\documentclass[a4paper,10pt]{article}

\usepackage[utf8]{inputenc}
\usepackage[a4paper, total={6in, 10in}]{geometry}
\usepackage{import} 
\usepackage{marginnote}

\input{preamble.tex}

\title{Feedback vertex sets of planar digraphs with fixed digirth\footnote{This research was partially funded by the French National Research Agency (ANR) under grant agreement No. ANR-24-CE48-3758-01. In accordance with the objective of open access dissemination, the author applies a Creative Commons Attribution (CC-BY) license to any accepted article or manuscript (AAM) resulting from this submission.}}
\author[1]{Simon Dreyer}
\author[1]{Alexandre Pinlou}
\author[1]{Petru Valicov}
\affil[1]{LIRMM, Université de Montpellier, CNRS, Montpellier, France.}

\begin{document}

\maketitle

\begin{abstract}
  Let $\fv(G)$ denote the size of a minimum feedback vertex set of a digraph $G$. We study $\fv_g(n)$, which is the maximum $\fv(G)$ over all $n$-vertex planar digraphs $G$ of digirth $g$. We prove a planar-digraph analogue of the celebrated Lucchesi-Younger theorem showing that the minimum feedback vertex set is at most the maximum packing of a special type of directed cycles. As a corollary, we derive that $\fv_g(n)\le \frac{n-2}{g-2}$ for all $g\geq 3$. This improves all previously known upper bounds for $g \ge 4$, and for $g \ge 6$ it supersedes the best known upper bound of $\frac{2n-6}{g}$ (Esperet, Lemoine and Maffray, 2017) by a factor of 2.
  
  On the other hand, we develop a new framework to construct planar digraphs of fixed digirth and large $\fv$. Using it, for $g = 6$ and every $g \ge 8$, we construct an infinite family of planar digraphs of digirth $g$ and $\fv(G) = \frac{g+2}{g^2} n + O(1)$. For $g= 7$, our construction gives $\fv(G) = \frac{2}{11} n + O(1)$ and for $g = 4$ and $5$, $\fv(G) = \frac{n}{g-1}$. These improve the best known lower bound of $\frac{n-1}{g-1}$ (Knauer, Valicov and Wenger, 2017) for all $g \ge 4$.
  
  We thus obtain the two-sided bound $\frac{g+2}{g^2} \le \sup_{n \ge 1} \frac{\fv_g(n)}{n} \le \frac{1}{g-2}$ for all values $g  = 6$ and every $g \ge 8$. The gap between the lower and the upper bound for $\sup_{n \ge 1} \frac{\fv_g(n)}{n}$ decreases from $\frac{g-2}{g(g-1)}$ to $\frac{4}{g^2(g-2)}$.
\end{abstract}

\section{Introduction}
Given a graph $G=(V,E)$, a \emph{feedback vertex set} is a set $S \subseteq V$ such that the subgraph induced by $V \setminus S$ is acyclic. If $G$ is directed, this means that the induced subdigraph contains no directed cycle.
The set $V\setminus S$ is then called an \emph{acyclic set}. Computing the \emph{minimum size of a feedback vertex set}, denoted by $\fv(G)$, is a hard problem even for planar graphs (see~\cite{Karp72, gareyjohnson1979}) and has been extensively studied in the literature.
 
In this paper, we focus on planar graphs. By the Four Color Theorem, it is known that every planar graph on $n$ vertices contains an independent set of vertices of size $\frac{n}{4}$. A widely open conjecture proposes a strengthening of this result:

\begin{conjecture}[Albertson and Berman, 1976~\cite{AB76}]\label{conj:albertson-berman}
    Every simple undirected planar graph $G$ on $n$ vertices contains an induced forest of size at least $\frac{n}{2}$.
\end{conjecture}

Equivalently, \Cref{conj:albertson-berman} states that every undirected planar graph $G$ on $n$ vertices has a feedback vertex set of size at most $\frac{n}{2}$. The best known general bound for \Cref{conj:albertson-berman} is due to Borodin~\cite{B79}, who proved that every undirected planar graph admits an induced forest of size at least $\frac{2n}{5}$, in other words $\fv(G) \le \frac{3n}{5}$. Better upper bounds for $\fv$ are known depending on the \emph{girth} (the length of a smallest cycle) and we summarize them in the second column of \Cref{tab:undirected_fvs}. The third column of this table shows the conjectured upper bounds, which are tight if true.

\begin{table}[htbp]
    \centering
    \begingroup
    \renewcommand{\arraystretch}{1.5}
    \begin{tabular}{|c|c|c|}
        \hline
        Girth $g$ & Best proved upper bound & Conjectured upper bound (tight if true) \\
        \hline
        3 & $\frac{3n}{5}$ ~\cite{B79} & $\frac{n}{2}$~\cite{AB76} \\\hline
        4 & $\frac{4n}{9}$~\cite{L18} & $\frac{3n}{8}$~\cite{AW87,L18} \\\hline
        5 & $\frac{m}{5}$~\cite{KL17,SX17} & $\frac{3n}{10}$~\cite{KLS10} \\\hline
        $\geq 6$ & $\frac{4m}{3g}$~\cite{DMP16} and $\frac{2m-n+2}{7}$~\cite{KL17,SX17} & $\frac{m}{g}$~\cite{DMP16} \\
        \hline
        \end{tabular}
    \endgroup
    \medskip
    \caption{\label{tab:undirected_fvs}Upper bounds for $\fv$ for planar undirected graphs depending on the girth $g$, the order $n$ and the number of edges $m$.}
\end{table}
 
Note that for undirected planar graphs, Euler's formula gives $\frac{m}{g} \le \frac{n-2}{g-2}$. Therefore, the known upper bounds of the last row of \Cref{tab:undirected_fvs} can be restated as $\fv(G) \le \frac{4m}{3g} \le \frac{4(n-2)}{3(g-2)}$. Moreover, if the conjecture $\fv(G) \le \frac{m}{g}$ of~\cite{DMP16} is true, then it would imply that $\fv(G) \le \frac{n-2}{g-2}$.

We switch to directed graphs (or \emph{digraphs}) with no parallel arcs and no loops. The length of a shortest directed cycle of a digraph $D$ is called the \emph{digirth} of $D$. An \emph{oriented graph} is a digraph with digirth at least 3 (that is, has no digons). As forests are the cycle-free structures in undirected graphs, acyclic sets are their natural analogs in digraphs.

\subsection{Upper bounds}

Albertson posed the following weakening of \Cref{conj:albertson-berman} (see~\cite{Mohar2002, W06, KVW17}): does every planar oriented graph contain an acyclic set of size at least $\frac{n}{2}$? In other words, is it true that $\fv(D) \le \frac{n}{2}$ for every planar oriented graph $D$ on $n$ vertices?
The best known general result for Albertson's question is the same as for \Cref{conj:albertson-berman} and is due to Borodin~\cite{B79}: $\fv(D) \le \frac{3n}{5}$. Moreover, the question is a weakening of the following famous conjecture.

\begin{conjecture}[Neumann-Lara, 1985~\cite{NL85}]\label{conj:NL}
    Every planar oriented graph can be vertex-partitioned into two acyclic sets.
\end{conjecture}

While \Cref{conj:NL} remains open in general, it was established by Li and Mohar~\cite{LM17} for planar oriented graphs without directed triangles. Therefore, $\fv(D) \le \frac{n}{2}$ for every planar oriented graph $D$ of digirth at least 4.
Better upper bounds for $\fv$ of planar oriented graphs with a larger digirth have been obtained by Esperet, Lemoine and Maffray~\cite{ELM17}. We summarize the known upper bounds for $\fv$ in \Cref{thm:SotA_digirth} below.

\begin{theorem}[Borodin, 1979~\cite{B79}, Esperet, Lemoine and Maffray, 2017~\cite{ELM17}, Li and Mohar, 2017~\cite{LM17}]\label{thm:SotA_digirth}
    Let $D$ be an oriented planar graph of digirth $g$ on $n$ vertices. Then
    \begin{itemize}
        \item $\fv(D) \le \frac{3n}{5}$~\cite{B79},
        \item $\fv(D) \le \frac{n}{2}$~\cite{LM17} if $g\ge4$,
        \item $\fv(D) \le \frac{2n-5}{4}$~\cite{ELM17} if $g\ge5$, and
        \item $\fv(D) \le \frac{2n-6}{g}$~\cite{ELM17} if $g\ge 6$.
    \end{itemize}
\end{theorem}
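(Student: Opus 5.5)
This theorem collects four results from the literature, so the plan is to recall how each one is obtained rather than to give a single unified argument.

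For the first item, an oriented planar graph $D$ has at most the arcs of a simple planar graph, because there are no digons. Every directed cycle of $D$ is a cycle of the underlying undirected simple planar graph. Hence any feedback vertex set of the underlying graph is a feedback vertex set of $D$. Borodin's theorem gives an acyclic $5$-coloring of planar graphs: every cycle receives at least three colors, and every two color classes induce a forest. Averaging over the $\binom{5}{2}=10$ pairs of color classes, the two largest classes together contain at least $\frac{2n}{5}$ vertices and induce a forest. This yields $\fv(D)\le \frac{3n}{5}$.

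For the second item, I would invoke the Li--Mohar theorem: a planar digraph with no directed triangle, in particular one with digirth at least $4$, admits a partition of $V(D)$ into two acyclic sets. The larger part has at least $\frac{n}{2}$ vertices, so the smaller part is a feedback vertex set of size at most $\frac{n}{2}$. The substance lies entirely in the Li--Mohar partition, which is proved by a minimal-counterexample and discharging argument on the plane embedding; I would take it as given.

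For the last two items (Esperet--Lemoine--Maffray), the plan is the following.
\begin{enumerate}
  \item Reduce to the case where $D$ is strongly connected and $2$-connected, with minimum degree conditions. Acyclic sets combine across strong components, and a vertex of in- or out-degree $0$ lies on no cycle.
  \item Show that one can pick vertices hitting all directed cycles at cost controlled by the faces. In the planar dual, directed cycles of $D$ correspond to directed cuts.
  \item Using digirth $g$, every directed cycle has length at least $g$. A Lucchesi--Younger type duality then bounds a minimum transversal by a maximum packing of arc-disjoint directed cycles.
  \item Combine this with Euler's formula: $m\le \frac{g}{g-2}(n-2)$ fails for general oriented graphs, but a packing of disjoint directed cycles of length at least $g$ uses at most $m\le 3n-6$ arcs. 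This gives at most $\frac{3n-6}{g}$ cycles, refined to $\frac{2n-6}{g}$ by a careful count of the arcs that cannot lie on directed cycles.
\end{enumerate}
The $\frac{2n-5}{4}$ bound for $g\ge5$ comes from the same method with a more careful local analysis.

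The main obstacle is this refinement: converting an arc-transversal bound from Lucchesi--Younger into a vertex feedback set, and bounding the packing number by $\frac{2n-6}{g}$ rather than $\frac{3n-6}{g}$. I would first try to show that in a minimal counterexample every face is bounded by a directed cycle or has a controlled structure. The arcs that are not on any directed cycle then contribute at least $n$ arcs' worth of slack, which accounts for the improvement from $3n$ to $2n$.
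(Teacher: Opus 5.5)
The paper does not prove this theorem; it only quotes \cite{B79}, \cite{LM17} and \cite{ELM17}. Your reductions of the first two items to those results are correct. The two largest classes of an acyclic $5$-colouring have at least $\frac{2n}{5}$ vertices and induce a forest. The smaller side of a Li--Mohar partition is a feedback vertex set of size at most $\frac{n}{2}$. One correction: state the Li--Mohar hypothesis as digirth at least $4$. ``No directed triangle'' alone is not enough once digons are allowed; for example, the bidirected $C_5$ cannot be split into two acyclic sets.

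For the Esperet--Lemoine--Maffray items there is a genuine gap: your outline is not an argument, and the step meant to carry it fails.

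\begin{itemize}
\item The chain $\fv\le\fa=\nu\le\frac{m}{g}\le\frac{3n-6}{g}$ only recovers the earlier Golowich--Rolnick bound. All the content lies in replacing $3n$ by $2n$.
\item The slack you propose, arcs lying on no directed cycle, vanishes after your own first step. In a strongly connected digraph every arc $uv$ lies on a directed cycle (close it with a $v$--$u$ path).
\item Counting only the arcs used by the packing does not help either. Take vertices $(i,j)$ with $i$ taken modulo $L\ge 3$ and $1\le j\le h$, placed on $h$ concentric circles. Add the arcs $(i,j)\to(i+1,j)$, $(i,j)\to(i+1,j+1)$ and $(i,j+1)\to(i,j)$. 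This is a strongly connected plane oriented graph of digirth $L$ (every directed cycle winds around the centre) with $m=3n-2L$. A maximum packing consists of the $h$ rows of length $L$ and $h-1$ zigzags of length $2L$. It uses every arc, yet has only $2h-1=\frac{2n-L}{L}$ cycles.
\end{itemize}

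So any proof through packings must trade the excess length of some cycles against the short faces of the union of the packing, which are necessarily non-directed. For instance, the identity $(g-2)\lvert\mathcal{C}\rvert=n-2-\mathbf{E_{tot}}(\mathcal{C})$ from the proof of \Cref{thm:energy} holds verbatim for every arc-disjoint family $\mathcal{C}$. But $\mathbf{E_2}(\mathcal{C})$ can now be negative and has to be compensated by $\mathbf{E_1}(\mathcal{C})$ through a genuine discharging argument. Your plan has no such step.

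Two smaller points. The $\frac{2n-5}{4}$ item is deferred to ``a more careful local analysis'' and so is not addressed. And the obstacle you single out, turning an arc transversal into a vertex one, is not an obstacle at all, since $\fv\le\fa$ is immediate.
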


Moreover, the authors of~\cite{ELM17} suggest that the factor 2 for $g\ge6$ should be reducible to a constant closer to 1.

A \emph{feedback arc set} of a digraph $D$ is a set of arcs $A$ such that $D - A$ is acyclic and we denote by $\fa(D)$ the \emph{minimum size of a feedback arc set} of $D$. A celebrated result of Lucchesi and Younger~\cite{LY78} combined with planar duality relates minimal feedback arc sets to maximum families of arc-disjoint directed cycles.

\begin{theorem}[Lucchesi and Younger, 1978~\cite{LY78}]\label{thm:LY}
    Let $D$ be a directed planar graph. Let $\mathcal{C}_{\max}$ be a maximum set of arc-disjoint directed cycles of $D$. Then $\fa(D) = |\mathcal{C}_{\max}|$.    
\end{theorem}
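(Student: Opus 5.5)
The planar statement follows from the Lucchesi--Younger min-max theorem for directed cuts, via planar duality. Fix a plane embedding of the (connected, without loss of generality) digraph $D$ and form its directed dual $D^*$. Each arc $a$ of $D$ crosses exactly one arc $a^*$ of $D^*$, and $a^*$ is oriented so that it crosses $a$ from left to right. I will use the standard fact that under this correspondence directed cycles of $D$ map exactly to minimal directed cuts (dicuts) of $D^*$, and vice versa. Then:
\begin{itemize}
\item a feedback arc set of $D$ (a set of arcs meeting every directed cycle) corresponds to a set of dual arcs meeting every dicut, that is, a \emph{dijoin} of $D^*$;
\item a family of arc-disjoint directed cycles of $D$ corresponds to a family of disjoint dicuts of $D^*$.
\end{itemize}
The Lucchesi--Younger theorem says that in any digraph the minimum size of a dijoin equals the maximum number of pairwise disjoint dicuts. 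Applied to $D^*$, this gives $\fa(D) = |\mathcal{C}_{\max}|$.

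The key steps, in order, are as follows.
\begin{enumerate}
\item Reduce to $D$ connected. Both sides of the equality are additive over components, and cycles and arcs stay within components.
\item Prove the cycle--cut correspondence. A directed cycle $C$ of $D$ is a Jordan curve, so it separates the faces into an inside set $X$ and an outside set. Every dual arc crossing $C$ goes from one side to the other, and since $C$ is consistently oriented, all of them point the same way, say into $X$. Thus $\{a^* : a \in C\}$ is the dicut $\delta^{-}(X)$. For the converse, let $\delta(X)$ be a minimal dicut of $D^*$. Its primal arcs form a minimal edge cut of $D^*$, hence a cycle of $D$, because a minimal cut of a connected plane graph is dual to a cycle. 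All of its dual arcs point the same way, so consecutive primal arcs are oriented consistently and the cycle is directed.
\item Note that minimality may be assumed on both sides. Every dicut contains a minimal dicut, so hitting all dicuts is the same as hitting all minimal dicuts. Likewise a disjoint packing of dicuts can be shrunk to a packing of minimal dicuts of the same size. So the correspondence of step~2 suffices.
\item Invoke Lucchesi--Younger for $D^*$ and translate back through steps~2 and~3.
\end{enumerate}

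The main obstacle is the converse direction of step~2. Care is needed with degenerate dual features, such as loops (bridges of $D$) and parallel dual arcs. A loop in $D^*$ never lies in a cut, which matches the fact that a bridge of $D$ lies on no cycle. Parallel dual arcs are harmless for cuts. I would handle this by working with multigraph duals throughout and using the cycle-space/cut-space duality for plane graphs.

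The inequality $|\mathcal{C}_{\max}| \le \fa(D)$ is trivial, since disjoint cycles need distinct arcs. The depth of the result lies entirely in Lucchesi--Younger, which I would simply cite rather than reprove. Its proof is via total dual integrality or an uncrossing argument.
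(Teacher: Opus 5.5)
Your proposal is correct and follows the route the paper indicates: the paper gives no proof of its own and simply cites the Lucchesi--Younger dijoin/dicut min--max theorem combined with planar duality, which is exactly your argument. The one step you assert without justification is that an inclusion-minimal dicut $\delta(X)$ of the connected dual is a bond. This follows because if either $X$ or its complement induced a disconnected subgraph, then one of its components would define a strictly smaller dicut.
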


As a consequence of \Cref{thm:LY}, the inequality $\fv(D)\leq \frac{m}{g}$ holds for planar digraphs of digirth $g$. Observe that this is a directed analogue of the conjecture of Dross, Montassier and the second author~\cite{DMP16} for planar undirected graphs. Let $D$ be a planar digraph of digirth $g$ and $\mathcal{C}_{\max}$ be a maximum set of arc-disjoint directed cycles of $D$. Since each directed cycle of $D$ has length at least $g$, clearly $g\times \lvert \mathcal{C}_{\max} \rvert \le m$ and thus $\fa(D) \le \frac{m}{g}$. Taking one endpoint from each arc in a minimum feedback arc set yields a feedback vertex set of size at most $\fa(D)$, hence $\fv(D) \le \fa(D) \le \frac{m}{g}$. However, the inequality $\frac{m}{g} \le \frac{n-2}{g-2}$ does not hold for every planar digraph $D$ of digirth $g$, as $D$ might contain faces of length smaller than $g$.

\subsubsection*{Our contributions}

Our first main contribution in this paper (in \Cref{sec:normal_set_of_cycles}) is to establish a Lucchesi-Younger type theorem for planar digraphs. Given an embedded planar digraph $D$ (a \emph{plane} digraph), an arc-disjoint set of directed cycles $\mathcal{N}$ of $D$ is said to be \emph{normal} if every vertex $v$ of $D$ is \emph{alternatingly oriented} with respect to $\mathcal{N}$, that is, the arcs of $\mathcal{N}$ incident to $v$ alternate between incoming and outgoing arcs in the cyclic order around $v$ (see \Cref{fig:normal_set} for an illustration).

\begin{figure}[htbp]
  \centering
  \begin{subfigure}[b]{0.53\linewidth}
    \centering
    \includegraphics[width=\linewidth]{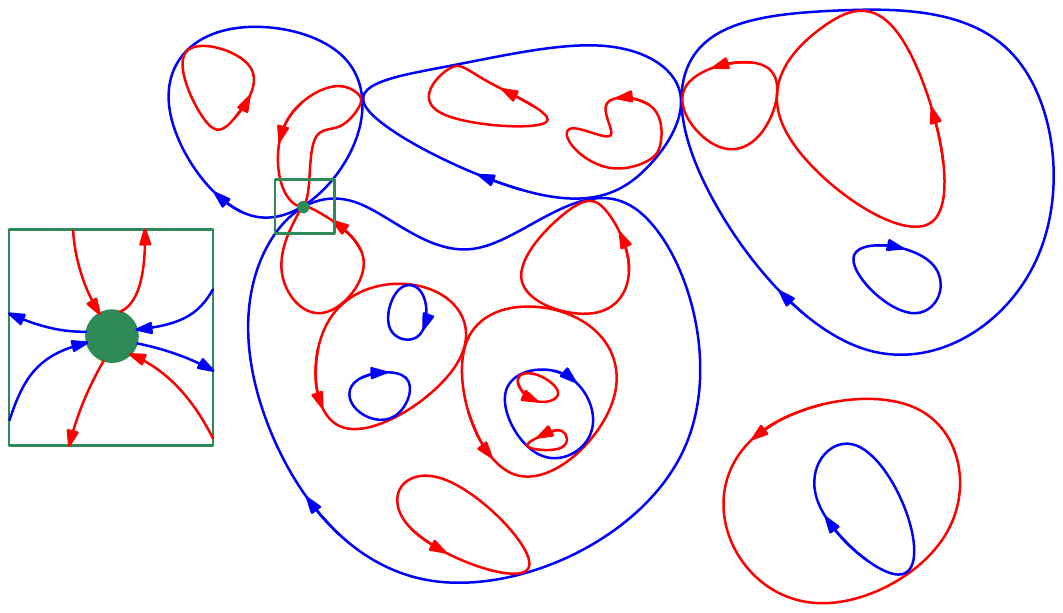}
  \end{subfigure}
  \hfill
  \begin{subfigure}[b]{0.43\linewidth}
    \centering
    \includegraphics[width=\linewidth]{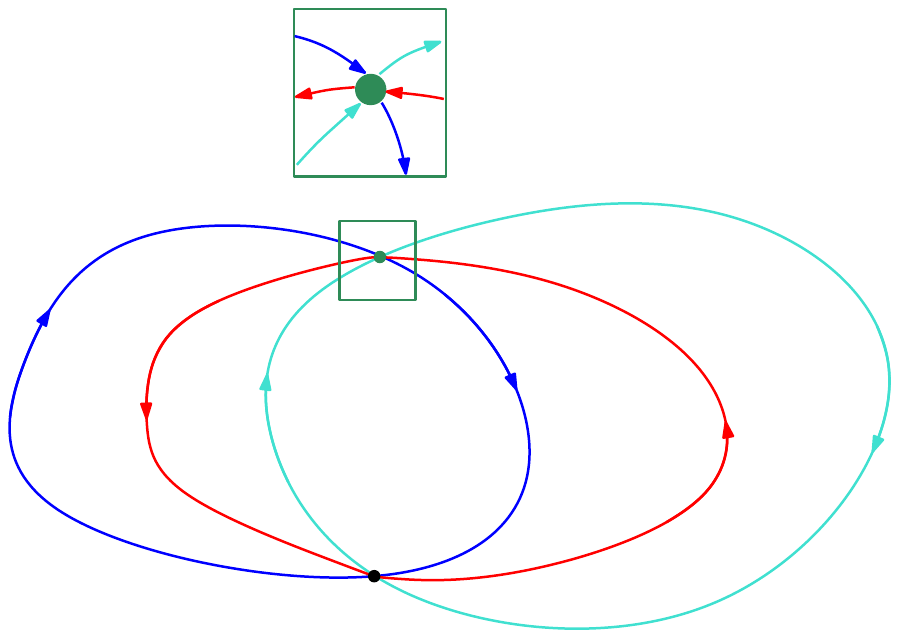}
  \end{subfigure}
  \medskip
  \caption{Examples of normal sets. The blue and red cycles are oriented clockwise and counterclockwise respectively. Each vertex is alternatingly oriented, see the zoomed-in view of the green vertices.}\label{fig:normal_set}
\end{figure}

\begin{theorem}\label{thm:maximal_normal_set_greater_than_fvs}
    Let $D$ be a directed plane graph. Let $\mathcal{N}_{\max}$ be a maximum normal set of cycles of $D$. Then $\fv(D) \le |\mathcal{N}_{\max}|$. 
\end{theorem}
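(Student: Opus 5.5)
The plan is to reduce the statement to \Cref{thm:LY}, applied to an auxiliary plane digraph $D'$ obtained by replacing every vertex of $D$ with a small planar gadget. We then transfer the two sides of the Lucchesi--Younger equality back to $D$. On the feedback side we want $\fv(D)\le \fa(D')$. On the packing side we want to turn $\fa(D')$ arc-disjoint directed cycles of $D'$ into at least as many cycles of a normal set of $D$. Together these give $\fv(D)\le \fa(D') = |\mathcal{C}_{\max}(D')| \le |\mathcal{N}_{\max}|$.

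\emph{Construction of $D'$.} Fix a vertex $v$ of $D$ and read its incident arcs in the cyclic order of the embedding. Group them into maximal runs of consecutive incoming arcs $I_1,\dots,I_k$ and outgoing arcs $O_1,\dots,O_k$; these alternate, and if $v$ is a source or a sink it lies on no cycle and is left alone. Replace $v$ by a centre $c_v$ and, for each run, a port vertex. The port $a_i$ receives the arcs of $I_i$ and carries an arc $a_i\to c_v$; the port $b_j$ carries an arc $c_v\to b_j$ and emits the arcs of $O_j$. Placing the ports in the cyclic order of the runs keeps $D'$ plane. Directed cycles of $D'$ project onto closed directed walks of $D$, and every directed cycle of $D$ lifts to a cycle of $D'$.

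\emph{Feedback side.} Take a minimum feedback arc set $F$ of $D'$. For a gadget arc of $v$, put $v$ into $S$. For an original arc $uw$, put its tail $u$ into $S$. Every directed cycle $C$ of $D$ lifts to a cycle of $D'$, which contains an arc of $F$. That arc is either a gadget arc of a vertex of $C$ or the lift of an arc of $C$, so in both cases $S$ meets $C$. Hence $S$ is a feedback vertex set of $D$, and $\fv(D)\le |S|\le \fa(D')$.

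\emph{Packing side.} Take a maximum family $\mathcal{C}'$ of arc-disjoint directed cycles of $D'$; by \Cref{thm:LY} it has $\fa(D')$ members. Their projections are arc-disjoint closed directed walks in $D$, since the original arcs are shared between $D$ and $D'$. Decomposing each walk into directed cycles yields an arc-disjoint family of cycles of $D$ that is at least as large. It remains to make every vertex alternatingly oriented without losing cycles or breaking arc-disjointness.

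\emph{Main obstacle: uncrossing.} At a vertex $v$, let $W$ be the set of arcs used by the family. Suppose two arcs of $W$ that are consecutive among $W$ in the cyclic order around $v$ are both incoming; the outgoing case is symmetric. The cycles through them pass $v$ and leave through two outgoing arcs. I would re-pair the transitions at $v$ by swapping the two outgoing continuations, choosing the pairing that matches each incoming arc with an adjacent outgoing arc in the cyclic order. This is a planar non-crossing matching of ins to outs at $v$, in the spirit of ``untwisting'' arguments for Eulerian subgraphs.

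The point needing care is that re-pairing must not decrease the number of cycles after decomposition. I would argue this with the extremal choice familiar from the proof of \Cref{thm:LY}. Among all packings of maximum size, choose one minimising a potential such as $\sum_v(\text{number of non-alternations at }v)$, or maximising the number of cycles after a full decomposition. Then show that a local swap at a non-alternating vertex either splits a closed walk into two, contradicting maximality, or strictly decreases the potential while keeping the count. The delicate case is when the two incoming arcs lie on the same cycle or on different cycles that merge after the swap. There the planar non-crossing structure of the gadget must be used to show that the merge can be undone elsewhere without loss. I expect this step to be the heart of the proof.
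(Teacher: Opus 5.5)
Your feedback side is correct: every directed cycle of $D$ lifts to $D'$, so $\fv(D)\le\fa(D')$. The gap is on the packing side. It is not a missing extremal argument, because the inequality $|\mathcal{C}_{\max}(D')|\le|\mathcal{N}_{\max}(D)|$ that you need is false.

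Your gadget only forbids using two arcs of the same \emph{run} at $v$. Normality, however, is a condition on the set $W$ of arcs \emph{used} at $v$. Arcs of $D$ that the packing does not use still split runs in $D'$, yet they play no role in normality.

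Here is a concrete example. Join $w_1$ and $w_2$ by four internally disjoint directed paths: $A_1,A_2$ from $w_1$ to $w_2$, and $B_1,B_2$ from $w_2$ to $w_1$. Choose the rotation $A_1,A_2,B_1,B_2$ at $w_1$, so that $A_1B_1$ and $A_2B_2$ cross at both $w_1$ and $w_2$. Then insert pendant arcs (from new sources or to new sinks) at $w_1$ and $w_2$ so that, counting them, the arcs around each $w_i$ alternate between incoming and outgoing.
\begin{itemize}
\item The only directed cycles are the four $A_iB_j$, and any two arc-disjoint ones use all four paths. Among the used arcs, $A_1,A_2$ are then consecutive outgoing arcs at $w_1$, so $|\mathcal{N}_{\max}(D)|=1$.
\item Every run at $w_1$ and $w_2$ is now a singleton. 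Hence the lifts of $A_1B_1$ and $A_2B_2$ are arc-disjoint in $D'$, and $\fa(D')\ge 2$.
\end{itemize}
In general one only has $|\mathcal{N}_{\max}(D)|\le|\mathcal{C}_{\max}(D')|$, since a normal set uses at most one arc per run. So your reduction yields only the strictly weaker bound $\fv(D)\le\fa(D')$.

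Your proposed repair also fails for a more basic reason. Re-pairing which outgoing arc continues which incoming arc at $v$ leaves $W$ unchanged. Whether $v$ is alternatingly oriented depends only on $W$, that is, on $G[\mathcal{N}]$. Swapping continuations can turn crossing cycles into non-crossing ones; this is exactly the paper's laminarization, which preserves normality precisely because it preserves arc sets. It can never make a non-alternating vertex alternating. That would require changing the arc set, and the example shows that no arc set of the same size may exist.

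The missing idea is that alternation is an interval constraint, not a capacity constraint: $W$ alternates at $v$ if and only if $\bigl|\sum_{e\in S\cap W}\mu_v(e)\bigr|\le 1$ for every segment $S$ of $v$. This is the paper's multiplicity of a valuation, and the paper proves the min--max directly from it rather than calling Lucchesi--Younger as a black box. It shows that every directed cycle contains an essential vertex, as follows.
\begin{itemize}
\item Suppose a cycle of length $k$ has no essential vertex. Sum $k$ maximum normal sets, each avoiding one vertex of the cycle, together with the cycle itself. The result has weight $k|\mathcal{N}_{\max}|+1$ and multiplicity at most $k$.
\item Laminarize this valuation.
\item Repeatedly halve the multiplicity by splitting the inclusion forest according to layer parity and orientation.
\item At multiplicity $1$ the valuation is normal with weight $|\mathcal{N}_{\max}|+1$, a contradiction.
\end{itemize}
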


Note that, unlike in the Lucchesi-Younger theorem (\Cref{thm:LY}), the inequality $\fv(D) \le |\mathcal{N}_{\max}|$ can be strict. As an example, consider the directed plane graph formed by $k$ arc-disjoint directed cycles, oriented clockwise and sharing a single common vertex. The size of a minimum feedback vertex set is $1$, while the size of a maximum normal set is $k$.
Using \Cref{thm:maximal_normal_set_greater_than_fvs} we prove the following two upper bounds for $\fv$ in \Cref{sec:normal_set_of_cycles} improving \Cref{thm:SotA_digirth} when $g\ge 4$:

\begin{theorem}\label{thm:upper_bound_FVS}
  Let $D$ be a planar oriented graph of digirth $g$ with $n \ge 3$ vertices. Then:
  \begin{displaymath}
    \fv(D) \le \frac{n-2}{g-2}
  \end{displaymath}
  and equality holds if and only if $D = C_g$, the directed cycle of length $g$.
\end{theorem}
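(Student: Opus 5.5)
The plan is to apply \Cref{thm:maximal_normal_set_greater_than_fvs} and then bound the size of any normal set by Euler's formula. Fix a plane embedding of $D$ and a maximum normal set $\mathcal{N}$. Let $H$ be the plane subdigraph formed by the arcs of the cycles in $\mathcal{N}$, together with the vertices they cover. Write $n_H$, $m_H$, $f_H$ and $c_H$ for its numbers of vertices, arcs, faces and components. Since the cycles are arc-disjoint and each has length at least $g$, we get $m_H = \sum_{C\in\mathcal{N}} |C| \ge g|\mathcal{N}|$. It therefore suffices to show $m_H \le \frac{g(n_H-2)}{g-2}$, because $n_H\le n$.

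The key structural observation concerns facial walks. Consider a face of $H$ and its boundary walk. When the walk enters a vertex $v$ along an arc $e$, it leaves along the arc that follows $e$ in the rotation at $v$. Each vertex of $H$ is alternatingly oriented, so this next arc has the opposite orientation at $v$. Hence every facial boundary walk of $H$ is a \emph{directed} closed walk, where each component of a disconnected boundary is treated separately. A directed closed walk of positive length contains a directed cycle, so it has length at least $g$. Each arc lies on exactly two facial walks (or twice on one), so this gives $2m_H \ge g f_H$. Euler's formula gives $n_H - m_H + f_H = 1 + c_H \ge 2$, that is $f_H \ge m_H - n_H + 2$. Combining the two yields $2m_H \ge g(m_H - n_H + 2)$, i.e. $m_H \le \frac{g(n_H-2)}{g-2}$. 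Therefore $\fv(D)\le|\mathcal{N}|\le \frac{m_H}{g}\le\frac{n_H-2}{g-2}\le\frac{n-2}{g-2}$. The degenerate case $\mathcal{N}=\emptyset$ is trivial, since then $D$ is acyclic and $n\ge3$. Before relying on the face-counting step, I need to check one subtlety: a facial walk could be a closed walk that traverses arcs in a degenerate way. This cannot happen, because every vertex of $H$ has degree at least $2$, and with arc-disjoint cycles and alternation the walk always moves forward along arcs.

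For the equality case, I would trace where each inequality is tight. Equality forces the following: $n_H=n$ ($H$ is spanning), $c_H=1$, every face of $H$ is a directed closed walk of length exactly $g$, every cycle of $\mathcal{N}$ has length exactly $g$, and $\fv(D)=|\mathcal{N}|$. The cycle $C_g$ clearly attains equality, since it gives $1=\frac{g-2}{g-2}$. The main obstacle is excluding $|\mathcal{N}|\ge 2$. In that case $H$ is a connected plane digraph in which all faces are directed $g$-walks. Each arc then borders one clockwise and one counterclockwise face, so the faces are properly $2$-colored. My first attempt would be to exhibit directly a feedback vertex set smaller than $|\mathcal{N}|$ in this rigid structure, for instance by finding a vertex shared by two cycles of $\mathcal{N}$ and removing it. Failing that, I would show that $\mathcal{N}$ can be modified by rerouting at a shared vertex, replacing one facial family by the other color class, and check that the slack created in Euler's count then gives a strict inequality. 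Finally, if $H\ne D$, any extra arc of $D$ would lie inside a face of length $g$ and create a shorter directed cycle or remain removable. I expect this case to be routine compared with the $|\mathcal{N}|\ge2$ analysis.
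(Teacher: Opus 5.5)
Your proof of the inequality is correct and is the paper's own argument. Your Euler count is \Cref{thm:energy} written as $(g-2)|\mathcal{N}| = n-2-\mathbf{E_{tot}}(\mathcal{N})$, and your remark that facial walks of $G[\mathcal{N}]$ are directed is \Cref{prop:positive_energies}. Your reduction of the equality case to a spanning, connected $G[\mathcal{N}]=D$ with all faces directed of length $g$ is also fine.

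The genuine gap is the case $|\mathcal{N}|\ge 2$. This is the whole difficulty, and you only offer two tentative plans, neither of which works. This rigid configuration really exists: the digraphs $O_k^{(g)}$ of \Cref{sec:upper_bound_maximal_normal_set} have $|\mathcal{N}_{\max}|=\frac{n-2}{g-2}=3k-2\ge 2$ for $k\ge 2$. By \Cref{thm:energy} they have exactly the structure you describe, yet $\fv(O_k^{(g)})\le k<3k-2$. So your second plan cannot give a contradiction. Any other normal decomposition of $E(D)$, for instance one colour class of faces, has the same underlying plane graph, hence the same faces, vertices and components, and creates no slack. The contradiction has to come from the hypothesis $\fv(D)=|\mathcal{N}|$: you must prove $\fv(D)<|\mathcal{N}|$. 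Your first plan does not achieve this either. After deleting a shared vertex $v$ you would need $\fv(D-v)\le k-2$, while the bound available to you only gives $\fv(D-v)\le\lfloor\frac{n-3}{g-2}\rfloor=k-1$.

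The missing idea is the extremal exchange argument of \Cref{prop:fvs_of_G[N]}. Among normal sets of size exactly $k=\fv(D)$, choose $\mathcal{N}$ minimising $q(\mathcal{N})=m_\mathcal{N}-n_\mathcal{N}$, and show that $q(\mathcal{N})>0$ forces $\fv(G[\mathcal{N}])<k$. Suppose instead $\fv(G[\mathcal{N}])=k$, and pick $v$ of in-degree at least $2$ in $G[\mathcal{N}]$. Then:
\begin{itemize}
\item \Cref{thm:maximal_normal_set_greater_than_fvs} applied to $G[\mathcal{N}]-v$ yields at least $k-1$ arc-disjoint cycles avoiding $v$.
\item \Cref{lem:complete_arc_disjoint_set_of_cycles} completes them into a decomposition of $E(\mathcal{N})$ with at least two further cycles through $v$. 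It is automatically normal, since its underlying plane graph is $G[\mathcal{N}]$.
\item Uncrossing and then deleting innermost non-isolated cycles (\Cref{lem:finding_a_weaker_normal_set}) gives a normal set of size $k$ with smaller $q$, contradicting minimality.
\end{itemize}
In your setting this finishes the proof at once. In the equality case a $q$-minimal normal set of size $k$ is also tight, so $G[\mathcal{N}]=D$ and $\fv(G[\mathcal{N}])=k$, whence $q(\mathcal{N})=0$. Being vertex-disjoint with connected $G[\mathcal{N}]$, $\mathcal{N}$ is a single cycle of length $g$ spanning $D$, that is, $D=C_g$.
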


\begin{theorem}\label{thm:upper_bound_strongly_connected}
  Let $D$ be a planar oriented graph of digirth $g \ge 4$ having $n$ vertices and $m$ arcs. Suppose that $D$ is strongly connected. Then:
  \begin{displaymath}
    \fv(D) \le \frac{n-2}{g-3} - \frac{m}{g(g-3)}.
  \end{displaymath}
\end{theorem}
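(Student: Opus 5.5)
The plan is to combine \Cref{thm:maximal_normal_set_greater_than_fvs} with Euler's formula applied twice: once to the subgraph carried by a maximum normal set, and once to all of $D$. Fix a plane embedding of $D$ and a maximum normal set $\mathcal{N}$, with $k=|\mathcal{N}|$. By \Cref{thm:maximal_normal_set_greater_than_fvs} it suffices to show
\[ k \le \frac{n-2}{g-3}-\frac{m}{g(g-3)}, \quad\text{equivalently}\quad m+g(g-3)k \le g(n-2). \]
Let $H$ be the plane subdigraph formed by the arcs of the cycles of $\mathcal{N}$. Let $L$ be its number of arcs and $n_H$ its number of vertices. Since the cycles are arc-disjoint and have length at least $g$, we have $gk\le L$.

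\textbf{Step 1: faces of $H$ are directed.} Walk along the boundary of a face of $H$. At each vertex the next boundary arc is the neighbour of the current arc in the rotation around that vertex. Because every vertex is alternatingly oriented with respect to $\mathcal{N}$, consecutive boundary arcs have opposite orientations relative to that vertex. Hence every facial walk of $H$ is a directed closed walk. It therefore contains a directed cycle, so every face $F$ of $H$ has length $\ell_F\ge g$. I first treat $H$ as connected; the general case only adds components to Euler's formula and should go through in the same way, possibly using strong connectivity of $D$ to glue components. With $f_H$ the number of faces, Euler's formula gives $f_H=L-n_H+2$ and $\sum_F \ell_F=2L$. (This step alone already yields \Cref{thm:upper_bound_FVS}.)

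\textbf{Step 2: reduce to a local inequality.} For each face $F$ of $H$, let $p_F$ be the number of vertices of $D$ strictly inside $F$ and $q_F$ the number of arcs of $D$ not in $H$ that lie inside $F$. Then $n=n_H+\sum_F p_F$ and $m=L+\sum_F q_F$. Using $g(g-3)k\le(g-3)L$, it suffices to prove
\[ (g-2)L+\sum_F (q_F-g\,p_F)\le g(n_H-2)=g(L-f_H). \]
Since $2L-g f_H=\sum_F(\ell_F-g)$, this rearranges to $\sum_F (q_F-g\,p_F)\le\sum_F(\ell_F-g)$. So it is enough to prove, for every face $F$ of $H$, the local inequality
\[ q_F - g\,p_F \le \ell_F - g. \]
This holds trivially for empty faces, since $\ell_F\ge g$.

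\textbf{Step 3 (main obstacle): the local inequality.} Inside $F$ we have a plane digraph $D_F$ of digirth at least $g$. Its outer boundary is a directed cycle of length $\ell_F$, and it has $p_F$ inner vertices and $q_F$ inner arcs. I would first treat chords. A chord $uv$ together with the boundary path from $v$ back to $u$ forms a directed cycle, so that path has length at least $g-1$. This should force the chords to be few and "nested", yielding roughly $q_F\le (\ell_F-g)/(g-2)$ when $p_F=0$.

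For interior vertices I would use strong connectivity of $D$: every inner arc lies on a directed cycle of $D$. I would then try to use maximality of $\mathcal{N}$, choosing among maximum normal sets one that covers the most arcs, to forbid configurations where $D_F$ contains a directed cycle that could be added to $\mathcal{N}$ or swapped in while keeping normality. The remaining inner structure would be handled by an Euler count inside $F$, in which the inner faces have length at least $3$.

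The delicate part is exactly this: showing that an interior vertex "pays" $g$ against the arcs it brings, which needs both the digirth and the strong connectivity hypothesis. I expect to argue by induction on $p_F+q_F$: either remove a chord, splitting $F$ into two faces each still satisfying the hypotheses, or contract or remove a directed path through interior vertices.
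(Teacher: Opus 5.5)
Your Steps 1 and 2 are correct. Facial walks of $H$ are directed, and for connected $H$ the rearrangement to the per-face inequality $q_F-g\,p_F\le \ell_F-g$ is right. If $H$ has $c_H$ components, Euler adds $g(c_H-1)$ to the right-hand side of the summed inequality. Since $\sum_F(b_F-1)=c_H-1$, where $b_F$ is the number of boundary walks of $F$, each face may use $\ell_F-g+g(b_F-1)$. For $b_F\ge 2$ a plain Euler count inside $F$ suffices, with $q_F-p_F+2-b_F$ inner faces of length at least $3$, because $D$ is connected. This should be written out, but it is not the real issue.

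The genuine gap is Step 3. It carries the whole theorem, and you only sketch it; the sketch also aims at the wrong case. Your chord estimate is false. Inside a directed cycle $v_0v_1\cdots v_{\ell-1}$, add the fan of chords $v_0v_j$ for $2\le j\le \ell-g+1$. The only directed cycle through $v_0v_j$ has length $\ell-j+1\ge g$, the digraph is strongly connected, and the boundary cycle is a maximum normal set. Here $q_F=\ell_F-g$, which exceeds $(\ell_F-g)/(g-2)$ by a factor $g-2$. So the target inequality is tight, and no nesting or maximality argument will improve on it.

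Conversely, interior vertices, which you call the delicate part, are trivial. Cut $F$ open along its boundary walk. The part of $D$ in the resulting closed disk is a connected simple plane graph with $\ell_F+p_F$ vertices and $\ell_F+q_F$ arcs. Hence it has $q_F-p_F+1$ inner faces, each of length at least $3$, of total length $2q_F+\ell_F$. This gives $q_F\le \ell_F+3p_F-3$, which is at most $\ell_F-g+g\,p_F$ as soon as $p_F\ge 1$. No digirth, strong connectivity, cycle swapping or induction is needed.

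The missing idea concerns $p_F=0$. The same count yields $q_F\le \ell_F-g$ as soon as one inner face inside $F$ has length at least $g$, i.e.\ has a directed boundary walk. That is precisely \Cref{prop:facial_cycle}. Orient the dual by a quarter turn. Because $\partial F$ is directed, every dual arc crossing it points into $F$ (say), so a sink of the dual restricted to the faces inside $F$ is a face with directed boundary.

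The paper obtains acyclicity of the dual from strong connectivity, and then argues globally rather than face by face:
\begin{itemize}
\item Euler on $D$ bounds the number $x$ of directed faces of $D$ by $\frac{3n-6-m}{g-3}$;
\item \Cref{lem:number_facial_cycle} gives $x\ge f_{\mathcal{N}}-(c_{\mathcal{N}}-1)$;
\item the identity of \Cref{thm:energy}, discarding $\mathbf{E_1}$ and $\mathbf{E_3}$, concludes.
\end{itemize}
Completed this way, your face-by-face bookkeeping is a valid alternative that keeps the interior-vertex term (the paper's $\mathbf{E_3}$). A directed inner face is then needed only in chord-only faces, where the weak dual is a tree and hence acyclic automatically. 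So your argument would in fact use only that $D$ is connected.
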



The two results are complementary. \Cref{thm:upper_bound_FVS} has broader applicability since it holds for all planar oriented graphs, but \Cref{thm:upper_bound_strongly_connected} — which additionally requires strong connectivity and digirth $g \ge 4$ — is tighter whenever $D$ has many arcs relative to its order. More precisely, \Cref{thm:upper_bound_strongly_connected} improves upon \Cref{thm:upper_bound_FVS} if and only if $\frac{m}{g} > \frac{n-2}{g-2}$.

\subsection{Lower bounds}

In the second part of this paper we tackle the converse question. What is the maximal possible value of $\fv(D)$ when $D$ is a planar oriented graph on $n$ vertices of digirth $g$? Taking a disjoint union of directed cycles of length $g$ yields an infinite family of oriented graphs such that $\fv = \frac{n}{g}$. A general construction improves this trivial lower bound:

\begin{theorem}[Knauer, Valicov and Wenger, 2017~\cite{KVW17}]\label{thm:lower_bound_KVW}
    For every $g\ge 3$, there exists an infinite family $\mathcal{F}_g$ of planar oriented graphs of digirth $g$ such that every $D \in \mathcal{F}_g$ satisfies $\fv(D) = \frac{n-1}{g-1}$.
\end{theorem}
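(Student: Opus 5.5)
The plan is an inductive construction that adds $g-1$ vertices and raises $\fv$ by exactly one at each step. Starting from a base digraph $D_0$ with $n_0 = 1$ and $\fv(D_0)=0$, the $k$-th member $D_k$ will then have $n = 1 + k(g-1)$ vertices and $\fv(D_k) = k = \frac{n-1}{g-1}$. The induction is easiest to maintain with an invariant that concerns a designated vertex $r_k$ on the outer face of $D_k$:
\begin{center}
$(\ast)$\quad every feedback vertex set of $D_k$ containing $r_k$ has size at least $\fv(D_k)+1$, i.e.\ $\fv(D_k - r_k) = \fv(D_k)$.
\end{center}
The base case is a single vertex $r_0$, where $(\ast)$ holds trivially.

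For the inductive step, I would place a directed path $x_1 x_2 \cdots x_{g-1}$ of new vertices in the outer face of $D_k$. I would add the arcs $x_{g-1} r_k$ and $r_k x_1$, so that $r_k x_1 \cdots x_{g-1} r_k$ is a directed cycle of length exactly $g$. Planarity and digirth are immediate for this gluing. Every new directed cycle must traverse the whole path, so it has length at least $g$, and no digons are created.

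The lower bound $\fv(D_{k+1}) \ge \fv(D_k)+1$ follows from $(\ast)$ by a two-case argument on a feedback vertex set $S'$ of $D_{k+1}$.
\begin{itemize}
\item If $r_k \notin S'$, then $S'$ must contain some $x_i$ to break the new $g$-cycle. In addition, $S' \cap V(D_k)$ is a feedback vertex set of $D_k$, so $|S'| \ge \fv(D_k)+1$.
\item If $r_k \in S'$, then $(\ast)$ gives $|S' \cap V(D_k)| \ge \fv(D_k)+1$ directly.
\end{itemize}
The upper bound is trivial: a minimum feedback vertex set of $D_k$ together with $x_1$ works.

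The main obstacle is re-establishing $(\ast)$ for a new root $r_{k+1}$. The naive choices fail. Keeping $r_{k+1}=r_k$ fails because deleting $r_k$ kills the new cycle. Taking $r_{k+1}=x_i$ fails for the same reason. Indeed, $g-1$ vertices alone cannot support a cycle avoiding the root.

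So the gluing has to be enriched. The vertex $x_1$ (say) should inherit the role of $r_k$: every cycle of $D_k$ through $r_k$ should have a twin through $x_1$. Then deleting $x_1$ leaves a copy of the old obstruction, while deleting $r_k$ leaves cycles through $x_1$.

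Concretely, I would try the following. Let $u$ and $w$ be the two neighbours of $r_k$ on the outer face, oriented so that $u \to r_k \to w$ lies on a directed cycle of $D_k$; arranging this can itself be made part of the invariant. Add the arcs $u \to x_1$ and $x_{g-1}\to w$, routed in the outer face alongside $r_k$, and set $r_{k+1} = x_1$ (or the middle vertex of the path). Three things then need checking:
\begin{itemize}
\item Any cycle through these new arcs still contains all of $x_1,\dots,x_{g-1}$, so the digirth stays at least $g$.
\item Planarity is preserved, since the new arcs hug the outer face next to $r_k$.
\item A case analysis on whether $S'$ contains $r_k$, $x_1$, or some other $x_i$ gives both the lower bound and $(\ast)$ for $r_{k+1}$.
\end{itemize}
The case analysis uses $(\ast)$ for $D_k$ together with the fact that any $S'$ avoiding both $r_k$ and the path must still meet every cycle of $D_k$ passing through $r_k$.

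If this single-path gadget does not suffice, I would strengthen the invariant to an outer-face directed path of length $g-1$. The new gadget would then be a parallel copy of that path, which would produce roots on the outer face again. The final verification would be the same counting argument.
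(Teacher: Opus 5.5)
\textbf{The gap: the invariant $(\ast)$ cannot hold.} Your final gadget is the right one, but the invariant $(\ast)$ that is supposed to carry the lower bound is unattainable, so the case $r_k\in S'$ is never justified.

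It already fails at $k=1$. $D_1$ has $g$ vertices, digirth $g$ and $\fv(D_1)=1$, so it is exactly $C_g$: any extra arc would close a cycle shorter than $g$. But $\fv(C_g-v)=0$ for every vertex $v$. More generally, $(\ast)$ would make $D_k-r_k$ a digraph of digirth at least $g$ on $k(g-1)$ vertices with $\fv=k$. At $k=1$ this is impossible for any gadget, and for any strengthened invariant that still contains $(\ast)$.

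The enriched gadget does not restore $(\ast)$ at later steps either. By your own digirth remark, every directed cycle through a new vertex contains all of $x_1,\dots,x_{g-1}$. Hence $D_{k+1}-x_i$ has exactly the directed cycles of $D_k$, and $\fv(D_{k+1}-x_i)=k<\fv(D_{k+1})$ for every $i$. This covers both $x_1$ and the middle vertex, and it is the very reason you gave for rejecting the $x_i$ with the naive gadget. So the announced case analysis cannot produce $(\ast)$ for $r_{k+1}$. Without it, $\fv(D_{k+1})\ge k+1$ is unproved for feedback vertex sets containing $r_k$. The case $r_k\notin S'$, the upper bound, planarity and the digirth check are all fine.

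\textbf{What is actually needed, and how the paper gets it.} The case $r_k\in S'$ only needs $\fv(D_{k+1}-r_k)\ge \fv(D_k)$. This must come from the new cycles $u\,x_1\cdots x_{g-1}\,w\cdots u$ created by the two extra arcs, not from $S'\cap V(D_k)$ alone.

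The paper does not reprove this cited theorem; its closest argument is the frieze of \Cref{prop:frieze}. Your enriched gadget is exactly that frieze, a $g$-coating of a path. In it, $r_k$ is the link vertex, $ux_1$ and $x_{g-1}w$ are the link arcs, and the next cycle is glued at the degree-$2$ vertex $x_2$ (not at $x_1$, which already has degree $3$).

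The lower bound there uses no root invariant. If a minimum feedback vertex set $F$ avoids link vertices, it needs a distinct vertex on each of the $k+1$ glued $g$-cycles, because these cycles pairwise share only link vertices. If $F$ contains a link vertex $s$, deleting $s$ and contracting suitable arcs around it yields a copy of $G_k$ in which $F\setminus\{s\}$ is still a feedback vertex set.

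Alternatively, \Cref{lem:fvs_without_link_vertices} swaps $s$ for one of its in-neighbours, $u$ or $x_{g-1}$. If neither swap works, $D_{k+1}-F$ contains a directed path from $x_1$ to $x_{g-1}$ and one from $w$ to $u$. Together with $ux_1$ and $x_{g-1}w$, these close a cycle avoiding $F$, a contradiction.

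The paper glues each new cycle at a vertex of degree $2$. For $g=3$ both new vertices $x_1,x_2$ already have degree $3$. This is why the paper states the frieze only for $g\ge 4$ and relies on \cite{KVW17} for $g=3$.
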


In particular, for $g=3$, the construction almost matches the upper bound of $\frac{n}{2}$ proposed in Albertson's question.

\subsubsection*{Our contributions}
In \Cref{sec:skeleton_and_coating} we construct new families of planar oriented graphs of digirth $g\ge 4$ with a large minimum feedback vertex set. We build these families using a new framework based on the notions of \emph{skeletons} and \emph{coatings}. The main idea is to start from a planar undirected graph $G$ called a \emph{skeleton}. From $G$ we construct a planar digraph $D$ by replacing each vertex $v$ of $G$ with a directed cycle $C_v$ and replacing each edge $uv$ of $G$ with a special branching between the cycles $C_u$ and $C_v$. The obtained digraph $D$ is called a \emph{coating} of $G$ (see \Cref{fig:example_coating} for an example).

\begin{figure}[htbp]
    \centering
    \includegraphics[height=5cm]{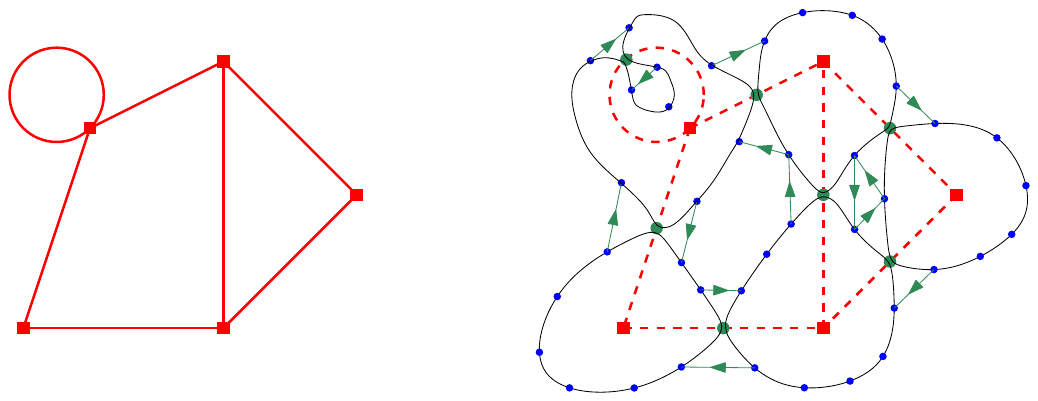}
    \caption{Left: an undirected graph $G$. Right: a coating $D$ of $G$. The black cycles associated to the vertices of $G$ are oriented clockwise.}\label{fig:example_coating}
\end{figure}

Using this new concept, we construct infinite families of planar oriented graphs with a minimum feedback vertex set larger than $\frac{n-1}{g-1}$, for every digirth $g \ge 4$:

\begin{theorem}\label{thm:general_lower_bound}
    For $g \ge 4$, there exists an infinite family $\mathcal{F}_g$ of planar oriented graphs of digirth $g$ such that every $D \in \mathcal{F}_g$ satisfies:
    \begin{itemize}
        \item Case $4 \le g \le 5$: \quad $\displaystyle\fv(D) = \frac{n}{g-1}$. (\Cref{appl:skeletonCk})
        \item Case $6 \le g \le 7$: \quad $\displaystyle\fv(D) = \frac{n - 2}{g-\frac{3}{2}}$. (\Cref{thm:general_lower_bound_small_digirth})
        \item Case $8 \le g \le 11$: \quad $\displaystyle\fv(D) = \frac{n - \frac{4(g-2)}{g+2}}{g-\frac{2g}{g+2}} = \frac{n(g+2)-4(g-2)}{g^2}$. (\Cref{thm:general_lower_bound_small_digirth})
        \item Case $g \ge 12$: \quad $\displaystyle\fv(D) = \frac{n - \frac{2g}{g+2}}{g - \frac{2g}{g+2}} = \frac{n(g+2) - 2g}{g^2}$. (\Cref{thm:general_lower_bound_high_digirth})
    \end{itemize}
\end{theorem}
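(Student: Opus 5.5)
The plan is to prove the theorem through the skeleton/coating framework. Each case is a separate instance: a specific skeleton family $G$ together with a specific coating of digirth exactly $g$. The statement splits into two tasks. The first is to count the vertices of the coating, which is bookkeeping. The second is to pin down $\fv(D)$ exactly: a lower bound by a packing/counting argument and a matching upper bound by exhibiting an explicit feedback vertex set.

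\textbf{Generic lemma.} First I will prove a lemma that holds for any coating $D$ of any skeleton $G$. Each cycle $C_v$ must meet every feedback vertex set, so
\[
\fv(D)\ge |V(G)|.
\]
The branchings along edges are designed so that this is not enough. If a feedback vertex set $S$ uses exactly one vertex on each of $C_u$ and $C_v$ for an edge $uv$, the branching gadget between them creates a further directed cycle avoiding $S$, unless the two chosen vertices are in a ``compatible'' position. I will then phrase $\fv(D)$ as $|V(G)|$ plus the minimum number of extra vertices needed to resolve the incompatible edges. This turns into a combinatorial problem on $G$: a minimum vertex-type cover of a conflict structure. The planar embedding fixes the cyclic positions where the gadgets attach.

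\textbf{Instantiating the cases.} For $4\le g\le 5$ the skeleton is a cycle $C_k$ (\Cref{appl:skeletonCk}). Each vertex cycle has length $g$ and each gadget adds one shared vertex or path, so that the total is $n=(g-1)\cdot\fv$. Here I need to show that one vertex per $C_v$ cannot be chosen consistently around the cycle, and that one extra vertex per gadget is forced.

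For $g\ge 12$ and $8\le g\le 11$, I will pick a skeleton that is an infinite family of bounded-degree planar graphs (e.g.\ a cubic or triangulated family), with gadgets of length about $2g/(g+2)$ per vertex on average. The vertex counts then come from Euler's formula: $n=|V(G)|\,g-\tfrac{2g}{g+2}(\text{stuff})+O(1)$. The additive constants $\tfrac{2g}{g+2}$ and $\tfrac{4(g-2)}{g+2}$ come from the outer face.

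The cases $6\le g\le 7$ use a denser skeleton whose gadgets share half a cycle. This gives the denominator $g-\tfrac32$.

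\textbf{Verifying digirth.} I will check that every directed cycle has length at least $g$. A directed cycle either is some $C_v$, or traverses gadgets. A cycle through gadgets must wind around a face of $G$, and I will bound its length below by the sum of the gadget lengths plus the arcs of the $C_v$ it uses.

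\textbf{Main obstacle.} The hard step is the matching lower bound on $\fv(D)$ in the high-digirth cases. There, extra vertices can be shared between several conflicts, so I need a discharging or weighting argument on $G$. I will assign each vertex of $S$ a charge and show that each face or edge of $G$ receives enough. The alternative is to exhibit disjoint obstructing subgraphs, each requiring its own vertex.

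For the upper bound I will construct an explicit $S$ by choosing compatible positions along a spanning structure of $G$. I expect the exact constants, in particular the boundary $g=12$ and the choice of outer face, to require careful tuning of the gadget lengths.
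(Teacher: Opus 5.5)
You have a genuine gap, because the mechanism you build on is not the one at work. You plan to show $\fv(D)=|V(G)|+(\text{extra vertices forced by incompatible gadgets})$, and you single out the lower bound on these extra vertices as the main obstacle. But in a $g$-coating $H$ of digirth $g$ no extra vertex is ever forced. A non-link vertex either has degree $2$, or it is an end of a link arc and is not alternatingly oriented, so only the $m_G$ link vertices can lie on two cycles of a normal set. Hence $g\,|\mathcal{N}_{\max}(H)|\le n_H+m_G=g\,n_G$, and \Cref{thm:maximal_normal_set_greater_than_fvs} gives $\fv(H)\le n_G$ (\Cref{lem:coating_digirth_g_normal_sets,thm:fvs_g_coating_digirth_g}). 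Every family in the paper has $\fv=n_G$ exactly. The large ratio comes solely from $n_H=g\,n_G-m_G$ being small, i.e.\ from a dense skeleton.

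Your $C_k$ case already contradicts your own plan. Vertex cycles of length $g$ sharing one link vertex per edge give $n=(g-1)k$, so the target $\fv=n/(g-1)=k$ leaves no room for ``one extra vertex per gadget''. You also have the difficulty backwards. The matching upper bound $\fv\le n_G$ is the deep direction, which the paper gets from its Lucchesi--Younger-type theorem; ``choosing compatible positions along a spanning structure'' is not an argument for it. Even your easy bound is unjustified as written. The cycles $C_v$ share link vertices, so ``each $C_v$ meets every feedback vertex set'' does not produce $|V(G)|$ distinct vertices. You need the exchange argument of \Cref{lem:fvs_without_link_vertices}: a link vertex of $S$ can be swapped for one of its in-neighbours, because paths blocking both swaps would close, with the two link arcs, a directed cycle in $H-S$.

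Your constructions and digirth check also fail. In a $g$-coating of digirth $g$ every face cycle has length at least $g$. This forces $a_f(G)\le\frac{2g}{g+2}<2$ (\Cref{prop:bounded_fractional_arboricity}), and every vertex cycle of length $g$ forces $\Delta(G)\le g/2$. So triangulated skeletons are impossible. Cubic ones give at best $m_G=\frac32 n_G-O(1)$, i.e.\ the denominator $g-\frac32$ of the $g\in\{6,7\}$ case, which is too weak for $g\ge 8$.

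Reaching $\frac{(g+2)n-2g}{g^2}$ requires skeletons with $m_G=\frac{2g}{g+2}(n_G-1)$ carrying a perfect $g$-coating. Digirth is then not settled by ``a cycle winds around a face'': a directed cycle may enclose many faces, and length conditions on vertex and face cycles are not sufficient (\Cref{fig:counter_example_digirth_coating}). The paper's key identity is $|D|=2g\,n_{G'}-(g+2)\,m_{G'}-g$ for the connected subskeleton $G'$ to the right of $D$. So digirth $g$ is equivalent to $a_f(G)=\frac{2g}{g+2}$ (\Cref{thm:digirth_perfect_coating}), which the paper certifies by explicit fractional arborizations.

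For $g\in\{6,8,9,10,11\}$ the paper instead uses recursive families. It verifies digirth on $H_0$ and $H_1$, propagates it using that link-vertex distances do not shrink, and then lifts to larger $g$ by padding the corners containing a minimum feedback vertex set (\Cref{thm:ratio_extension_to_higher_digirth}). None of these ingredients appears in your proposal, and the step you put at its centre is an attempt to prove something false.
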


 \Cref{thm:general_lower_bound} improves \Cref{thm:lower_bound_KVW} for every value of $g\ge 4$. While the improvement for $g = 4$ and $5$ is rather minor (we increase the size of the minimum feedback vertex set by an additive constant), for $g \ge 6$ we improve the asymptotic factor of the lower bound from $\frac{1}{g-1}$ to $\frac{g+2}{g^2}$ (except for the case $g = 7$), which is a significant improvement for large $g$. Moreover, several observations lead us to believe that $\frac{n(g+2)-2g}{g^2}$ is the highest possible value for $\fv(D)$ when $D$ is a planar oriented graph on $n$ vertices of digirth $g$ (see \Cref{conj:normal_family_energy}).

To compare our lower and upper bounds asymptotically when $n \to \infty$, we introduce the following notation. 
\begin{displaymath}
    \tau_g = \sup \left\{ \frac{\fv(D)}{n} \;\middle|\; D \text{ is a planar oriented graph of digirth } g \text{ on } n \text{ vertices} \right\}.
\end{displaymath}

\Cref{thm:lower_bound_KVW,thm:general_lower_bound} give a lower bound on $\tau_g$, while \Cref{thm:SotA_digirth,thm:upper_bound_FVS} give an upper bound on $\tau_g$. \Cref{tab:bounds_tau_g} summarizes the known lower and upper bounds for $\tau_g$ as well as the gaps between them, and compares them with the new results obtained in this paper. Although the old and new bounds on $\tau_g$ in light gray cells are equal, we improve the bounds by an additive constant. For example, for $g = 4$, we construct an infinite family of oriented graphs with $\fv = \frac{n}{3}$ instead of $\frac{n-1}{3}$~\cite{KVW17} and we prove that $\fv \le \frac{n-2}{2}$ instead of $\frac{n}{2}$~\cite{LM17}. As a result, the gap between the upper and lower bounds goes from $\frac{n}{6} + \frac{1}{3}$ to $\frac{n}{6} - 1$. Note that the gap between the lower and upper bounds for $\tau_g$ significantly decreases from $\frac{g-2}{g(g-1)}$ to $\frac{4}{g^2(g-2)}$ for $g \ge 6$ and $g \neq 7$.

\begin{table}[htbp]
    \centering
    \begingroup
    \renewcommand{\arraystretch}{1.75}
    \begin{tabular}{|c||c|c||c|c||c|c|}
        \hline
        \multirow{2}{*}{Digirth $g$} & \multicolumn{2}{c||}{Lower bounds} & \multicolumn{2}{c||}{Upper bounds} & \multicolumn{2}{c|}{Gap}  \\
        \cline{2-7}
        & Old & New & Old & New & Old & New \\
        \hline
        3 & \multirow{6}{*}{$\frac{1}{g-1}$~\cite{KVW17}}& \cellcolor{gray} & $\frac{3}{5}$~\cite{B79} & \cellcolor{gray} & $\frac{1}{10}$ & \cellcolor{gray} \\
        \cline{1-1} \cline{3-7}
        4  &  & \cellcolor{gray!25} & $\frac{1}{2}$~\cite{LM17} & \cellcolor{gray!25} $\frac{1}{2}$ & $\frac{1}{6}$ & \cellcolor{gray!25} $\frac{1}{6}$ \\
        \cline{1-1} \cline{4-7}
        5  & & \multirow{-2}{*}{\cellcolor{gray!25} $\frac{1}{g-1}$} & $\frac{1}{2}$~\cite{ELM17} & \multirow{4}{*}{$\frac{1}{g-2}$} & $\frac{1}{4}$ & $\frac{1}{12}$ \\
        \cline{1-1} \cline{3-4} \cline{6-7}
        6  & & & \multirow{3}{*}{$\frac{2}{g}$~\cite{ELM17}} & & $\frac{2}{15}$ & $\frac{1}{36}$ \\
        \cline{1-1} \cline{6-7}
        7 & & \multirow{-2}{*}{$\frac{1}{g-\frac{3}{2}}$} &  & & $\frac{5}{42}$  &  $\frac{1}{55}$\\
        \cline{1-1} \cline{3-3} \cline{6-7}
        $\ge 8$ & & $\frac{g+2}{g^2}$ & & & $\frac{g-2}{g(g-1)}$ & $\frac{4}{g^2(g-2)}$ \\
        \hline
    \end{tabular}
    \endgroup
    \medskip
    \caption{\label{tab:bounds_tau_g}Bounds for $\tau_g$: previous results and new bounds obtained in this paper. \colorbox{gray!25}{Light gray} cells ($g \in \{4,5\}$) indicate that the asymptotic value of $\tau_g$ is unchanged, but the bounds are improved by an additive constant. \colorbox{gray}{Dark gray} cells ($g = 3$) indicate that no new result is obtained.}
\end{table}

\subsection*{Definitions and notations}
Given a digraph $G$, we denote by $V(G)$ and $E(G)$ the sets of vertices and arcs of $G$. We will always work on \emph{plane graphs}, that is, planar graphs with a fixed embedding. For a plane graph $G$, we denote by $F(G)$ the set of faces of $G$ in its planar embedding. We denote by $n_G, m_G$ and $f_G$ (or simply $n,m$ and $f$ when there is no ambiguity) the numbers of vertices, arcs and faces of $G$.

For a vertex $v \in V(G)$ we denote by $\degin_G(v)$ (resp. $\degout_G(v)$) the number of incoming (resp. outgoing) arcs of $v$ in $G$ and we call it the \emph{indegree} (resp. \emph{outdegree}) of $v$. The \emph{degree} of $v$ is $\deg_G(v)= \degin_G(v) + \degout_G(v)$.

We denote by $\ell_F$ the length of a face $F \in F(G)$, that is, the number of arcs on the boundary of $F$ (counting multiple occurrences of an arc if needed).

For a cycle $C$ of $G$ the notation $\lvert C \rvert$ denotes the length of $C$ which is the number of arcs of $C$.

A plane sub(di)graph $H$ of a plane (di)graph $G$ is obtained by removing some arcs and vertices of $G$ without changing the embedding.

Given a set of arc-disjoint cycles $\mathcal{C}$ of an plane digraph $G$, the graph $G[\mathcal{C}]$ is the subgraph of $G$ induced by the arcs of the cycles of $\mathcal{C}$ where all isolated vertices (vertices not incident to any arc of $\mathcal{C}$) are removed. We denote by $n_\mathcal{C}, m_\mathcal{C}, f_\mathcal{C}$ and $c_\mathcal{C}$ the number of vertices, of arcs, of faces and of connected components of $G[\mathcal{C}]$ respectively. We denote by $V(\mathcal{C}), E(\mathcal{C})$ and $F(\mathcal{C})$ the sets of vertices, of arcs and of faces of $G[\mathcal{C}]$.

In order to capture the local structure of arcs around each vertex $v$, we will use the term \emph{half-arcs of $v$} to refer to the portion of an arc incident to $v$, considered from $v$'s perspective. For instance, if $v$ has a loop, then $v$ has two half-arcs induced by this arc: one outgoing and one incoming. A vertex $v \in V(G)$ is said to be \emph{alternatingly oriented} if every pair of two consecutive \emph{half-arcs} around $v$ in the cyclic order have different orientations with respect to $v$.

A set $\mathcal{N}$ of arc-disjoint directed cycles of $G$ is called \emph{normal} if $G[\mathcal{N}]$ has only alternatingly oriented vertices. We give some examples in \Cref{fig:normal_set}. We denote $\mathcal{N}_{\max}(G)$ a maximum normal set of $G$.

\section{Upper bounds: normal sets of cycles}\label{sec:normal_set_of_cycles}
\subsection{Outline of the section}

Denote by $\mathcal{C}_{\max}(G)$ a maximum set of arc-disjoint cycles of a directed plane graph $G$. Since every normal set is a set of arc-disjoint cycles we have $\Modul{\mathcal{N}_{\max}(G)} \le \Modul{\mathcal{C}_{\max}(G)}$. Inspired by the ideas from the proof of Lucchesi and Younger~\cite{LY78}, we establish an analogous inequality between $\fv(G)$ and $\Modul{\mathcal{N}_{\max}(G)}$ in the next theorem.

\begin{reptheorem}{thm:maximal_normal_set_greater_than_fvs}
  Let $G$ be a directed plane graph. Then $\fv(G) \le |\mathcal{N}_{\max}(G)|$.
\end{reptheorem}
 
To bound $\fv(G)$ from above, it therefore suffices to bound the size of $\mathcal{N}_{\max}(G)$, which we do in the following proposition.

\begin{proposition}\label{prop:upper_bound_for_maximal_normal_set}
  Let $G$ be an oriented plane graph on $n$ vertices of digirth $g$. Then $\Modul{\mathcal{N}_{\max}(G)} \le \frac{n-2}{g-2}$.
\end{proposition}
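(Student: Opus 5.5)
Let $\mathcal{N}$ be a normal set of $k$ cycles and $H = G[\mathcal{N}]$, viewed as a plane digraph with $n_\mathcal{N} \le n$ vertices, $m_\mathcal{N}$ arcs, $f_\mathcal{N}$ faces and $c_\mathcal{N}$ components. Two things need to be combined: the arc count coming from digirth, and a face-length bound coming from normality. Because the cycles are arc-disjoint and each has length at least $g$, we have $m_\mathcal{N} \ge gk$.

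The key observation concerns the faces of $H$. Normality forces every vertex of $H$ to be alternatingly oriented, so along the boundary walk of any face of $H$ the two boundary half-arcs at each corner have opposite orientations at that vertex. Hence consecutive arcs on a face boundary are consistently oriented, and every face boundary is a closed directed walk in $H$. Since $G$ is oriented with digirth $g$, any closed directed walk contains a directed cycle. That cycle has length at least $g$, so every face has length $\ell_F \ge g$. (I need a little care here: a face boundary of a disconnected $H$ may consist of several closed walks, each directed. Each walk is nonempty, since every vertex of $H$ has an arc, so the bound still holds.)

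With this, the face-length sum gives $2m_\mathcal{N} = \sum_F \ell_F \ge g f_\mathcal{N}$. Euler's formula for plane graphs with $c$ components reads $n_\mathcal{N} - m_\mathcal{N} + f_\mathcal{N} = 1 + c_\mathcal{N} \ge 2$. Substituting $f_\mathcal{N} \le 2m_\mathcal{N}/g$ gives
\[
2 \le n_\mathcal{N} - m_\mathcal{N}\left(1 - \tfrac{2}{g}\right),
\]
that is, $m_\mathcal{N} \le \frac{g(n_\mathcal{N}-2)}{g-2}$. Combining this with $m_\mathcal{N} \ge gk$ yields $k \le \frac{n_\mathcal{N}-2}{g-2} \le \frac{n-2}{g-2}$.

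The degenerate case $\mathcal{N}=\emptyset$ is trivial when $n \ge 2$. If $H$ is a single cycle, the Euler computation still holds: $H$ has 2 faces, each of length at least $g$.

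The main obstacle is rigorously justifying that each face boundary is a directed closed walk. This requires tracking the cyclic rotation at each vertex: the face walk passes from one half-arc to the next in the rotation, and alternation guarantees that the in/out pattern matches. I would formalize it with the rotation system, noting that the face-tracing rule takes the successor half-arc and that alternation makes this successor an out-half-arc whenever we entered through an in-half-arc, or the reverse, which depends uniformly on the side.
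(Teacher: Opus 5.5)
Your proof is correct and follows essentially the same route as the paper. The paper also combines Euler's formula for $G[\mathcal{N}]$, the identity $2m_\mathcal{N}=\sum_F \ell_F$, the bound $\lvert C\rvert\ge g$, and the fact that alternating orientation makes every face boundary directed (so $\ell_F\ge g$); the only difference is that the paper writes the slack as an exact identity $(g-2)\lvert\mathcal{N}\rvert = n-2-\mathbf{E_{tot}}(\mathcal{N})$ with four nonnegative energy terms, where you chain the inequalities directly.
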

This is a direct corollary of a stronger statement (\Cref{thm:energy}) proved in \Cref{sec:upper_bound_maximal_normal_set}, where we also construct an infinite family of oriented plane graphs attaining the bound.

As a direct consequence of \Cref{thm:maximal_normal_set_greater_than_fvs} and \Cref{prop:upper_bound_for_maximal_normal_set}, we obtain upper bounds for $\fv(G)$ in terms of $n$ and $g$, or of $n$, $m$ and $g$.
   
\begin{reptheorem}{thm:upper_bound_FVS}
  Let $G$ be a planar oriented graph of digirth $g$ with $n \ge 3$ vertices. Then:
  \begin{displaymath}
    \fv(G) \le \frac{n-2}{g-2}
  \end{displaymath}
  and equality holds if and only if $G = C_g$, the directed cycle of length $g$.
\end{reptheorem}

While the inequality follows immediately from \Cref{thm:maximal_normal_set_greater_than_fvs} and \Cref{prop:upper_bound_for_maximal_normal_set}, characterising when equality holds requires more work. When $G$ is moreover strongly connected, the arc count $m$ provides another upper bound.

\begin{reptheorem}{thm:upper_bound_strongly_connected}
  Let $G$ be a planar oriented graph of digirth $g \ge 4$ having $n$ vertices and $m$ arcs. Suppose that $G$ is strongly connected. Then:
  \begin{displaymath}
    \fv(G) \le \frac{n-2}{g-3} - \frac{m}{g(g-3)}.
  \end{displaymath}
\end{reptheorem}

The rest of the section is organised as follows. In \Cref{sec:upper_bound_maximal_normal_set} we prove \Cref{prop:upper_bound_for_maximal_normal_set} and establish its tightness. Both \Cref{thm:upper_bound_FVS} and \Cref{thm:upper_bound_strongly_connected} are then deduced assuming \Cref{thm:maximal_normal_set_greater_than_fvs}: \Cref{thm:upper_bound_strongly_connected} is proved in \Cref{sec:upper_bound_strongly_connected}, while for \Cref{thm:upper_bound_FVS} we first introduce laminar (multi)sets of cycles (sets of cycles that do not cross each other) in \Cref{sec:laminarity} and then settle the equality case in \Cref{sec:tightness_general_upper_bound_fvs}. We conclude with the proof of \Cref{thm:maximal_normal_set_greater_than_fvs} in \Cref{sec:proof_fvs_lower_than_maximal_normal_set}.

\subsection{Energies of an arc-disjoint set of cycles: proof of \Cref{prop:upper_bound_for_maximal_normal_set}}\label{sec:upper_bound_maximal_normal_set}

\begin{observation}\label{obs:double_counting_G[N]}
    Let $\mathcal{C}$ be a set of arc-disjoint directed cycles of $G$. Then 
    \begin{displaymath}
        m_\mathcal{C} = \sum_{C \in \mathcal{C}} \lvert C \rvert = \frac{1}{2} \sum_{F \in F(\mathcal{C})} \ell_F
    \end{displaymath}
\end{observation}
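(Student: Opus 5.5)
The first equality $m_\mathcal{C} = \sum_{C \in \mathcal{C}} \lvert C \rvert$ follows from the definition of $G[\mathcal{C}]$. This subgraph is induced by the arcs of the cycles of $\mathcal{C}$. Since these cycles are pairwise arc-disjoint, every arc of $E(\mathcal{C})$ belongs to exactly one cycle $C \in \mathcal{C}$. Removing isolated vertices does not change the arc set. Partitioning $E(\mathcal{C})$ according to the cycle containing each arc therefore gives $m_\mathcal{C} = \sum_{C\in\mathcal{C}} \lvert C\rvert$. Here $\lvert C \rvert$ counts the arcs of $C$, each once, since a directed cycle does not repeat arcs.

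The second equality is the usual face--arc handshake for plane graphs, applied to the plane subgraph $G[\mathcal{C}]$ with its inherited embedding. I would argue that every arc $e$ of $G[\mathcal{C}]$ contributes exactly $2$ to $\sum_{F \in F(\mathcal{C})} \ell_F$. The arc $e$ has two sides. Either the two sides lie on two distinct faces, and $e$ is counted once in each of their lengths. Or both sides lie on the same face, in which case $e$ is a bridge of $G[\mathcal{C}]$, and the convention that $\ell_F$ counts multiple occurrences on the boundary walk makes $e$ contribute $2$ to that single face. Summing over all arcs gives $\sum_F \ell_F = 2m_\mathcal{C}$.

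In fact $G[\mathcal{C}]$ has no bridges, since every arc lies on a cycle. So the second case cannot occur, although the counting convention already covers it. I would also remark that the count holds regardless of connectivity: when $G[\mathcal{C}]$ is disconnected, a face may have several boundary walks, and $\ell_F$ is understood as the total length of all of them. The only point needing care is this convention for faces with disconnected or repeated boundaries. With it fixed as in the definitions section, the observation is a direct double count and presents no real obstacle.
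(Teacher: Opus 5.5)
Your proposal is correct: arc-disjointness gives the first equality by partitioning $E(\mathcal{C})$ among the cycles. The second equality is the standard face--arc double count in the plane graph $G[\mathcal{C}]$, where every arc lies on a cycle, so it is not a bridge and borders exactly two distinct faces. The paper states this as an observation without proof, and your double count is exactly the reasoning it leaves implicit.
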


\begin{definition}[Energies]\label{def:energies}
    Let $\mathcal{C}$ be a set of arc-disjoint directed cycles of a plane digraph $G$ of digirth $g$.
    We define the \emph{energies} of $\mathcal{C}$ in $G$ as follows:
    \begin{itemize}
        \item \emph{Energy of cycles}: $\mathbf{E_1}(\mathcal{C}) = \frac{g-2}{g} \sum_{C \in \mathcal{C}} ( \lvert C \rvert - g)$ 
        \item \emph{Energy of faces}: $\mathbf{E_2}(\mathcal{C}) = \frac{1}{g} \sum_{F \in F(\mathcal{C})} ( \ell_F -g)$ 
        \item \emph{Energy of internal vertices}: $\mathbf{E_3}(\mathcal{C}) = n_G - n_\mathcal{C}$
        \item \emph{Energy of components}: $\mathbf{E_4}(\mathcal{C}) = c_\mathcal{C} - 1$
        \item \emph{Total energy}: $\mathbf{E_{tot}}(\mathcal{C})=\mathbf{E_1}(\mathcal{C})+\mathbf{E_2}(\mathcal{C})+\mathbf{E_3}(\mathcal{C})+\mathbf{E_4}(\mathcal{C})$
    \end{itemize}
\end{definition}

\begin{proposition}\label{prop:positive_energies}
    If $\mathcal{N}$ is a normal set of cycles of a plane digraph $G$ of digirth $g$, then $\mathbf{E_1}(\mathcal{N})$, $ \mathbf{E_2}(\mathcal{N})$, $\mathbf{E_3}(\mathcal{N})$ and $\mathbf{E_4}(\mathcal{N})$ are all non-negative.
\end{proposition}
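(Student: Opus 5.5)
We check the four energies one at a time. Three of them follow directly from the definitions; the real content is in $\mathbf{E_2}$.

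\textbf{$\mathbf{E_1}$, $\mathbf{E_3}$, $\mathbf{E_4}$.} For $\mathbf{E_1}(\mathcal{N})$, every $C\in\mathcal{N}$ is a directed cycle of $G$, so $\lvert C\rvert \ge g$. Also $g\ge 2$ (no loops), so $\frac{g-2}{g}\ge 0$, and each summand is non-negative. For $\mathbf{E_3}(\mathcal{N})$, $G[\mathcal{N}]$ is a subgraph of $G$, so $n_\mathcal{N}\le n_G$. For $\mathbf{E_4}(\mathcal{N})$, if $\mathcal{N}\neq\emptyset$ then $c_\mathcal{N}\ge 1$. If $\mathcal{N}=\emptyset$, the graph $G[\mathcal{N}]$ is empty and we use the convention that it has one (outer) face and is counted as one component, or else treat this degenerate case separately. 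I would state this convention explicitly.

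\textbf{$\mathbf{E_2}$: the plan.} Here normality is needed. I will show that every face $F$ of $G[\mathcal{N}]$ satisfies $\ell_F\ge g$; then each summand is non-negative. The key claim is that, because every vertex is alternatingly oriented in $G[\mathcal{N}]$, the boundary walk of every face is a \emph{directed} closed walk.

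Fix a face $F$ and traverse its boundary walk. At a vertex $v$ the walk enters along a half-arc $h$ and leaves along the half-arc $h'$ that is consecutive to $h$ in the cyclic order around $v$. Alternation says $h$ and $h'$ have opposite orientations with respect to $v$. So if the walk enters $v$ along an arc directed into $v$, it leaves along an arc directed out of $v$. Hence consecutive arcs of the boundary walk are head-to-tail, and either every arc is traversed forward or every arc is traversed backward. In both cases the boundary walk, possibly reversed, is a closed directed walk in $G$.

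Every closed directed walk contains a directed cycle, of length at least $g$. Since an arc may be counted twice in $\ell_F$, we get $\ell_F\ge g$.

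\textbf{Expected obstacle.} The hard part is making the boundary-walk argument rigorous when faces are not bounded by simple cycles, i.e.\ when $G[\mathcal{N}]$ is disconnected or has cut vertices. In that case a face boundary consists of several closed walks, one per component touching $F$. The fix is to apply the argument to each boundary walk separately: each is a closed directed walk, so $\ell_F\ge g$ still holds.

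Two further points need care. First, a degree-$2$ vertex: alternation forces one in-arc and one out-arc, which is consistent with the argument. Second, a component that is a single directed cycle: its two sides each give a directed walk, which is fine.
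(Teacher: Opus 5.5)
Your proof is correct and follows the paper's argument. $\mathbf{E_1}$, $\mathbf{E_3}$ and $\mathbf{E_4}$ are immediate, and $\mathbf{E_2}\ge 0$ holds because alternation makes consecutive arcs along each face boundary head-to-tail; the paper states the contrapositive, that two consecutive boundary arcs with opposite directions would make their common vertex non-alternating.

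The only loose end is $\mathcal{N}=\emptyset$, which the paper also ignores. Your proposed convention (one face, one component) fixes $\mathbf{E_4}$ but gives $\mathbf{E_2}(\emptyset)=\frac{1}{g}(0-g)=-1$ for the lone face of length $0$, so it is cleaner to simply exclude the empty set.
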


\begin{proof}
    By definition $\mathbf{E_3}(\mathcal{N}) \ge 0$ and $\mathbf{E_4}(\mathcal{N}) \ge 0$. Moreover, $\mathbf{E_1}(\mathcal{N}) \ge 0$ since $g$ is the digirth of $G$. Finally, as $\mathcal{N}$ is a normal set of cycles, the faces of $G[\mathcal{N}]$ are bounded by directed cycles. Indeed, if there is a face $F \in F(\mathcal{N})$ that is not bounded by directed cycles, then there exist two consecutive $e$ and $e'$ along $F$ that have opposite directions. But then the vertex adjacent to both $e$ and $e'$ cannot be alternatingly oriented, contradiction. Thus $\mathbf{E_2}(\mathcal{N}) \ge 0$. 
\end{proof}

\begin{theorem}\label{thm:energy}
    Let $\mathcal{N}$ be a normal set of cycles of a plane digraph $G$ of order $n$ and digirth $g$. Then:
    \begin{displaymath}
        (g-2) \lvert \mathcal{N} \rvert = n-2 - \mathbf{E_{tot}}(\mathcal{N}) \le n-2
    \end{displaymath}
\end{theorem}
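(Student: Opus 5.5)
The identity is pure bookkeeping: it is Euler's formula for the plane graph $G[\mathcal{N}]$, rewritten using the double counting of \Cref{obs:double_counting_G[N]}. The inequality then follows at once from \Cref{prop:positive_energies}, which gives $\mathbf{E_{tot}}(\mathcal{N}) \ge 0$. So the only real work is the identity $(g-2)\lvert\mathcal{N}\rvert = n-2-\mathbf{E_{tot}}(\mathcal{N})$, and I would derive it by expanding each energy.

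\emph{Euler's formula.} Since $G[\mathcal{N}]$ is a plane graph with $c_\mathcal{N}$ connected components, Euler's formula for possibly disconnected plane graphs gives
\begin{displaymath}
n_\mathcal{N} - m_\mathcal{N} + f_\mathcal{N} = 1 + c_\mathcal{N}.
\end{displaymath}
This holds even though faces may have repeated boundary arcs. The case $\mathcal{N}=\emptyset$ needs care: then $G[\mathcal{N}]$ is empty, with one face and no components, and the formula $n_\mathcal{N}-m_\mathcal{N}+f_\mathcal{N}=1+c_\mathcal{N}$ still holds ($0-0+1=1+0$). I would also check the energies directly in this case as a sanity check. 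With $\mathbf{E_2}(\emptyset)=\frac{0-g}{g}=-1$ (the face $F$ having $\ell_F=0$) and $\mathbf{E_4}(\emptyset)=-1$, we get $\mathbf{E_{tot}}(\emptyset)=n-2$, so the identity reads $0=n-2-(n-2)$, which is correct. Note that this makes $\mathbf{E_2}$ and $\mathbf{E_4}$ negative, so \Cref{prop:positive_energies} implicitly assumes $\mathcal{N}$ nonempty. The inequality is then trivial for the empty set, but I would remark on it.

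\emph{Expanding the energies.} By \Cref{obs:double_counting_G[N]},
\begin{align*}
\mathbf{E_1}(\mathcal{N}) &= \tfrac{g-2}{g}\, m_\mathcal{N} - (g-2)\lvert\mathcal{N}\rvert, \\
\mathbf{E_2}(\mathcal{N}) &= \tfrac{1}{g}\, 2m_\mathcal{N} - f_\mathcal{N}.
\end{align*}
Summing these gives $\mathbf{E_1}+\mathbf{E_2} = m_\mathcal{N} - f_\mathcal{N} - (g-2)\lvert\mathcal{N}\rvert$. Adding $\mathbf{E_3}+\mathbf{E_4} = n - n_\mathcal{N} + c_\mathcal{N} - 1$ yields
\begin{displaymath}
\mathbf{E_{tot}}(\mathcal{N}) = n - 1 - (g-2)\lvert\mathcal{N}\rvert - \bigl(n_\mathcal{N} - m_\mathcal{N} + f_\mathcal{N} - c_\mathcal{N}\bigr).
\end{displaymath}
Euler's formula says the bracketed term equals $1$, so $\mathbf{E_{tot}}(\mathcal{N}) = n-2-(g-2)\lvert\mathcal{N}\rvert$, which is exactly the claimed identity.

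The main point requiring care is that Euler's formula is applied correctly to a disconnected, possibly non-simple plane graph, with face lengths counted with multiplicity so that the sum of face lengths is $2m_\mathcal{N}$. Normality itself is used only through \Cref{prop:positive_energies}, to guarantee $\mathbf{E_2}\ge 0$ and hence $\mathbf{E_{tot}}(\mathcal{N})\ge 0$.
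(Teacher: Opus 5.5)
Your proof is correct and is the paper's argument: Euler's formula $n_\mathcal{N}-m_\mathcal{N}+f_\mathcal{N}=1+c_\mathcal{N}$ for $G[\mathcal{N}]$, combined with the double counting $m_\mathcal{N}=\sum_{C}\lvert C\rvert=\frac12\sum_F\ell_F$, and then the inequality from \Cref{prop:positive_energies}. Your remark on $\mathcal{N}=\emptyset$ is also accurate and the paper does not address it: there $\mathbf{E_2}=\mathbf{E_4}=-1$, so \Cref{prop:positive_energies} tacitly assumes $\mathcal{N}\neq\emptyset$. One correction: in that case the inequality reads $0\le n-2$, which holds only when $n\ge 2$, so it is not automatic.
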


\begin{proof}
    By Euler's formula $n_\mathcal{N} + f_\mathcal{N} = m_\mathcal{N} + 2 + (c_\mathcal{N} - 1)$. Hence by \Cref{obs:double_counting_G[N]} we get
    \begin{displaymath}
        n-(n - n_\mathcal{N}) +  f_\mathcal{N} = \frac{g-2}{g} \sum_{C \in \mathcal{N}} \lvert C \rvert + \frac{2}{g} \times \frac{1}{2} \sum_{F \in F(\mathcal{N})} \ell_F + 2 + (c_\mathcal{N} - 1)
    \end{displaymath}
    Then it follows that:
    \begin{displaymath}
        \begin{split}
            n-2 &= \frac{g-2}{g} \sum_{C \in \mathcal{N}} (g + \lvert C \rvert - g) +  \frac{1}{g} \sum_{F \in F(\mathcal{N})} \ell_F - f_\mathcal{N} + (n - n_\mathcal{N}) + (c_\mathcal{N} - 1) \\
            &= (g-2) \lvert \mathcal{N} \rvert + \frac{g-2}{g} \sum_{C \in \mathcal{N}} ( \lvert C \rvert - g) + \frac{1}{g} \sum_{F \in F(\mathcal{N})} ( \ell_F -g) + (n - n_\mathcal{N}) + (c_\mathcal{N} - 1) \\
            &= (g-2) \lvert \mathcal{N} \rvert +\mathbf{E_1}(\mathcal{N})+\mathbf{E_2}(\mathcal{N})+\mathbf{E_3}(\mathcal{N})+\mathbf{E_4}(\mathcal{N})
        \end{split}
    \end{displaymath}
    Then we get the desired formula and the inequality comes directly from \Cref{prop:positive_energies}.
\end{proof}

Clearly, \Cref{thm:energy} implies \Cref{prop:upper_bound_for_maximal_normal_set}. Now we show that the upper bound of \Cref{prop:upper_bound_for_maximal_normal_set} is tight, by building an infinite family of oriented plane graphs reaching the upper bound of the theorem. We first show the construction for digirth $g=3$ and then we generalize to larger values of $g$. For every integer $k \ge 1$, let $O_k$ be the plane directed graph on $n_k=3k$ vertices with digirth $g=3$ and $\Modul{\mathcal{N}_{\max}(O_k)} = 3k-2 = \frac{n_k-2}{g-2}$. The base case is the graph $O_1$ depicted in \Cref{subfig:octahedron-left}. Each vertex of $O_1$ is alternatingly oriented, so the only directed cycle of $O_1$ constitutes a normal set of size $\Modul{\mathcal{N}_{\max}(O_1)} = 1 = \frac{n_1-2}{g-2}$. Then, to construct $O_{k+1}$ from $O_k$, we consider the central triangular face $xyz$ of $O_k$ and add three new  vertices $a,b,c$ and nine new arcs inside $xyz$ as depicted in \Cref{subfig:octahedron}. By construction, the vertices of $O_{k+1}$ are also alternatingly oriented. The set $\mathcal{N}_{\max}(O_k) \cup \{xba,yac,zcb\}$ forms a normal set of $O_{k+1}$ of size $\Modul{\mathcal{N}_{\max}(O_{k})} + 3 = 3(k+1)-2 = \frac{n_{k+1}-2}{g-2}$. Thus by \Cref{prop:upper_bound_for_maximal_normal_set}, this normal set is maximum and therefore $\Modul{\mathcal{N}_{\max}(O_{k+1})} = \frac{n_{k+1}-2}{g-2}$.

\begin{figure}[htbp]
  \begin{subfigure}[b]{0.15\linewidth}
    \centering 
    \begin{tikzpicture}[scale=0.3]
      \input{Figures/triangle.tex}
    \end{tikzpicture}
    \caption{$O_1$.}\label{subfig:octahedron-left}
  \end{subfigure}
  \hfill
  \begin{subfigure}[b]{0.3\linewidth}
    \centering
    \begin{tikzpicture}[scale=0.7]
      \input{Figures/octahedron.tex}
    \end{tikzpicture}
    \caption{From $O_1$ to $O_{2}$.}\label{subfig:octahedron}
  \end{subfigure}
  \hfill
  \begin{subfigure}[b]{0.3\linewidth}
    \centering
    \begin{tikzpicture}[scale=2.2,rotate=180]
      \input{Figures/octahedron_third_iteration.tex}
    \end{tikzpicture}
    \caption{From $O_k$ to $O_{k+1}$.}\label{subfig:stacked-octahedron}
  \end{subfigure}
  \smallskip
  \caption{Construction of an infinite family of graphs with $g= 3$ and $\Modul{\mathcal{N}_{\max}(G)} = \frac{n-2}{g-2}$. The black vertices form a minimum feedback vertex set.}\label{fig:octahedron}
\end{figure}

In order to generalize $O_k$ for higher digirth $g$, we use a minimum feedback vertex set $S$ of $O_k$.

First observe that $|S| = \fv(O_k)=  k$. Indeed, for the base case $O_1$ it is clear that $\fv(O_1) = 1$. Suppose that $\fv(O_k) = k$ and let $S$ be a solution of this size. Note that one of the vertices of the face $xyz$ must be in $S$, say $x \in S$. Then take $S' = S \cup \{ b \}$ as a feedback vertex set of $O_{k+1}$. Note that $S'$ is minimum with respect to $O_{k+1}$ since $abc$ is a directed cycle and we are done.

Now, for $g > 3$, we build a plane digraph $O_k^{(g)}$ by subdividing $g-3$ times each in-going arc to $S$. For $k \ge 2$, $S$ contains 2 vertices of in-degree 2 (vertices on the central and outer faces) and $k-2$ vertices of in-degree 3 so $2 \times 2 + 3 \times (k-2) = 3k-2$ arcs are subdivided. Thus $O_k^{(g)}$ has order $n_k^{(g)} = 3k + (g-3)(3k - 2)$. Moreover, since $S$ is a feedback vertex set of $O_k^{(g)}$, every cycle of $O_k^{(g)}$ passes through a subdivided arc ingoing to a vertex of $S$, and thus it has length at least $g$. Finally, note that the subdivisions do not change the size of the maximum normal set of cycles and therefore:

\begin{displaymath}
  \frac{n_k^{(g)} - 2}{g-2} = \frac{3k + (3k - 2)(g-3) - 2}{g-2} = 3k-2 = |\mathcal{N}_{\max}(O_{k})| = |\mathcal{N}_{\max}(O_{k}^{(g)})|
\end{displaymath}

\subsection{Strongly connected digraphs: proof of \Cref{thm:upper_bound_strongly_connected}}\label{sec:upper_bound_strongly_connected}

\begin{proposition}\label{prop:facial_cycle}
  Let $G$ be a strongly connected plane digraph. Let $\mathcal{F}$ be a connected set of faces of $G$ such that the boundary of the union of $\mathcal{F}$ is a directed cycle $C$. Then some face in $\mathcal{F}$ has a directed facial boundary.
\end{proposition}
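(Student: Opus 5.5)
We argue by contradiction with a counting/extremal argument, inducting on the number $|\mathcal{F}|$ of faces. Let $R$ be the closed region which is the union of the faces of $\mathcal{F}$, bounded by the directed cycle $C$; orient things so that $C$ is, say, clockwise around $R$. If $|\mathcal{F}|=1$, the single face has boundary $C$, which is directed, and we are done. Otherwise, the arcs of $G$ inside $R$ (not on $C$) split $R$ into the faces of $\mathcal{F}$.

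The key step is to find a directed path $P$ inside $R$ (using only interior arcs and vertices of $R$, apart from its endpoints) joining two distinct vertices $u\neq v$ of $C$, or a directed cycle strictly inside $R$. Given such a $P$ from $u$ to $v$, the cycle $C$ is split into two $u$–$v$ arcs: the directed subpath $C[v,u]$ together with $P$ forms a directed cycle $C'$, and $C'$ bounds a region $R'\subsetneq R$ that is a union of a proper, nonempty, connected subset $\mathcal{F}'\subset\mathcal{F}$ of faces. By induction, some face of $\mathcal{F}'$ has a directed facial boundary. Similarly, a directed cycle strictly inside $R$ bounds a strictly smaller union of faces and we apply induction. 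Connectivity of $\mathcal{F}'$ follows because the region bounded by a cycle in a plane graph is a disk subdivided by arcs, whose faces are connected through shared arcs.

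To produce $P$, use strong connectivity. Pick an interior arc $e=xy$ of $R$ (exists since $|\mathcal{F}|\ge 2$ and faces are separated by arcs; if the only separating structure is a vertex touching the boundary twice, handle the chord case directly). Starting from $y$, follow the arcs forward: by strong connectivity $y$ has an out-arc, and there is a directed walk from $y$ back to $x$. Take a longest-possible forward walk from $y$ staying inside $R$; either it revisits a vertex (giving a directed cycle inside $R$, all of whose interior lies in $R$) or it reaches $C$. Symmetrically, walk backward from $x$ until reaching $C$ or closing a cycle. The catch is that a directed walk in $G$ may leave $R$; but it can only leave through a vertex of $C$, so truncating at the first vertex of $C$ gives a walk inside $R$. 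Concatenating the backward part, $e$, and the forward part yields either a directed cycle inside $R$ or a directed path between vertices $u,v$ of $C$ through the interior.

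The main obstacle is the degenerate case $u=v$: the path returns to the same vertex of $C$, giving a directed cycle $C''$ through $u$ lying in $R$ that touches $C$ only at $u$. This is still fine: $C''$ is a directed cycle bounding a proper sub-union of faces of $\mathcal{F}$ (it must enclose at least the face on one side of $e$ but not the faces adjacent to $C$ on the other side), so induction applies. Care is needed to check that the enclosed set is nonempty and strictly smaller than $\mathcal{F}$, and that arcs $e$ that are chords between two vertices of $C$ give $P$ immediately; with these cases handled, induction on $|\mathcal{F}|$ completes the proof.
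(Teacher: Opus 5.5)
Your proof is correct, but it takes a different route from the paper's. The paper argues in the dual. Strong connectivity makes $G^*$ acyclic. Since $C$ is directed, all dual arcs between $\mathcal{F}^*$ and the rest of $G^*$ point the same way (say into $\mathcal{F}^*$). So a sink of the acyclic graph $G^*[\mathcal{F}^*]$ is a sink of $G^*$, i.e.\ a face of $\mathcal{F}$ with a directed boundary walk.

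You instead stay in the primal and shrink the disk $R$. Extending an interior arc forward and backward until it reaches $C$ or closes up always yields a directed cycle $D\neq C$ inside $R$, and you recurse. This happens in every branch of your case analysis: a chord, $u\neq v$ closed up by $C[v,u]$, $u=v$, a repeated vertex, or the two walks meeting. So the separate ``degenerate case'' discussion can be replaced by one remark: $D$ contains an interior arc, hence $D\neq C$, and the closed disk of $D$ inside $R$ is a proper, nonempty sub-union.

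The paper's argument is three lines but relies on the duality fact ``strongly connected $\Leftrightarrow$ acyclic dual''. Yours is elementary and only ever uses that vertices inside $R$ have an in-arc and an out-arc. It even produces a face bounded by a simple directed cycle, at the cost of more topological bookkeeping.

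One step you should justify rather than assert is the base case. A single face $F$ with $\overline{F}=R$ could a priori contain pendant trees, so its facial walk would differ from $C$. This is excluded because in a strongly connected digraph every arc lies on a directed cycle, so no arc has the same face on both sides. Alternatively, a pendant tree would have an interior leaf of total degree $1$.

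Similarly, an interior arc exists when $|\mathcal{F}|\ge 2$ simply because finitely many vertices cannot disconnect the open disk $\mathring{R}$. Your remark about a vertex touching the boundary twice is moot, since $C$ is a simple cycle.
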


\begin{proof}
  Denote by $G^*$ the dual of $G$, that is the dual graph of the underlying undirected graph of $G$ where each arc is oriented by a quarter-turn clockwise from its corresponding primal arc.
  
  Since $G$ is strongly connected, $G^*$ is acyclic. Since $\mathcal{F}$ is a connected set of faces of $G$, $\mathcal{F}^*$ is a set of vertices of $G^*$ such that the subgraph $G^*[\mathcal{F}^*]$ is connected. Moreover since the boundary of $\mathcal{F}$ is a directed cycle, all the arcs of $G^*$ that have an endpoint in $\mathcal{F}^*$ and an endpoint outside of $\mathcal{F}^*$ are either all ingoing or all outgoing. Without loss of generality, suppose that they are all ingoing. Since $G^*[\mathcal{F}^*]$ is a connected acyclic graph, it has a sink $F^* \in \mathcal{F}^*$ and from the previous remark, $F^*$ is a sink in $G^*$. By duality, $F^*$ corresponds to a face $F \in \mathcal{F}$ whose boundary is a directed cycle.
\end{proof}

\begin{lemma}\label{lem:number_facial_cycle}
  Let $G$ be a strongly connected plane digraph. Let $\mathcal{N}$ be a normal set of cycles of $G$. Then $G$ has at least $f_\mathcal{N} - (c_\mathcal{N} - 1)$ faces whose boundary is a directed cycle.
\end{lemma}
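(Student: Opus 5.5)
We plan to apply \Cref{prop:facial_cycle} once for each face of $G[\mathcal{N}]$, except for a few that we cannot control. The aim is to find $f_\mathcal{N} - (c_\mathcal{N}-1)$ faces of $G[\mathcal{N}]$ that satisfy the hypotheses of \Cref{prop:facial_cycle}, and then to check that the facial directed cycles of $G$ they produce are pairwise distinct.

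First, each face $\Phi$ of $G[\mathcal{N}]$ is a region of the plane that is a union of faces of $G$. Call this set of faces $\mathcal{F}_\Phi$; it is connected in the dual sense, because $\Phi$ is a connected open region. The sets $\mathcal{F}_\Phi$ are pairwise disjoint. This already gives distinctness: a face of $G$ lies in a unique $\mathcal{F}_\Phi$, so distinct $\Phi$ yield distinct faces of $G$.

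Second, we need the boundary of $\Phi$ to be a single directed cycle. The proof of \Cref{prop:positive_energies} shows that, by alternating orientation, every facial walk of $G[\mathcal{N}]$ is a directed closed walk. A face can, however, have several boundary components when $G[\mathcal{N}]$ is disconnected. A face $\Phi$ whose boundary is connected has a boundary that is a directed closed walk. This walk may repeat vertices, but the argument of \Cref{prop:facial_cycle} only uses that all dual arcs crossing the boundary of $\mathcal{F}_\Phi$ point the same way, into or out of $\mathcal{F}_\Phi$. That holds for a directed closed facial walk, since $\Phi$ lies on the same side (left or right) of every arc of the walk. So we restate \Cref{prop:facial_cycle} with ``boundary is a directed closed walk traversed with the region on a consistent side'', and the same sink argument in the acyclic dual $G^*$ applies. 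Strong connectivity of $G$ gives acyclicity of $G^*$, and connectivity of $\mathcal{F}_\Phi$ gives a sink (or source) in $G^*[\mathcal{F}_\Phi^*]$ that is also a sink (or source) of $G^*$. Its primal face is bounded by a directed cycle.

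Third, we count the faces with disconnected boundary. Consider the nesting structure of the components of $G[\mathcal{N}]$, where a face with $k$ boundary components is a node of degree $k$ in a tree whose other nodes are the $c_\mathcal{N}$ components. This tree has $c_\mathcal{N}-1$ edges beyond one per face, so $\sum_\Phi (k_\Phi - 1) = c_\mathcal{N}-1$. Hence at most $c_\mathcal{N}-1$ faces have disconnected boundary, and at least $f_\mathcal{N}-(c_\mathcal{N}-1)$ faces are good.

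For a face with several boundary components, each boundary walk is directed, but different components may induce opposite crossing directions, so \Cref{prop:facial_cycle} fails there. This is exactly why these faces are discarded. The main obstacle is making the generalisation of \Cref{prop:facial_cycle} rigorous: we must check that the consistent-side property of a facial walk of $G[\mathcal{N}]$ holds even when the walk revisits vertices, using alternation at each vertex of $G[\mathcal{N}]$.
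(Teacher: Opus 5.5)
Your proof is correct. It shares the paper's skeleton: apply \Cref{prop:facial_cycle} inside each good face $\Phi$ of $G[\mathcal{N}]$, and get distinctness from the disjointness of the sets $\mathcal{F}_\Phi$. The difference is the counting step.

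The paper proves $y\ge f_\mathcal{N}-(c_\mathcal{N}-1)$ by induction on $c_\mathcal{N}$: it splits off a component $G_1$ and asserts $y\ge y_1+y_2-2$. You instead count globally via $\sum_\Phi(k_\Phi-1)=c_\mathcal{N}-1$. Your version requires no choice of component. It also sidesteps a tacit assumption in the paper's step, which treats $G_2=G[\mathcal{N}]-G_1$ as lying in a single face of $G_1$. That fails for an arbitrary $G_1$. Take $G_1$ an alternatingly oriented octahedron, whose eight faces are all directed triangles, and $G_2$ three disjoint directed cycles placed in three different faces of $G_1$. Then $y=8<9=y_1+y_2-2$, so the paper's induction only goes through if $G_1$ is chosen innermost.

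You also make explicit what the paper's base case glosses over. Faces of a connected $G[\mathcal{N}]$ are bounded by directed closed \emph{walks}, not necessarily cycles; think of two clockwise cycles sharing one vertex. The sink argument only needs the region to lie on a consistent side of its boundary.

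Two things to tighten. First, your tree claim follows directly from Euler's formula. Summing over the components $K$ gives $\sum_K f_K=\sum_K(m_K-n_K+2)=f_\mathcal{N}+c_\mathcal{N}-1$. Moreover $\sum_\Phi k_\Phi=\sum_K f_K$, because each face of $K$ contains exactly one face of $G[\mathcal{N}]$ incident to $K$.

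Second, as in the paper, the face of $G$ produced by the sink argument is in general bounded by a directed closed walk rather than a simple directed cycle. In the two-cycles example with $G=G[\mathcal{N}]$, the outer face is one of the three faces found. So the lemma holds only under that reading, and that reading is all \Cref{thm:upper_bound_strongly_connected} uses.
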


\begin{proof}
  Let $y$ be the number of faces of $G[\mathcal{N}]$ whose boundary is a directed cycle. 
  We show by induction on $c_\mathcal{N}$ that $y \ge f_\mathcal{N} - (c_\mathcal{N} - 1)$.
  
  If $G[\mathcal{N}]$ is connected then since $\mathcal{N}$ is a normal set of cycles, every face is bounded by a directed cycle and the formula holds.

  Suppose that $G[\mathcal{N}]$ has $c_\mathcal{N} \ge 2$ connected components. Let $G_1$ be one of the connected components of $G[\mathcal{N}]$ and set $G_2 = G[\mathcal{N}] - G_1$. Let $\mathcal{N}_1$ and $\mathcal{N}_2$ be the partition of $\mathcal{N}$ such that $G_1 = G[\mathcal{N}_1]$ and $G_2 = G[\mathcal{N}_2]$. We have $c_{\mathcal{N}_1} = 1$ and $c_{\mathcal{N}_2} = c_\mathcal{N} - 1$. Let $y_1$ and $y_2$ be respectively the number of faces of $G_1$ and $G_2$ whose boundary is a directed cycle. By induction, $y_1 \ge f_{\mathcal{N}_1} - (c_{\mathcal{N}_1} - 1)$ and $y_2 \ge f_{\mathcal{N}_2} - (c_{\mathcal{N}_2} - 1)$. Now note that $y \ge y_1 + y_2 -2$. Indeed, let $F_1$ (resp.\ $F_2$) be the face of $G_1$ (resp.\ $G_2$) that contains the other component; these two faces merge into a single face $F$ in $G[\mathcal{N}]$. Since $c_{\mathcal{N}_1} = 1$, every face of $G_1$ has a directed boundary (base case), so in particular $F_1$ is counted in $y_1$. In the worst case $F_2$ is also counted in $y_2$ but the merged face $F$ is not directed, giving $y \ge (y_1 - 1) + (y_2 - 1) = y_1 + y_2 - 2$. Moreover we have $c_\mathcal{N} = c_{\mathcal{N}_1} + c_{\mathcal{N}_2}$ and $f_\mathcal{N} = f_{\mathcal{N}_1} + f_{\mathcal{N}_2} - 1$ since the face of $G[\mathcal{N}]$ that is incident to both $G_1$ and $G_2$ is counted twice in $f_{\mathcal{N}_1} + f_{\mathcal{N}_2}$. Thus we have:
  \begin{displaymath}
    y \ge y_1 + y_2 - 2 \ge f_{\mathcal{N}_1} - (c_{\mathcal{N}_1} - 1) + f_{\mathcal{N}_2} - (c_{\mathcal{N}_2} - 1) - 2 = f_\mathcal{N} + 1 - c_\mathcal{N}.
  \end{displaymath}
  This establishes $y \ge f_\mathcal{N} - (c_\mathcal{N} - 1)$ by induction.

  Now for every face $F$ of $G[\mathcal{N}]$ whose boundary is a directed cycle, denote by $\mathcal{F}_F$ the set of faces of $G$ contained in $F$. Note that $\mathcal{F}_F$ is a connected set of faces of $G$ whose boundary is a directed cycle. Then one can apply \Cref{prop:facial_cycle} to get that there exists a face in $\mathcal{F}_F$ whose boundary is a directed cycle. Since the $\mathcal{F}_F$ are disjoint for different faces $F$ of $G[\mathcal{N}]$, we get that there are at least $y \ge f_\mathcal{N} - (c_\mathcal{N} - 1)$ faces of $G$ whose boundary is a directed cycle.
\end{proof}

We can now prove \Cref{thm:upper_bound_strongly_connected}.

\begin{proof}[Proof of \Cref{thm:upper_bound_strongly_connected}]
  Let $f$ be the total number of faces of $G$ and let $x$ be the number of faces of $G$ whose boundary is a directed cycle. Using Euler's formula ($G$ is connected since it is strongly connected) and the fact that facial cycles have length at least $g$ and other faces have length at least 3, we have:
  \begin{displaymath}
    2m = \sum_{F \in F(G)} \ell_F \ge g x + 3(f - x) = gx + 3(m+2-n - x) \quad \text{thus} \quad x \le \frac{3n-6-m}{g-3}.
  \end{displaymath}

  From \Cref{thm:maximal_normal_set_greater_than_fvs}, $G$ has a normal set of cycles $\mathcal{N}$ of size $k = \fv(G)$. We compute $\mathbf{E_2}(\mathcal{N})$ (see \Cref{def:energies}). We use the fact that $\sum_{F \in F(\mathcal{N})} \ell_F = 2 m_\mathcal{N} = 2 \sum_{C \in \mathcal{N}} \lvert C \rvert$ and that $\lvert C \rvert \ge g$ for every $C \in \mathcal{N}$ to get:
  \begin{displaymath}
    \mathbf{E_2}(\mathcal{N}) = \frac{1}{g} \sum_{F \in F(\mathcal{N})} ( \ell_F -g) = \frac{1}{g} \left( 2 \sum_{C \in \mathcal{N}} \lvert C \rvert - g f_\mathcal{N} \right) \ge \frac{1}{g}(2gk - g f_\mathcal{N}) = 2k - f_\mathcal{N}.
  \end{displaymath}
  Moreover since $\mathbf{E_1}(\mathcal{N}) \ge 0$, $\mathbf{E_3}(\mathcal{N}) \ge 0$ (from \Cref{prop:positive_energies}) and $\mathbf{E_4}(\mathcal{N}) = c_\mathcal{N} - 1 \ge 0$ we get:
  \begin{displaymath}
    \mathbf{E_{tot}}(\mathcal{N}) = \mathbf{E_1}(\mathcal{N}) + \mathbf{E_2}(\mathcal{N}) + \mathbf{E_3}(\mathcal{N}) + \mathbf{E_4}(\mathcal{N}) \ge 2k - f_\mathcal{N} + c_\mathcal{N} - 1.
  \end{displaymath}
  By \Cref{lem:number_facial_cycle}, we have $x \ge f_\mathcal{N} - (c_\mathcal{N} - 1)$. Then we have:
  \begin{displaymath}
    \mathbf{E_{tot}}(\mathcal{N}) \ge 2k - f_\mathcal{N} + c_\mathcal{N} - 1 \ge 2k - x \ge 2k - \frac{3n-6-m}{g-3}.
  \end{displaymath}
  Finally from \Cref{thm:energy}, we have $k = \frac{n-2}{g-2} - \frac{\mathbf{E_{tot}}(\mathcal{N})}{g-2}$ and thus:
  \begin{displaymath}
    \begin{split}
      k &\le \frac{n-2}{g-2} - \frac{1}{g-2} \left( 2k - \frac{3n-6-m}{g-3} \right) \\
      (g-2) k &\le n-2 - 2k + \frac{3n-6-m}{g-3} \\
      gk &\le n-2 + \frac{3(n-2)-m}{g-3} = \frac{(n-2)(g-3) + 3(n-2) - m}{g-3} = \frac{g(n-2) - m}{g-3} \\
      \fv(G) = k &\le \frac{n-2}{g-3} - \frac{m}{g(g-3)}.
    \end{split}
  \end{displaymath}
\end{proof}

\subsection{Laminar (multi)sets of cycles}\label{sec:laminarity}

Let $G$ be a plane digraph. For a given directed cycle $C$ of $G$, we define a \emph{region} of $C$ as the closed area of the plane bounded by $C$ and is denoted by $\overline{C}$; the notation $\mathring{C}=\overline{C}\setminus C$ denotes the interior of the region of $C$.

Two directed cycles $C_1$ and $C_2$ of $G$ are said to be \emph{crossing} if $\mathring{C_1} \cap \mathring{C_2} \neq \emptyset$ and $\mathring{C_1} \not\subseteq \mathring{C_2}$, and $\mathring{C_2} \not\subseteq \mathring{C_1}$. That is, $C_1$ and $C_2$ cross if there is at least one element (arc or vertex) of $G$ lying in $\mathring{C_1} \cap C_2$ and at least one element of $G$ lying in $\mathring{C_2} \cap C_1$. See \Cref{fig:cycles_with_crossing} for an example.

\begin{figure}[htbp]
  \centering
  \includegraphics[width=0.4\linewidth]{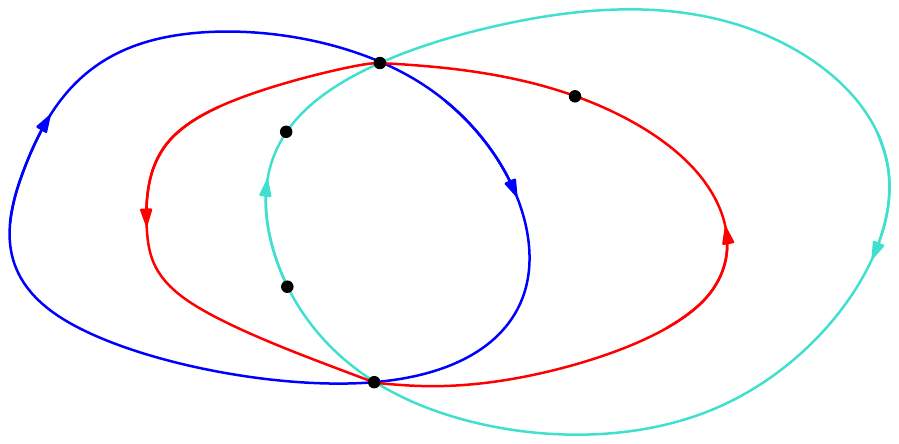}
  \caption{Example of three pairwise crossing cycles.}\label{fig:cycles_with_crossing}
\end{figure}

A set of pairwise non-crossing directed cycles is called \emph{laminar}. Since two directed cycles can share arcs without crossing, a laminar family is not necessarily arc-disjoint.

In the rest of this section we will need the following \namecref{prop:operation_lam}, already shown in a more general setting~\cite{LY78,Lovasz76}. 

\begin{proposition}\label{prop:operation_lam}
  Given a plane graph $G$ and a multiset of directed cycles $\mathcal{F}$
  of $G$, there exists a multiset of cycles $\mathcal{F}'$ of $G$ such that:
  \begin{enumerate}
  \item $|\mathcal{F}'| = |\mathcal{F}|$,
  \item for every arc $e$, the number of cycles in $\mathcal{F}'$
   containing $e$ is equal to the number of cycles in $\mathcal{F}$
   containing $e$,
  \item $\mathcal{F}'$ is laminar.
  \end{enumerate}
\end{proposition}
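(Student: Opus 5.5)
The plan is the classical uncrossing argument with a potential function that strictly decreases. Start from $\mathcal{F}$ and repeatedly replace a crossing pair by a non-crossing pair covering the same arcs. Properties 1 and 2 are preserved at every step by construction, so the only real work is the uncrossing step itself and proving that the process terminates.

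\textbf{Uncrossing step.} Let $C_1, C_2 \in \mathcal{F}$ be crossing directed cycles. Consider the closed regions $\overline{C_1}$ and $\overline{C_2}$, and the regions $\overline{C_1}\cap\overline{C_2}$ and $\overline{C_1}\cup\overline{C_2}$. The multiset of arcs of $C_1$ and $C_2$ (with multiplicities) equals the multiset of arcs on the boundaries of the intersection and of the union. Each arc lies on exactly one of these boundaries, except arcs common to $C_1$ and $C_2$, which lie on both. Since $C_1$ and $C_2$ are both directed, we may assume they are both clockwise; if they had opposite orientations, then at a crossing the boundary would not be consistently directed, which needs a separate look. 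A shared arc then has its two regions on the same side, so it is consistent.

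At each point where $C_1$ and $C_2$ meet, the local picture shows that the boundary of the union, and likewise of the intersection, follows arcs that are head-to-tail. Hence each boundary is a closed directed walk. A closed directed walk decomposes into directed cycles, and I would choose the decomposition along the facial structure so that the resulting cycles are nested rather than crossing.

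The main obstacle is that the union or intersection may be disconnected or non-simple. Then we obtain several cycles instead of exactly two, which violates $|\mathcal{F}'| = |\mathcal{F}|$. I would handle this by working with arc-multiplicity vectors, in the spirit of Lucchesi--Younger and Lovász. Alternatively, one can argue that for two crossing cycles, the boundary of the intersection is a single cycle whenever the intersection region is connected. If it is not, pick one component as the ``intersection'' cycle and merge the remaining boundary pieces into the ``union'' walk, which stays a single closed walk since both pieces meet $C_1$ consecutively. The count is kept at two cycles by noting that the union region of two crossing discs is connected. If the union has holes, the hole boundaries are directed oppositely, which contradicts consistency, so this case should not occur for same-orientation cycles.

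\textbf{Termination.} Use the potential $\Phi(\mathcal{F}) = \sum_{C\in\mathcal{F}} \operatorname{area}(\overline{C})^2$, where area is measured as the number of faces of $G$ enclosed. Replacing $(A, B)$ by $(A\cap B, A\cup B)$ keeps $a + b$ fixed, by inclusion--exclusion on the face counts. It also strictly increases $a^2 + b^2$, because the pair becomes more unbalanced when the sets cross: neither contains the other, so both $|A\cap B|<\min(a,b)$ and $|A\cup B|>\max(a,b)$. Since $\Phi$ is bounded above, the process ends, and the final multiset is laminar.
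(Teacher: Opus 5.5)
Your same-orientation step (replacing $C_1,C_2$ by the boundaries of $\overline{C_1}\cap\overline{C_2}$ and $\overline{C_1}\cup\overline{C_2}$) is the paper's first case. The reduction ``we may assume both are clockwise'' is not justified, though. A clockwise and a counterclockwise cycle can cross (think of two overlapping discs meeting at two vertices), and then neither the union boundary nor the intersection boundary is directed, so your process has no move at all. The missing idea is to use the two differences, i.e.\ the boundaries of $\overline{C_1}\setminus\mathring{C_2}$ and $\overline{C_2}\setminus\mathring{C_1}$, which are consistently directed in that case; this is the paper's second case. Adding these moves also breaks your termination argument: they shrink both regions, so $\Phi$ decreases, whereas same-orientation moves increase it. The paper instead uses $\psi(\mathcal{F})=\prod_{C\in\mathcal{F}}(1+\|\mathring{C}\|)$, which strictly decreases under both moves. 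Writing $x,y,i$ for the numbers of elements inside $C_1$, inside $C_2$, and inside both, the first case reads $(1+x+y-i)(1+i)=(1+x)(1+y)-(x-i)(y-i)$. A lexicographic potential would also work: first the total number of enclosed faces (preserved by the first kind of move, strictly decreased by the second), then $-\Phi$.

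Your treatment of a disconnected intersection or a union with holes does not work. Merging pieces yields a closed walk, not a cycle, and $\overline{C}$ and laminarity are not defined for it. A hole also does not create an inconsistency: along the boundary of a hole of $\overline{C_1}\cup\overline{C_2}$, both clockwise cycles have the union on their right, so that boundary is consistently directed (counterclockwise). For example, take for $C_1,C_2$ the clockwise boundaries of $\{1\le r\le 2,\ -20^\circ\le\theta\le 200^\circ\}$ and $\{0.8\le r\le 2.2,\ 160^\circ\le\theta\le 380^\circ\}$ in polar coordinates. These are two horseshoes crossing transversally at four vertices, and union and intersection produce four directed cycles: the outer boundary, the hole, and two lenses.

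In fact the statement itself fails here. In $G=C_1\cup C_2$ every crossing vertex has in-degree $2$, so in any cover of the arcs by two simple directed cycles, each cycle passes through all four crossing vertices. After contracting the degree-$2$ vertices, these four vertices span a $4$-cycle together with its reverse, whose only Hamiltonian directed cycles are $C_1$ and $C_2$, and these cross. A horizontal and a vertical bar forming a plus sign, one clockwise and one counterclockwise, is the analogous example for opposite orientations.

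So you have located the real obstacle, which the paper's proof passes over by asserting from its figure that each transformation yields two cycles. The requirement $|\mathcal{F}'|=|\mathcal{F}|$ cannot be guaranteed. What one can aim for is the version with $|\mathcal{F}'|\ge|\mathcal{F}|$, e.g.\ by uncrossing the sets of enclosed faces (the dual dicuts, as in Lucchesi--Younger) and only at the end splitting each boundary into its directed cycles.
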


\begin{proof}
  We start with the initial multiset $\mathcal{F'}:=\mathcal{F}$, and as long as there are two crossing cycles in $\mathcal{F'}$, we transform them as shown in \Cref{fig:laminar_transfo}.
  \begin{figure}[htbp]
    \centering
     \begin{subfigure}[b]{0.3\linewidth}   
    \centering 
      \includegraphics[scale=0.45]{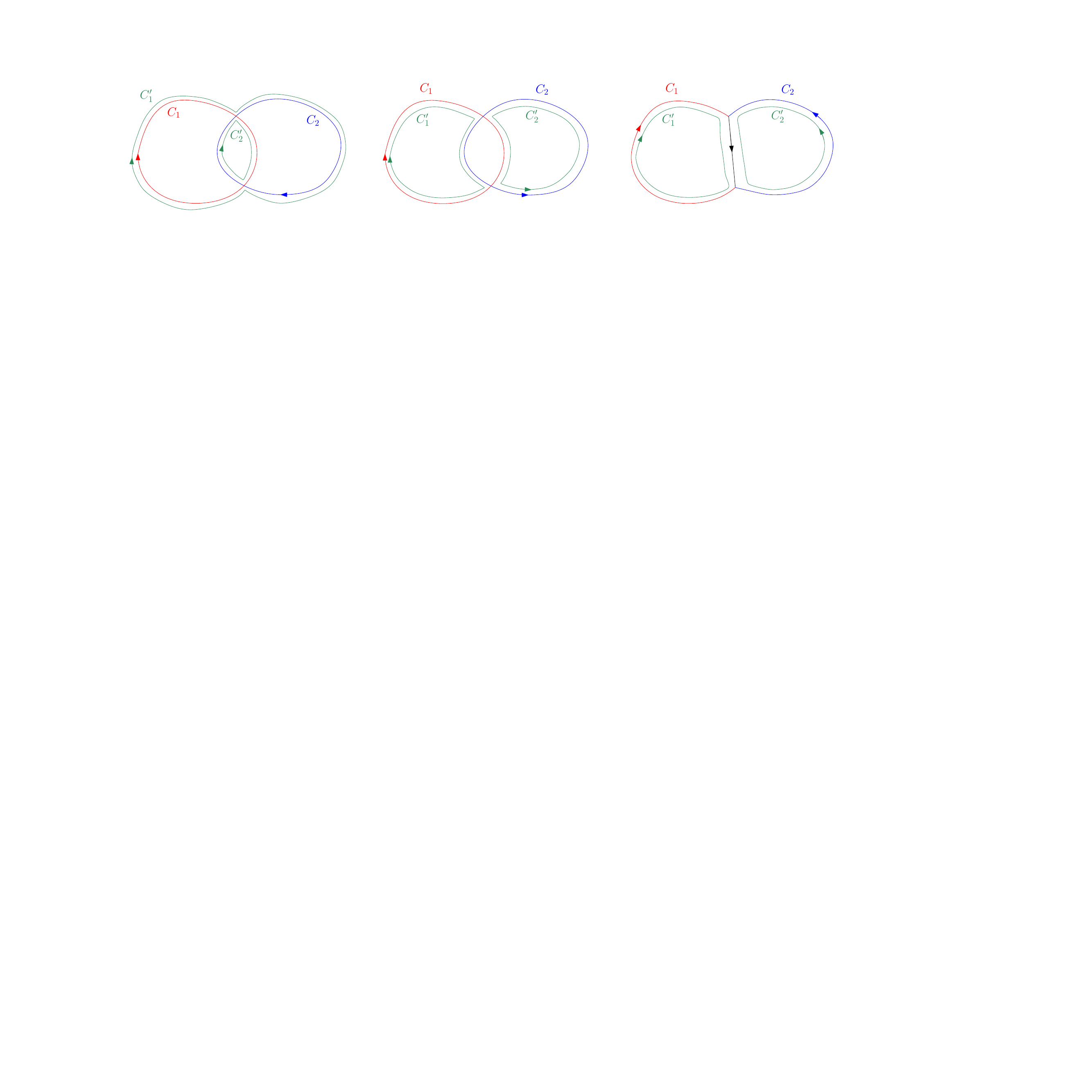}
    \caption{}\label{subfig:laminar_transfo_a}
  \end{subfigure}  
  \begin{subfigure}[b]{0.3\linewidth}   
    \centering 
      \includegraphics[scale=0.45]{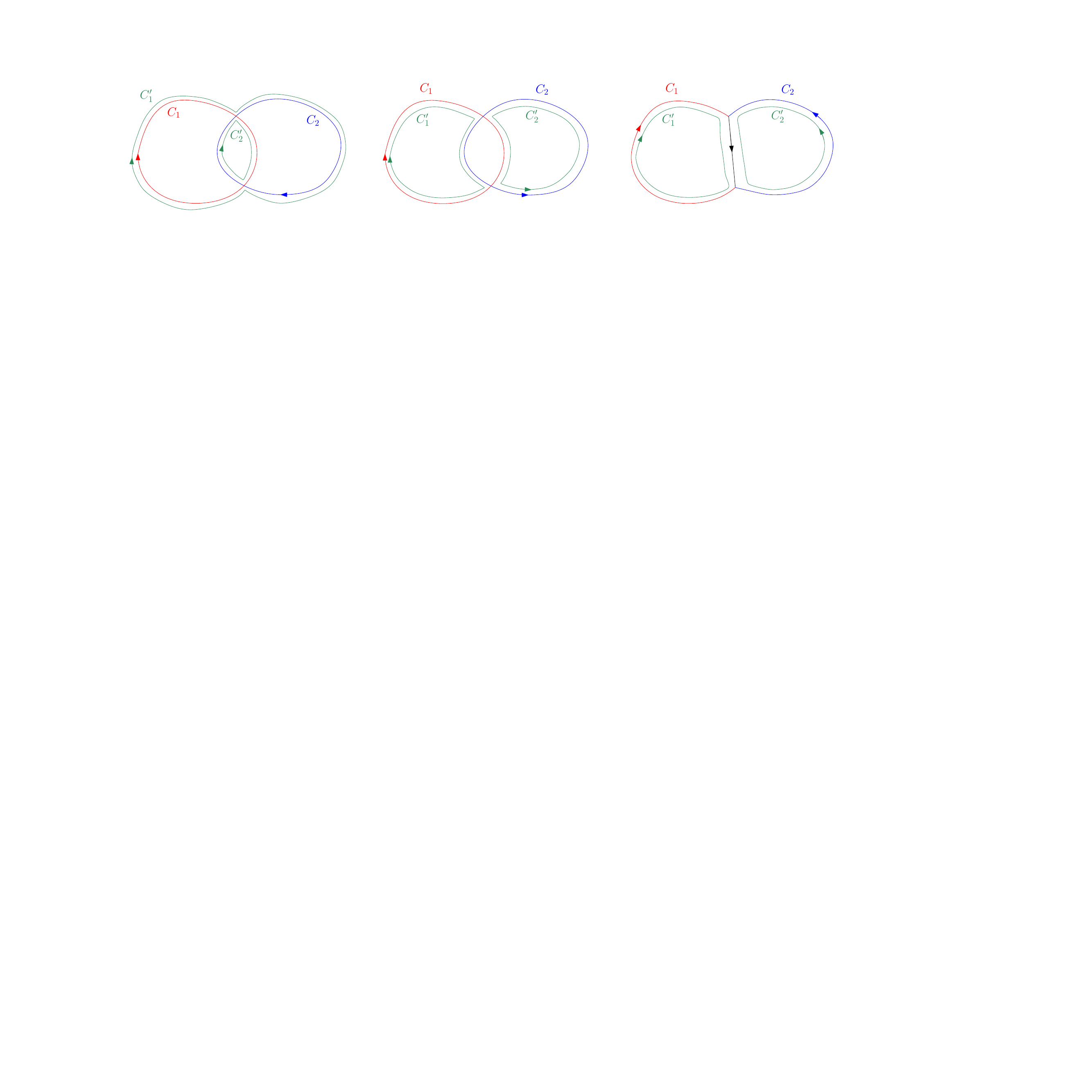}
    \caption{}\label{subfig:laminar_transfo_b}
  \end{subfigure}  
  \begin{subfigure}[b]{0.3\linewidth}
    \centering 
      \includegraphics[scale=0.45]{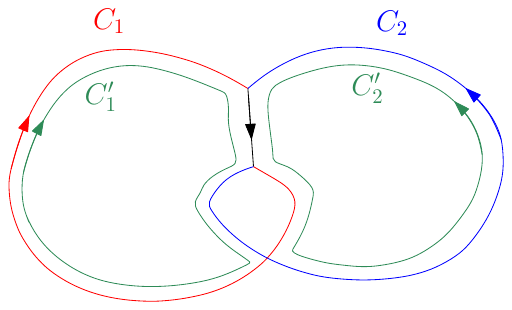}
    \caption{}\label{subfig:laminar_transfo_c}
  \end{subfigure}  
    \caption{Transformations of two crossing oriented cycles (red and blue) into two non-crossing cycles (green).}\label{fig:laminar_transfo}
  \end{figure} 
 
  These transformations turn two oriented cycles (red and blue) into two other oriented cycles (green) that use exactly the same arcs and if an arc belongs to both cycles $C_1$ and $C_2$ then it belongs to both $C_1'$ and $C_2'$. At the end of the process, we obtain a multiset $\mathcal{F'}$ that satisfies the three properties of the proposition. It remains to show that this process terminates.
    
  For a region $R$ of the graph, let $||R||$ be the number of elements (vertices and arcs) in the region $R$ of the graph. For a multiset of cycles $\mathcal{F}$, we define the quantity $\psi(\mathcal{F}) =  \prod_{C \in \mathcal{F}} (1 + ||\mathring{C}||)$.
  Note that $\psi(\mathcal{F})$ is always a positive integer. We show that this quantity strictly decreases at each transformation. Indeed, given two cycles $C_1$ and $C_2$, transformed into two cycles $C_1'$ and $C_2'$, we prove that the following holds: $(1 + ||\mathring{C_1'}||) \cdot (1 + ||\mathring{C_2'}||) < (1 + ||\mathring{C_1}||) \cdot (1 + ||\mathring{C_2}||)$.

  Let $x = ||\mathring{C_1}||$, $y = ||\mathring{C_2}||$, and $i = ||\mathring{C_1} \cap \mathring{C_2}||$. Since $C_1$ and $C_2$ cross, we have $x-i \ge ||\mathring{C_1} \cap C_2|| \ge 1$, and $y-i \ge ||\mathring{C_2} \cap C_1|| \ge 1$.
  
  There are two cases depending on the orientation of $C_1$ and $C_2$ (both clockwise or counterclockwise, or one clockwise and the other counterclockwise), as illustrated on \Cref{fig:laminar_transfo}:

  \begin{itemize}
  \item If both $C_1$ and $C_2$ are oriented in the same direction (see \Cref{subfig:laminar_transfo_a}):
    \begin{displaymath}
      \begin{split} 
        (1 + ||\mathring{C_1'}||) \cdot (1 + ||\mathring{C_2'}||) &= (1 + x + y - i) \cdot (1 + i) = 1 + x + y + xi + yi - i^2 \\
                                                                  &= (1+x) \cdot (1+y) - (x-i) \cdot (y-i)  < (1+x) \cdot (1+y)
      \end{split}
    \end{displaymath}
      \item If $C_1$ is clockwise and $C_2$ is anticlockwise (see \Cref{subfig:laminar_transfo_b,subfig:laminar_transfo_c}):
    \begin{displaymath}
      (1 + ||\mathring{C_1'}||) \cdot (1 + ||\mathring{C_2'}||) = (1 + x - i - ||\mathring{C_1} \cap C_2||) \cdot (1 + y-i - ||\mathring{C_2} \cap C_1||) < (1+x) \cdot (1+y)
    \end{displaymath}

  \end{itemize}
  Thus, $\psi(\mathcal{F})$ strictly decreases at each transformation, and since it is a positive integer, the process terminates.
\end{proof}

\begin{remark*}
  If $\mathcal{N}$ is a normal set of cycles of a plane digraph $G$ then the associated laminar set of cycles $\mathcal{N'}$ obtained with \Cref{prop:operation_lam} is also normal. Indeed $\mathcal{N'}$ uses exactly the same arcs as $\mathcal{N}$ and hence $G[\mathcal{N}'] = G[\mathcal{N}]$ and so each vertex is still alternatingly oriented.
\end{remark*}

\subsection{Digraphs reaching the upper bound: proof of \Cref{thm:upper_bound_FVS}}\label{sec:tightness_general_upper_bound_fvs}

For a plane digraph $G$ of order $n$ and digirth $g \ge 3$, \Cref{thm:upper_bound_FVS} states that  $\fv(G) \le \frac{n-2}{g-2}$ with equality if and only if $G = C_g$ the directed cycle of length $g$. Recall that the upper bound directly follows from \Cref{thm:maximal_normal_set_greater_than_fvs,prop:upper_bound_for_maximal_normal_set}. In this subsection, we prove
that it is tight only for $C_g$. 

\begin{lemma}\label{lem:complete_arc_disjoint_set_of_cycles}
    Let $\mathcal{C}, \mathcal{C'}$ be two sets of arc-disjoint directed cycles of $G$ such that $E(\mathcal{C}') \subsetneq E(\mathcal{C})$. Then there exists a directed cycle $C$ of $G$ that uses only arcs in $E(\mathcal{C}) \backslash E(\mathcal{C}')$.
\end{lemma}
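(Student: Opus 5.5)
The natural approach is a degree-balance argument. A digraph in which every vertex has equal in- and out-degree, and which has at least one arc, contains a directed cycle. So I will show that the subdigraph $H$ formed by the arcs of $E(\mathcal{C}) \setminus E(\mathcal{C}')$ is Eulerian in this sense, that is, balanced at every vertex and nonempty.

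First, consider the digraph $G[\mathcal{C}]$. Since $\mathcal{C}$ is a set of arc-disjoint directed cycles, each cycle passing through a vertex $v$ contributes exactly one incoming and one outgoing arc at $v$. The only exception is a loop, which contributes one incoming and one outgoing half-arc. Hence for every vertex $v$ of $G$, the number of arcs of $E(\mathcal{C})$ entering $v$ equals the number leaving $v$, and the same holds for $E(\mathcal{C}')$.

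Second, since $E(\mathcal{C}') \subseteq E(\mathcal{C})$, the arc set of $H$ is exactly $E(\mathcal{C}) \setminus E(\mathcal{C}')$. Its in-degree and out-degree at each vertex $v$ are obtained by subtracting the corresponding counts for $E(\mathcal{C}')$ from those for $E(\mathcal{C})$. Therefore $\degin_H(v) = \degout_H(v)$ for all $v$. The inclusion is strict, so $H$ has at least one arc.

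Third, I extract the cycle. Start at the tail of some arc of $H$ and walk along arcs of $H$, never reusing an arc. Whenever the walk enters a vertex $v \neq$ start, it has used one more incoming than outgoing arc at $v$. By balance, an unused outgoing arc of $H$ at $v$ is then available, so the walk can continue. Since $H$ is finite, the walk must eventually revisit a vertex. The segment between two consecutive visits of that vertex is a closed walk with no repeated vertex, that is, a directed cycle $C$ of $G$ using only arcs of $E(\mathcal{C}) \setminus E(\mathcal{C}')$. Equivalently: take a longest directed path in $H$, whose last vertex has positive out-degree in $H$ and must therefore point back into the path.

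There is no real obstacle here. The only point needing care is the balance bookkeeping, in particular treating loops via half-arcs as in the paper's conventions, and noting that arc-disjointness is what makes the degree counts of $E(\mathcal{C})$ simple sums over the cycles.
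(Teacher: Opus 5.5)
Your proposal is correct and follows essentially the paper's route. The paper also starts from in/out-degree balance in $G[\mathcal{C}]$ and $G[\mathcal{C}']$ to get, at each vertex entered by an arc of $E(\mathcal{C})\setminus E(\mathcal{C}')$, an outgoing arc of the difference, and then walks until a vertex repeats. One wording fix: the cycle is the segment ending at the \emph{first} repeated vertex (equivalently your longest-path variant), not the segment between any two consecutive visits of a vertex, which may repeat other vertices.
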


\begin{proof}
    As $\mathcal{C}$ and $\mathcal{C}'$ are sets of arc-disjoint cycles, every vertex in $G[\mathcal{C}]$ (resp. in $G[\mathcal{C'}]$) has its in-degree equals to its out-degree. Since $E(\mathcal{C}') \subsetneq E(\mathcal{C})$, one can find $v_0 v_1 \in E(\mathcal{C}) \backslash E(\mathcal{C}')$. Then 
    \begin{displaymath}
        \degout_{G[\mathcal{C'}]}(v_1) = \degin_{G[\mathcal{C'}]}(v_1) \le \degin_{G[\mathcal{C}]}(v_1) - 1 = \degout_{G[\mathcal{C}]}(v_1) - 1
    \end{displaymath}
    (with the convention that $\degout_{G[\mathcal{C'}]}(v_1) = \degin_{G[\mathcal{C'}]}(v_1) = 0$ in case $v_1 \not \in V(G[\mathcal{C'}])$).
        Therefore, $v_1$ has at least one outgoing arc in $G[\mathcal{C}]$ that is not in $G[\mathcal{C'}]$. Thus, there exists a vertex $v_2$ such that $v_1 v_2 \in E(\mathcal{C}) \backslash E(\mathcal{C}')$. We continue the process until we find $v_{k-1} v_k \in E(\mathcal{C}) \backslash E(\mathcal{C}')$ with $v_k$ equal to some vertex $v_i$, with $1\leq i < k$. Then the cycle $C = v_i v_{i+1} \dots v_{k-1}$ is a cycle that uses only arcs in $E(\mathcal{C}) \backslash E(\mathcal{C}')$.        
\end{proof}

For a set $\mathcal{C}$ of arc-disjoint cycles of $G$, we define the quantity $q(\mathcal{C}) = m_\mathcal{C} - n_\mathcal{C}$.
Note that $q(\mathcal{C}) \ge 0$ and that $q(\mathcal{C}) = 0$ if and only if $\mathcal{C}$ is a set of vertex-disjoint cycles (or equivalently $G[\mathcal{C}]$ is a union of vertex-disjoint cycles).

\begin{lemma}\label{lem:finding_a_weaker_normal_set}
    Let $\mathcal{N}$ be a non-empty normal set of directed cycles of $G$. There exists a normal set of cycles $\mathcal{N'}$ of size $|\mathcal{N}|-1$ such that $E(\mathcal{N}') \subseteq E(\mathcal{N})$ and with $q(\mathcal{N}') \le \max(0,q(\mathcal{N}) - 1)$.
\end{lemma}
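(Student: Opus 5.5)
The plan is to delete a single, carefully chosen cycle from $\mathcal{N}$. Removing any cycle $C$ gives a family of size $|\mathcal{N}|-1$ whose arc set is contained in $E(\mathcal{N})$, so only two things need care: the new family must stay normal, and $q$ must drop. Deleting $C$ removes $|C|$ arcs. It removes exactly those vertices of $C$ that lie on no other cycle of $\mathcal{N}$. Hence $q$ decreases by exactly the number of vertices of $C$ shared with some other cycle of $\mathcal{N}$.

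\emph{Case $q(\mathcal{N})=0$.} Here the cycles are vertex-disjoint. Removing any one of them leaves every remaining vertex with exactly the same half-arcs as before, so the result is normal and still has $q=0$.

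\emph{Case $q(\mathcal{N})>0$.} First apply \Cref{prop:operation_lam} to replace $\mathcal{N}$ by a laminar family with the same arcs. By the remark following that proposition, this family is still normal, and $G[\mathcal{N}]$, hence $q$, is unchanged. Since $q>0$, some connected component $K$ of $G[\mathcal{N}]$ contains at least two cycles of $\mathcal{N}$. Because $K$ is connected, every cycle of $K$ shares a vertex with another cycle of $K$. Choose $C$ among the cycles of $K$ with inclusion-minimal region $\overline{C}$; when convenient, work on the sphere so that "interior" can be taken to be either side. Then no arc of another cycle of $K$ lies in $\mathring{C}$. Indeed, by laminarity such a cycle would either cross $C$ or be nested inside it, and arc-disjointness together with minimality rule this out. (A cycle merely touching $C$ at a vertex from the outside is fine.) Deleting $C$ then decreases $q$ by at least $1$, which gives $q(\mathcal{N}')\le q(\mathcal{N})-1$.

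The step I expect to be the main obstacle is checking that normality survives. At a vertex $v$ of $C$, all half-arcs of $\mathcal{N}$ belong to cycles of $K$, since they lie in the component of $v$. No arc of $K$ enters $\mathring{C}$, so the incoming and outgoing half-arcs of $C$ at $v$ are consecutive in the cyclic order of $K$'s half-arcs around $v$, namely the two bounding the side of $v$ facing $\mathring{C}$. Deleting two consecutive half-arcs of opposite orientation from an alternating cyclic sequence leaves it alternating. Vertices not on $C$ are unaffected. Hence $\mathcal{N}'=\mathcal{N}\setminus\{C\}$ is normal.

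The delicate points in writing this out are making the face-like property of a minimal cycle rigorous for closed regions when $C$ passes through a vertex several times (it cannot, since $C$ is a cycle), and handling other components nested inside $C$. The latter is harmless because those components share no vertex with $C$.
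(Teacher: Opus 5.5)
Your proof is correct and follows essentially the same route as the paper. Both laminarize via \Cref{prop:operation_lam}, delete an inclusion-minimal cycle that shares a vertex with another cycle, and note that $q$ drops by the number of shared vertices; you take the cycle minimal within a component containing at least two cycles, while the paper takes it minimal among non-isolated cycles, and your argument that the two half-arcs of $C$ at each vertex are consecutive (so alternation survives) supplies a step the paper only asserts.
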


\begin{proof}
  The proof of the case $q(\mathcal{N}) = 0$ is straightforward as $\mathcal{N}$ is a set of vertex-disjoint cycles and removing one of them yields a normal set $\mathcal{N'}$ as required.
  
  Suppose now that $q(\mathcal{N}) > 0$.
  Let $\mathcal{N}_{lam}$ be a laminar normal set of cycles obtained from $\mathcal{N}$ by applying \Cref{prop:operation_lam}. As $E(\mathcal{N}_{lam}) = E(\mathcal{N})$ we have $q(\mathcal{N}_{lam}) = q(\mathcal{N}) > 0$, thus $\mathcal{N}_{lam}$ is not a set of vertex-disjoint cycles. Take $C \in \mathcal{N}_{lam} \backslash \{ \text{isolated cycles} \}$ that is minimal for the inclusion relation (i.e. there is no $C' \in \mathcal{N}_{lam}$ such that $\overline{C'} \subsetneq \overline{C}$ except isolated cycles). Then $\mathcal{N}' = \mathcal{N}_{lam} - C$ is still normal and $|\mathcal{N'}| = |\mathcal{N}| - 1$. Furthermore, as $C$ is not an isolated cycle we have $V(\{C \}) \cap V( \mathcal{N'}) \neq \emptyset$ then
    \begin{displaymath}
        q(\mathcal{N'}) = m_{\mathcal{N'}} - n_{\mathcal{N'}} = (m_{\mathcal{N}} - |C|) - (n_{\mathcal{N}} - |C| + |V(\{C \}) \cap V(\mathcal{N'})| ) < m_{\mathcal{N}} - n_{\mathcal{N}} = q(\mathcal{N}).
    \end{displaymath}
\end{proof}

\begin{proposition}\label{prop:fvs_of_G[N]}
    Let $\mathcal{N}$ be a normal set of cycles of $G$ of size $k = \fv(G)$ that minimizes $q(\mathcal{N})$. Then $\fv(G[\mathcal{N}]) = k$ if and only if $\mathcal{N}$ is a set of vertex-disjoint cycles.
\end{proposition}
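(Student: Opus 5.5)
The easy direction comes first. If $\mathcal{N}$ is a set of vertex-disjoint cycles, then $G[\mathcal{N}]$ is a disjoint union of $k$ directed cycles. Any feedback vertex set of this digraph must hit each of the $k$ cycles, and one vertex per cycle suffices, so $\fv(G[\mathcal{N}]) = k$. This direction does not even use the minimality of $q(\mathcal{N})$.

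For the converse we argue by contraposition. Assume $\mathcal{N}$ is not vertex-disjoint, i.e.\ $q(\mathcal{N}) > 0$, and that it minimizes $q$ among normal sets of size $k=\fv(G)$. We want to show $\fv(G[\mathcal{N}]) < k$. Suppose instead that $\fv(G[\mathcal{N}]) = k$; note that $\fv(G[\mathcal{N}]) \le k$ in any case, since $G[\mathcal{N}]$ is a subgraph of $G$. Set $H = G[\mathcal{N}]$ and view it as a plane digraph with the inherited embedding. By \Cref{thm:maximal_normal_set_greater_than_fvs} applied to $H$ (in the form "there is a normal set of size $\fv(H)$", as used in the proof of \Cref{thm:upper_bound_strongly_connected}), $H$ has a normal set $\mathcal{M}$ of size $k$.

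The plan is then to compare $\mathcal{M}$ with $\mathcal{N}$.

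\emph{Case $E(\mathcal{M}) = E(\mathcal{N})$.} Then $q(\mathcal{M}) = q(\mathcal{N})$, and $\mathcal{M}$ is just a rearrangement of the same arcs, so there is no immediate contradiction. Here we would use the flexibility in choosing $\mathcal{M}$. Apply \Cref{lem:finding_a_weaker_normal_set} to $\mathcal{N}$ to obtain a normal $\mathcal{N}'$ of size $k-1$ with $q(\mathcal{N}') \le q(\mathcal{N})-1$ and $E(\mathcal{N}')\subsetneq E(\mathcal{N})$. By \Cref{lem:complete_arc_disjoint_set_of_cycles} there is a directed cycle $C$ using only arcs of $E(\mathcal{N})\setminus E(\mathcal{N}')$. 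Adding $C$ gives an arc-disjoint set $\mathcal{N}'\cup\{C\}$ of size $k$. It is normal provided that every vertex of $H[\mathcal{N}'\cup\{C\}]$ is alternating, and we need $q(\mathcal{N}'\cup\{C\}) < q(\mathcal{N})$; either would contradict minimality of $q(\mathcal{N})$ once normality is established.

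\emph{Case $E(\mathcal{M}) \subsetneq E(\mathcal{N})$.} This is the more natural route. Since $\mathcal{M}$ uses strictly fewer arcs, we compare $q(\mathcal{M})$ with $q(\mathcal{N})$. Because $\mathcal{M}$ is a normal set of $H \subseteq G$, and alternation at a vertex only concerns the arcs of $\mathcal{M}$ itself, $\mathcal{M}$ is also normal in $G$, of size $k$. We would choose $\mathcal{M}$ with $q(\mathcal{M})$ minimal among normal sets of $H$ of size $k$ and try to argue $q(\mathcal{M})<q(\mathcal{N})$, contradicting minimality.

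Since both cases are awkward, the cleanest strategy I would try first is a \emph{strict-decrease} argument. Replace $H$ by the smaller graph $H' = G[\mathcal{N}']\cup C'$ built as above, or more directly use that deleting one vertex $v$ shared by two cycles of $\mathcal{N}$ should leave enough structure. A vertex $v$ of degree at least $4$ in $H$ exists because $q>0$. Taking $v$ into a feedback set hits all cycles of $\mathcal{N}$ through $v$, at least two of them. We would then aim to show $H - v$ has a feedback vertex set of size $k-2$, for instance by induction on $q$ applied to the normal set of $H-v$ made of the cycles of $\mathcal{N}$ avoiding $v$. That would give $\fv(H)\le k-1$, the desired contradiction.

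The main obstacle is exactly this step. The cycles avoiding $v$ form a normal set of size at most $k-2$, but $\fv(H-v)$ need not equal the size of that normal set, because $H-v$ may contain cycles not in $\mathcal{N}$ (formed from pieces of several cycles). To control these we would use laminarity (\Cref{prop:operation_lam}), choosing $v$ on a minimal non-isolated cycle of $\mathcal{N}_{lam}$ as in \Cref{lem:finding_a_weaker_normal_set}, together with the alternating orientation at $v$ to show that every directed cycle of $H$ through the relevant region passes through $v$.
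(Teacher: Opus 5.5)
\textbf{There is a genuine gap in the converse.} Your easy direction is correct. For the converse, your final ``strict-decrease'' plan leaves open exactly the step it depends on, namely that $\fv(H-v)\le k-2$; you say so yourself. The two cases before it do not close either. In your Case $E(\mathcal{M})=E(\mathcal{N})$, applying \Cref{thm:maximal_normal_set_greater_than_fvs} to $H=G[\mathcal{N}]$ gives nothing new, because $\mathcal{N}$ is already a normal set of $H$ of size $k$. Moreover, $\mathcal{N}'\cup\{C\}$ has no reason to be normal. Alternation at every vertex comes for free only when the arc set is \emph{exactly} $E(\mathcal{N})$, since then $G[\cdot]=G[\mathcal{N}]$. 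It is not inherited by a proper subset of $E(\mathcal{N})$, and adding a single cycle to $\mathcal{N}'$ generally does not restore all of $E(\mathcal{N})$.

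\textbf{The missing idea: use $\fv(H)=k$ as a lower bound on $H-v$.} Do not try to upper-bound $\fv(H-v)$. Since $q(\mathcal{N})>0$, pick $v$ with in-degree at least $2$ in $H$. Then $\fv(H-v)\ge k-1$, so \Cref{thm:maximal_normal_set_greater_than_fvs} applied to $H-v$ gives a normal set $\mathcal{C}$ of size at least $k-1$ that avoids $v$. Repeatedly apply \Cref{lem:complete_arc_disjoint_set_of_cycles} to add cycles built from arcs of $E(\mathcal{N})\setminus E(\mathcal{C})$, until the arc set is all of $E(\mathcal{N})$. The resulting $\mathcal{C}'$ is automatically normal, with $q(\mathcal{C}')=q(\mathcal{N})$. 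Now $\mathcal{C}$ uses no arc at $v$, and each added cycle is a simple directed cycle, so it uses at most one in-arc of $v$. Hence at least two cycles were added, and $|\mathcal{C}'|\ge k+1$. Applying \Cref{lem:finding_a_weaker_normal_set} $|\mathcal{C}'|-k\ge 1$ times then yields a normal set of size $k$ with $q<q(\mathcal{N})$, which contradicts minimality. The cycles of $H-v$ outside $\mathcal{N}$, which you saw as the obstacle, are exactly what drives this argument.
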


\begin{proof}
    If $\mathcal{N}$ is a set of vertex-disjoint cycles, then $G[\mathcal{N}]$ is the union of $k$ vertex-disjoint cycles so $\fv(G[\mathcal{N}]) = k$.
    
    Reciprocally assume $\fv(G[\mathcal{N}]) = k$. Suppose by contradiction that $\mathcal{N}$ is not a set of vertex-disjoint cycles, so $q(\mathcal{N}) > 0$. Therefore, there exists a vertex $v$ such that $\degin_{G[\mathcal{N}]}(v) \ge 2$.
    
    By hypothesis $\fv(G[\mathcal{N}] - v) \ge k-1$ and by \Cref{thm:maximal_normal_set_greater_than_fvs} let $\mathcal{C}$ be a (normal) set of cycles of size $\ge k-1$ in $G[\mathcal{N}] - v$. Since $E(\mathcal{C}) \subsetneq E(\mathcal{N})$, we apply \Cref{lem:complete_arc_disjoint_set_of_cycles} to find a cycle $C$ that consists of arcs of $E(\mathcal{N}) \backslash E(\mathcal{C})$ and add it to $\mathcal{C}$. We iterate the operation while $E(\mathcal{C}) \subsetneq E(\mathcal{N})$. At the end of the process we obtain a set of arc-disjoint cycles $\mathcal{C}'$ such that $E(\mathcal{N}) = E(\mathcal{C'})$. It means that $G[\mathcal{N}] = G[\mathcal{C}']$ and since $\mathcal{N}$ is normal, all the vertices of $G[\mathcal{N}] = G[\mathcal{C}']$ are alternatingly oriented and thus $\mathcal{C}'$ is a normal set of cycles and $q(\mathcal{C}') = q(\mathcal{N}) > 0$. Moreover, since $v$ is not used in $\mathcal{C}$ and $\degin_{G[\mathcal{N}]}(v) \ge 2$, at least two cycles were added to $\mathcal{C'}$ using \Cref{lem:complete_arc_disjoint_set_of_cycles}. Thus $|\mathcal{C'}| \ge |\mathcal{C}| + 2 \ge k+1$. We apply \Cref{lem:finding_a_weaker_normal_set} $\lvert \mathcal{C'} \rvert - k$ times to $\mathcal{C'}$, in order to obtain a normal set $\mathcal{N}'$ of size $k$, such that $q(\mathcal{N'}) < q(\mathcal{N})$. This contradicts the minimality of $q(\mathcal{N})$.
\end{proof}

\begin{proposition}\label{prop:tightness_upper_bound}
    Let $G$ be a plane digraph of digirth $g \ge 3$ with $n \ge 3$ vertices. Then $\fv(G) = \frac{n-2}{g-2}$ if and only if $G = C_g$ the directed cycle of length $g$.
\end{proposition}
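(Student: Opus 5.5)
The easy direction is immediate: for $G=C_g$ we have $n=g$ and $\fv(C_g)=1=\frac{g-2}{g-2}$. For the converse, assume $\fv(G)=k=\frac{n-2}{g-2}$. By \Cref{thm:maximal_normal_set_greater_than_fvs} there is a normal set of size at least $k$. Applying \Cref{lem:finding_a_weaker_normal_set} repeatedly, we may take a normal set of size exactly $k$. Among all normal sets of size $k$, choose $\mathcal{N}$ minimizing $q(\mathcal{N})$.

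\Cref{thm:energy} gives $(g-2)k=n-2-\mathbf{E_{tot}}(\mathcal{N})$, so $\mathbf{E_{tot}}(\mathcal{N})=0$. By \Cref{prop:positive_energies}, each of the four energies vanishes:
\begin{itemize}
\item $\mathbf{E_3}=0$ means $n_\mathcal{N}=n$, so every vertex of $G$ lies on a cycle of $\mathcal{N}$;
\item $\mathbf{E_4}=0$ means $G[\mathcal{N}]$ is connected;
\item $\mathbf{E_1}=0$ means every cycle of $\mathcal{N}$ has length exactly $g$;
\item $\mathbf{E_2}=0$ means every face of $G[\mathcal{N}]$ has length exactly $g$.
\end{itemize}
The goal is now to show that $\mathcal{N}$ consists of vertex-disjoint cycles. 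Connectivity then forces $|\mathcal{N}|=1$, so $k=1$ and $n=g$. Since every vertex lies on the single cycle $C$ of length $g$, and $G$ has digirth $g$, any extra arc of $G$ would be a chord of $C$. Such a chord together with an arc-path of $C$ closes a directed cycle of length less than $g$, because a chord $u\to w$ combined with the $C$-path from $w$ back to $u$ has length at most $g-1$. Hence $G=C_g$.

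To get vertex-disjointness, I would invoke \Cref{prop:fvs_of_G[N]}, which reduces the task to proving $\fv(G[\mathcal{N}])=k$. Clearly $\fv(G[\mathcal{N}])\le\fv(G)=k$, since $G[\mathcal{N}]$ is a subgraph of $G$. For the reverse inequality, note that $G[\mathcal{N}]$ is a plane digraph on the same $n$ vertices. Its digirth $g'$ satisfies $g'\ge g$, being a subgraph of $G$, and also $g'\le g$, because it contains cycles of length $g$. So $g'=g$, and applying the already established upper bound, \Cref{thm:maximal_normal_set_greater_than_fvs} with \Cref{prop:upper_bound_for_maximal_normal_set}, to $G[\mathcal{N}]$ gives $\fv(G[\mathcal{N}])\le\frac{n-2}{g-2}=k$.

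That only reproduces the easy inequality, so the needed direction $\fv(G[\mathcal{N}])\ge k$ is the main obstacle. My first attempt: suppose a set $S$ of size $k-1$ hits every cycle of $G[\mathcal{N}]$. Then $G-S$ still contains a directed cycle $D$, and $D$ must use at least one arc outside $E(\mathcal{N})$. I would try to show that such a $D$, together with the facts that all faces of $G[\mathcal{N}]$ are directed $g$-cycles and every vertex lies in $G[\mathcal{N}]$, yields a directed cycle of length less than $g$ in $G$. An arc outside $E(\mathcal{N})$ lies inside some face $F$ of length $g$ of $G[\mathcal{N}]$ and is a chord of that directed facial $g$-cycle, because no vertex lies strictly inside $F$. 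By the same chord argument as above, this chord closes a directed cycle of length less than $g$, contradicting the digirth. Hence $G=G[\mathcal{N}]$. This actually gives $\fv(G[\mathcal{N}])=\fv(G)=k$ directly, and \Cref{prop:fvs_of_G[N]} then finishes the argument. The delicate point is checking that the chord argument is valid even when the facial boundary of $F$ repeats vertices; I expect this to go through since a directed closed walk of length $g$ containing a shortcut still yields a shorter directed closed walk, hence a short directed cycle.
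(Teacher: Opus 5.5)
Your proof is correct and uses the same ingredients as the paper: a $q$-minimal normal set of size $\fv(G)$, the energy identity of \Cref{thm:energy}, the chord argument on a face of $G[\mathcal{N}]$, and \Cref{prop:fvs_of_G[N]}. The paper arranges these as a case split, showing $\mathbf{E_{tot}}(\mathcal{N})>0$ when $q(\mathcal{N})>0$ and using $k\le n/g$ when $q(\mathcal{N})=0$, whereas you first force all four energies to vanish, conclude $G=G[\mathcal{N}]$, and finish with $\mathbf{E_4}=0$. Your ordering is slightly cleaner: having $G[\mathcal{N}]$ connected before the chord step guarantees that both endpoints of an extra arc lie on the same directed facial walk, and your walk-based shortcut argument correctly handles repeated vertices.
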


\begin{proof}
    For $G = C_g$ the equality is clear as $\fv(G) = 1$ and $n = g$.

    By \Cref{thm:maximal_normal_set_greater_than_fvs}, there exists a normal set of cycles of size $k = \fv(G)$. Take such a set $\mathcal{N}$ that minimizes $q(\mathcal{N})$.
    Suppose first that $q(\mathcal{N}) > 0$. Then by \Cref{prop:fvs_of_G[N]} we have $\fv(G[\mathcal{N}]) \neq k$ and in particular $G[\mathcal{N}] \neq G$.
    
    If $n_{\mathcal{N}} < n$ then the energy of internal vertices is at least one: $\mathbf{E_3}(\mathcal{N}) \ge 1$.
    
    If $n_{\mathcal{N}} = n$ and $m_{\mathcal{N}} < m$, then there exists an arc $uv \in E(G) \backslash E(G[\mathcal{N}])$. In $G$, $uv$ is a chord of a face $D$ of $G[\mathcal{N}]$. As $\mathcal{N}$ is normal, the boundary of the face $D$ forms a directed cycle. We note $P_{uv}$ (resp. $P_{vu}$) the path from $u$ to $v$ (resp. from $v$ to $u$) in the face $D$. Denote by $\ell_1$ the number of arcs of $P_{uv}$ and by $\ell_2$ the number of arcs of $P_{vu}$. Now observe that $P_{vu}$ together with the arc $uv$ form a cycle in $G$ then $\ell_2 + 1 \ge g$. Observe also that $\ell_1 \ge 2$ as $G$ has no parallel arcs. Finally $\ell_D = \ell_1 + \ell_2 \ge g+1$ and then the energy of faces is not null: $\mathbf{E_2}(\mathcal{N}) \ge \frac{1}{g}$.
    
    Both cases together with \Cref{prop:positive_energies} show that the total energy $\mathbf{E_{tot}}(\mathcal{N})$ is strictly positive and thus 
    \begin{displaymath}
        \fv(G) = k = |\mathcal{N}| = \frac{n-2}{g-2} - \frac{\mathbf{E_{tot}}(\mathcal{N})}{g-2} < \frac{n-2}{g-2}
    \end{displaymath}
    
    Suppose now that $q(\mathcal{N}) = 0$. Then, by \Cref{prop:fvs_of_G[N]} the cycles of the normal set $\mathcal{N}$ are pairwise vertex-disjoint and since  $|\mathcal{N}|=k$, we conclude that $\fv(G) = k \le \frac{n}{g}$. Therefore if $\fv(G) = \frac{n-2}{g-2} \le \frac{n}{g}$ then $n \le g$. On the other hand, as $n \ge 3$ and $\fv(G) = \frac{n-2}{g-2} > 0$, $G$ must contain at least one directed cycle which implies that $n \ge g$. Thus $n=g$ and $G = C_g$.
\end{proof}

\Cref{prop:tightness_upper_bound} together with \Cref{thm:maximal_normal_set_greater_than_fvs} complete the proof of \Cref{thm:upper_bound_FVS}.


\subsection{Valuations of cycles: proof of \Cref{thm:maximal_normal_set_greater_than_fvs}}\label{sec:proof_fvs_lower_than_maximal_normal_set}

In this section, we prove \Cref{thm:maximal_normal_set_greater_than_fvs}, which states that for every plane digraph $G$, $\fv(G) \le |\mathcal{N}_{\max}(G)|$.

Let $G$ be a plane digraph. A vertex $v \in V(G)$ is called \emph{essential} if it lies on a cycle of every maximum normal set of $G$. That is, in every normal set $\mathcal{N}$ of $G$ of maximal cardinality $\Modul{\mathcal{N}} = |\mathcal{N}_{\max}(G)|$, there exists at least one directed cycle that contains $v$. The main ingredient in the proof of \Cref{thm:maximal_normal_set_greater_than_fvs} is the following \namecref{lem:essential_vertex}, which we prove later in \Cref{sec:proof_essential_vertex}.

\begin{lemma}\label{lem:essential_vertex}
  Every directed cycle of $G$ contains at least one essential vertex.
\end{lemma}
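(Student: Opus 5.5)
The plan is to argue by contradiction in the spirit of Lovász's proof of the Lucchesi--Younger theorem. Let $k = |\mathcal{N}_{\max}(G)|$ and suppose some directed cycle $C = v_1 v_2 \dots v_p$ of $G$ contains no essential vertex. Then for each $i$ there is a maximum normal set $\mathcal{N}_i$ with $|\mathcal{N}_i| = k$ and no cycle of $\mathcal{N}_i$ passes through $v_i$. Form the multiset
\[
\mathcal{M} = \{C\} \uplus \mathcal{N}_1 \uplus \dots \uplus \mathcal{N}_p ,
\]
which has $pk+1$ cycles.

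First I will bound the arc multiplicities in $\mathcal{M}$. An arc not on $C$ lies in at most one cycle of each $\mathcal{N}_i$, so it has multiplicity at most $p$. An arc $v_iv_{i+1}$ of $C$ is avoided by both $\mathcal{N}_i$ and $\mathcal{N}_{i+1}$, so it has multiplicity at most $(p-2)+1 = p-1$. Half-arcs behave the same way at vertices: at $v_i$, the set $\mathcal{N}_i$ contributes nothing.

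Next I apply \Cref{prop:operation_lam} to $\mathcal{M}$. This gives a laminar multiset $\mathcal{M}'$ with $pk+1$ cycles and the same multiplicity on every arc.

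The goal is then to split $\mathcal{M}'$ into $p$ classes, each of which is a normal set. Pigeonhole would then give a class with at least $k+1$ cycles, contradicting maximality. For arc-disjointness alone this is an interval-colouring argument on the laminar family. Cycles sharing an arc are nested, so along each arc they form a chain of length at most $p$, and in the planar nesting forest one can colour greedily with $p$ colours. The natural choice is to colour by depth modulo $p$, or by a greedy top-down assignment.

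The main obstacle is the alternation condition at each vertex $v$. Laminarity means the in/out pairs of half-arcs used by the cycles through $v$ are non-crossing in the cyclic order around $v$. I must make sure each colour class, restricted to $v$, is again an alternating sequence, i.e.\ no face of the class has two consecutive half-arcs of the same orientation. To handle this I will examine the local picture at $v$. The multiset of half-arcs at $v$ is the sum of the alternating patterns coming from the $\mathcal{N}_i$ (and $C$), and uncrossing preserves this multiset. I then want to show the local structure is a ``stack'' of at most $p$ alternating layers compatible with the nesting. The first thing I would try is to strengthen the colouring invariant: assign colours so that at every vertex, cycles of the same colour through $v$ occupy pairwise non-nested consecutive pairs of half-arcs. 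Then I would prove by induction on the nesting forest that such a colouring exists whenever every half-arc has multiplicity at most $p$, using that the original sets were normal to rule out two same-orientation half-arcs being forced into adjacency.

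If the direct colouring fails, my fallback is an LP/total-dual-integrality formulation. I would phrase normal sets as integer points of a polytope defined by arc capacities and local alternation constraints. Uncrossing should then show that the optimum is attained on a laminar support and is integral, so that $\mathcal{M}'/p$ is a fractional normal packing of value $k+1/p > k$, which can be rounded to a normal set of size $k+1$.
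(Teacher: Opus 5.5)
Your setup is the paper's: the multiset $\{C\}\uplus\mathcal{N}_1\uplus\dots\uplus\mathcal{N}_p$ of $pk+1$ cycles, made laminar by \Cref{prop:operation_lam}. The gap is the step that carries all the difficulty, namely splitting the laminar $\mathcal{M}'$ into $p$ normal classes. You do not supply it, and the tools you propose for it do not work.

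(i) Arc multiplicity at most $p$ is the wrong hypothesis. Suppose two consecutive half-arcs at $v$ have the same orientation and each is used $p$ times. Then every class must contain both, and no class is alternating at $v$. You say normality of the $\mathcal{N}_i$ should rule this out, but you never name a quantity that records it and survives uncrossing.

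(ii) Colouring by depth modulo $p$ ignores orientation. Take a clockwise cycle containing a counterclockwise one containing a clockwise one, all through $v$. This family is already alternating at $v$. Yet depth modulo $2$ puts the two clockwise cycles into one class, and there two half-arcs of the same orientation become consecutive at $v$. Analogous chains defeat every $p$.

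(iii) Your strengthened invariant (same-coloured cycles through $v$ occupy pairwise non-nested pairs) is unattainable. A single normal set may consist of $p+1$ cycles nested through one vertex with alternating orientations. It has multiplicity $1$, but would need $p+1$ colours. Also, cycles sharing an arc need not be nested: a clockwise and a counterclockwise cycle can share an arc with disjoint interiors.

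(iv) The LP fallback assumes precisely the integrality statement that would have to be proved.

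The missing idea is the paper's segment multiplicity $\mathrm{mult}_{\mathcal{V}}(v)=\max_S\bigl|\sum_{e\in S}\mu_v(e)\,\mathcal{V}^*(e)\bigr|$, taken over segments $S$ of consecutive half-arcs at $v$. It has the properties you need:
(a) it is subadditive;
(b) it depends only on arc usage, so uncrossing leaves it unchanged;
(c) it is at most $p$ for your $\mathcal{M}'$, since at most $p$ of the $p+1$ normal sets pass through any vertex;
(d) a valuation of multiplicity $1$ is normal (\Cref{prop:multiplicity_normal_set}).

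The paper then only ever splits in two. It partitions by the parity of (layer in the inclusion forest) plus (indicator of counterclockwise orientation). It proves each half has multiplicity at most $q$ when the whole has at most $2q$ (\Cref{lem:consecutive_half_arcs,lem:multiplicity_reduced_valuations}). For odd $k$ it first adds one more maximum normal set, and it iterates down to multiplicity $1$.

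Your direct $p$-way split can also be made to work with the right colour. Replace shared arcs by parallel copies, and let $\varphi(x)$ be the number of clockwise minus the number of counterclockwise cycles enclosing $x$. Then:
(a) crossing any copy from its left to its right raises $\varphi$ by exactly $1$;
(b) all faces immediately to the left of a cycle $C$ have the same value $\lambda(C)$;
(c) the range of $\varphi$ around $v$ equals $\mathrm{mult}(v)\le p$.

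Colour each cycle $C$ by $\lambda(C)\bmod p$. At each vertex, each class then consists exactly of the crossings of a single level of $\varphi$, and these alternate in orientation. The at most $p$ copies of any arc receive distinct colours. So each class is normal, and pigeonhole gives a normal set of size $k+1$. For $p=2$ this is exactly the paper's parity rule. As written, though, your proposal contains neither this argument nor the invariant it rests on.
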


\begin{proof}[Proof of \Cref{thm:maximal_normal_set_greater_than_fvs} assuming \Cref{lem:essential_vertex}]
  We proceed by induction on $|\mathcal{N}_{\max}(G)|$ to show that $\fv(G) \le |\mathcal{N}_{\max}(G)|$. If $|\mathcal{N}_{\max}(G)| = 0$, then $G$ has no directed cycle and $\fv(G) = 0$.

  Assume the result holds for every plane graph $G$ with $|\mathcal{N}_{\max}(G)| \le k$. Suppose that $|\mathcal{N}_{\max}(G)| = k+1$. Then $G$ contains a directed cycle, and by \Cref{lem:essential_vertex}, this cycle has an essential vertex $v_0$. Let $G' = G \backslash \lbrace v_0 \rbrace$. Since $v_0$ is essential, we have $|\mathcal{N}_{\max}(G')| < |\mathcal{N}_{\max}(G)| = k+1$ and thus $|\mathcal{N}_{\max}(G')| \le k$. Therefore, by the induction hypothesis $\fv(G') \le |\mathcal{N}_{\max}(G')| \le k$. Moreover, a feedback vertex set of $G$ can be obtained by adding $v_0$ to a feedback vertex set of $G'$, so:
  \begin{displaymath}
    \fv(G) \le \fv(G') + 1 \le k + 1 = |\mathcal{N}_{\max}(G)|.
  \end{displaymath}
\end{proof}
In the remainder of this section, we prove \Cref{lem:essential_vertex}.

\subsubsection{Valuations of cycles}

We introduce some definitions and remarks inspired by Lucchesi and Younger's work~\cite{LY78}.

\begin{definition}[Valuation]
  Let $\mathcal{C}$ be the set of the directed cycles of $G$. A \emph{valuation} of $G$ is a function $\mathcal{V} : \mathcal{C} \to \NN$.
  \begin{itemize}
  \item The \emph{weight} of a valuation $\mathcal{V}$ is defined as: $\displaystyle\# \mathcal{V} = \sum_{C \in \mathcal{C}} \mathcal{V}(C)$.
  \item We say that $\mathcal{V}$ \emph{uses a cycle} $C \in \mathcal{C}$ when $\mathcal{V}(C) > 0$. If $\mathcal{V}(C) = k$ we say that \emph{$\mathcal{V}$ uses $C$ $k$ times}.
  \item We say that $\mathcal{V}$ \emph{uses a vertex $v$ (resp.\ arc $e$) at least $k$ times} when $\sum_{C\ni v} \mathcal{V}(C) \ge k$ (resp. $\sum_{C\ni e} \mathcal{V}(C) \ge k$). Furthermore, if an arc $e$ is used $k$ times, then its two corresponding half-arcs are also used $k$ times.
  \item We denote by $\langle \mathcal{V}\rangle$ the set of cycles used by $\mathcal{V}$, that is: $\displaystyle       \langle \mathcal{V}\rangle = \lbrace C \in \mathcal{C} \mid \mathcal{V}(C) > 0 \rbrace$.
    
  \item We say that $\mathcal{V}$ is \emph{unitary} if $\underset{C \in \mathcal{C}}{\max}\{\mathcal{V}(C)\} = 1$.   
  \item We say that $\mathcal{V}$ is \emph{laminar} if two crossing cycles $C_1$ and $C_2$ are not both used in $\mathcal{V}$, that is: $\mathcal{V}(C_1) \cdot \mathcal{V}(C_2) = 0$.
    In other words $\mathcal{V}$ is a laminar valuation if $\langle \mathcal{V}\rangle$ is a laminar set of cycles.
  \item We say that $\mathcal{V}$ is \emph{normal} if $\mathcal{V}$ is unitary and $\langle\mathcal{V}\rangle$ is a normal set of cycles. Hence, there is a natural bijection between normal sets and normal valuations: a normal valuation is precisely the characteristic function (indicator function) of a normal set of cycles, and conversely, every normal set of cycles defines a unique normal valuation via this correspondence.
  \item For a set of cycles $\mathcal{S} \subseteq \mathcal{C}$, we can define the unitary valuation $\mathcal{F}$ \emph{associated to $S$} as:
    \begin{displaymath}
      \forall C \in \mathcal{C}, \quad \mathcal{F}(C) = \mathbb{1}_{C \in S} = \begin{cases}
      1 \text{ if $C \in S$} \\
      0 \text{ if $C \not \in S$}
      \end{cases}
    \end{displaymath}
  \end{itemize}
\end{definition}

Observe that if $\mathcal{V}$ is a normal valuation then $\langle \mathcal{V}\rangle$ is a normal set of $G$ of size $\# \mathcal{V}$.
Intuitively, a valuation generalizes the notion of a set of cycles where multiplicities indicate how many times each cycle is counted.

Given a half-arc $e$ incident to $v$, we set $\mu_v(e) = 1$ if $e$ is an outgoing half-arc of $v$ and $\mu_v(e) = -1$ if $e$ is an incoming half-arc of $v$. 

\begin{definition}
  Let $\mathcal{V}$ be a valuation of $G$ and $v$ a vertex of $G$.
  \begin{itemize}[label = $\bullet$]
  \item For a half-arc $e$, $\mathcal{V}^*(e)$ denotes the number of times $e$ is used in $\mathcal{V}$, which is:
      $\mathcal{V}^*(e) = \sum_{C \ni e} \mathcal{V}(C)$.
  
  \item A set $S$ of consecutive half-arcs around $v$ in the cyclic order is a \emph{segment of $v$} and we let $\segmentval{v}{\mathcal{V}}{S} = \sum_{e \in S} \mu_v(e) \times \mathcal{V}^*(e)$.
    
  \item Define the \emph{multiplicity of a vertex $v$} in a valuation
    $\mathcal{V}$ as $\mult_{\mathcal{V}}(v) = \underset{S \text{
        segment of $v$}}{\max}
    \left\{\Modul{\segmentval{v}{\mathcal{V}}{S}}\right\}$.
  \item Define the \emph{multiplicity of a valuation $\mathcal{V}$} as the maximal multiplicity among all vertices of $G$: $\mult(\mathcal{V}) = \underset{v \in V(G)}{\max} \{\mult_\mathcal{V}(v)\}$.
  \end{itemize}
\end{definition}

We provide an example in \Cref{fig:example_multiplicity}, where $\segmentval{v}{\mathcal{V}}{S_1} = 1 + 2 - 1 + 1 + 2 = 5$ and $\segmentval{v}{\mathcal{V}}{S_2} = 1 - 3 + 1 + 2 = 1$. The multiplicity of $v$ in this valuation is $\mult_{\mathcal{V}}(v) = 5$.

\begin{figure}[htbp]
  \centering
  \includegraphics[width=0.35\linewidth]{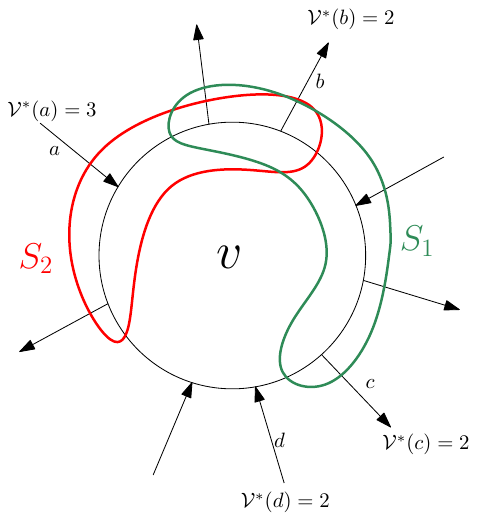}
  \caption{Example for computing the multiplicity of a vertex in some valuation $\mathcal{V}$.}\label{fig:example_multiplicity}
\end{figure}

\begin{remark}\label{rem:laminar_valuation} \ 
  \begin{itemize}
  \item A laminar valuation can use the same cycle multiple times since a cycle does not cross itself.
  \item Any valuation $\mathcal{V}$ can be transformed into a laminar valuation $\mathcal{V}'$ with $\# \mathcal{V}' = \# \mathcal{V}$ and $\mult(\mathcal{V}') = \mult(\mathcal{V})$. To construct
    $\mathcal{V}'$, we iterate the following process until $\langle \mathcal{V}\rangle$ does not contain any pair of crossing cycles:
    \begin{itemize}
    \item Take two crossing cycles $C_1, C_2 \in \langle \mathcal{V}\rangle$.
    \item Define $C_1', C_2'$ the cycles obtained from $C_1, C_2$ using the transformations of \Cref{prop:operation_lam}.
    \item We set $\mathcal{V} := \mathcal{V} + \mathbb{1}_{C_1'} + \mathbb{1}_{C_2'} - \mathbb{1}_{C_1} - \mathbb{1}_{C_2}$
    \end{itemize}

    Indeed, the transformations described in \Cref{prop:operation_lam} do not change the arcs used in the valuation. Then for any vertex $v$, we have $\segmentval{v}{\mathcal{V}'} = \segmentval{v}{\mathcal{V}}$.  
  \end{itemize}
\end{remark}

  \begin{proposition}\label{prop:multiplicity_normal_set}
    Let $\mathcal{V}, \mathcal{V}_1, \mathcal{V}_2$ be valuations of $G$.
    \begin{enumerate}
    \item\label{itm:mult1} $\mathcal{V}$ is normal if and only if $\mult(\mathcal{V}) = 1$.
    \item\label{itm:multK} Suppose $\mathcal{V}$ can be decomposed into $\ell$ normal valuations $\mathcal{N}_1, \ldots, \mathcal{N}_\ell$, i.e. $\mathcal{V} = \mathcal{N}_1 + \cdots + \mathcal{N}_\ell$. If for every $v \in V(G)$, $v$ is used in at most $k$ of these valuations, then $\mult(\mathcal{V}) \le k$.
    \item\label{itm:multSubAdditive} $\mult(\mathcal{V}_1 + \mathcal{V}_2) \le \mult(\mathcal{V}_1) + \mult(\mathcal{V}_2)$.
    \end{enumerate}
  \end{proposition}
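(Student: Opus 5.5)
The three items follow from one basic observation about how a segment sum behaves as a segment grows half-arc by half-arc. I will prove that observation first, then derive the items.

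For item~\ref{itm:mult1}, I will first observe that $\segmentval{v}{\mathcal{V}}{S}$ is a signed sum of the values $\mathcal{V}^*(e)$ over consecutive half-arcs. Taking $S=\{e\}$ a single half-arc gives $\mult_\mathcal{V}(v)\ge \mathcal{V}^*(e)$. So if $\mult(\mathcal{V})=1$, every arc is used at most once, which forces $\mathcal{V}$ to be unitary with $\langle\mathcal{V}\rangle$ arc-disjoint. (If some cycle $C$ had $\mathcal{V}(C)\ge 2$, any arc of $C$ would be used twice.)

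Next, I discard half-arcs with $\mathcal{V}^*(e)=0$, since they contribute nothing to any segment sum. If two consecutive used half-arcs around $v$ had the same orientation, the segment consisting of them (plus the unused half-arcs in between) has sum $\pm 2$, contradicting $\mult=1$. Hence every vertex of $G[\langle\mathcal{V}\rangle]$ is alternatingly oriented, and $\mathcal{V}$ is normal.

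Conversely, suppose $\mathcal{V}$ is normal. Around each $v$, the used half-arcs have weights $\pm1$ and alternate in sign. Any segment therefore has a partial sum in $\{-1,0,1\}$, so $\mult_\mathcal{V}(v)\le 1$. Equality $\mult(\mathcal{V})=1$ holds as soon as some arc is used. The empty valuation is the degenerate case; I will treat it by convention, or note that it has multiplicity $0$ and is trivially normal (this is a boundary case to state carefully).

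For item~\ref{itm:multSubAdditive}, I will use linearity: $(\mathcal{V}_1+\mathcal{V}_2)^*=\mathcal{V}_1^*+\mathcal{V}_2^*$, so the segment function is additive, $\segmentval{v}{\mathcal{V}_1+\mathcal{V}_2}{S}=\segmentval{v}{\mathcal{V}_1}{S}+\segmentval{v}{\mathcal{V}_2}{S}$. The triangle inequality then gives the bound segment by segment, and taking maxima over $S$ and then over $v$ concludes.

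For item~\ref{itm:multK}, the point is that the bound is $k$, not $\ell$, so plain subadditivity is not enough. I will fix $v$ and a segment $S$ of $v$ and write $\segmentval{v}{\mathcal{V}}{S}=\sum_{i=1}^{\ell}\segmentval{v}{\mathcal{N}_i}{S}$ by additivity. If $\mathcal{N}_i$ does not use $v$, then $\mathcal{N}_i^*(e)=0$ for every half-arc at $v$, so its term vanishes. If $\mathcal{N}_i$ does use $v$, the term has absolute value at most $\mult_{\mathcal{N}_i}(v)\le 1$ by item~\ref{itm:mult1}. At most $k$ terms are nonzero, each bounded by $1$, so $|\segmentval{v}{\mathcal{V}}{S}|\le k$ and hence $\mult(\mathcal{V})\le k$.

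The only real obstacle is the careful handling of unused half-arcs in item~\ref{itm:mult1}, namely ensuring that segments may skip unused half-arcs while still being consecutive.
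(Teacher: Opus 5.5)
Your proof is correct and follows the paper's argument for all three items: single-half-arc and two-half-arc segments for item~1, dropping the valuations that avoid $v$ and applying the triangle inequality for item~2, and additivity for item~3. Including the unused half-arcs between $e_1$ and $e_2$ is more careful than the paper's choice $S=\{e_1,e_2\}$. On the boundary case, no convention is needed: the paper defines unitary as $\max_C \mathcal{V}(C)=1$, so the zero valuation is \emph{not} normal and the equivalence holds as stated, whereas calling it ``trivially normal'' would actually contradict item~1.
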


  \begin{proof}
    \begin{enumerate}
    \item If $\mathcal{V}$ is normal, then by definition, the cycles of $\langle\mathcal{V}\rangle$ are arc-disjoint and each vertex $v$ in $G[\langle \mathcal{V}\rangle]$ is alternatingly oriented. Therefore, $\segmentval{v}{\mathcal{V}}{S} \in \lbrace -1,0,1 \rbrace$ for every segment $S$ of $v$. So $\mult_{\mathcal{V}}(v) = 1$ and thus $\mult(\mathcal{V}) = 1$.
      
      Conversely, if $\mathcal{V}$ is not normal, then either it is not unitary or $\langle \mathcal{V}\rangle$ is not arc-disjoint or there exists a vertex $v$ that is not alternatingly oriented in $G[\langle \mathcal{V}\rangle]$. In the first two cases, there exists a half-arc $e$ used at least twice in $\mathcal{V}$. Then, taking the segment $S = \lbrace e \rbrace$, we get $\Modul{\segmentval{v}{\mathcal{V}}{S}} = \mathcal{V}^*(e) \ge 2$ and so $\mult_{\mathcal{V}}(v) \ge 2$. In the latter case, there exist two consecutive half-arcs $e_1$ and $e_2$ of $v$ with the same orientation. Then, taking the segment $S = \lbrace e_1, e_2 \rbrace$, we have $\Modul{\segmentval{v}{\mathcal{V}}{S}} =  2$. So $\mult_{\mathcal{V}}(v) \ge 2$.
    \item Let $v \in V(G)$. By hypothesis, $v$ is used in at most $k$ normal valuations among ${(\mathcal{N}_i)}_{1 \le i \le \ell}$, say $\mathcal{N}_1, \ldots , \mathcal{N}_k$. Given $S$ a segment of $v$, we have:
      \begin{displaymath}
        \begin{split}
          \Modul{\segmentval{v}{\mathcal{V}}{S}} &= \Modul{\sum_{e \in S} \mu_v(e) \times \mathcal{V}^*(e)} = \Modul{\sum_{e \in S} \left(\mu_v(e) \times \sum_{i=1}^\ell \mathcal{N}_i^*(e)\right)} = \Modul{\sum_{e \in S} \left(\mu_v(e) \times \sum_{i=1}^k \mathcal{N}_i^*(e)\right)} \\
                                                &= \Modul{\sum_{i=1}^k \sum_{e \in S} \mu_v(e) \times \mathcal{N}_i^*(e)} = \Modul{\sum_{i=1}^k \segmentval{v}{\mathcal{N}_i}{S}} \le \sum_{i=1}^k \Modul{\segmentval{v}{\mathcal{N}_i}{S}} \le k \ \text{(since each $\mathcal{N}_i$ is normal).}
        \end{split}
      \end{displaymath}

      So $\mult_{\mathcal{V}}(v) \le k$ and thus $\mult(\mathcal{V}) \le k$.

    \item The proof works similarly to item\ref{itm:multK}.
    \end{enumerate}
  \end{proof}

\subsubsection{Proof of \Cref{lem:essential_vertex}}\label{sec:proof_essential_vertex}

We prove \Cref{lem:essential_vertex} by contradiction. Suppose $G$ is a counterexample to the statement, that is, there exists a directed cycle in $G$ that contains no essential vertex. We first construct a laminar valuation $\mathcal{V}$ of $G$ with large weight and small multiplicity.

\begin{lemma}\label{lem:existence_of_big_laminar_set_with_low_mult}
  There exists an integer $k\ge 1$ and a valuation $\mathcal{V}$ of $G$ such that:
  \begin{enumerate}
  \item $\mathcal{V}$ is laminar,
  \item $\#\mathcal{V} = k \times  |\mathcal{N}_{\max}(G)| + 1$,
  \item $\mult(\mathcal{V}) \le k$.
  \end{enumerate}
\end{lemma}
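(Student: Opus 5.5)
Throughout, let $N = |\mathcal{N}_{\max}(G)|$, and fix a directed cycle $C_0$ of $G$ that contains no essential vertex; this exists by our assumption that $G$ is a counterexample. The idea is to mimic the Lucchesi--Younger argument. We sum many maximum normal valuations that avoid the vertices of $C_0$ as evenly as possible, and then add $C_0$ once. The sum has weight $kN$, and adding $C_0$ brings it to $kN+1$. We then argue that the multiplicity stays at most $k$ and finally laminarize.

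\textbf{Step 1 (covering).} Write the vertices of $C_0$ as $v_1,\dots,v_r$. Since no $v_i$ is essential, for each $i$ there is a maximum normal set $\mathcal{N}_i$ with $|\mathcal{N}_i| = N$ that avoids $v_i$. Let $\mathcal{N}^{(i)}$ be the associated normal valuation, and take
\[
\mathcal{V}_0 = \sum_{i=1}^{r} \mathcal{N}^{(i)} ,
\]
so that $k = r$ and $\#\mathcal{V}_0 = rN$. Every vertex $v_i$ of $C_0$ is missed by at least one summand, namely $\mathcal{N}^{(i)}$, so it is used by at most $r-1$ of the normal valuations. Every other vertex is used by at most $r$ of them. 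Item~\ref{itm:multK} of \Cref{prop:multiplicity_normal_set} therefore gives $\mult_{\mathcal{V}_0}(v) \le r$ for all $v$. Applied to the sub-sum over the summands that actually use $v_i$, the same item gives $\mult_{\mathcal{V}_0}(v_i) \le r-1$.

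\textbf{Step 2 (adding $C_0$).} Set $\mathcal{V}_1 = \mathcal{V}_0 + \mathbb{1}_{C_0}$, so that $\#\mathcal{V}_1 = rN + 1$. At a vertex off $C_0$ nothing changes. At a vertex $v_i$ on $C_0$, the segment values of $\mathbb{1}_{C_0}$ lie in $\{-1,0,1\}$, since $C_0$ contributes exactly one incoming and one outgoing half-arc at $v_i$. By subadditivity (item~\ref{itm:multSubAdditive}) we get $\mult_{\mathcal{V}_1}(v_i) \le (r-1) + 1 = r$. Hence $\mult(\mathcal{V}_1) \le r$.

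The one thing to verify carefully here is the local subadditivity claim at a single vertex. It follows exactly as in item~\ref{itm:multK}: $\segmentval{v}{\cdot}{S}$ is linear in the valuation, so the absolute value of a sum of segment values is at most the sum of their absolute values. So I do not expect a real obstacle in Steps 1 and 2.

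\textbf{Step 3 (laminarity).} Apply \Cref{rem:laminar_valuation} to $\mathcal{V}_1$. This produces a laminar valuation $\mathcal{V}$ with $\#\mathcal{V} = rN+1$ and $\mult(\mathcal{V}) = \mult(\mathcal{V}_1) \le r$. Taking $k = r \ge 1$ proves the lemma.

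The only point where I expect subtlety is the assertion in \Cref{rem:laminar_valuation} that the uncrossing operations preserve both weight and multiplicity. The weight is preserved because two cycles are replaced by two cycles. The multiplicity is preserved because the uses of every arc, and hence of every half-arc, are unchanged. Since this is stated earlier in the paper, I will simply invoke it.
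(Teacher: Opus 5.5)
Your proof is correct and follows essentially the paper's argument: the paper also sums the $k=|D|$ maximum normal valuations that avoid the respective $v_i$, adds the normal valuation of $\{D\}$, and laminarizes via \Cref{rem:laminar_valuation}. The only difference is cosmetic. The paper treats $\{D\}$ as a $(k+1)$-st normal set and applies item~\ref{itm:multK} of \Cref{prop:multiplicity_normal_set} once, since each vertex lies in at most $k$ of the $k+1$ summands; you instead bound the multiplicity by $r-1$ at the vertices of $C_0$ and then add $C_0$ using local subadditivity.
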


\begin{proof} 
  Since $G$ is a counterexample to \Cref{lem:essential_vertex}, there exists a directed cycle $D$ in $G$ that contains no essential vertex. Set $k = \Modul{D}$ and $D = v_1 v_2 \ldots v_k$. For each $i \in \IntSet{1}{k}$, since $v_i$ is not essential, there exists a maximum normal set $N_i$ that does not use $v_i$. Also set $N_{k+1} = \lbrace D \rbrace$, which is a normal set. Set ${(\mathcal{N}_i)}_{1 \le i \le k+1}$ the normal valuations associated with these normal sets. Let $\mathcal{W} = \mathcal{N}_1 + \cdots + \mathcal{N}_{k+1}$. Then:
  \begin{itemize}
  \item One can construct a laminar valuation $\mathcal{V}$ from $\mathcal{W}$ by \Cref{rem:laminar_valuation} such that $\# \mathcal{V} = \# \mathcal{W}$ and $\mult(\mathcal{V}) = \mult(\mathcal{W})$.
  \item $\# \mathcal{V} = \# \mathcal{W} = \sum_{i=1}^{k+1} \# \mathcal{N}_i = k \times |\mathcal{N}_{\max}(G)| + 1$.
  \item Each vertex $v\in V(G)$ is used in at most $k$ valuations $\mathcal{N}_i$. If $v$ is not a vertex of $D$ then $v$ is not used in $\mathcal{N}_{k+1}$, otherwise $v = v_i$ for some $i \in \IntSet{1}{k}$  and then $v$ is not used in $\mathcal{N}_i$. Thus we have $\mult(\mathcal{V}) = \mult(\mathcal{W}) \le k$ by \Cref{prop:multiplicity_normal_set}\ref{itm:multK}.
  \end{itemize}
\end{proof}

\begin{lemma}\label{lem:multiplicity_reduction}
  Let $\mathcal{V}$ be a laminar valuation such that
  $\begin{cases}
    \# \mathcal{V} \ge k \times  |\mathcal{N}_{\max}(G)| + 1 \\
    2 \le \mult(\mathcal{V}) \le k
  \end{cases}$ for some integer $k \ge 2$.
  Then one can construct another laminar valuation $\mathcal{V'}$ such that
  $\begin{cases}
    \# \mathcal{V'} \ge \Ent{\frac{k+1}{2}} \times  |\mathcal{N}_{\max}(G)| + 1 \\
    \mult(\mathcal{V'}) \le \Ent{\frac{k+1}{2}}
  \end{cases}$.
\end{lemma}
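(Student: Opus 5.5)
The approach is to split $\mathcal{V}$ into two laminar valuations $\mathcal{V}_a+\mathcal{V}_b=\mathcal{V}$ with $\mult(\mathcal{V}_a)\le \Ent{\frac{k+1}{2}}$ and $\mult(\mathcal{V}_b)\le \Ent{\frac{k}{2}}$, and then finish by pigeonhole. Set $N=|\mathcal{N}_{\max}(G)|$. Since $\#\mathcal{V}_a+\#\mathcal{V}_b=\#\mathcal{V}\ge kN+1$, either $\#\mathcal{V}_a\ge \Ent{\frac{k+1}{2}}N+1$ or $\#\mathcal{V}_b\ge \Ent{\frac{k}{2}}N+1$.

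In the first case we take $\mathcal{V}'=\mathcal{V}_a$. In the second case, if $k$ is even then $\Ent{\frac{k}{2}}=\Ent{\frac{k+1}{2}}$ and we take $\mathcal{V}'=\mathcal{V}_b$. If $k$ is odd, we set $\mathcal{V}'=\mathcal{V}_b+\mathcal{N}$, where $\mathcal{N}$ is a maximum normal valuation. Then $\#\mathcal{V}'\ge \frac{k+1}{2}N+1$. By \Cref{prop:multiplicity_normal_set}\ref{itm:mult1} and \ref{itm:multSubAdditive}, $\mult(\mathcal{V}')\le \frac{k-1}{2}+1=\Ent{\frac{k+1}{2}}$. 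We then make it laminar again with \Cref{rem:laminar_valuation}, which preserves both weight and multiplicity.

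To build the split I would encode multiplicity by a potential on faces. For a face $f$ of $G$ put $\varphi(f)=\sum_{C}\varepsilon_C\,\mathcal{V}(C)\,\mathbb{1}[f\subseteq \mathring{C}]$, where $\varepsilon_C=+1$ if $C$ is clockwise and $-1$ otherwise. A clockwise cycle has its interior on the right of each of its arcs, and a counterclockwise one on the left. Hence for every arc $e$, $\varphi(\text{right}(e))-\varphi(\text{left}(e))=\mathcal{V}^*(e)$. Summing signed half-arcs over a segment $S$ of $v$ then telescopes: $\segmentval{v}{\mathcal{V}}{S}$ is, up to sign, the difference of $\varphi$ between the faces at the two ends of $S$. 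Therefore $\mult_{\mathcal{V}}(v)$ is the range of $\varphi$ over the faces around $v$. The goal becomes to split $\mathcal{V}$ so that $\varphi_a=\lceil\varphi/2\rceil$ and $\varphi_b=\lfloor\varphi/2\rfloor$ (or any two potentials whose local ranges are at most $\Ent{\frac{k+1}{2}}$ and $\Ent{\frac{k}{2}}$). Indeed, the range of $\lceil\varphi/2\rceil$ on a set where $\varphi$ has range at most $k$ is at most $\lceil k/2\rceil$, and similarly for the floor.

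The main obstacle is realizing this split by an honest partition of the cycle copies of $\mathcal{V}$, so that weights add exactly, rather than by some arbitrary cycle decomposition of the circulation $\Delta\varphi_a$, whose weight is uncontrolled. This is where laminarity is used. The multiset $\langle\mathcal{V}\rangle$ with multiplicities forms a forest under inclusion of regions, and $\varphi(f)$ is the signed count of copies on the chain of regions containing $f$. I would first try a greedy assignment going down the forest from the outside in. Each cycle copy goes to $\mathcal{V}_a$ or $\mathcal{V}_b$ according to the parity of the running signed depth at the moment it is entered: a $+1$ step from an even value, or a $-1$ step to an even value, goes to $\mathcal{V}_a$, and the others go to $\mathcal{V}_b$. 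Then along every chain the partial sums of $\mathcal{V}_a$ track $\lceil\cdot/2\rceil$ of the partial sums of $\mathcal{V}$. I need to check that this is well defined when several copies of the same cycle, or sibling cycles sharing arcs, enter at once. I also need to check that the resulting $\varphi_a$ is exactly $\lceil\varphi/2\rceil$ on every face and not just up to an additive drift. If exact equality fails, I would fall back to the weaker goal that the local range of $\varphi_a$ is at most $\lceil k/2\rceil$, which is all the multiplicity bound needs. Both $\mathcal{V}_a$ and $\mathcal{V}_b$ are automatically laminar, being subfamilies of a laminar family.
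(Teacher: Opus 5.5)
Your even case is sound, but the odd case has a genuine gap. For $k=2p+1$ your second endgame needs $\mult(\mathcal{V}_b)\le p$, and the claim that $\lfloor\varphi/2\rfloor$ has local range at most $\lfloor k/2\rfloor$ is false. On integers in $[m,m+2p+1]$, $\lfloor x/2\rfloor$ has range $p+1$ when $m$ is odd, and $\lceil x/2\rceil$ has range $p+1$ when $m$ is even. The parity of the local minimum of $\varphi$ varies from vertex to vertex, so neither half is guaranteed the smaller bound. Concretely, take $k=3$, a top-level clockwise cycle $C'$ used three times, and a clockwise cycle $C$ used three times, enclosed by a vertex-disjoint clockwise cycle $C_0$ used once. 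Your rule puts the copies of $C'$ at layers $0,2$ into $\mathcal{V}_a$ and the copies of $C$ at layers $1,3$ into $\mathcal{V}_b$, so $\mult(\mathcal{V}_a)=\mult(\mathcal{V}_b)=2>\lfloor 3/2\rfloor$. With only $\mult\le p+1$ for both halves, neither endgame works. Pigeonhole only guarantees a half of weight at least $\frac{(2p+1)N+1}{2}$, not $(p+1)N+1$, and $\mathcal{V}_b+\mathcal{N}$ may have multiplicity $p+2$. The repair is the paper's: for odd $k$, first replace $\mathcal{V}$ by a laminarization of $\mathcal{V}+\mathcal{N}$. This has weight at least $(k+1)N+1$ and, by subadditivity, multiplicity at most $k+1$, which is even. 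Then apply the even split, both halves of which have multiplicity at most $\frac{k+1}{2}$.

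In the even case your hedges resolve positively. The copies whose interior contains a face $f$ are exactly the ancestors (inclusive) of the deepest such copy in the inclusion forest, and your assignment of a copy depends only on its ancestors. Hence $\varphi_a=\lceil\varphi/2\rceil$ holds exactly, with no drift. Moreover, the signed depth before entering a copy has the parity of its layer, so your rule is precisely the paper's split $\mathcal{C}_{\mathcal{V}}(\text{even},\text{clockwise})\cup\mathcal{C}_{\mathcal{V}}(\text{odd},\text{counterclockwise})$. The difference lies in the verification. The paper splits shared arcs into parallel copies and proves a local lemma on consecutive half-arcs. Your face potential instead turns each signed segment sum at $v$ into a difference of $\varphi$ between two corners of $v$, so the bound $\mult\le k/2$ becomes a one-line floor/ceiling computation. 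That is a cleaner argument once the odd case is handled as above.
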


\begin{proof}[Proof of \Cref{lem:essential_vertex} assuming \Cref{lem:multiplicity_reduction}]
    Take $(\mathcal{V},k)$ the valuation and the integer given by \Cref{lem:existence_of_big_laminar_set_with_low_mult}.
    Let $k_0 = k$ and $\mathcal{V}_0 = \mathcal{V}$. If $k_0 \ge 2$, we apply \Cref{lem:multiplicity_reduction} to $\mathcal{V}_0$ to obtain a laminar valuation $\mathcal{V}_1$ such that $\begin{cases}
        \# \mathcal{V}_1 \ge k_1 \times |\mathcal{N}_{\max}(G)| + 1 \\
        \mult(\mathcal{V}_1) \le k_1
    \end{cases}$ with $k_1 = \left\lfloor \frac{k_0 + 1}{2} \right\rfloor$.
    
      We iterate the process as long as $k_i \ge 2$ and it clearly terminates.
      At the end, we obtain a laminar set $\mathcal{V}'$ such that:
      \begin{displaymath}
        \#\mathcal{V}' \ge |\mathcal{N}_{\max}(G)| + 1 \qquad \text{and} \qquad \mult(\mathcal{V}') = 1.
      \end{displaymath}
      According to \Cref{prop:multiplicity_normal_set}\ref{itm:mult1}, we deduce that $\mathcal{V}'$ is a normal valuation. Thus $\langle \mathcal{V}' \rangle$ is a normal set of cycles of size at least $|\mathcal{N}_{\max}(G)| + 1$.
      A contradiction.
\end{proof}

It remains to show \Cref{lem:multiplicity_reduction}.

\subsubsection{Proof of \Cref{lem:multiplicity_reduction}}

To prove \Cref{lem:multiplicity_reduction}, we consider two cases based on the parity of $k$.

\paragraph{Case 1:  $k=2p$ is even.}\ 

For a valuation $\mathcal{V}$, we define $\mathcal{C}_\mathcal{V}$ as the multiset of cycles used by $\mathcal{V}$, where each cycle $C$ appears exactly $\mathcal{V}(C)$ times:
\begin{displaymath}
  \mathcal{C}_\mathcal{V} = \bigcup_{C \in \langle \mathcal{V}\rangle} \lbrace C^{(1)}, \dots , C^{(\mathcal{V}(C))} \rbrace \qquad \text{where } C^{(1)}, \dots , C^{(\mathcal{V}(C))} \text{ are copies of $C$}.
\end{displaymath}
Suppose $\mathcal{V}$ is a valuation satisfying the hypothesis of \Cref{lem:multiplicity_reduction}. Since $\mathcal{V}$ is laminar, we can define a forest structure $\mathcal{T}_{\mathcal{V}}$ induced by the inclusion relation on the cycles of $\mathcal{C}_\mathcal{V}$. This forest has vertices corresponding to the cycles in $\mathcal{C}_\mathcal{V}$, and a vertex $A$ is the parent of a vertex $B$ if either:
\begin{itemize}
\item there exists a cycle $C \in \langle \mathcal{V}\rangle$ such that $A = C^{(i)}$ and $B = C^{(i+1)}$, for some $1 \le i < \mathcal{V}(C)$, 
\item or $A = C_1^{(\mathcal{V}(C_1))}$ and $B = C_2^{(1)}$ for some $C_1, C_2 \in \langle\mathcal{V}\rangle$ with $\overline{C_2} \subset \overline{C_1}$ and there is no other cycle $C' \in \langle\mathcal{V}\rangle$ such that $\overline{C_2} \subset \overline{C'} \subset \overline{C_1}$. 
\end{itemize}

The roots of this forest are the maximal elements for the inclusion relation (i.e. the roots correspond to the cycles that are not included in any other cycle, such cycles being $C^{(1)}$ for some $C \in \langle \mathcal{V}\rangle$), and we denote them by $\Root(\mathcal{T}_{\mathcal{V}})$. Moreover, we denote by $\Ch(C)$ the set of children of $C$ in $\mathcal{T}_{\mathcal{V}}$.

  To each cycle $C \in \mathcal{C}_\mathcal{V}$, we associate its layer $\ell(C)$, which is the distance between $C$ and the root of the tree to which it belongs. Thus, if $C_2$ is the child of $C_1$, then $\ell(C_2) = \ell(C_1) + 1$. See \Cref{fig:example_layer} for an example.

  \begin{figure}[htbp]
    \centering
    \includegraphics[width=\linewidth]{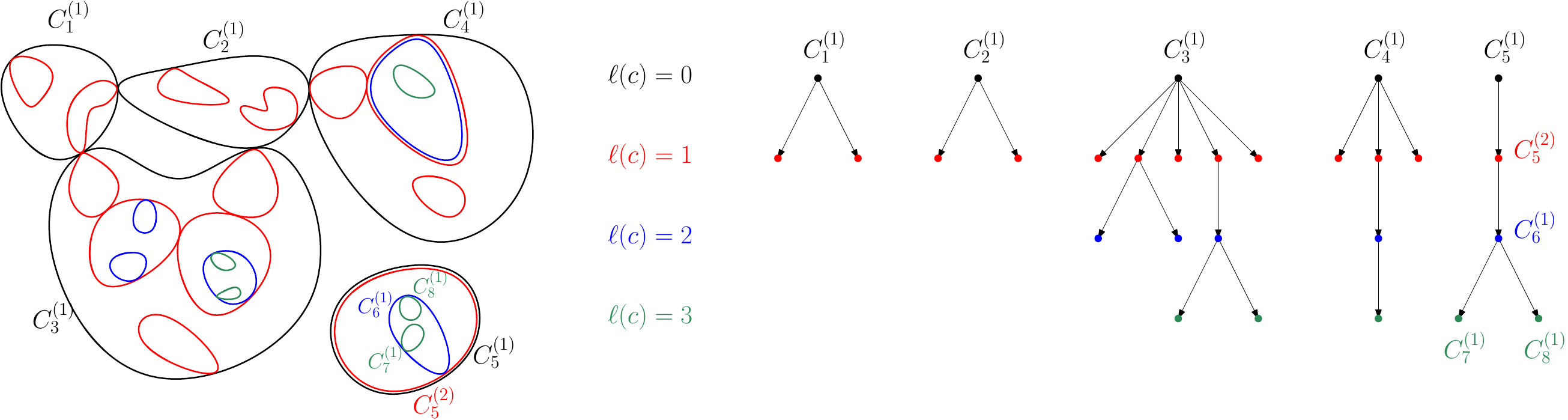}
    \caption{A laminar valuation $\mathcal{V}$ with $\Root(\mathcal{T}_{\mathcal{V}}) = \lbrace C_1^{(1)}, C_2^{(1)}, C_3^{(1)}, C_4^{(1)}, C_5^{(1)} \rbrace$ and its inclusion forest $\mathcal{T}_{\mathcal{V}}$.}\label{fig:example_layer}
  \end{figure}

  Given $par\in\{\text{even}, \text{odd}\}$ and $ori \in\{\text{clockwise}, \text{counterclockwise}\}$, we define the following multiset of cycles:
  $\mathcal{C}_{\mathcal{V}}(par, ori) = \Set{C \in \mathcal{C}_\mathcal{V}}{\ell(C) \text{ has parity } par \text{ and $C$ is oriented } ori}$. We partition the multiset $\mathcal{C}_\mathcal{V}$ into two multisets according to the parity of the layer of the cycles and their orientation:

    \begin{displaymath}
    \begin{split}
      \mathcal{C}_{\mathcal{V}}^1 &=\mathcal{C}_{\mathcal{V}}(\text{even}, \text{clockwise}) \cup \mathcal{C}_{\mathcal{V}}(\text{odd}, \text{counterclockwise}) \\
      \mathcal{C}_{\mathcal{V}}^2 &=\mathcal{C}_{\mathcal{V}}(\text{odd}, \text{clockwise}) \cup \mathcal{C}_{\mathcal{V}}(\text{even}, \text{counterclockwise})
    \end{split}
  \end{displaymath}

  Then we define the valuation $\mathcal{V}_1$ as the valuation induced by the multiset $\mathcal{C}_{\mathcal{V}}^1$: $ \mathcal{V}_1 = \sum_{C \in \mathcal{C}_{\mathcal{V}}^1} \mathbb{1}_{C}$.
  Similarly, we define the valuation $\mathcal{V}_2$ induced by the multiset $\mathcal{C}_{\mathcal{V}}^2$. As $\mathcal{C}_{\mathcal{V}}^1$ and $\mathcal{C}_{\mathcal{V}}^2$ form a partition of $\mathcal{C}_{\mathcal{V}}$, we have $\mathcal{V} = \mathcal{V}_1 + \mathcal{V}_2$.

  We will show that $\mathcal{V}_1$ and $\mathcal{V}_2$ are valuations of multiplicity at most $\frac{k}{2}$.

Whenever an arc $e$ is used by several cycles in $\mathcal{V}$, we replace $e$ by as many parallel arcs as necessary so that each copy of $e$ is used by only one cycle. We choose which cycle to associate with which copy of $e$ such that the laminarity of the set is preserved (note that this is always possible following the embedding of $G$). We denote by $G_{\mathcal{V}}$ the graph obtained after this transformation and delete from  $G_{\mathcal{V}}$ the arcs not used by $\mathcal{V}$. Then note that for $v \in V(G_{\mathcal{V}})$ and $S$ a segment of $v$ in $G_{\mathcal{V}}$, the following holds:
  \begin{displaymath}
    \segmentval{v}{\mathcal{V}}{S} = \sum_{e \in S} \mu_v(e) \qquad \text{since each half-arc is used exactly once in $\mathcal{V}$.}
  \end{displaymath}

  Furthermore, $\mathcal{V} = \mathcal{V}_1 + \mathcal{V}_2$ defines a bipartition of the arcs of $G_{\mathcal{V}}$ depending whether the arc is used in $\mathcal{V}_1$ or in $\mathcal{V}_2$. The graph induced by the arcs of $\mathcal{V}_1$ (resp. $\mathcal{V}_2$) is denoted by $G_{\mathcal{V}_1}$ (resp. $G_{\mathcal{V}_2}$).

  \begin{lemma}\label{lem:consecutive_half_arcs}
    Let $v$ be a vertex of $G_{\mathcal{V}}$ and $e, e'$ two consecutive half-arcs incident to $v$ in $G_{\mathcal{V}_1}$ (or in $G_{\mathcal{V}_2}$). Let $S = ]e, e'[$ be the segment of $v$ in $G_{\mathcal{V}}$ between these two half-arcs ($e$ and $e'$ excluded). Then 
    \begin{displaymath}
      \segmentval{v}{\mathcal{V}}{S} = \frac{\mu_v(e) + \mu_v(e')}{2}.
    \end{displaymath}
  \end{lemma}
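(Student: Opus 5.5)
The plan is to turn the signed sum over a segment into a difference of a ``potential'' between two sectors around $v$, and then use the layer/orientation rule defining $\mathcal{C}^1_{\mathcal{V}}$ and $\mathcal{C}^2_{\mathcal{V}}$ to evaluate that difference. Work in $G_{\mathcal{V}}$, where every half-arc is used by exactly one cycle of $\mathcal{C}_{\mathcal{V}}$ and the cycles are pairwise non-crossing. Since a directed cycle passes through $v$ at most once, each cycle $C\in\mathcal{C}_{\mathcal{V}}$ through $v$ contributes exactly one incoming and one outgoing half-arc at $v$. By laminarity, the pairs of half-arcs coming from two distinct cycles are non-interleaved in the cyclic order around $v$. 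Consequently the pairs are nested or disjoint, like non-crossing chords of a circle.

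For the first step, take a cycle $D\in\mathcal{C}_{\mathcal{V}}$ through $v$ whose two half-arcs both lie in $S=]e,e'[$. Its contribution to $\segmentval{v}{\mathcal{V}}{S}$ is $1-1=0$. So only cycles with exactly one half-arc in $S$ matter; I call these \emph{separating}. By the definition of $e,e'$ as consecutive in $G_{\mathcal{V}_1}$, every half-arc in $S$ belongs to a cycle of $\mathcal{C}^2_{\mathcal{V}}$, so every separating cycle lies in $\mathcal{C}^2_{\mathcal{V}}$. I then distinguish whether $e$ and $e'$ belong to the same cycle $C$ or to two different cycles.

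\emph{Case $e,e'\in C$.} Then $\mu_v(e)+\mu_v(e')=0$. A separating cycle would have one half-arc inside the chord $\{e,e'\}$ and one outside, so it would cross $C$. This contradicts laminarity, so the sum over $S$ is $0$, as required.

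\emph{Case $e\in C$, $e'\in C'$ with $C\neq C'$.} Here I plan to use a potential. For a sector $\sigma$ between two consecutive half-arcs at $v$, let $\varphi_i(\sigma)$ be the signed number of cycles of $\mathcal{C}^i_{\mathcal{V}}$ whose region contains $\sigma$, counting clockwise cycles $+1$ and counterclockwise ones $-1$. Passing across a half-arc $h$ of a cycle $D$, rotating around $v$ in a fixed direction, changes the signed containment of $D$ by exactly $\mu_v(h)$. This is a local check of the four combinations of in/out with clockwise/counterclockwise. Hence $\segmentval{v}{\mathcal{V}}{S}=(\varphi_1+\varphi_2)(\sigma')-(\varphi_1+\varphi_2)(\sigma)$, where $\sigma$ is the sector just after $e$ and $\sigma'$ the sector just before $e'$. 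Since no half-arc of $\mathcal{C}^1_{\mathcal{V}}$ lies in $S$, $\varphi_1(\sigma)=\varphi_1(\sigma')$, so only the change of $\varphi_2$ remains to be computed.

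The main obstacle is to show that this change of $\varphi_2$ equals $(\mu_v(e)+\mu_v(e'))/2$. The key point is that consecutive elements of a nesting chain alternate between $\mathcal{C}^1_{\mathcal{V}}$ and $\mathcal{C}^2_{\mathcal{V}}$ exactly when their orientations agree. This holds because moving one layer deeper flips the parity, and the partition is defined by parity $\oplus$ orientation. I would try to prove this by examining the chain of separating cycles. These cycles are nested between $C$ and $C'$, and each one's region contains exactly one of $\sigma,\sigma'$. Two successive separating cycles $D_1\supset D_2$ are adjacent in $\mathcal{T}_{\mathcal{V}}$ up to cycles not through the relevant sectors, and both lie in $\mathcal{C}^2_{\mathcal{V}}$. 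So by the parity rule they must have opposite orientations, and their contributions $\mu$ to $S$ cancel in pairs. I would therefore aim to show that, after these cancellations, the residual contribution is $\pm1$ precisely when $\mu_v(e)=\mu_v(e')$, with the common sign, and $0$ otherwise. To do this I would compare the orientation and layer of $C$ and $C'$ with the nearest separating cycle, using that $C,C'\in\mathcal{C}^1_{\mathcal{V}}$. This final case analysis (whether $C$ and $C'$ are nested or siblings, and their orientations) is where I expect most of the work to lie.
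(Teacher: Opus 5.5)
\textbf{The proposal has a genuine gap: the decisive step is left undone.} Your preliminary reductions are correct. Cycles with both half-arcs in $S$ contribute $0$. The case where $e,e'$ lie on the same cycle follows from laminarity. The identity with $\varphi_2$ also holds, but it is only a rewriting of $\sum_{h\in S}\mu_v(h)$ as a sum over the $\mathcal{C}^2_{\mathcal{V}}$ half-arcs in $S$. The whole content of the lemma is the residual computation you leave as a plan, so nothing beyond the trivial cases is proved.

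Moreover, the structure you intend to exploit is misdescribed: the separating cycles are not, in general, one chain nested between $C$ and $C'$. Each separating cycle contains exactly one of $\sigma,\sigma'$. Those containing $\sigma$ form one parent--child chain and those containing $\sigma'$ form another. Cycles from different chains have disjoint sectors at $v$, so they are not nested. For instance, if the region of $C$ does not contain $\sigma$ and that of $C'$ does not contain $\sigma'$, you get $C\subset D^1\subset\cdots\subset D^p$ and $C'\subset D^m\subset\cdots\subset D^{p+1}$, with $D^p$ and $D^{p+1}$ siblings. For that pair your cancellation argument (nested, same class, hence opposite orientations) does not apply. They cancel for a different reason: siblings in the same class have the same orientation but contain opposite sectors.

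To finish along your route you would still have to prove all of the following:
\begin{itemize}
\item consecutive members of each chain are really parent and child in $\mathcal{T}_{\mathcal{V}}$;
\item $C$ (resp.\ $C'$) is a child or the parent of the nearest separating cycle;
\item $D^p$ and $D^{p+1}$ are siblings;
\item the regions of $C$ and $C'$ cannot contain $\sigma$ and $\sigma'$ respectively at the same time;
\item the parity bookkeeping linking $p$ and $m-p$ to $\mu_v(e)$ and $\mu_v(e')$ works out.
\end{itemize}

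\textbf{The missing idea is a local rule for adjacent half-arcs, which the paper uses and which makes the chain analysis unnecessary.} Let $h,h'$ be half-arcs that are adjacent around $v$ in $G_{\mathcal{V}}$ and belong to different cycles. Laminarity forces these cycles to be either siblings in $\mathcal{T}_{\mathcal{V}}$ (the sector between $h$ and $h'$ lies outside both) or parent and child (the sector lies inside exactly one). Combined with the parity-and-orientation definition of $\mathcal{C}^1_{\mathcal{V}},\mathcal{C}^2_{\mathcal{V}}$, this gives $\mu_v(h)=-\mu_v(h')$ when the two cycles are in the same class, and $\mu_v(h)=\mu_v(h')$ otherwise. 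If $h,h'$ lie on the same cycle, they trivially have opposite directions.

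This rule settles $S=\emptyset$ directly. If $S=\{e_1,\dots,e_k\}$, every $e_i$ belongs to $\mathcal{C}^2_{\mathcal{V}}$. Hence $\mu_v(e_1)=\mu_v(e)$ and $\mu_v(e_k)=\mu_v(e')$, and the values $\mu_v(e_i)$ alternate. An alternating $\pm1$ sequence sums to the average of its two end terms, which here is $\frac{\mu_v(e)+\mu_v(e')}{2}$. Replacing your global chain analysis by this adjacency rule closes the gap.
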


  \begin{proof}
    Let $C$ and $C'$ be the cycles in $\mathcal{C}_\mathcal{V}$ that use $e$ and $e'$ respectively. Assume without loss of generality that $C$ and $C'$ are both in $\mathcal{C}_{\mathcal{V}}^1$.
    
    Suppose first that $S = \emptyset$. Then either $\ell(C) = \ell(C')$ and $C$ and $C'$ have the same orientation or $|\ell(C) - \ell(C')| = 1$ and $C$ and $C'$ have opposite orientations (see \Cref{fig:consecutive_half_arcs}). In both cases $e$ and $e'$ have opposite directions. Thus $\frac{\mu_v(e) + \mu_v(e')}{2} = 0 = \segmentval{v}{\mathcal{V}}(S)$.
    
        \begin{figure}[htbp]
      \centering
      \begin{subfigure}[b]{0.45\linewidth}   
        \centering
        \includegraphics[width=0.8\textwidth]{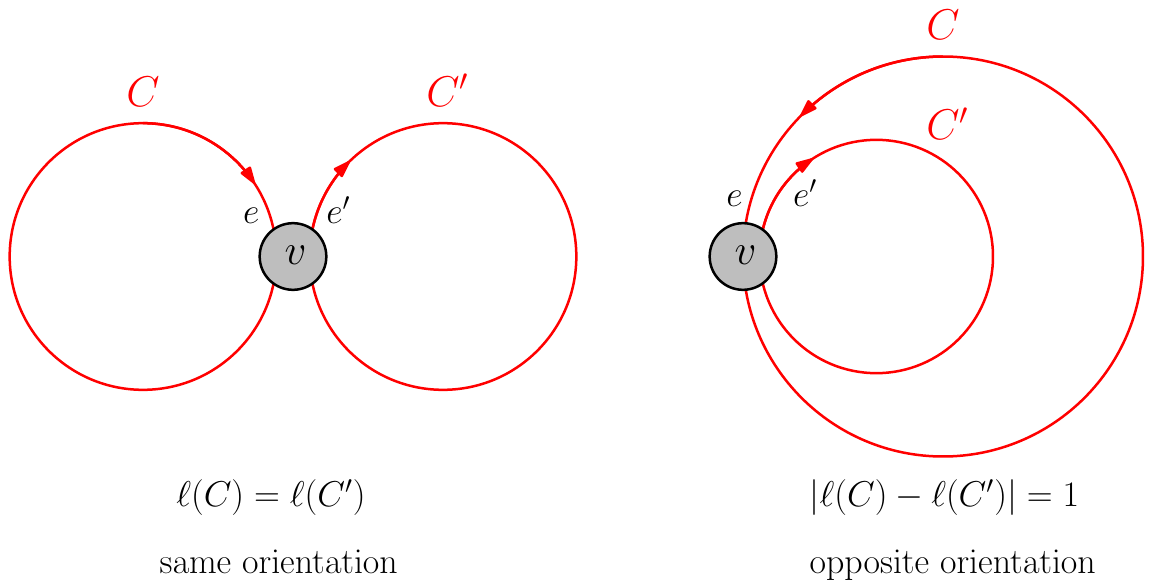}
        \caption{Case $\ell(C) = \ell(C')$ and $C$ and $C'$ have the same orientation.}
      \end{subfigure}
      \hfill
      \begin{subfigure}[b]{0.45\linewidth}   
        \centering
        \includegraphics[width=0.6\textwidth]{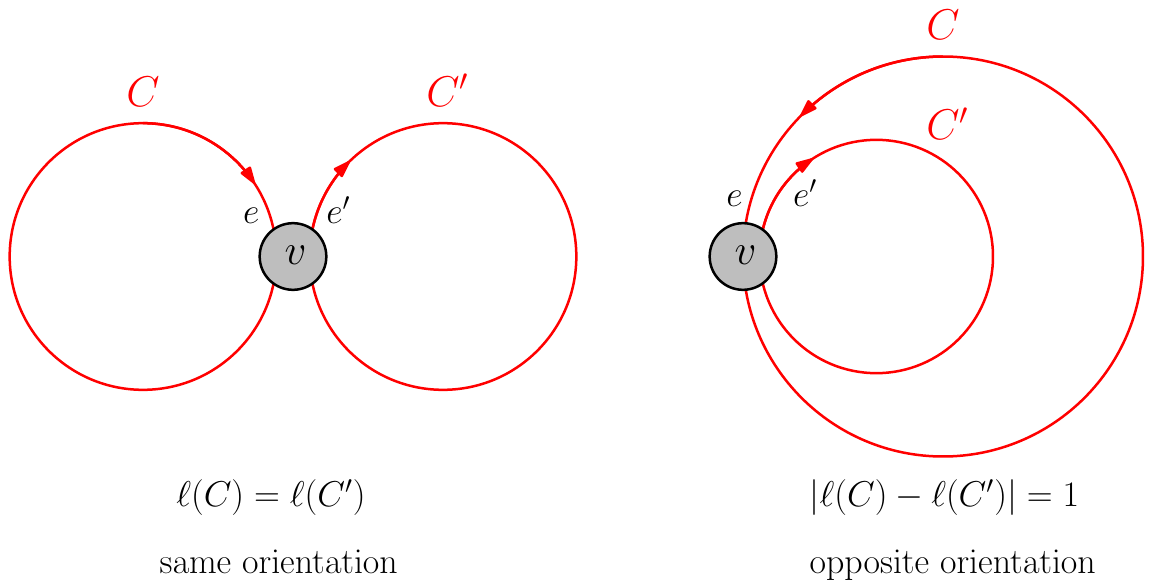}
        \caption{Case $|\ell(C) - \ell(C')| = 1$ and $C$ and $C'$ have opposite orientations.}
      \end{subfigure}

      \caption{Two half-arcs incident to $v$ that are consecutive in $G_{\mathcal{V}}$ (so $]e,e'[ =\emptyset$) and that both belong to $G_{\mathcal{V}_1}$ (or $G_{\mathcal{V}_2}$) have opposite directions.\label{fig:consecutive_half_arcs}}
        
    \end{figure}

    Suppose now that $S = \lbrace e_1, \dots, e_k \rbrace$ with $k \ge 1$. Let $C_1$ (respectively $C_k$) be the cycle that uses $e_1$ (respectively $e_k$). Since $e$ and $e'$ are consecutive in $G_{\mathcal{V}_1}$, then $C_1 \in \mathcal{C}_{\mathcal{V}}^2$ (respectively $C_k \in \mathcal{C}_{\mathcal{V}}^2$). Then either $\ell(C) = \ell(C_1)$ and $C$ and $C_1$ have opposite orientations or $|\ell(C) - \ell(C_1)| = 1$ and $C$ and $C_1$ have the same orientation. In both cases $e$ and $e_1$ have the same direction (either ingoing or outgoing). Thus $\mu_v(e) = \mu_v(e_1)$. Similarly we have $\mu_v(e_k) = \mu_v(e')$. Then:
    \begin{displaymath}
        \segmentval{v}{\mathcal{V}}{S} = \sum_{i = 1}^k \mu_v(e_i) = \frac{\mu_v(e_1)}{2} + \sum_{i = 1}^{k-1} \frac{\mu_v(e_i) + \mu_v(e_{i+1})}{2} + \frac{\mu_v(e_k)}{2} = \frac{\mu_v(e) + \mu_v(e')}{2} + \sum_{i = 1}^{k-1} \frac{\mu_v(e_i) + \mu_v(e_{i+1})}{2}.
    \end{displaymath}
    
    Recall that for $1 \le i \le k-1$, $e_i$ and $e_{i+1}$ are two consecutive half-arcs incident to $v$ in $G_{\mathcal{V}_2}$ such that $]e_i, e_{i+1}[ = \emptyset$ otherwise there would exist a half-arc $e'' \in ]e_i, e_{i+1}[ \subset ]e,e'[$ used in $G_{\mathcal{V}_1}$ which contradicts the fact that $e$ and $e'$ are consecutive in $G_{\mathcal{V}_1}$. Then from the case $S = \emptyset$ it comes that $\frac{\mu_v(e_i) + \mu_v(e_{i+1})}{2} = \segmentval{v}{\mathcal{V}}(]e_i, e_{i+1}[) = 0$. Thus
    \begin{displaymath}
        \segmentval{v}{\mathcal{V}}{S} = \frac{\mu_v(e) + \mu_v(e')}{2} + \sum_{i = 1}^{k-1} \frac{\mu_v(e_i) + \mu_v(e_{i+1})}{2} = \frac{\mu_v(e) + \mu_v(e')}{2}
    \end{displaymath}
\end{proof}

  \begin{lemma}\label{lem:multiplicity_reduced_valuations}    
   We have $\mult(\mathcal{V}_1) \le \frac{k}{2} = p$ and $\mult(\mathcal{V}_2) \le \frac{k}{2} = p$.
  \end{lemma}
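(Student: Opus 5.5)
The plan is to compare segment sums for $\mathcal{V}_1$ with segment sums for $\mathcal{V}$ in the split graph $G_{\mathcal{V}}$, using \Cref{lem:consecutive_half_arcs} to evaluate the gaps. I treat $\mathcal{V}_1$; $\mathcal{V}_2$ is symmetric.

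\emph{Step 1: reduction to $G_{\mathcal{V}_1}$.} Fix a vertex $v$ and a segment $S$ of $v$ in $G$. Half-arcs not used by $\mathcal{V}_1$ contribute $0$ to $\segmentval{v}{\mathcal{V}_1}{S}$. In $G_{\mathcal{V}}$ the arc $e$ is replaced by copies, each used once, so $\mathcal{V}_1^*(e)$ equals the number of copies of $e$ lying in $G_{\mathcal{V}_1}$. Hence $\segmentval{v}{\mathcal{V}_1}{S}=\sum_{f}\mu_v(f)$, where $f$ ranges over the half-arcs of $G_{\mathcal{V}_1}$ lying in the block of copies corresponding to $S$. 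These half-arcs are consecutive in the cyclic order of $G_{\mathcal{V}_1}$ around $v$; call them $f_1,\dots,f_r$ in order.

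If they form the whole cyclic order, the sum is $0$, because every vertex of $G_{\mathcal{V}_1}$ has equal in- and out-degree. If $r=0$ the sum is also $0$. So assume $1\le r$ and that $f_1,\dots,f_r$ form a proper run.

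\emph{Step 2: the key identity.} Let $T$ be the segment of $v$ in $G_{\mathcal{V}}$ from $f_1$ to $f_r$, both included. Consecutive $f_i,f_{i+1}$ are consecutive in $G_{\mathcal{V}_1}$, so \Cref{lem:consecutive_half_arcs} gives $\segmentval{v}{\mathcal{V}}{]f_i,f_{i+1}[}=\frac{\mu_v(f_i)+\mu_v(f_{i+1})}{2}$. Summing over the $f_i$ and the $r-1$ gaps, I expect
\begin{displaymath}
\segmentval{v}{\mathcal{V}}{T}=\sum_{i=1}^r\mu_v(f_i)+\sum_{i=1}^{r-1}\frac{\mu_v(f_i)+\mu_v(f_{i+1})}{2}=2\sum_{i=1}^r\mu_v(f_i)-\frac{\mu_v(f_1)+\mu_v(f_r)}{2}.
\end{displaymath}
Rearranging,
\begin{displaymath}
\segmentval{v}{\mathcal{V}_1}{S}=\frac12\Bigl(\segmentval{v}{\mathcal{V}}{T}+\frac{\mu_v(f_1)+\mu_v(f_r)}{2}\Bigr).
\end{displaymath}

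\emph{Step 3: the bound.} Suppose $|\segmentval{v}{\mathcal{V}}{T}|\le k=2p$. Then $|\segmentval{v}{\mathcal{V}_1}{S}|\le p+\frac12$, and since the left side is an integer, it is at most $p$. This gives $\mult(\mathcal{V}_1)\le p$.

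\emph{Step 4: justifying $|\segmentval{v}{\mathcal{V}}{T}|\le k$ (main obstacle).} The hypothesis $\mult(\mathcal{V})\le k$ only controls segments of $v$ in the original graph $G$. A segment $T$ of $G_{\mathcal{V}}$ may contain only part of the block of copies of some arc, at either end.

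All copies of one arc $e$ have the same direction $\mu_v(e)$. So including $j$ of the copies at an end changes the sum by $j\,\mu_v(e)$, which is monotone in $j$. Therefore $\segmentval{v}{\mathcal{V}}{T}$ lies between the values of two genuine segments of $G$: one omitting $e$ at that end, and one including all of $e$. Doing this independently at both ends shows $\segmentval{v}{\mathcal{V}}{T}$ lies in the interval spanned by values of segments of $G$, each of absolute value at most $k$.

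The degenerate case where the two ends are copies of the same arc also needs care. Then $T$ is a sub-block of one arc, and its value is at most $\mathcal{V}^*(e)=|\segmentval{v}{\mathcal{V}}{\{e\}}|\le k$. With this bound Step 3 applies, and the identical argument for $\mathcal{V}_2$ completes the proof.
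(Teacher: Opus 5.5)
Your proof is correct and follows the paper's argument. You use the same telescoping identity $\segmentval{v}{\mathcal{V}}{T} = 2\,\segmentval{v}{\mathcal{V}_1}{S} - \frac{\mu_v(f_1)+\mu_v(f_r)}{2}$ derived from \Cref{lem:consecutive_half_arcs}, and your integrality rounding of $p+\frac12$ down to $p$ plays the role of the paper's contradiction $2(p+1)-1 = 2p+1 > k$. Your Steps 1 and 4 translate between segments of $G$ and segments of $G_{\mathcal{V}}$, handling partially included bundles of parallel copies by monotonicity. They make explicit a point the paper uses silently when it bounds a segment sum taken in $G_{\mathcal{V}}$ by $\mult(\mathcal{V})$.
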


  \begin{proof}
    By contradiction, let $v$ be a vertex of $G_{\mathcal{V}}$ such that $\mult_{\mathcal{V}_1}(v) \ge p + 1$.
    Then there exists a segment $S=\{e_1,\ldots,e_j\}$ of $v$ in $G_{\mathcal{V}_1}$ such that $\Modul{\segmentval{v}{\mathcal{V}_1}(S)} \ge p+1$.
    For $i \in \{1,\ldots,j-1\}$, denote $]e_i, e_{i+1}[$ the segment between $e_i$ and $e_{i+1}$ (excluded) seen as a segment of $v$ in $G_{\mathcal{V}}$.
    By \Cref{lem:consecutive_half_arcs}, we have $\segmentval{v}{\mathcal{V}}(]e_i, e_{i+1}[) = \frac{\mu_v(e_i) + \mu_v(e_{i+1})}{2}$. Thus:
    \begin{displaymath}
      \begin{split}
        \segmentval{v}{\mathcal{V}}(S) &= \sum_{i=1}^j \mu_v(e_i) + \sum_{i=1}^{j-1} \segmentval{v}{\mathcal{V}}(]e_i, e_{i+1}[) = \sum_{i=1}^j \mu_v(e_i) +\sum_{i=1}^{j-1} \frac{\mu_v(e_i) + \mu_v(e_{i+1})}{2} \\
                             &= 2 \sum_{i=1}^j \mu_v(e_i) - \left( \frac{\mu_v(e_1) + \mu_v(e_j)}{2} \right) = 2 \segmentval{v}{\mathcal{V}_1}(S) - \left( \frac{\mu_v(e_1) + \mu_v(e_j)}{2} \right) \\
      \end{split}
    \end{displaymath}
    So by the second triangle inequality, we have
    \begin{displaymath}
      \Modul{\segmentval{v}{\mathcal{V}}(S)} \ge 2 \Modul{\segmentval{v}{\mathcal{V}_1}(S)} - \frac{\Modul{\mu_v(e_1) + \mu_v(e_j)}}{2} \ge 2(p+1) - 1 = 2p + 1
    \end{displaymath}
    This is a contradiction since $\mult(\mathcal{V}) \le k = 2p$. So $\mult(\mathcal{V}_1) \le p$ and similarly $\mult(\mathcal{V}_2) \le p$.
  \end{proof}

  Since $\mathcal{V} = \mathcal{V}_1 + \mathcal{V}_2$, we have $\# \mathcal{V}_1 + \# \mathcal{V}_2 = \# \mathcal{V} \ge 2p \times |\mathcal{N}_{\max}(G)| + 1$.
  Hence, at least one of the two valuations $\mathcal{V}_1$ and $\mathcal{V}_2$, say $\mathcal{V}_1$, has weight at least $p \times |\mathcal{N}_{\max}(G)| + 1$. Then $\mathcal{V}_1$ is laminar as $\langle \mathcal{V}_1 \rangle \subset \langle\mathcal{V}\rangle$ and $\langle \mathcal{V}\rangle$ is laminar. Moreover:
  \begin{itemize}[label = $\bullet$]
  \item $\# \mathcal{V}_1 \ge p\times |\mathcal{N}_{\max}(G)| + 1 = \Ent{\frac{k+1}{2}} \times |\mathcal{N}_{\max}(G)| + 1$
  \item $\mult(\mathcal{V}_1) \le p = \frac{k}{2} = \Ent{\frac{k+1}{2}}$ by \Cref{lem:multiplicity_reduced_valuations}.
  \end{itemize}

  \paragraph{Case 2: $k=2p+1$ is odd.}\ 

  Consider a maximum normal set of $G$ and $\mathcal{N}$ its associated normal valuation.
  By \Cref{rem:laminar_valuation}, one can obtain a laminar valuation $\tilde{\mathcal{V}}$ from $\mathcal{V} + \mathcal{N}$ such that $\#\tilde{\mathcal{V}} = \#(\mathcal{V} + \mathcal{N})$ and $\mult(\tilde{\mathcal{V}}) = \mult(\mathcal{V} + \mathcal{N})$.
  
  We have:
  \begin{displaymath}
    \#\tilde{\mathcal{V}} = \#(\mathcal{V} + \mathcal{N}) = \# \mathcal{V} + \# \mathcal{N} \ge (2p + 1) \times |\mathcal{N}_{\max}(G)| + 1 + |\mathcal{N}_{\max}(G)| = 2(p+1) \times |\mathcal{N}_{\max}(G)| + 1.
  \end{displaymath}
  Moreover, from \Cref{prop:multiplicity_normal_set}\ref{itm:multSubAdditive}, we have 
  \begin{displaymath}
    \mult(\tilde{\mathcal{V}}) = \mult(\mathcal{V}+ \mathcal{N}) \le \mult(\mathcal{V}) + \mult(\mathcal{N}) \le (2p + 1) + 1 = 2(p+1)
  \end{displaymath}
  We can therefore apply the reasoning of the even case to $\tilde{\mathcal{V}}$ to obtain a laminar valuation $\mathcal{V}'$ such that:
  \begin{itemize}[label = $\bullet$]
  \item $\# \mathcal{V'} \ge (p+1) \times |\mathcal{N}_{\max}(G)| + 1 = \Ent{\frac{k+1}{2}} \times |\mathcal{N}_{\max}(G)| + 1$
  \item $\mult(\mathcal{V}') \le p+1 = \Ent{\frac{k+1}{2}}$.
  \end{itemize}

\clearpage

\section{Lower bounds: skeletons and coatings}\label{sec:skeleton_and_coating}

In this section, we aim to construct planar digraphs of digirth $g\geq 4$ with a large feedback vertex set relative to their order $n$.

\subsection{An introductory example}\label{sec:introductory_example_coating}
We start by showing a new family ${(G_{k})}_{k \ge 1}$ of planar digraphs of digirth $g\ge 4$ and order $n_k$ such that $\fv(G_k) = \frac{n_k - 1}{g-1}$, which is the same ratio as that of the family obtained in~\cite{KVW17}. See~\Cref{fig:frieze} for an illustration.

\begin{figure}[htbp]
    \centering
    \includegraphics[height=2.8cm]{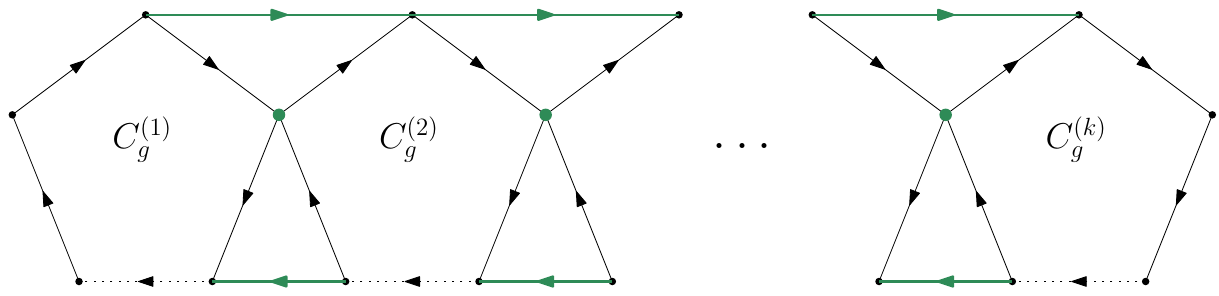}
    \caption{A digraph $G_k$ of digirth $g\geq 4$, satisfying $\fv(G_k) = \frac{n_k - 1}{g-1}$}\label{fig:frieze}
\end{figure}

Begin with $G_1$, which is a directed cycle $C_g^{(1)}=v^1_0v^1_1\ldots v^1_{g-1}$ of length $g$ oriented clockwise (with respect to a fixed planar embedding). For $k \ge 2$, construct $G_k$ from $G_{k-1}$ as follows:
\begin{itemize}
    \item Add a new directed cycle $C_g^{(k)}=v^k_0v^k_1\ldots v^k_{g-1}$ of length $g$ oriented clockwise.
    \item Identify vertices $v^k_{g-1}$ of $C_g^{(k)}$ with $v_1^{k-1}$ of $G_{k-1}$. Call the newly created vertex $v^k_{g-1}$ a \emph{link vertex between $G_{k-1}$ and $C_g^{(k)}$}.
    \item Add two arcs $v^{k-1}_0v^{k}_0$ and $v^{k}_{g-2}v^{k-1}_2$
    which are called the \emph{link arcs between $G_{k-1}$ and $C_g^{(k)}$}.
\end{itemize}

\begin{proposition}\label{prop:frieze}
    For every $k \ge 1$, $G_k$ has digirth $g$, order $n_k= k(g - 1) + 1$ and 
        $\fv(G_k)= k = \frac{n_k - 1}{g - 1}$.
\end{proposition}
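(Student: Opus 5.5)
The plan is an induction on $k$, proving the three claims (order, digirth, $\fv$) together. For the order, $G_1$ has $g$ vertices, and each step adds $g$ new cycle vertices, one of which is identified with an existing vertex. Hence $n_k = n_{k-1}+g-1$, which gives $n_k = k(g-1)+1$.

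For $\fv$, the lower bound $\fv(G_k)\ge k$ will come from $k$ vertex-disjoint directed cycles. The cycles $C_g^{(1)},\dots,C_g^{(k)}$ themselves are not vertex-disjoint, since consecutive ones share the link vertex. Instead I would argue inductively. Let $S$ be a feedback vertex set of $G_k$. Then $S$ meets $C_g^{(k)}$, and I would show that $S\setminus V(C_g^{(k)})$, possibly after replacing the link vertex by a suitable vertex of $G_{k-1}$, is still a feedback vertex set of $G_{k-1}$.

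A cleaner route to the lower bound is to exhibit a cycle of $G_k$ avoiding $v^k_{g-1}$ that lies inside $C_g^{(k)}$ plus link arcs. One candidate is $v^{k-1}_0 v^k_0 v^k_1\cdots v^k_{g-2} v^{k-1}_2 \cdots$, returning to $v^{k-1}_0$ along $C^{(k-1)}_g$; it uses $v^{k-1}_0$ and $v^{k-1}_2$ but skips $v^{k-1}_1$. The exchange argument I would actually run is then as follows. If $S\cap V(G_{k-1})$ is not a feedback vertex set of $G_{k-1}$, the remaining cycle must pass through $v^{k-1}_1=v^k_{g-1}$, which is therefore not in $S$. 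Then a short case analysis on the positions of $S$ on $C^{(k)}_g\setminus\{v^k_{g-1}\}$ yields a second vertex of $S$ outside $V(G_{k-1})$, or a replacement that keeps the count. Either way we get $|S|\ge \fv(G_{k-1})+1$.

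The upper bound $\fv(G_k)\le k$ should be explicit. I would take $S_k=\{v^1_1,v^2_1,\dots,v^k_1\}$, or some uniform choice such as $v^i_0$, adjusted so that every cycle using link arcs is hit. The check uses the structure of $G_k$: after removing $S_k$, every arc between consecutive levels goes "forward" in a consistent order, so the remainder is acyclic. Alternatively, for $\fv\le k$ one can simply invoke \Cref{thm:upper_bound_FVS}, which gives $\fv(G_k)\le \frac{n_k-2}{g-2}$. That is weaker, however, so the explicit set is needed.

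For the digirth, I would classify the directed cycles of $G_k$ by the highest level they touch. A cycle confined to $C^{(i)}_g$ has length $g$. A cycle using a link pair between levels $i-1$ and $i$ must enter $C^{(i)}_g$ via $v^{i-1}_0v^i_0$ and leave via $v^i_{g-2}v^{i-1}_2$. Together with the path from $v^{i-1}_2$ back to $v^{i-1}_0$ in $G_{i-1}$ (length at least $g-2$ along $C^{(i-1)}_g$), this gives length at least $(g-2)+2+(g-2)\ge g$. Since cycles compose only through these bottlenecks, induction finishes the argument.

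I expect the main obstacle to be the lower bound $\fv(G_k)\ge k$, specifically the exchange argument when $S$ contains the shared link vertex. I would try first to prove the stronger inductive statement that $G_k - v^k_1$ (or $G_k-v^k_0$) still has $\fv\ge k-1$ while $G_k$ needs $k$, handling the case split on which vertex of $C^{(k)}_g$ lies in $S$.
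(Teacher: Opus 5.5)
There is a genuine gap in the lower bound $\fv(G_k)\ge k$. It sits exactly at the point you flag as the main obstacle, and nothing you propose closes it.

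First, your exchange argument starts from a case that never occurs. The only vertices of $G_k$ not in $G_{k-1}$ are $v^k_0,\dots,v^k_{g-2}$, and every new arc (of $C^{(k)}_g$ or a link arc) has one of them as an endpoint. So $G_{k-1}$ is an induced subgraph of $G_k$, and $S\cap V(G_{k-1})$ is always a feedback vertex set of $G_{k-1}$. This settles at once the case where $S$ contains one of $v^k_0,\dots,v^k_{g-2}$, since then $|S|\ge 1+\fv(G_{k-1})$.

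The only hard case is $S\cap V(C^{(k)}_g)=\{v^k_{g-1}\}$, i.e.\ $S\subseteq V(G_{k-1})$. There the induction hypothesis yields only $|S|\ge k-1$. Your sketch never treats this case. The strengthening you suggest (that $G_k-v^k_1$ or $G_k-v^k_0$ has $\fv\ge k-1$) concerns the wrong vertices: both graphs contain $G_{k-1}$, so the claim is trivial and says nothing about a set $S$ that uses the link vertex $v^k_{g-1}$.

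The missing idea is that deleting the link vertex turns $G_k$ into a subdivision of $G_{k-1}$. In $G_{k-1}$ the vertex $v^{k-1}_1$ is not yet a link vertex, so it has exactly one in-arc (from $v^{k-1}_0$) and one out-arc (to $v^{k-1}_2$); this uses $g\ge 4$. Removing $v^k_{g-1}=v^{k-1}_1$ from $G_k$ leaves $G_{k-1}-v^{k-1}_1$ together with the directed path $v^{k-1}_0v^k_0v^k_1\cdots v^k_{g-2}v^{k-1}_2$, whose internal vertices have in- and out-degree $1$. This is the path underlying the cycle in your ``cleaner route''. So $G_k-v^k_{g-1}$ is $G_{k-1}$ with $v^{k-1}_1$ replaced by a directed path. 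Hence $\fv(G_k-v^k_{g-1})=\fv(G_{k-1})\ge k-1$, and $|S|\ge k$.

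The paper runs the same idea in a slightly different form. It takes a minimum feedback vertex set $F$ of $G_{k+1}$. If $F$ contains no link vertex, it needs $k+1$ distinct vertices, because the cycles $C^{(i)}_g$ pairwise share only link vertices. If $F$ contains some link vertex $v^{i+1}_{g-1}$, the paper deletes it and contracts $v^i_0v^{i+1}_0$ and the arcs on the path from $v^{i+1}_2$ to $v^i_2$. This yields a copy of $G_k$ in which $F\setminus\{v^{i+1}_{g-1}\}$ is still a feedback vertex set.

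Two smaller points. For the upper bound, both $\{v^i_0\}$ (the paper's choice) and $\{v^i_1\}$ work, but your ``consistently forward'' justification fits only $\{v^i_0\}$. Removing $\{v^i_0\}$ kills every arc $v^{i-1}_0v^i_0$ and leaves only the downward link arcs and paths inside each level. For the digirth, your claim that a cycle must use both link arcs is inaccurate: a cycle can enter or leave level $i$ through the shared vertex $v^i_{g-1}$ using a single link arc. Your bound still survives. A cycle whose top level is $i$ can enter that level only at $v^i_0$ and leave only from $v^i_{g-2}$. It therefore contains the $g-2$ arcs $v^i_0\to\cdots\to v^i_{g-2}$ plus an entering and a leaving arc.
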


\begin{figure}[htbp]
    \centering
    \includegraphics[scale=0.85]{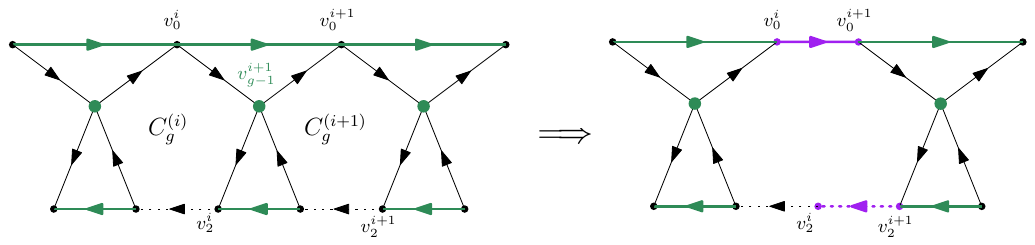}
    \caption{Illustration of the proof of \Cref{prop:frieze}. The purple elements are those contracted to obtain $G_k$ from $G_{k+1} - \{ v^{i+1}_{g-1} \}$.}\label{fig:proof_frieze}
\end{figure}

\begin{proof}
    First note that $\{v^1_0,v^2_0,\ldots,v^k_0\}$ is a feedback vertex set of size $k$ proving that $\fv(G_k) \le k$.

    To show $\fv(G_k)\ge k$, proceed by induction on $k$. 
    For $k = 1$, $G_1$ is a directed cycle of length $g$, so $\fv(G_1) = 1$. Let $F$ be a minimum feedback vertex set of $G_{k+1}$. Distinguish two cases.
    
    If $F$ contains no link vertex, then $F$ must hit each cycle $C_g^{(i)}$ for $i \in \llbracket 1, k+1 \rrbracket$ with $k+1$ distinct vertices and we are done. Now suppose $F$ contains a link vertex $v^{i+1}_{g-1}$ for some $i\in\{1,\ldots,k\}$. Then we apply the induction hypothesis on the digraph obtained from $G_{k+1}$ by removing $v_{g-1}^{i+1}$ and contracting the arc $v_0^i v_0^{i+1}$ and all the arcs between $v_{2}^{i+1}$ and $v_2^i$ (see \Cref{fig:proof_frieze}). Note that the graph obtained is isomorphic to $G_k$. Since $F\setminus \{v^{i+1}_{g-1}\}$ is a feedback vertex set of this newly created digraph, by induction hypothesis it must have size at least $k$, so we are done.
\end{proof}

In the next sections, we generalize this construction in order to increase the ratio $\frac {\fv(G)}n$.

\subsection{Skeletons and coatings}

\begin{definition}[Coating and Skeleton]\label{def:coating}
    Let $G$ be a plane undirected graph.
    \begin{itemize}
        \item A \emph{coating} of $G$ is a plane digraph $H$ constructed from $G$ as follows. To each vertex $v$ of $G$ we associate a directed cycle $C_v$ oriented clockwise. This cycle is said to be \emph{associated to vertex $v$ of $G$}. For every edge $uv$ in $G$, the two cycles $C_u$ and $C_v$ of $H$ are connected by merging a vertex $s_u$ of degree 2 of $C_u$ with a vertex $s_v$ of degree 2 of $C_v$  into a vertex $s_{uv}$ and by adding two extra arcs between $C_u$ and $C_v$: one from the in-neighbor of $s_u$ to the out-neighbor of $s_v$ and one from the in-neighbor of $s_v$ to the out-neighbor of $s_u$ (see \Cref{fig:transfo_coating}). The vertex $s_{uv}$ is called \emph{link vertex associated to the edge $uv$} and the two arcs added on both sides of the link vertex are called \emph{link arcs associated to the edge $uv$}. In case of a loop at $u$ in $G$, the two vertices of degree 2 to be merged are taken in the same cycle $C_u$ and are at distance at least 3 so that the link arcs are never loops.
        \begin{figure}[htbp]
            \centering
            \begin{subfigure}[b]{0.3\textwidth}
                \centering
                \includegraphics[scale=1]{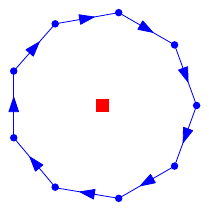}
                \caption{A vertex of $G$.}
            \end{subfigure}
            \begin{subfigure}[b]{0.68\textwidth}
                \centering
                \includegraphics[scale=1]{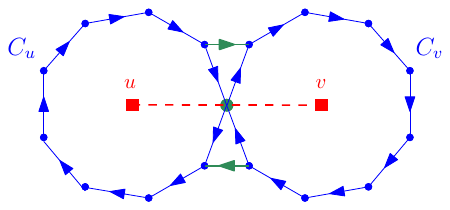}
                \caption{An edge of $G$ with its associated link vertex and link arcs (in green).}
            \end{subfigure}
            \caption{Building a coating $H$ (in blue and green) of a skeleton graph $G$ (in red).}\label{fig:transfo_coating}
        \end{figure}
        
        \item If $H$ is a coating of $G$, we say that $G$ is \emph{the skeleton} of $H$ and by the previous item of the definition it is uniquely defined as a plane graph.
        \item If $f$ is a face of $G$ whose incident vertices are in the same connected component of $G$, the vertices of $H$ that lie inside $f$ (excluding link vertices) form a cycle $C_f$ oriented counterclockwise (or clockwise if $f = f_{ext}$). This cycle is said to be \emph{associated to the face $f$ of $G$}. In case $f$ is a face incident to $k$ connected components in $G$, then the vertices of $H$ that lie inside $f$ (excluding link vertices) form a union of $k$ vertex disjoint directed cycles $C_f^1, \dots , C_f^k$ \emph{associated to the face $f$ of $G$}. In this case by abusing the notation we will denote $\lvert C_f \rvert = \sum_{i=1}^k \lvert C_f^i \rvert$ the number of edges lying inside $f$. See \Cref{fig:example_coating_bis}~(left) for an example of a skeleton with four faces.
        \item If every vertex of $G$ is transformed into a directed cycle with a fixed length $g \ge 1$, we say that $H$ is a \emph{$g$-coating} of $G$.
    \end{itemize}
\end{definition}

Note that the graph $G_k$ of \Cref{fig:frieze} is a $g$-coating of the undirected path on $k$ vertices. However, not every $g$-coating has digirth $g$ (see \Cref{sec:digirth_coating}). Finally, observe that every undirected plane graph $G$ admits a $g$-coating for a sufficiently large $g$.

\begin{figure}[htbp]
    \centering
    \includegraphics[height=5cm]{Figures/example_coating_loop}
    \caption{Left: an undirected graph $G$. Right: a coating $H$ of $G$. The black cycles associated to the vertices of $G$ are oriented clockwise. The link vertices and arcs are represented in green.}\label{fig:example_coating_bis}
\end{figure}

Note that coatings depend on the embedding of $G$, and two different embeddings of $G$ do not necessarily yield the same coatings.

\begin{observation}\label{prop:coating_subgraph}
    Let $G$ be a skeleton that admits a $g$-coating of digirth $g$ and let $G' \subset G$ be a subgraph of $G$. Then $G'$ also admits a $g$-coating of digirth $g$.
\end{observation}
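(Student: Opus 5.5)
The approach is to take a $g$-coating $H$ of $G$ of digirth $g$ and extract from it a $g$-coating $H'$ of $G'$. We then show that $H'$ has digirth $g$ by checking that every directed cycle of $H'$ corresponds to a directed cycle of $H$ of the same length. Since deleting a vertex of the skeleton amounts to deleting all its incident edges and then an isolated vertex, it suffices to handle two elementary operations: deleting a single edge $uv$ of $G$, and deleting an isolated vertex of $G$. Induction on $|V(G)\setminus V(G')| + |E(G)\setminus E(G')|$ then gives the statement.

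Deleting an isolated vertex $w$ is easy. Its cycle $C_w$ is a separate connected component of $H$, and removing it leaves a $g$-coating of $G-w$ that is a subdigraph of $H$. Removing vertices and arcs never creates new directed cycles, so the digirth is still at least $g$. It is exactly $g$ as long as some vertex cycle remains, since each $C_v$ has length $g$; if $G'$ is empty the statement is vacuous.

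For deleting an edge $uv$, the plan is to undo its gadget:
\begin{itemize}
\item remove the two link arcs associated to $uv$;
\item split the link vertex $s_{uv}$ back into $s_u$ on $C_u$ and $s_v$ on $C_v$, each of degree $2$.
\end{itemize}
The result $H'$ is, by definition, a $g$-coating of $G-uv$ with the inherited embedding. Its vertex cycles still have length $g$, so it suffices to show that $H'$ has no directed cycle shorter than $g$.

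The key step is to compare a directed cycle $Z$ of $H'$ with a directed cycle of $H$. Identifying $s_u$ and $s_v$ back together maps $Z$ to a closed directed walk $\pi(Z)$ in $H$ of the same length, using none of the removed link arcs. If $Z$ passes through at most one of $s_u, s_v$, then $\pi(Z)$ is a directed cycle of $H$, so $|Z|\ge g$. If $Z$ passes through both, then $\pi(Z)$ visits $s_{uv}$ twice and splits into two closed directed walks. Each contains a directed cycle of $H$, so $|Z|\ge 2g\ge g$.

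The main obstacle is the loop case and the claim that the planar structure of a coating is preserved. One must check that the splitting is consistent with the prescribed embedding and with the distance-$3$ condition for loops. This should follow directly from the local nature of the construction in \Cref{def:coating}, since each edge gadget only touches two degree-$2$ vertices of the vertex cycles.
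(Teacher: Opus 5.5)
Your proposal is correct and follows the paper's proof. The paper also deletes an edge by splitting the link vertex into $s_1\in C_u$ and $s_2\in C_v$ and removing the two link arcs. It deletes a vertex $v$ by removing the non-link vertices of $C_v$, which amounts to your ``delete all incident edges, then the isolated cycle''. Your identification argument via $\pi(Z)$ supplies the digirth justification that the paper only asserts. The loop case you flag as the main obstacle is in fact vacuous: a $g$-coating of digirth $g$ has no loops (\Cref{prop:coating_connected_without_loops}), because the two link arcs of a loop create two disjoint directed cycles inside $C_u$, one of which is shorter than $g$.
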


\begin{figure}[htbp]
        \centering
        \subfloat[Removal of the edge $uv$ of the skeleton: the link vertex $s$ is split into two vertices $s_1$ and $s_2$ and the link arcs are removed.]{\label{subfig:proof_coating_edge_deletion}
            \centering
            \includegraphics[height=4cm]{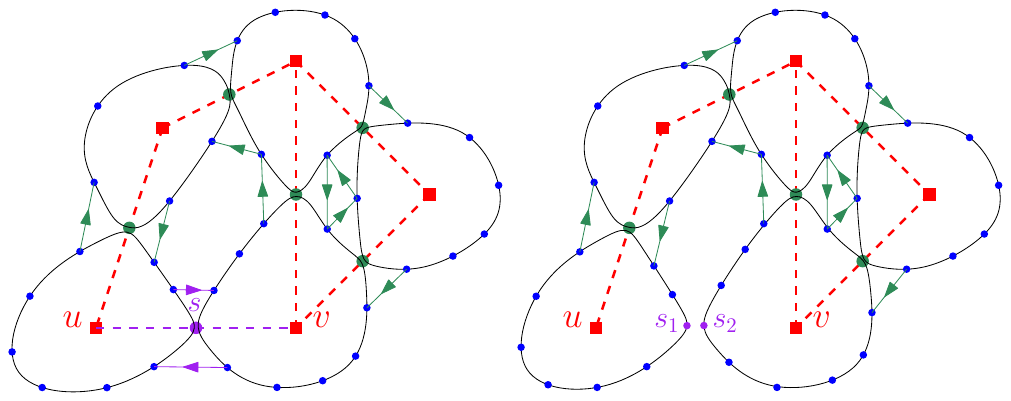}
        }
        \hspace{1cm}
        \subfloat[Removal of the vertex $v$ of the skeleton: all vertices of the coating that are in $C_v$ and that are not link vertices are deleted.]{\label{subfig:proof_coating_vertex_deletion}  
            \centering
            \includegraphics[height=4cm]{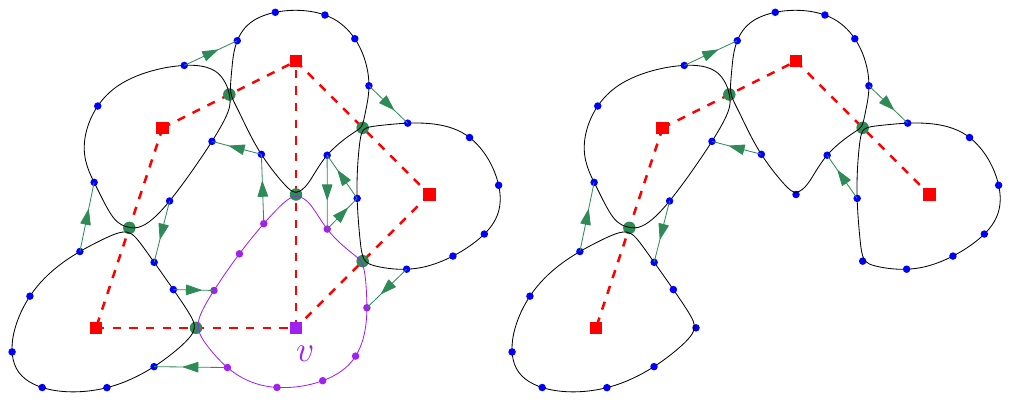}
        }
        \caption{Construction of a $g$-coating of digirth $g$ for a subgraph of a skeleton $G$.}
    \end{figure}

\begin{proof}
    Following \Cref{def:coating}, we show that we can delete an edge (see \Cref{subfig:proof_coating_edge_deletion}) and delete a vertex (see \Cref{subfig:proof_coating_vertex_deletion}) of the skeleton $G$ and obtain a new coating $H'$ from the initial one $H$ without reducing the digirth.

    \begin{enumerate}[label=(\alph*)]
        \item Let $uv \in E(G)$, and let $s \in V(H)$ be the link vertex associated to the edge $uv$. By splitting the vertex $s$ into two vertices $s_1 \in C_u \setminus C_v$ and $s_2 \in C_v \setminus C_u$ and removing the link arcs associated to the edge $uv$, we obtain a graph $H'$ which is a $g$-coating of digirth $g$ of $G - \{ uv \}$.
        \item Let $v \in V(G)$. By deleting all vertices of $H$ that are in $C_v$ and that are not link vertices we obtain a graph $H'$ which is a $g$-coating of digirth $g$ of $G \setminus \{ v \}$.
    \end{enumerate}
\end{proof}

\begin{proposition}\label{prop:coating_properties}
    Let $G$ be a skeleton graph and $H$ a coating of $G$. The following properties hold:
    \begin{enumerate}[(i)]
        \item\label{prop:fvs_greater_than_fG}
        
        $\fv(H) \ge f_G = \lvert F(G) \rvert$

        \item\label{prop:number_of_faces_coating}
        $f_H = f_G + n_G + 2 m_G$. Moreover, if $G$ is connected, then $f_H = 2 + 3 m_G$.
        \item\label{prop:sum_length_cycles} $n_H = \sum_{v \in V(G)} \lvert C_v \rvert - m_G = \sum_{f \in F(G)} \lvert C_f \rvert + m_G$
        \item\label{prop:size_coating} If $H$ is a $g$-coating of $G$, then $n_H = g \cdot n_G - m_G$. If $G$ is connected, then $m_H = g \cdot n_G + 2m_G$.
        \item\label{prop:maximal_degree_skeleton} If $G$ admits a $g$-coating, then $g \ge 2\Delta(G)$ where $\Delta(G)$ denotes the maximum degree of $G$.
    \end{enumerate}
\end{proposition}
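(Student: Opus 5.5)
I will prove the five items separately. Items (ii)–(iv) are Euler-formula and counting arguments, (v) is a local degree count, and (i) is the only item with real content.

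\emph{Item (i).} Each face $f$ of $G$ has its associated cycle $C_f$ (or family $C_f^1,\dots,C_f^k$). These cycles avoid link vertices, they are vertex-disjoint for distinct faces, and they are disjoint from one another inside the same face. So picking one cycle per face gives $f_G$ vertex-disjoint directed cycles in $H$, and hence $\fv(H)\ge f_G$.

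To see that they are vertex-disjoint, I will note that every non-link vertex of $H$ lies on exactly one cycle $C_v$, and it lies inside exactly one face of $G$ in the drawing where $C_v$ surrounds $v$. The arcs of $C_f$ consist of the arcs of the cycles $C_v$ that run between consecutive link vertices on the side of $f$, together with the link arcs that lie in $f$. This is the main point requiring care, namely that the face cycles are well defined and disjoint. I take this from the definition, where the paper already asserts that these vertices form a directed cycle.

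\emph{Items (ii)–(iv).} For (iii), each link vertex merges two cycle vertices and no other vertices are created. This gives $n_H=\sum_v|C_v|-m_G$. Every non-link vertex lies on exactly one face cycle, and there are $m_G$ link vertices, so $n_H=\sum_f|C_f|+m_G$. Item (iv) then follows by setting $|C_v|=g$. For the arc count, each $C_v$ contributes $g$ arcs and each edge of $G$ adds two link arcs, so $m_H=g n_G+2m_G$ whenever the cycles are counted as distinct. I will state $m_H$ only in the connected case, as the paper does.

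For (ii), I will count faces of $H$ directly. Each vertex $v$ gives the inner face of $C_v$. Each edge gives two small triangular faces bounded by a link arc and the two cycle arcs at the link vertex. Each face $f$ of $G$ gives the face bounded by $C_f$. This yields $n_G+2m_G+f_G$. As a check, when $G$ is connected I will apply Euler's formula to $H$: with $n_H=gn_G-m_G$ and $m_H=gn_G+2m_G$, we get $f_H=2+m_H-n_H=2+3m_G$. This agrees with the direct count, since $n_G+f_G=m_G+2$.

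\emph{Item (v).} A vertex $v$ of degree $d$ needs $d$ distinct link vertices on $C_v$. The paper's loop convention requires them to be pairwise non-adjacent, because the in-neighbour and out-neighbour of each link vertex must be ordinary degree-2 vertices that receive link arcs. I will argue that between consecutive link vertices on $C_v$ there is at least one non-link vertex, since otherwise two link arcs would share an endpoint in a way that conflicts with the merging rule on degree-2 vertices. This gives $|C_v|\ge 2d$, hence $g\ge 2\Delta(G)$. The obstacle here is justifying the separation rigorously, and I will rely on the requirement that merged vertices have degree 2 at the moment of merging.
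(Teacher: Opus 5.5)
Your proposal is correct and follows the paper's proof item by item. For (i) you use vertex-disjoint face cycles, for (ii) the direct face count plus Euler on $G$, for (iii) the double count of non-link and link vertices, and for (v) the non-adjacency of link vertices on $C_v$.

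The one real deviation is in (iv), and it is harmless. You count arcs directly as $m_H=\sum_{v}\lvert C_v\rvert+2m_G$ instead of combining (ii) with Euler's formula for $H$, and your count shows the arc formula holds even without connectivity.

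Two wording points in (v). Adjacent link vertices are ruled out by the degree-$2$ merging rule, not by the loop convention. When $G$ has loops, you should count the $\deg(v)$ merge positions on $C_v$ rather than $\deg(v)$ distinct link vertices.
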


\begin{proof}
    Item~\ref{prop:fvs_greater_than_fG} is straightforward since the cycles associated to faces of $G$ are pairwise vertex-disjoint.
    
    To show~\ref{prop:number_of_faces_coating}, observe that:
    \begin{itemize}
        \item each vertex of $G$ induces a face in $H$ (bounded by a clockwise directed cycle),
        \item each face of $G$ induces a face in $H$ (bounded by one or multiple counterclockwise directed cycles),
        \item each edge $uv$ of $G$ gives two triangular faces in $H$ (on each side of the link vertex associated to $uv$).
    \end{itemize}
    
    Thus $f_H = f_G + n_G + 2m_G$. Moreover, if $G$ is connected, then applying Euler's formula we get $f_H = 2 + 3 m_G$.

    For the proof of~\ref{prop:sum_length_cycles}, note that every vertex of $H$ appears in exactly one cycle associated to a vertex of $G$ and in one (union of) cycle(s) associated to a face of $G$ except the link vertices. Link vertices appear in exactly two cycles associated to a vertex of $G$ and in no cycle associated to a face of $G$. There are $m_G$ link vertices in $H$, this proves the equalities.
    
    Item~\ref{prop:size_coating} follows directly from $\ref{prop:sum_length_cycles}$ as $\lvert C_v \rvert = g$ for every $v \in V(G)$. If $G$ is connected then $H$ is also connected and by~\ref{prop:number_of_faces_coating} and Euler's formula we get $m_H = g \cdot n_G + 2m_G$.
    
    Finally observe that if $v$ is degree-$k$ vertex of a skeleton $G$, then the directed cycle $C_v$ associated to $v$ in any coating $H$ of $G$ has length at least $2k$ as link vertices are never adjacent and~\ref{prop:maximal_degree_skeleton} follows.
\end{proof}

\begin{lemma}\label{lem:fvs_without_link_vertices}
Every coating has a minimum feedback vertex set containing no link vertices.
\end{lemma}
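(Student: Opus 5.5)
A natural plan is an exchange argument. Start from a minimum feedback vertex set $S$ of the coating $H$ that contains the fewest link vertices, and suppose it still contains a link vertex $s=s_{uv}$. We want a non-link vertex $s'$ such that $(S\setminus\{s\})\cup\{s'\}$ is still a feedback vertex set. This contradicts the choice of $S$. So the whole proof is a local analysis of which directed cycles of $H$ pass through $s_{uv}$.

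First, record the local structure around $s=s_{uv}$. Write $a_u\to s\to b_u$ for the arcs of $C_u$ through $s$ and $a_v\to s\to b_v$ for those of $C_v$. The link arcs are $a_u b_v$ and $a_v b_u$. The in-neighbours of $s$ in $H$ are exactly $a_u,a_v$ and its out-neighbours are exactly $b_u,b_v$. So every directed cycle through $s$ contains a subpath $x\to s\to y$ with $x\in\{a_u,a_v\}$ and $y\in\{b_u,b_v\}$. When $x$ and $y$ come from different cycles, for example $a_u\to s\to b_v$, the link arc $a_u b_v$ shortcuts this subpath. Replacing $x s y$ by the arc $xy$ gives a directed cycle avoiding $s$, unless that cycle already uses the arc $xy$. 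It cannot, since a cycle visits $x$ only once. Such a shortened cycle must be hit by $S\setminus\{s\}$, and then the original cycle is hit as well, because its vertex set contains the shortened cycle's vertex set. So only cycles using the ``straight'' subpaths $a_u s b_u$ or $a_v s b_v$ still need to be handled.

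Next, look at the straight cycles, say those through $a_u s b_u$. My intended choice is to replace $s$ by a non-link vertex adjacent to it that kills every remaining cycle, namely $a_u$ or $b_u$, possibly after also using the symmetric side. The difficulty is that we need a single vertex that handles both straight types. My plan is a case distinction on whether $S\setminus\{s\}$ already hits all straight cycles of one type. If all $v$-straight cycles are already hit, then replacing $s$ by $a_u$ kills every remaining $u$-straight cycle, so it suffices, and $a_u$ is not a link vertex because link vertices are never adjacent. The key claim is that one type is always already covered. Suppose not. Then there is a $u$-straight cycle $P$ and a $v$-straight cycle $Q$ that both avoid $S\setminus\{s\}$. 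We would then recombine $P$ and $Q$ at $s$, using the link arcs $a_u b_v$ and $a_v b_u$, into a cycle avoiding $S$ entirely. Concretely, follow $P$ from $b_u$ back to $a_u$, take the link arc $a_u b_v$, follow $Q$ from $b_v$ to $a_v$, and take the link arc $a_v b_u$. This closed directed walk avoids $s$ and avoids $S$, so it contains a directed cycle avoiding $S$, which contradicts $S$ being a feedback vertex set.

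The main obstacle is checking this recombination step: the walk is closed and avoids $s$, but one must verify that it lies within $V(H)\setminus S$. This holds because $P$ and $Q$ avoid $S$ apart from $s$. A closed walk avoiding $S$ always contains a directed cycle avoiding $S$, so no disjointness of $P$ and $Q$ is needed. The loop case in the coating definition, where $u=v$, should be handled the same way, using the distance-$3$ condition to ensure the link arcs exist as genuine arcs. Iterating the exchange, or using the minimality of the number of link vertices in $S$, finishes the proof.
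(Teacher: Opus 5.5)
Your proposal is correct and is essentially the paper's exchange argument. Replacing $s$ by an in-neighbour fails only if a straight cycle of the other side avoids $S\setminus\{s\}$, and if both replacements fail, the two surviving paths closed up with the link arcs $a_u b_v$ and $a_v b_u$ form a closed walk avoiding $S$; this is exactly the paper's argument with $S_1=S-s+a_1$ and $S_2=S-s+a_2$. Your separate treatment of crossing cycles via the link-arc shortcut, and choosing a minimum set with the fewest link vertices instead of iterating, are only presentational differences.
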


\begin{proof}
    Let $S$ be a minimum feedback vertex set of a coating $H$. Suppose $S$ contains a link vertex $s$ associated to an edge $uv$ of the underlying skeleton. Let $a_1$ and $a_2$ (resp. $b_1$ and $b_2$) be the two in-neighbors (resp. out-neighbors) of $s$ (see \Cref{fig:fvs_without_link_vertices}).

    \begin{figure}[htbp]
        \centering
        \includegraphics[width = 0.6\textwidth]{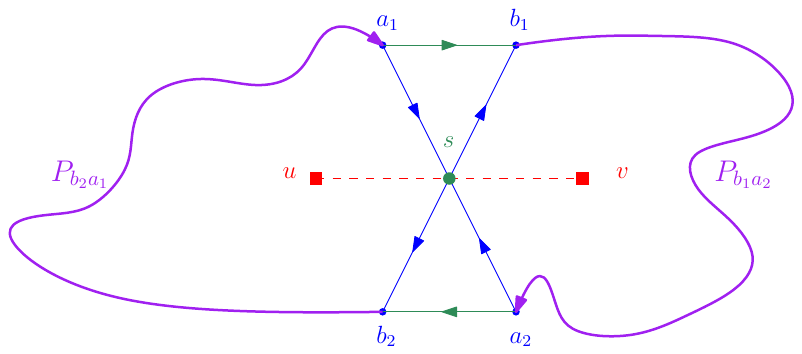}
        \caption{Notations for the proof of \Cref{lem:fvs_without_link_vertices}.}\label{fig:fvs_without_link_vertices}
    \end{figure}
    Let $S_1 = S - s + a_1$ and $S_2 = S - s + a_2$. If $S_1$ is not a feedback vertex set of $H$, then there exists a directed cycle $s  b_1  \dots  a_2 s$ (because $S$ is a feedback vertex set of $H$) that does not contain vertices of $S_1$, in particular there is a directed path $P_{b_1 a_2}$ from $b_1$ to $a_2$ in $H - S$. Similarly if $S_2$ is not a feedback vertex set of $H$, then one can find a directed path $P_{b_2 a_1}$ from $b_2$ to $a_1$ in $H - S$. But $P_{b_1 a_2}$ and $P_{b_2 a_1}$ cannot both exist at the same time in $H - S$ as together with arcs $a_1b_1,a_2b_2$ they would form a directed cycle in $H - S$. Hence either $S_1$ or $S_2$ is a minimum feedback vertex set of $H$. We can iterate the process to replace every link vertex in $S$ by one of its two in-neighbors.
\end{proof}

\begin{corollary}\label{cor:fvs_coating}
    Let $G$ be a plane undirected graph and $H$ a coating of $G$, then $\fv(H) \ge n_G$.
\end{corollary}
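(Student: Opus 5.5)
By \Cref{lem:fvs_without_link_vertices}, I fix a minimum feedback vertex set $S$ of $H$ that contains no link vertex. The plan is to show that $S$ meets every cycle $C_v$ associated to a vertex $v$ of $G$ in a distinct vertex. Then $|S| \ge n_G$, and the corollary follows.

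\textbf{Step 1: each $C_v$ is hit.} For each $v \in V(G)$, the cycle $C_v$ is a directed cycle of $H$. So $S$ must contain at least one vertex of $C_v$, and that vertex is not a link vertex.

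\textbf{Step 2: the hitting vertices are distinct.} By the definition of a coating, every vertex of $H$ lies on exactly one cycle $C_v$, except the link vertices, each of which lies on two such cycles. This is the observation used in the proof of \Cref{prop:coating_properties}\ref{prop:sum_length_cycles}. Since $S$ contains no link vertices, the sets $S \cap V(C_v)$ for $v \in V(G)$ are pairwise disjoint. Each is nonempty by Step 1, so summing gives $\fv(H) = |S| \ge \sum_{v} |S \cap V(C_v)| \ge n_G$.

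\textbf{Main obstacle.} The only delicate point is the case of loops. There the two merged degree-$2$ vertices come from the same cycle $C_u$, so a link vertex may lie on only one vertex-cycle. Step 2 still holds, since such a link vertex never lies on two different cycles $C_v$. In any case link vertices are excluded from $S$, so the argument does not depend on how many vertex-cycles they lie on. Everything else is routine once \Cref{lem:fvs_without_link_vertices} is available.
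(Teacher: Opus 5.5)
Your proof is correct and is essentially the paper's argument: take a minimum feedback vertex set $S$ with no link vertices via \Cref{lem:fvs_without_link_vertices}, then note that the $n_G$ cycles $C_v$ are hit by pairwise distinct vertices of $S$, because non-link vertices lie on exactly one $C_v$. One small precision for your loop discussion: when $u$ carries a loop, $C_u$ becomes a closed walk that passes twice through the link vertex, not a cycle, so the issue is really in Step 1 rather than Step 2. Step 1 still holds, because each of the two directed sub-cycles through that link vertex forces a non-link vertex of $C_u$ into $S$.
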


\begin{proof}
    From \Cref{lem:fvs_without_link_vertices} there exists a minimum feedback vertex set $S$ of $H$ that does not contain any link vertices. Then the $n_G$ cycles of $H$ associated to vertices of $G$ are hit by $n_G$ different vertices of $S$. Hence $\fv(H) = \lvert S\rvert \ge n_G$.
\end{proof}

\begin{remark}\label{rem:lower_bound_max_nG_fG}
    \Cref{prop:coating_properties}\ref{prop:fvs_greater_than_fG} together with \Cref{cor:fvs_coating} imply that $ \fv(H) \ge \max(n_G, f_G)$. This inequality can be strict as shown by \Cref{fig:example_fvs_larger_than_fG_and_nG}. Indeed, in this figure, the skeleton $G$ has $n_G = 10$ and $f_G = 10$. Every coating $H$ of $G$ has at least 11 vertex-disjoint cycles (the green cycles associated to vertices and faces of $G$) and thus $ \fv(H) \ge 11 > \max(n_G, f_G)$.
    
    \begin{figure}[htbp]
        \centering
        \includegraphics[height=5cm]{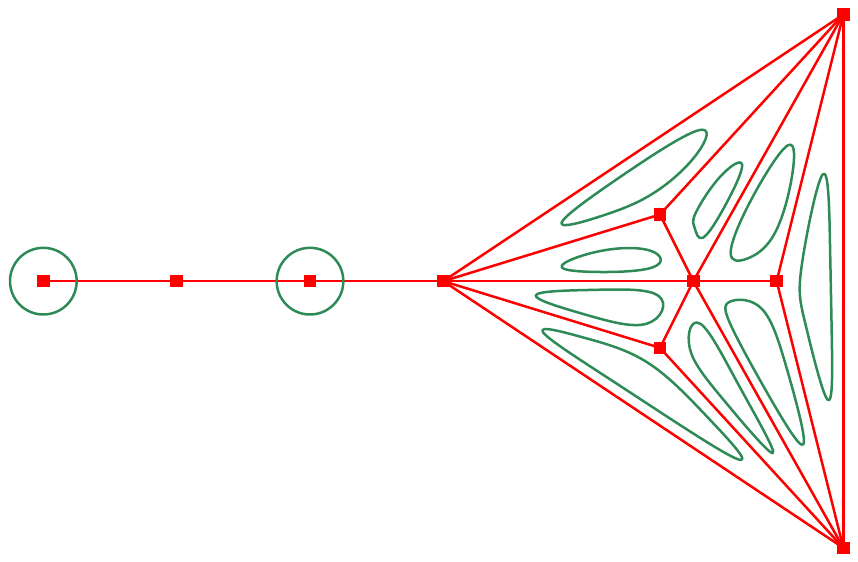}
        \caption{A skeleton graph $G$ (in red) and 11 vertex-disjoint cycles of any coating of $G$ (in green).}\label{fig:example_fvs_larger_than_fG_and_nG}
    \end{figure}
\end{remark}
    
The following remark, although not directly used in the proofs, is of independent interest as it highlights an inductive structure of skeletons and coatings. For example, it can be used to prove \Cref{cor:fvs_coating} using an induction on $m_G$.

\begin{remark}\label{rem:edge_contraction_skeleton}
    Let $H$ be a coating of some skeleton $G$. Let $s$ be the link vertex associated to an edge $uv \in E(G)$.
    \begin{itemize}
        \item If $u \neq v$ then $H' = H - s$ is a coating of $G' = G/uv$ where the edge $uv$ is contracted. (see \Cref{fig:coating_edge_contraction})
        \item If $u = v$ then $H' = H - s$ is a coating of $G' = G_1 \uplus G_2$ where $G_1$ (resp. $G_2$) is the graph induced by the edges lying in the exterior (resp. interior) of the loop $uu$ (see \Cref{fig:contraction_loop}).
    \end{itemize}
    \begin{figure}[htbp]
        \centering
        \includegraphics[width = \textwidth]{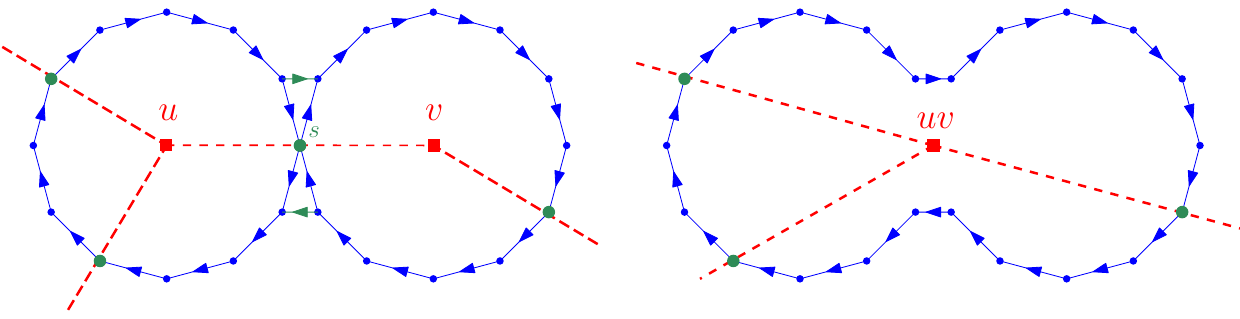}
        \caption{The deletion of a link vertex $s$ of the coating $H$ associated to an edge $uv$ of the skeleton $G$ gives a coating $H' = H - s$ of the skeleton $G'=G/uv$. The skeletons and coatings are drawn in red and blue respectively.}\label{fig:coating_edge_contraction}
    \end{figure}
    \begin{figure}[htbp]
        \centering
        \includegraphics[width = \textwidth]{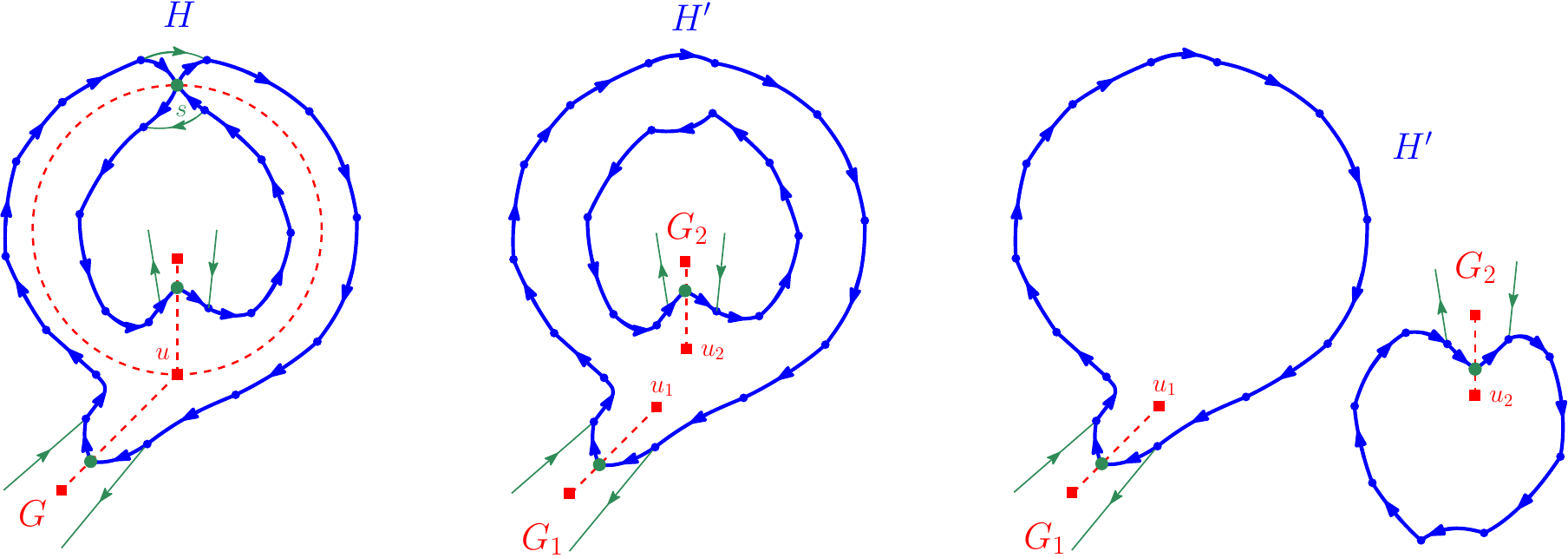}
        \caption{The deletion of a link vertex $s$ of a coating $H$ associated to a loop $uu$ of a skeleton $G$, gives a coating $H'=H-s$ of the skeleton $G' = G_1 \uplus G_2$ obtained from $G$ where the interior and exterior of $uu$ are disconnected. The skeletons and coatings are drawn in red and blue respectively.}\label{fig:contraction_loop}
    \end{figure}
\end{remark}

\begin{lemma}\label{lem:coating_digirth_g_normal_sets}
    If $H$ is a coating of digirth $g$ of some skeleton graph $G$, then $g\cdot\fv(H) \le n_H + m_G$.
\end{lemma}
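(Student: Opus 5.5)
The plan is to combine the min–max inequality of \Cref{thm:maximal_normal_set_greater_than_fvs} with a vertex-by-vertex count of the arcs of a normal set, using the local structure of coatings. By \Cref{thm:maximal_normal_set_greater_than_fvs} there is a normal set $\mathcal{N}$ of $H$ with $|\mathcal{N}| \ge \fv(H)$; we may shrink it by deleting cycles, since a subset of a normal set is still normal, so assume $|\mathcal{N}| = \fv(H)$. Since $H$ has digirth $g$, every cycle of $\mathcal{N}$ has length at least $g$. By \Cref{obs:double_counting_G[N]} this gives $g\cdot \fv(H) \le \sum_{C\in\mathcal{N}}|C| = m_\mathcal{N}$. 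It therefore suffices to prove
\begin{displaymath}
m_\mathcal{N} \le n_H + m_G .
\end{displaymath}

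To bound $m_\mathcal{N}$, write $m_\mathcal{N} = \frac12\sum_{x\in V(H)} \deg_{H[\mathcal{N}]}(x)$. We then show that $\deg_{H[\mathcal{N}]}(x)\le 4$ when $x$ is a link vertex and $\deg_{H[\mathcal{N}]}(x)\le 2$ otherwise. There are exactly $m_G$ link vertices, so this yields $m_\mathcal{N}\le 2m_G + (n_H-m_G) = n_H+m_G$, which is what we need.

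For link vertices the bound is immediate, because a link vertex has degree $4$ in $H$. For a non-link vertex $x$, which lies on a single cycle $C_u$, note that $\deg_{H[\mathcal{N}]}(x)$ is even, since in $H[\mathcal{N}]$ every vertex has indegree equal to outdegree. So the only case to exclude is $\deg_{H[\mathcal{N}]}(x)=4$ at a non-link vertex of $H$-degree $4$. Such an $x$ carries its two arcs of $C_u$ and two link arcs. By the definition of a coating, these link arcs are the head of a link arc (when $x$ is the out-neighbour of a link vertex $s_1$ on $C_u$) and the tail of another link arc (when $x$ is the in-neighbour of a link vertex $s_2$ on $C_u$). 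They therefore come from the configuration $s_1\,x\,s_2$ on $C_u$.

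This case is the main obstacle, and I would settle it using the planar embedding and the alternation property. Both link arcs at $x$ lie in the triangular faces next to $s_1$ and $s_2$, and hence on the same side of $C_u$, namely the side of the face cycle $C_f$, not the interior face of $C_u$. So in the cyclic order around $x$, the incoming cycle arc $s_1x$ is immediately followed by the link arc whose head is $x$, which sits in the triangle at $s_1$ adjacent to $s_1x$. Both of these half-arcs are incoming at $x$. Hence if all four arcs were in $\mathcal{N}$, the vertex $x$ would not be alternatingly oriented in $H[\mathcal{N}]$, contradicting normality. Thus $\deg_{H[\mathcal{N}]}(x)\le 2$ for every non-link vertex. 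This completes the count and gives $g\cdot\fv(H)\le m_\mathcal{N}\le n_H+m_G$.

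The only points I would double-check carefully are the exact cyclic positions of the link arcs around $x$, including the loop case of \Cref{def:coating}. The degenerate situation where $s_1$ and $s_2$ belong to the same skeleton edge cannot occur, since for a loop the merged vertices are at distance at least $3$.
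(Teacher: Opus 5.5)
Your proof is correct and follows the paper's argument: in a normal set only link vertices can lie on two cycles, so $g\,\fv(H)\le \sum_{C\in\mathcal N}\lvert C\rvert\le n_H+m_G$. Your parity and cyclic-order analysis at a non-link vertex between two link vertices spells out the paper's one-line remark that such a vertex is not alternatingly oriented.

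One correction concerns the shrinking step. The justification ``a subset of a normal set is still normal'' is false in general: at a vertex of degree $6$ in $H[\mathcal N]$, deleting one cycle can leave two consecutive incoming half-arcs, which is why \Cref{lem:finding_a_weaker_normal_set} needs laminarity. It happens to hold in a coating because $\Delta(H)\le 4$. In any case the step is unnecessary, since $g\,\fv(H)\le g\lvert\mathcal N\rvert\le m_{\mathcal N}$ holds for any normal set with $\lvert\mathcal N\rvert\ge\fv(H)$.
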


\begin{proof}
    We use \Cref{thm:maximal_normal_set_greater_than_fvs}. Consider a maximum normal set $\mathcal{N}_{\max}$ of $H$. 
    A vertex $v$ of $H$ that is neither a link vertex nor an extremity of a link arc has degree 2 and thus can be used in at most one cycle of $\mathcal{N}_{\max}$. A link vertex $v$ has degree 4 by construction of $H$ and thus can be used in at most two cycles of $\mathcal{N}_{\max}$. Finally, by the planarity of $H$, a vertex $v$ that is an extremity of a link arc has degree either 3 or 4. By definition of a coating, such a vertex is not alternatingly oriented and thus cannot be used in two alternatingly directed cycles of $\mathcal{N}_{\max}$.

    \begin{figure}[htbp]
        \centering
        \includegraphics[height=4cm]{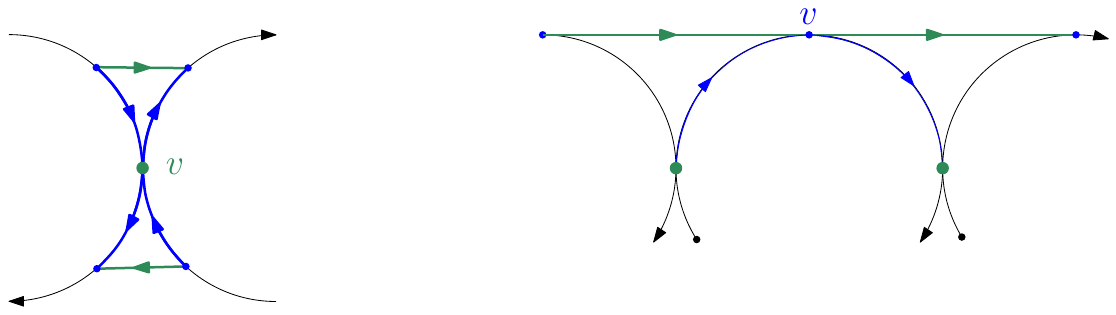}
        \caption{The two types of degree 4 vertices of a coating. Left: link vertex $v$. Right: vertex $v$ adjacent to two link vertices.}\label{fig:degree_4_vertices_in_coatings}
    \end{figure}
   
     Therefore, the only vertices that can be used in two cycles of $\mathcal{N}_{\max}$ are link vertices. As there are exactly $m_G$ link vertices in $H$, we have $\sum_{C \in \mathcal{N}_{\max}} \lvert C \rvert \le n_H +  m_G$. Since $H$ has digirth $g$, we have $g \cdot \lvert \mathcal{N}_{\max} \rvert \le \sum_{C \in \mathcal{N}_{\max}} \lvert C \rvert$ and hence by \Cref{thm:maximal_normal_set_greater_than_fvs} we conclude that $g\cdot \fv(H) \leq n_H + m_G$.
\end{proof}

Now we refine \Cref{cor:fvs_coating} for $g$-coatings:

\begin{theorem}\label{thm:fvs_g_coating_digirth_g}
    If $H$ is a $g$-coating of digirth $g$ of some skeleton graph $G$, then $\fv(H) = n_G$.
\end{theorem}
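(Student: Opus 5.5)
The lower bound $\fv(H) \ge n_G$ is exactly \Cref{cor:fvs_coating}, so it remains to prove $\fv(H) \le n_G$. I will derive this from \Cref{lem:coating_digirth_g_normal_sets} together with the order formula of \Cref{prop:coating_properties}\ref{prop:size_coating}.

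Since $H$ is a $g$-coating, \Cref{prop:coating_properties}\ref{prop:size_coating} gives $n_H = g\cdot n_G - m_G$. This formula does not require connectivity, because it comes from item~\ref{prop:sum_length_cycles} with $\lvert C_v\rvert = g$ for every $v$. Since $H$ also has digirth $g$, \Cref{lem:coating_digirth_g_normal_sets} applies and yields
\begin{displaymath}
  g\cdot \fv(H) \le n_H + m_G = g\cdot n_G - m_G + m_G = g\cdot n_G .
\end{displaymath}
Dividing by $g$ gives $\fv(H)\le n_G$. Combined with \Cref{cor:fvs_coating}, this proves $\fv(H)=n_G$.

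The only point needing care is that the hypotheses of the two cited results match the present setting. For \Cref{lem:coating_digirth_g_normal_sets}, its counting argument bounds the total length of a maximum normal set by $n_H+m_G$: each non-link vertex lies on at most one cycle of the set, and each link vertex on at most two. It then invokes \Cref{thm:maximal_normal_set_greater_than_fvs}, and both steps hold for any coating of digirth $g$, in particular for a $g$-coating. For the order formula, it is valid even when $G$ has loops or is disconnected, since every link vertex is counted in exactly two of the vertex cycles $C_v$ and there are $m_G$ link vertices. I therefore expect no real obstacle: the theorem is essentially the observation that in a $g$-coating the upper bound $(n_H+m_G)/g$ and the lower bound $n_G$ coincide.
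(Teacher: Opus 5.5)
Your proposal is correct and follows essentially the same route as the paper: the lower bound comes from \Cref{cor:fvs_coating}, and the upper bound comes from combining \Cref{lem:coating_digirth_g_normal_sets} with $n_H = g\cdot n_G - m_G$ from \Cref{prop:coating_properties}\ref{prop:size_coating}. Your remark that this order formula does not need connectivity is accurate, since it follows directly from item~\ref{prop:sum_length_cycles}.
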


\begin{proof}
    Recall that $ \fv(H) \ge n_G$ by \Cref{cor:fvs_coating}. It remains to show that $\fv(H) \le n_G$. From \Cref{lem:coating_digirth_g_normal_sets} we know that $g . \fv(H) \le n_H + m_G$. Since $H$ is a $g$-coating, by \Cref{prop:coating_properties}\ref{prop:size_coating} it follows that $n_H + m_G = g .n_G$ and thus $\fv(H) \le n_G$.
\end{proof}

\begin{corollary}\label{cor:computing_fvs_g_coating_of_digirth_g}
    Let $H$ be a $g$-coating of digirth $g$ of some skeleton $G$ and let $\alpha, \beta$ be two constants such that $m_G = \alpha \times n_G - \beta$ with $\alpha \neq g$. Then $\fv(H) = \frac{n_H - \beta}{g - \alpha}$.
\end{corollary}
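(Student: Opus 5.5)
The plan is to combine \Cref{thm:fvs_g_coating_digirth_g} with the vertex count of a $g$-coating from \Cref{prop:coating_properties}\ref{prop:size_coating}, and then eliminate $n_G$ by simple algebra.

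First, since $H$ is a $g$-coating of digirth $g$, \Cref{thm:fvs_g_coating_digirth_g} gives $\fv(H) = n_G$. Second, \Cref{prop:coating_properties}\ref{prop:size_coating} gives $n_H = g \cdot n_G - m_G$ for any $g$-coating. This identity does not require $G$ to be connected; only the arc count $m_H$ did.

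Substituting the hypothesis $m_G = \alpha n_G - \beta$ yields
\[
n_H = g\, n_G - \alpha\, n_G + \beta = (g-\alpha)\, n_G + \beta .
\]
Since $\alpha \neq g$, we may divide by $g - \alpha$ to get $n_G = \frac{n_H - \beta}{g-\alpha}$. Combining this with $\fv(H) = n_G$ gives the claim.

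There is no real obstacle here; the only points to check are the hypothesis $\alpha \neq g$, which makes the division legitimate, and that the vertex-count identity from \Cref{prop:coating_properties} is used in its general (not necessarily connected) form.
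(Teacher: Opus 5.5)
Your proof is correct and is essentially the paper's argument: use $n_H = g\,n_G - m_G$ from \Cref{prop:coating_properties}\ref{prop:size_coating}, substitute $m_G = \alpha n_G - \beta$, and conclude with $\fv(H) = n_G$ from \Cref{thm:fvs_g_coating_digirth_g}. Your remark that the vertex-count identity does not need connectivity is also accurate.
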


\begin{proof}
    As $H$ is a $g$-coating, from \Cref{prop:coating_properties}\ref{prop:size_coating} we have
    \begin{displaymath}
        n_H = g \times n_G - m_G = g \times n_G - \alpha \times n_G + \beta = (g- \alpha) \times n_G + \beta
    \end{displaymath}
    By \Cref{thm:fvs_g_coating_digirth_g}, we conclude that $\fv(H) = n_G = \frac{n_H - \beta}{g - \alpha}$.
\end{proof}

\subsection{Coating functions}

In this section, we introduce the notion of \emph{coating functions}, which provide an alternative way to represent a coating of a skeleton graph and are useful for constructing coatings with specific properties.

\begin{definition}[Corner]
    Let $G$ be a plane graph. Let $v$ be a vertex of $G$ and let $e_1, e_2$ be two half-edges incident to $v$ that are consecutive in the clockwise order around $v$. The triple $c = (e_1, v, e_2)$ is called a \emph{corner} of $G$. 
    If $v$ is an isolated vertex, we define the corner as the triple $(\emptyset, v, \emptyset)$. If $v$ has degree 1 with incident half-edge $e$, we define its single corner as $(e, v, e)$. Also, for consistency, if $v$ is a vertex whose only incident edge $e$ is a loop, we define its two corners as ${(e,v,e)}^{int}$ and ${(e,v,e)}^{ext}$.
    
     The set of all the corners of $G$ is denoted by $\mathcal{K}$. Note that each corner $c$ is incident to a unique face $f \in F(G)$ and to a unique vertex $v \in V(G)$. For a vertex $v \in V(G)$ (resp. a face $f \in F(G)$) denote by $\mathcal{K}(v)$ (resp. $\mathcal{K}(f)$) the sets of corners incident to $v$ (resp. incident to $f$).
    
    Let $H$ be a coating of some skeleton graph $G$. Note that every vertex of $H$ is either a link vertex or a vertex that lies in a corner of $G$. Given a directed cycle $D$ of $H$, we denote by $\mathcal{K}(D)$ the set of corners where the vertices of $D$ lie.
\end{definition}

\begin{definition}[Coating Function]
    Given a skeleton plane graph $G$, a \emph{coating function} of $G$ is a function $h \colon \mathcal{K} \to \mathbb{N}^*$ where $h(c)$ denotes the number of vertices of the coating lying strictly within corner $c$, excluding link vertices. This establishes a bijection between coatings of $G$ and coating functions of $G$.
\end{definition}

As an example, \Cref{fig:example_coating_function} shows the coating $H$ of a plane graph $G$ defined by the following coating function $h$:
\[
\begin{array}{c|cccccccccccccc}
c & c_1&c_2&c_3&c_4&c_5&c_6&c_7&c_8&c_9&c_{10}&c_{11}&c_{12}&c_{13}&c_{14}\\ \hline
h(c) &5&1&1&1&6&1&5&3&2&6&2&3&2&2
\end{array}
\]
   
    \begin{figure}[htbp]
        \centering
        \includegraphics[height=5cm]{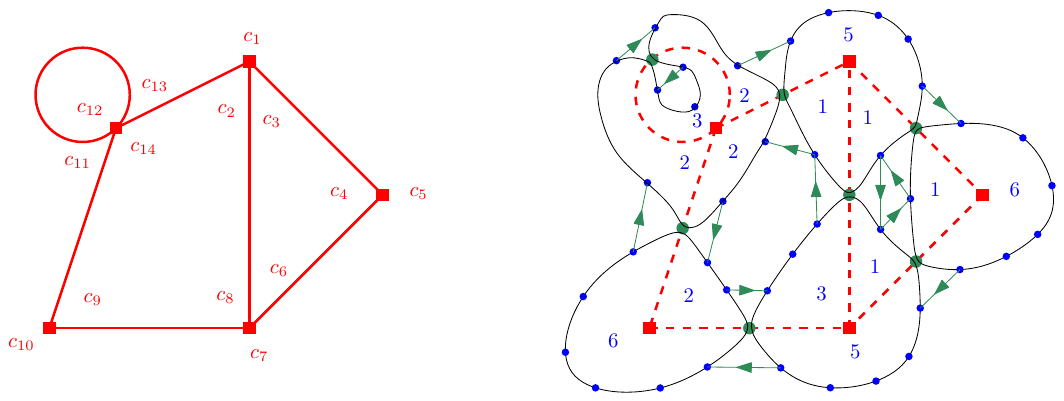}
        \caption{Left: a graph $G$ with its corners. Right: a coating $H$ of $G$ with its associated coating function in blue. The black cycles associated to the vertices of $G$ are oriented clockwise.}\label{fig:example_coating_function}
    \end{figure}

\begin{remark}\label{rem:sum_coating_function}
    Given a plane graph $G$ and a coating function $h$, the following hold:
    \begin{itemize}
        \item $\sum_{c \in \mathcal{K}} h(c) = n_H - m_G$.
        \item The function $h$ defines a $g$-coating if and only if for all $v \in V(G), \sum_{c \in \mathcal{K}(v)} h(c) = g - \deg(v)$.
    \end{itemize}
\end{remark}

The cycles associated to vertices and faces of a skeleton graph in \Cref{def:coating} provide a necessary condition for a coating function to define a coating of digirth $g$.

\begin{observation}\label{obs:necessary_condition_digirth_coating}
    Let $G$ be a plane graph and $h$ a coating function of $G$. If $h$ defines a coating of digirth $g$ then:
    \begin{itemize}
        \item $\forall v \in V(G),  \sum_{c \in \mathcal{K}(v)} h(c) = \lvert C_v \rvert - \deg(v) \ge g - \deg(v)$;
        \item $\forall f \in F(G),  \sum_{c \in \mathcal{K}(f)} h(c) = \lvert C_f \rvert \ge g$.
    \end{itemize}
\end{observation}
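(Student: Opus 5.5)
The plan is to read off both inequalities directly from the structure of a coating. The key fact is that the directed cycles $C_v$ (for $v \in V(G)$) and $C_f$ (for $f \in F(G)$) described in \Cref{def:coating} are directed cycles of $H$. Since $H$ has digirth $g$, each of them must have length at least $g$. It then remains to translate the lengths of these cycles into sums of values of the coating function $h$ over the relevant corners.

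For the vertex condition, fix $v \in V(G)$. The cycle $C_v$ consists of exactly two kinds of vertices:
\begin{itemize}
  \item the non-link vertices lying strictly within the corners of $\mathcal{K}(v)$, which number $\sum_{c \in \mathcal{K}(v)} h(c)$ by definition of $h$;
  \item the link vertices $s_{uv}$, one for each edge incident to $v$.
\end{itemize}
I will check the count of link vertices carefully. A loop contributes two merged vertices on $C_v$, so it is counted twice, which matches the convention that a loop adds $2$ to $\deg(v)$. Hence $|C_v| = \sum_{c \in \mathcal{K}(v)} h(c) + \deg(v)$. I will also check the degenerate corners $(\emptyset,v,\emptyset)$, $(e,v,e)$ and the two loop corners, and confirm that they still partition the non-link vertices of $C_v$. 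Combining the length formula with $|C_v| \ge g$ gives the first item.

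For the face condition, fix $f \in F(G)$. By \Cref{def:coating}, the non-link vertices of $H$ lying inside $f$ form $C_f$, or a disjoint union $C_f^1,\dots,C_f^k$ if $f$ meets $k$ components. These vertices are exactly the vertices lying in the corners of $\mathcal{K}(f)$, so $|C_f| = \sum_{c \in \mathcal{K}(f)} h(c)$. Here I use the abuse of notation of the definition: each arc of a directed cycle corresponds to one vertex, so counting vertices equals counting arcs. In the connected case, $C_f$ is a directed cycle of $H$, so $|C_f| \ge g$. In the disconnected case, each $C_f^i$ is a directed cycle, so $|C_f| \ge kg \ge g$.

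The only delicate point is bookkeeping rather than any real obstacle. I need to justify that the arcs of $C_f$ really exist in $H$, that is, that consecutive corner vertices around $f$ are joined. Between two consecutive corners at a link vertex, this is exactly the role of the link arc. I also need the same care for loops and for corners of degree-$1$ and isolated vertices, where the corner conventions must match the vertex counts. These checks follow directly from the construction in \Cref{def:coating}.
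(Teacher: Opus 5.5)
Your proposal is correct and follows the paper's own reasoning. The paper justifies this observation only by noting that the cycles $C_v$ and $C_f$ of the coating are directed cycles, whose lengths are $\deg(v)+\sum_{c\in\mathcal{K}(v)}h(c)$ and $\sum_{c\in\mathcal{K}(f)}h(c)$, and hence are at least $g$.

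One nuance deserves an explicit line in your write-up. When $v$ carries a loop, $C_v$ passes twice through the merged link vertex, so it is a closed directed walk rather than a cycle. It still contains a directed cycle, so $\lvert C_v\rvert \ge g$ holds in that case as well.
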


We note that the conditions of \Cref{obs:necessary_condition_digirth_coating} to get a coating of digirth $g$ are not sufficient to guarantee digirth $g$, as shown in \Cref{fig:counter_example_digirth_coating}.

\begin{figure}[htbp]
    \centering
    \includegraphics[height=5cm]{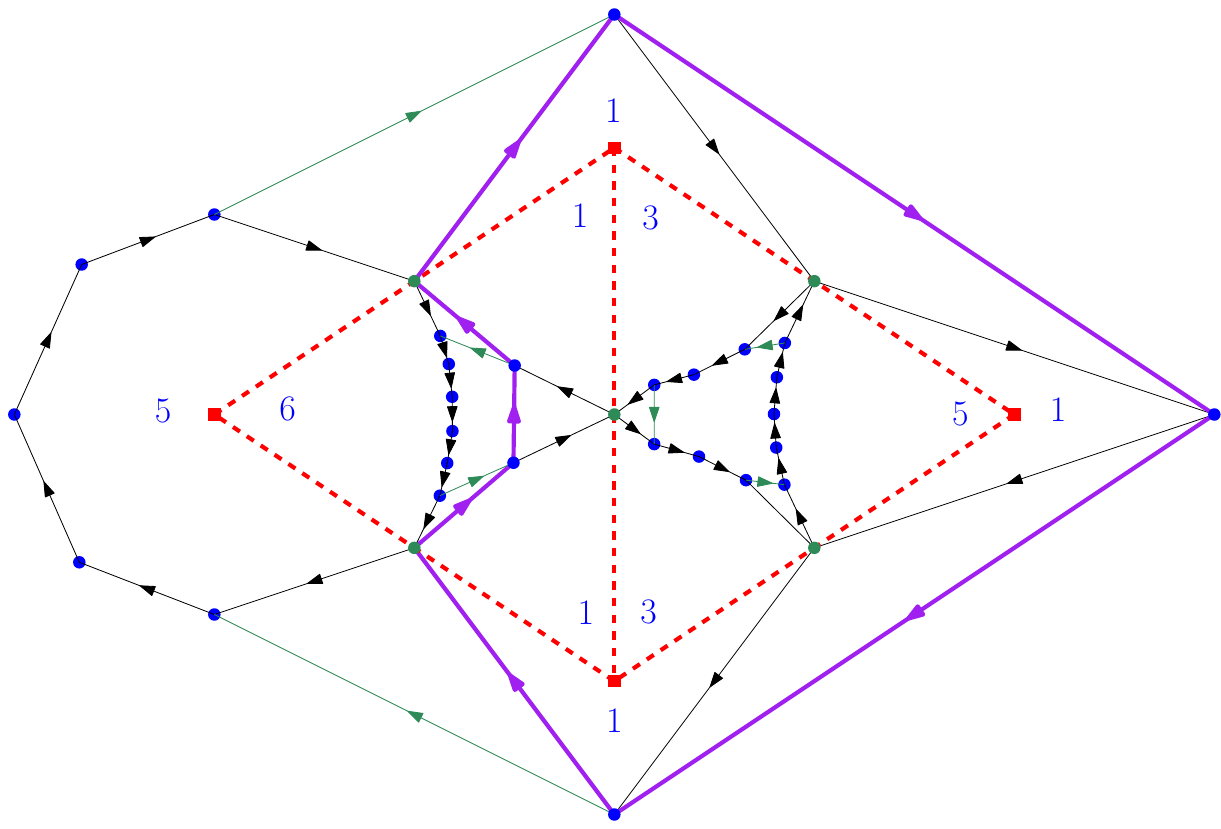}
    \caption{A skeleton $G$ (in red) with a coating function $h$ (in blue) that satisfies the conditions of \Cref{obs:necessary_condition_digirth_coating} for $g=8$ but nevertheless the corresponding coating contains a directed cycle of length 7 (in purple).}\label{fig:counter_example_digirth_coating}
\end{figure}

\begin{remark*}
    Let $G$ be a plane undirected graph and $H_1$ and $H_2$ two coatings of $G$. Then $ \fv(H_1) = \fv(H_2) $, meaning that the size of a minimum feedback vertex set of a coating of $G$ does not depend on the choice of the coating.
    
    Let $F_1$ be a minimal feedback vertex set of $H_1$ that does not use any link vertex (\Cref{lem:fvs_without_link_vertices}) and let $D_2$ be a directed cycle of $H_2$. Note that there exists a directed cycle $D_1$ of $H_1$ such that $\mathcal{K}(D_2) = \mathcal{K}(D_1)$. Then there exists a vertex $v \in F_1$ that lies in a corner $c \in \mathcal{K}(D_2)$. Observe that every directed cycle of a coating passing through a vertex of a corner $c$ must contain all vertices of that corner, therefore $v$ is a vertex of $D_2$. This shows that $F_1$ is a feedback vertex set of $H_2$ and then $\fv(H_1) = \fv(H_2)$.
\end{remark*}

The following application constructs infinite families of digraphs of digirth $g$ whose minimum feedback vertex sets have size $\frac{n}{g-1}$. In particular, for each digirth $g\geq 4$, this yields a slightly better ratio $\frac{\fv(H)}{n}$ than the one obtained in~\cite{KVW17}.

\begin{application}\label{appl:skeletonCk}
    \begin{figure}[htbp]
        \centering
        \includegraphics[height=5cm]{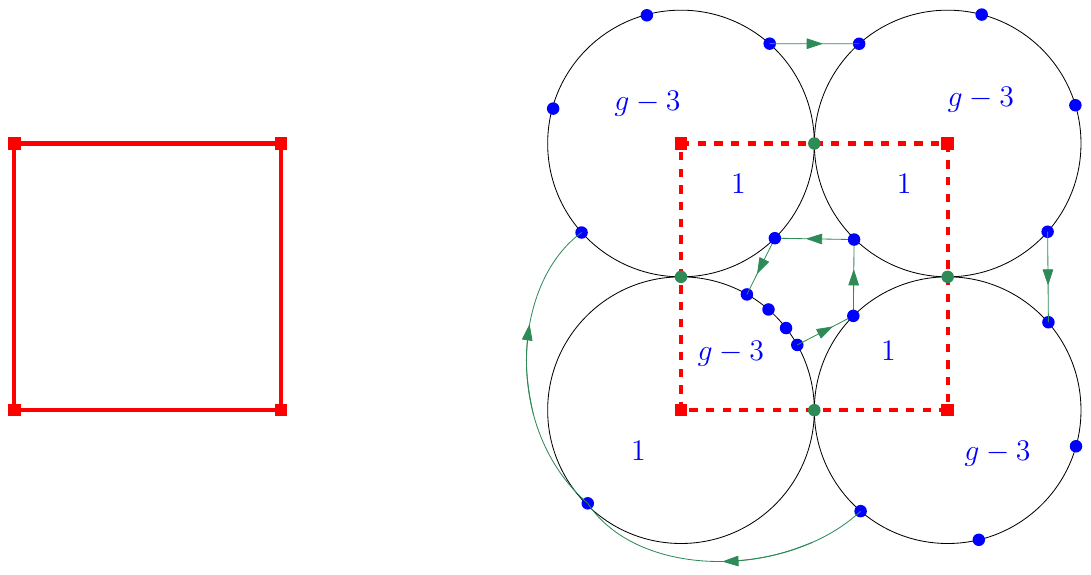}
        \caption{A $g$-coating $H$ of the skeleton graph $G = C_4$ with its associated coating function in blue. The black cycles associated to the vertices of $G$ are oriented clockwise.}\label{fig:coating_of_the_intro}
    \end{figure}
    
    Consider a $g$-coating $H$ of digirth $g \ge 4$ of the skeleton graph $G = C_4$ (undirected 4-cycle) with the following coating function defined in \Cref{fig:coating_of_the_intro}. One can verify that $h$ defines a $g$-coating and that this coating has digirth $g$. Since $m_G = n_G$, by \Cref{cor:computing_fvs_g_coating_of_digirth_g} it follows that $\fv(H) = \frac{n_H}{g-1} = 4$.
    
    For a fixed digirth $g$, one can naturally extend this construction to build an infinite family of planar digraphs $H$ such that $\fv(H)=\frac{n_H}{g-1}$ by considering the skeletons $G_k = C_k$ for all $k \ge 4$.
\end{application}

The following proposition shows that the construction of \Cref{appl:skeletonCk} has the largest ratio $\frac{\fv(H)}{n}$ one can obtain using $g$-coatings of digirth $g$ for $g = 4,5$. It also shows that the ratio cannot be improved for $g=4$ using any type of coatings. 

\begin{proposition}\label{prop:bound_ratio_for_low_digirth}
    Let $H$ be a coating of digirth $g$. Then $\fv(H) \le \frac{4n}{3g}$. Moreover, if $H$ is a $g$-coating then $\fv(H) \le \frac{n_H}{g- \frac{1}{2} \left \lfloor \frac{g}{2} \right \rfloor}$.
\end{proposition}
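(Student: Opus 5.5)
Both inequalities should follow by combining the counting bounds already proved for coatings with a degree constraint coming from the fact that link vertices are never adjacent. There is no genuinely new combinatorial step; the only care needed is in the inequality relating $m_G$ to $n_H$.

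\emph{First inequality.} I start from \Cref{lem:coating_digirth_g_normal_sets}, which gives $g\cdot\fv(H)\le n_H+m_G$. It then suffices to show $m_G\le n_H/3$, since then $g\cdot \fv(H)\le \frac{4}{3}n_H$.
\begin{itemize}
\item By \Cref{prop:coating_properties}\ref{prop:sum_length_cycles}, $n_H=\sum_{v\in V(G)}\lvert C_v\rvert - m_G$.
\item As in the proof of \Cref{prop:coating_properties}\ref{prop:maximal_degree_skeleton}, each cycle $C_v$ contains $\deg(v)$ link vertices, and no two of them are consecutive on $C_v$. Hence $\lvert C_v\rvert\ge 2\deg(v)$. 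For a loop, the two merged vertices lie on $C_v$ and contribute $2$ to the degree, so the count is consistent.
\item Summing over $v$ gives $\sum_v \lvert C_v\rvert \ge 2\sum_v\deg(v)=4m_G$.
\item Therefore $n_H\ge 4m_G-m_G=3m_G$, which is what we need.
\end{itemize}

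\emph{Second inequality.} Now let $H$ be a $g$-coating of digirth $g$.
\begin{itemize}
\item By \Cref{thm:fvs_g_coating_digirth_g}, $\fv(H)=n_G$.
\item By \Cref{prop:coating_properties}\ref{prop:size_coating}, $n_H=g\,n_G-m_G$.
\item By \Cref{prop:coating_properties}\ref{prop:maximal_degree_skeleton}, $\Delta(G)\le \lfloor g/2\rfloor$, because degrees are integers.
\item The handshake lemma then gives $m_G=\frac12\sum_v\deg(v)\le \frac12\lfloor g/2\rfloor\, n_G$.
\item Substituting, $n_H\ge n_G\left(g-\frac12\lfloor g/2\rfloor\right)$. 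Since this factor is positive, dividing yields $\fv(H)=n_G\le \frac{n_H}{g-\frac12\lfloor g/2\rfloor}$.
\end{itemize}

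The only mildly delicate point is the bound $\lvert C_v\rvert\ge 2\deg(v)$ when loops and degree-$1$ or isolated vertices are present. For loops, I would note that the definition of a coating places the two merged vertices at distance at least $3$ on $C_v$, so the non-adjacency of link vertices still holds. Isolated and degree-$1$ vertices satisfy the bound trivially.
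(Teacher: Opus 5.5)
Your proposal is correct and matches the paper's proof essentially step for step. The first bound combines \Cref{lem:coating_digirth_g_normal_sets} with $n_H+m_G=\sum_v\lvert C_v\rvert\ge 4m_G$, and the second combines $\fv(H)=n_G$, $n_H=g\,n_G-m_G$ and $\Delta(G)\le\lfloor g/2\rfloor$; your remarks on loops and low-degree vertices just spell out the paper's one-line justification of $\lvert C_v\rvert\ge 2\deg(v)$.
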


Note that the two bounds coincide for even values of $g$.

\begin{proof}
   Let $G$ be the skeleton of $H$. From \Cref{lem:coating_digirth_g_normal_sets}, $\fv(H) \le \frac{n_H + m_G}{g}$. By \Cref{prop:coating_properties}\ref{prop:sum_length_cycles} and the fact that $\lvert C_v \rvert \ge 2\deg(v)$ for every $v \in V(G)$:
    \begin{displaymath}
        n_H + m_G = \sum_{v \in V(G)} \lvert C_v \rvert \ge \sum_{v \in V(G)} 2\deg(v) = 4m_G,
    \end{displaymath}
    so $m_G \le \frac{n_H}{3}$, and therefore $\fv(H) \le \frac{n_H + m_G}{g} \le \frac{4n_H}{3g}$.\\
    Now suppose $H$ is a $g$-coating. By \Cref{thm:fvs_g_coating_digirth_g}, $\fv(H) = n_G$. By \Cref{prop:coating_properties}\ref{prop:size_coating}, $n_H = g \cdot n_G - m_G$. Since $\Delta(G) \le \lfloor g/2 \rfloor$ by \Cref{prop:coating_properties}\ref{prop:maximal_degree_skeleton}, we have $m_G = \frac{1}{2}\sum_{v \in V(G)} \deg(v) \le \frac{1}{2}\left\lfloor \frac{g}{2} \right\rfloor \cdot n_G$, and therefore:
    \begin{displaymath}
        n_H \ge \left(g - \tfrac{1}{2}\left\lfloor \tfrac{g}{2} \right\rfloor\right) n_G = \left(g - \tfrac{1}{2}\left\lfloor \tfrac{g}{2} \right\rfloor\right) \fv(H). \qedhere
    \end{displaymath}
\end{proof}

We now prove a series of results allowing us to increase the digirth of a coating $H$ while maintaining the same growth of $\fv(H)$ relative to $n_H$. 

As a direct consequence of \Cref{thm:fvs_g_coating_digirth_g}, it follows that if $G$ is a skeleton that admits a $g$-coating of digirth $g$ (for any value of $g \ge 2$), then for every coating $H$ of $G$ (not necessarily a $g$-coating), $\fv(H) = n_G$.

\begin{theorem}\label{thm:ratio_extension_to_higher_digirth}
    Let $G$ be a plane undirected graph that admits a $g$-coating of digirth $g$. Let $\alpha, \beta$ be two constants such that $m_G = \alpha \cdot n_G - \beta$ with $\alpha < g$. Then for every integer $r \ge 0$, there exists a $g_r$-coating $H_{r}$ of $G$ with digirth $g_r = g+r$ and $\fv(H_{r})  = n_G = \frac{n_{H_r}-\beta}{g_r - \alpha}$.
\end{theorem}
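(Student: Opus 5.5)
The plan is an induction on $r$, where a single step turns a $g$-coating of digirth $g$ into a $(g+1)$-coating of digirth $g+1$. The case $r=0$ is the hypothesis $H_0=H$. Once $H_r$ is a $g_r$-coating of digirth $g_r$, \Cref{thm:fvs_g_coating_digirth_g} gives $\fv(H_r)=n_G$. \Cref{cor:computing_fvs_g_coating_of_digirth_g} with $\alpha<g\le g_r$ then gives $n_G=\frac{n_{H_r}-\beta}{g_r-\alpha}$. So the only thing left to prove is the existence of the coatings.

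For the step I would work with coating functions. By \Cref{rem:sum_coating_function}, $h$ defines a $g$-coating exactly when $\sum_{c\in\mathcal{K}(v)}h(c)=g-\deg(v)$ for every $v$. Starting from the function $h$ of $H_r$, I choose one corner $c_v\in\mathcal{K}(v)$ at each vertex $v$ and set $h'(c_v)=h(c_v)+1$, leaving all other corners unchanged. Then $h'$ defines a $(g_r+1)$-coating $H_{r+1}$. Concretely, $H_{r+1}$ is $H_r$ with one non-link arc of each $C_v$ (one lying inside the corner $c_v$) subdivided once, so that $n_{H_{r+1}}=n_{H_r}+n_G$, consistent with \Cref{prop:coating_properties}\ref{prop:size_coating}.

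The main obstacle is the digirth. Subdividing never shortens a cycle, so every directed cycle of $H_{r+1}$ has length at least $g_r$. What must be ruled out is a cycle of length exactly $g_r$ in $H_r$ that avoids all the subdivided arcs. I would use the following fact, noted in the remark on $\fv(H_1)=\fv(H_2)$: a directed cycle of a coating that passes through some vertex of a corner contains all vertices of that corner. Together with \Cref{lem:fvs_without_link_vertices}, this suggests taking the subdivided arcs to be the arcs entering a minimum feedback vertex set. Let $S$ be a minimum feedback vertex set of $H_r$ with no link vertices. Then $|S|=n_G$ and each $C_v$ contains exactly one vertex $s_v\in S$, lying in some corner $c_v$. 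Take $c_v$ as the corner whose count is raised, and realize $h'$ by subdividing the arc of $C_v$ entering the first vertex of $c_v$.

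Every directed cycle $D$ of $H_r$ meets $S$, say at $s_v$. Hence $D$ contains all vertices of $c_v$, in particular its first vertex. I would then check that $D$ enters that first vertex through the arc of $C_v$ preceding it. If instead that vertex is also the head of a link arc, I would switch to subdividing the $C_v$-arc inside $c_v$ leading to $s_v$, which $D$ must traverse once $D$ contains all of $c_v$ and $s_v$ is not the corner's first vertex; if $s_v$ is the first vertex, choose $S$ via the in-neighbour swap of \Cref{lem:fvs_without_link_vertices} so this can be arranged. With this choice, every directed cycle of $H_{r+1}$ uses at least one subdivided arc, so its length is at least $g_r+1$. The directed cycle $C_v$ itself has length exactly $g_r+1$, so the digirth is exactly $g_{r+1}$. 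The local case check of which arc $D$ must use inside $c_v$ is where I expect the real care is needed. The fallback is to mimic the subdivision argument used for $O_k^{(g)}$, subdividing all arcs entering $S$ and rebalancing the lengths through the corner counts.
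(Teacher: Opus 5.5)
\textbf{Gap in the digirth step.} Your construction is the paper's. You take a minimum feedback vertex set $S$ of $H_r$ with no link vertices, use $\lvert S\rvert=n_G$ to get one vertex $s_v$ on each $C_v$, and raise the coating function on the corner $c_v$ containing $s_v$. The paper raises it by $r$ in one step instead of inducting, which makes no difference.

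The gap is that you try to realize $h'$ as an \emph{arc subdivision} of $H_r$ and then show every directed cycle uses a subdivided arc. Write $c_v=(e_1,v,e_2)$ as the path $w_1\to\cdots\to w_{h(c_v)}$ of $C_v$. The first vertex $w_1$ is entered both by the arc $s_{e_1}w_1$ and by a link arc. The last vertex leaves both by $w_{h(c_v)}s_{e_2}$ and by a link arc.
\begin{itemize}
\item Subdividing $s_{e_1}w_1$ does not give the coating defined by $h'$, since the link arc still ends at $w_1$. Cycles entering through that link arc, such as the face cycle $C_f$ through $c_v$, also avoid the new vertex.
\item Your patch of subdividing an arc strictly inside $c_v$ works when $h(c_v)\ge 2$. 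When $h(c_v)=1$, however, no arc of $H_r$ lies on every cycle through $w_1$: $C_v$ traverses $w_1$ using only arcs of $C_v$, while $C_f$ traverses it using only link arcs.
\item The in-neighbour swap of \Cref{lem:fvs_without_link_vertices} cannot repair this. It only trades a link vertex for the last vertex of a corner, and all corners of $C_v$ can have $h=1$. This is forced whenever $g=2\deg(v)$, e.g.\ for $g=4$ in \Cref{appl:skeletonCk}.
\item The $O_k^{(g)}$-style fallback of subdividing the arcs entering $S$ subdivides a link arc whenever $s_v$ is the first vertex of its corner. The result is no longer a coating, so \Cref{thm:fvs_g_coating_digirth_g} would not apply to it.
\end{itemize}

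\textbf{The missing idea.} Argue at the level of corners instead of arcs, as the paper does. In any coating of $G$, a directed cycle that meets a corner traverses that corner's whole path, entering at its first vertex and leaving at its last. Let $D$ be a directed cycle of the coating $H'$ defined by $h'$, containing $x$ link vertices. The cyclic sequence of corners and link vertices that $D$ visits is also realized by a directed cycle $D'$ of $H_r$. Then $\lvert D\rvert=x+\sum_{c\in\mathcal{K}(D)}h'(c)$ and $\lvert D'\rvert=x+\sum_{c\in\mathcal{K}(D)}h(c)$. Since $S$ meets $D'$, $\mathcal{K}(D)$ contains some $c_v$, hence $\lvert D\rvert\ge\lvert D'\rvert+1\ge g_r+1$.

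Concretely, when $h(c_v)=1$ the operation on $H_r$ is not an arc subdivision. It is a split of $w_1$ into an arc $w_1'\to w_1''$, with all in-arcs at $w_1'$ and all out-arcs at $w_1''$. With this argument you can add $r$ to each $c_v$ in one step. The rest of your proof, applying \Cref{thm:fvs_g_coating_digirth_g} and \Cref{cor:computing_fvs_g_coating_of_digirth_g}, goes through unchanged.
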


\begin{proof}
    Let $H$ be a $g$-coating of digirth $g$ of $G$ and $h_0$ the coating function of $H$. We construct a family of coating functions $(h_r)_{r \ge 0}$ such that for all $r \ge 0$, $h_r$ defines a coating of digirth $g+r$ of $G$.
    
    By \Cref{lem:fvs_without_link_vertices}, there exists a minimum feedback vertex set $S$ of $H$ with no link vertices. Let $\mathcal{K}$ be the set of all corners of $G$ and denote by $\mathcal{K'}$ the set of corners of $G$ where the vertices of $S$ lie. Construct the coating function $h_r$ as follows:
    \begin{displaymath}
        \forall c \in \mathcal{K} \backslash \mathcal{K'}, \quad h_r(c) = h_0(c) \qquad \text{and} \qquad \forall c \in \mathcal{K'}, \quad h_r(c) = h_0(c) + r
    \end{displaymath}
    
    Let $H_r$ be the coating defined by the coating function $h_r$. As $\lvert S \rvert = n_G$ (by \Cref{thm:fvs_g_coating_digirth_g}) and $S$ does not contain any link vertices, $S \cap C_v$ contains exactly one vertex for every $v \in V(G)$. Then for every $v \in V(G)$ we have $\lvert C_v \rvert = g+r = g_r$, thus $H_r$ is a $g_r$-coating of $G$.
    
    We now show that $H_r$ has digirth $g_r = g+r$.
    Let $D$ be a directed cycle of $H_r$. Let $x$ be the number of link vertices in $D$ and $\mathcal{K}(D) = \lbrace c_1, \dots, c_i \rbrace$ the list of corners of $G$ where the vertices of $D$ lie. Note that $\mathcal{K}(D)$ is a list of corners of a directed cycle $D'$ in $H$ and $\lvert D' \rvert = x + \sum_{c \in \mathcal{K}(D)} h_0(c)$. Then since $S$ is a feedback vertex set of $H$ it follows that $\mathcal{K}(D) \cap \mathcal{K}' \neq \emptyset$ and then
    \begin{displaymath}
        \lvert D \rvert = x + \sum_{c \in \mathcal{K}(D)} h_r(c) \ge x + r + \sum_{c \in \mathcal{K}(D)} h_0(c) = \lvert D' \rvert + r \ge g+r = g_r.
    \end{displaymath}
    
    Moreover using \Cref{rem:sum_coating_function} we get
    \begin{displaymath}
        n_{H_r} =  m_G + \sum_{c \in \mathcal{K}} h_r(c) = m_G + \sum_{c \in \mathcal{K}} h_0(c) + r \cdot \lvert S \rvert = n_H + r \cdot \fv(H).
    \end{displaymath}
    
    Now, since $H_r$ is a $g_r$-coating of digirth $g_r$ and since $m_G = \alpha \cdot n_G - \beta$ with $\alpha < g\leq g_r$, by \Cref{cor:computing_fvs_g_coating_of_digirth_g} we conclude that
    \begin{displaymath}
        \fv(H_r) = \frac{n_{H_r} - \beta}{g_r - \alpha}
    \end{displaymath}
\end{proof}

\subsection{Constructions with large feedback vertex set and small digirth}\label{sec:coating_small_digirth}

In \Cref{appl:skeletonCk} we constructed an infinite family $\mathcal{F}_g$ of planar digraphs with digirth $g \ge 4$ such that every $H \in \mathcal{F}_g$ satisfies $\fv(H) = \frac{n}{g-1}$.
In this section we construct several infinite families of digraphs of digirth $g\ge 6$ with larger ratio $\frac{\fv(H)}{n}$.

\begin{definition}\label{def:recursive_coating}
    Let $\ell$ be a positive integer. A family of coatings $(H_k)_{k \ge 0}$ of skeletons $(G_k)_{k \ge 0}$ is said to be \emph{recursive} if the following hold:
    \begin{enumerate}
        \item $G_0$ is the undirected cycle of length $\ell$. The graph $G_1$ has two distinguished facial $\ell$-cycles $C_{in} = u_1\dots u_\ell$ and $C_{out} = v_1\dots v_\ell$. For $k \ge 2$, $G_k$ is obtained from $G_{k-1}$ by identifying the vertices of $C_{out}$ of $G_{k-1}$ with those of $C_{in}$ of a new copy of $G_1$. Equivalently, $G_k$ is a chain of $k$ copies of $G_1$, with $C_{out}$ of each copy identified with $C_{in}$ of the next.
        \item There exists a coating function $h_{G_1}$ of $G_1$ such that for every $k \ge 0$ and every corner $c$ of $G_k$, the coating function $h_k$ of $H_k$ satisfies $h_k(c) = h_{G_1}(c')$, where $c'$ is the corner of $G_1$ in the copy of $G_1$ containing $c$. In other words, the coating function is the same in every copy of $G_1$.
    \end{enumerate}
\end{definition}

\begin{lemma}\label{lem:digirth_recursive_coating}
    Let $(H_k)_{k \ge 0}$ be a recursive family of coatings of skeletons $(G_k)_{k \ge 0}$ with $H_0$ and $H_1$ having digirth $g$. Suppose moreover that for every pair $u, v$ of link vertices of $H_0$, $d_{H_1}(u,v) \ge d_{H_0}(u,v)$. Then $H_k$ has digirth $g$ for all $k \ge 0$.
\end{lemma}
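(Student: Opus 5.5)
We argue by induction on $k$; the cases $k=0,1$ hold by hypothesis. For $k\ge 2$, write $H_k$ as the union of $H_{k-1}$ and a new copy $H^{(k)}$ of $H_1$, glued along the part of the coating corresponding to the identified cycle $C_{out}$ of $G_{k-1}$ and $C_{in}$ of the new copy of $G_1$. This identified cycle is a copy of $G_0$, and its coating sits inside $H_k$ as a separating structure. Concretely, we will locate a closed curve $\gamma$ in the plane, running along the skeleton cycle $C_{out}=C_{in}$, which meets $H_k$ only in the $\ell$ link vertices $s_1,\dots,s_\ell$ of that cycle. These are the link vertices of the edges $v_iv_{i+1}$. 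Since $H_0$ is the coating of $G_0=C_\ell$, these are exactly the link vertices of a copy of $H_0$. Every directed cycle of $H_k$ therefore lies entirely on one side of $\gamma$, or crosses $\gamma$ through some of the $s_i$.

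Let $D$ be a directed cycle of $H_k$ with no arc of length zero, and suppose it uses vertices strictly on both sides of $\gamma$. Then $D$ passes through at least two of the $s_i$. We decompose $D$ into maximal subpaths $P_1,\dots,P_t$, each lying (apart from its endpoints $s_i$) on one side of $\gamma$. Every subpath of $D$ on the outer side, the $H^{(k)}$ side, joins two link vertices $s_a,s_b$ inside the copy of $H_1$. Its length is therefore at least $d_{H_1}(s_a,s_b)$, and by hypothesis this is at least $d_{H_0}(s_a,s_b)$. We replace each such subpath by a shortest $s_a$–$s_b$ path in $H_0$. To make this concrete we embed $H_0$ on the same side: removing the far part of $H^{(k)}$ and keeping only the corners adjacent to the cycle gives the coating of $C_\ell$ with the induced coating function. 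Since the coating function is the same in every copy, this is exactly $H_0$ placed on the side of $C_{in}$ in the new copy. The replacement produces a closed directed walk $D'$ in $H_{k-1}$ (respectively in $H_1$, if we replace the inner parts instead) with $|D'|\le|D|$.

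The resulting closed walk $D'$ lies in $H_{k-1}$, which has digirth $g$ by induction. Since $D'$ is a nonempty closed directed walk, it contains a directed cycle, so $|D'|\ge g$. Hence $|D|\ge g$. Cycles lying entirely on one side are cycles of $H_{k-1}$, or of the copy of $H_1$, and have length at least $g$ by induction and by hypothesis.

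The main obstacle is the identification step. We must verify that the portion of $H_k$ on the $H^{(k)}$ side can be compared with $H_0$ through the distance hypothesis. The hypothesis compares $d_{H_1}$ with $d_{H_0}$ between link vertices, but the subpaths of $D$ live in $H^{(k)}$ with its inner boundary glued. So we must check that the link vertices on $\gamma$ are literally the link vertices of $H_0$ and those of $C_{in}$ in $H_1$, with the same corner values, so that $H_{k-1}\cup(\text{copy of }H_0)$ is just $H_{k-1}$ again. This is where the recursive definition, with the same $h_{G_1}$ in every copy, is used.

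We must also handle the case where a subpath of $D$ on the new side leaves and re-enters through the same $s_a$. Such a subpath closes a directed cycle inside $H^{(k)}\cong H_1$, which is already of length at least $g$, so this case is harmless.
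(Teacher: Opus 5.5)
Your argument breaks at the replacement step. From $|\sigma|\ge d_{H_1}(s_a,s_b)\ge d_{H_0}(s_a,s_b)$ you replace each outer subpath $\sigma$ by a shortest $s_a$--$s_b$ path of $H_0$, and you claim the resulting closed walk $D'$ lies in $H_{k-1}$. But $H_0$, the coating of $C_\ell$, has a face cycle on each side of $C_\ell$, and only the outer one has a counterpart in $H_{k-1}$: it is the outer face cycle of $G_{k-1}$. For $k\ge 2$, the inner side of the gluing cycle in $H_{k-1}$ is the coating of the interior of $G_{k-1}$, not the inner face cycle of $H_0$. The two face cycles of $H_0$ run in opposite directions around $C_\ell$. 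So an $H_0$-path from $s_a$ to $s_b$ that advances in the direction of the inner one must use its vertices, and for many pairs this is the shorter route. In that case $D'$ is not a walk of $H_{k-1}$, nor of any graph known to have digirth $g$, and the induction hypothesis cannot be applied. Your identification (``removing the far part of $H^{(k)}$ \ldots\ is exactly $H_0$'') does not fix this. It concerns only the outer side of the gluing cycle, and the merged corner values it produces are not those of $H_0$ in general. The alternative of replacing the inner parts has no hypothesis behind it, since nothing compares the inner side of $H_{k-1}$ with that of $H_0$.

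What you are missing is a distance inequality at the gluing step you actually use: $d_{H_k}(u,v)\ge d_{H_{k-1}}(u,v)$ for link vertices $u,v$ of the gluing cycle. The paper first extends the hypothesis to this form for every $k$, arguing that attaching a copy of $G_1$ to $G_k$ has the same shape as attaching it to $G_0$. Then, for a cycle $D$ of $H_{k+1}$ meeting both sides, it keeps one portion $P$ of $D$ on the old side and closes it with a shortest path of $H_k$ itself. The rest of $D$ has length at least $d_{H_{k+1}}(v_p,v_1)\ge d_{H_k}(v_p,v_1)$, so $|D|$ is at least the length of a closed walk of $H_k$, hence at least $g$. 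With that inequality your segment-by-segment replacement also works, provided you use shortest paths of $H_{k-1}$ rather than of $H_0$.

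This extension is where the real content lies, and it is not a pure relabelling. Going from $(H_0,H_1)$ to $(H_k,H_{k+1})$, the inner side of the gluing cycle changes from the inner face cycle of $H_0$ to the coating of the interior of $G_k$. Transferring the hypothesis therefore needs an argument, for instance that every portion of a shortest $H_0$-path through that face cycle can be reproduced, with no greater length, on the inner side of the gluing cycle in $H_k$.
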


\begin{proof}
    We first establish that for any pair $(u,v)$ of link vertices on the outer face of $G_k$, $d_{H_{k+1}}(u,v) \ge d_{H_k}(u,v)$. For $k=0$ this is the hypothesis. For $k \ge 1$, building $H_{k+1}$ from $H_k$ by attaching a new copy of the $G_1$-coating is structurally identical to building $H_1$ from $H_0$, so the same hypothesis applies.

    We now prove by induction on $k$ that $H_k$ has digirth $g$. The base cases $k=0$ and $k=1$ are the hypothesis. Assume $H_k$ has digirth $g$, and let $D$ be a directed cycle of $H_{k+1}$. Let $\mathcal{K}(D)$ be the set of corners of $G_{k+1}$ visited by $D$.
    \begin{itemize}
        \item If $\mathcal{K}(D) \subseteq \mathcal{K}(G_k)$, then $D$ is a cycle of $H_k$, so $\lvert D \rvert \ge g$ by the induction hypothesis.
        \item If $\mathcal{K}(D) \subseteq \mathcal{K}(G_{k+1}) \setminus \mathcal{K}(G_k)$, then $D$ lies entirely in the new copy of $G_1$, so $\lvert D \rvert \ge g$ since $H_1$ has digirth $g$.
        \item Otherwise, $D$ visits corners from both parts. Since the $H_k$-part and the new $G_1$-copy share only the link vertices on the outer face of $G_k$, the cycle $D$ must pass through at least two such link vertices $v_1$ and $v_p$. Let $P = v_1 \dots v_p$ be the portion of $D$ traversing the $H_k$-part (with all intermediate vertices in interior corners of $G_k$). Closing $P$ with a shortest path $P'$ from $v_p$ to $v_1$ in $H_k$ yields a cycle $D'$ in $H_k$. By the induction hypothesis $\lvert D'\rvert = p + d_{H_k}(v_p, v_1) \ge g$, and since $d_{H_{k+1}}(v_p, v_1) \ge d_{H_k}(v_p, v_1)$ we conclude $\lvert D \rvert \ge p + d_{H_{k+1}}(v_p, v_1) \ge \lvert D' \rvert \ge g$. \qedhere
    \end{itemize}
\end{proof}

We are now ready to give some constructions for small digirths.

\begin{theorem}\label{thm:general_lower_bound_small_digirth}
    For each item below, for every $g$ at least the stated threshold, there exists an infinite family $\mathcal{F}_g$ of planar digraphs of digirth $g$ such that every $H \in \mathcal{F}_g$ satisfies:
    \begin{enumerate}[(i)]
        \item\label{itm:digirth6} $g \ge 6$: \quad $\displaystyle\fv(H) = \frac{n - 2}{g-\frac{3}{2}}$.\smallskip
        \item\label{itm:digirth8} $g \ge 8$: \quad $\displaystyle\fv(H) = \frac{n - \frac{12}{5}}{g-\frac{8}{5}}$.\smallskip
        \item\label{itm:digirth9} $g \ge 9$: \quad $\displaystyle\fv(H) = \frac{n - \frac{28}{11}}{g - \frac{18}{11}}$.\smallskip
        \item\label{itm:digirth10} $g \ge 10$: \quad $\displaystyle\fv(H) = \frac{n - \frac{8}{3}}{g - \frac{5}{3}}$.\smallskip
        \item\label{itm:digirth11} $g \ge 11$: \quad $\displaystyle\fv(H) = \frac{n - \frac{36}{13}}{g - \frac{22}{13}}$.
    \end{enumerate}
    In particular, for $g = 6, 8, 9, 10, 11$, there exists infinitely many planar digraphs $H$ of digirth $g$ such that $\fv(H) = \frac{n- \frac{4(g-2)}{g+2}}{g - \frac{2g}{g+2}} = \frac{n(g+2) - 4(g-2)}{g^2}$.
\end{theorem}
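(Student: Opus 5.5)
The plan is to reduce each item to exhibiting a single skeleton family with a good edge-to-vertex ratio, and then apply the machinery already developed. For each item I will construct a recursive family of coatings $(H_k)_{k\ge 0}$ of skeletons $(G_k)_{k\ge 0}$ in the sense of \Cref{def:recursive_coating}. These will be $g_0$-coatings of digirth $g_0$, where $g_0$ is the stated threshold ($6,8,9,10,11$). The skeletons should satisfy $m_{G_k}=\alpha n_{G_k}-\beta$ with the prescribed constants. The required $(\alpha,\beta)$ are:
\begin{itemize}
\item $(\tfrac32,2)$ for item (i);
\item $(\tfrac85,\tfrac{12}{5})$ for item (ii);
\item $(\tfrac{18}{11},\tfrac{28}{11})$ for item (iii);
\item $(\tfrac53,\tfrac83)$ for item (iv);
\item $(\tfrac{22}{13},\tfrac{36}{13})$ for item (v).
\end{itemize}
Once such a family exists for $g_0$, \Cref{thm:fvs_g_coating_digirth_g} and \Cref{cor:computing_fvs_g_coating_of_digirth_g} give $\fv(H_k)=n_{G_k}=\frac{n_{H_k}-\beta}{g_0-\alpha}$. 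Then \Cref{thm:ratio_extension_to_higher_digirth}, which requires only $\alpha<g_0$, extends this to every $g\ge g_0$ with the same $\alpha,\beta$. The final sentence is pure arithmetic. With $\alpha=\frac{2g}{g+2}$ and $\beta=\frac{4(g-2)}{g+2}$, the values $g=6,8,9,10,11$ reproduce exactly the five pairs above.

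For the skeletons, note that $\alpha=\frac{2g}{g+2}$ means average degree $\frac{4g}{g+2}$. The degree bound of \Cref{prop:coating_properties}\ref{prop:maximal_degree_skeleton} forces $\Delta(G)\le\lfloor g/2\rfloor$. So I would take $G_1$ to be a plane graph between two concentric $\ell$-cycles $C_{in}$ and $C_{out}$, filled with a layer of degree-$3$ and degree-$4$ vertices, and chosen so that every face has length about $\frac{g+2}{2}$ in the skeleton. The point is that each face cycle $C_f$ then has room to reach length $g$.

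Each recursion step adds $a$ new vertices and $b$ new edges. Then $m_{G_k}=m_{G_0}+kb$ and $n_{G_k}=\ell+ka$, with $m_{G_0}=n_{G_0}=\ell$. Hence $\alpha=b/a$ and $\beta=\ell(\alpha-1)$. For example, item (i) has $\beta=2$ and $\alpha=\tfrac32$, so $\ell=4$ and $b/a=3/2$. This is a $4$-cycle ladder with added structure, such as a cube-like layer where each new layer adds $4$ vertices and $6$ edges: $4$ cycle edges plus... I would instead fit the counts to $a=4$, $b=6$. Similarly:
\begin{itemize}
\item item (ii) gives $\ell=6$ and $b/a=8/5$;
\item item (iv) gives $\ell=4$ and $b/a=5/3$;
\item items (iii) and (v) give $\ell=4$ with ratios $18/11$ and $22/13$.
\end{itemize}
I would guess these come from face-length-balanced layers, for instance hexagonal-type patterns with $\Delta\le 5$. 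I would then specify a coating function $h_{G_1}$ with $\sum_{c\in\mathcal{K}(v)}h(c)=g_0-\deg v$ at each vertex and $\sum_{c\in\mathcal{K}(f)}h(c)\ge g_0$ at each face, as required by \Cref{rem:sum_coating_function} and \Cref{obs:necessary_condition_digirth_coating}.

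The main obstacle is certifying digirth $g_0$, since \Cref{fig:counter_example_digirth_coating} shows that the face and vertex conditions are not sufficient. By \Cref{lem:digirth_recursive_coating}, it suffices to check two things. First, that $H_0$ and $H_1$ have digirth $g_0$. Second, that distances between link vertices of $H_0$ do not shrink in $H_1$. Both are finite checks on a fixed small digraph.

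For the digirth check I would argue via corners. Any directed cycle $D$ traverses a sequence of corners, and switches between vertex cycles and face cycles only at link vertices or along link arcs. So $|D|$ is at least a sum of $h$-values over a closed walk in the corner structure. I would enumerate the cycles that are not facial, namely those mixing several $C_v$ and $C_f$, and bound their lengths by choosing $h$ to put large values on corners shared by many short alternative routes. If a hand proof gets messy, this enumeration can be delegated to a computer check on $H_1$.
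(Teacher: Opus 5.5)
Your reduction is the same as the paper's: a recursive family of $g_0$-coatings (Definition~\ref{def:recursive_coating}), digirth certified via Lemma~\ref{lem:digirth_recursive_coating} by checking $H_0$, $H_1$ and the link-vertex distances, then Corollary~\ref{cor:computing_fvs_g_coating_of_digirth_g} and Theorem~\ref{thm:ratio_extension_to_higher_digirth}, plus the arithmetic for the final sentence.

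\textbf{The gap.} All of that is general machinery. The only part specific to this theorem is never produced: an explicit skeleton $G_1$ and coating function $h_{G_1}$ for each $g_0\in\{6,8,9,10,11\}$ such that $H_0,H_1$ really have digirth $g_0$ and the distance hypothesis holds. You say you \emph{would} take a layered graph, you \emph{guess} hexagonal-type patterns, and you \emph{would then} specify $h$. The digirth check is deferred to an enumeration on a graph that has not been defined. The vertex and face conditions of Observation~\ref{obs:necessary_condition_digirth_coating} do not imply digirth (Figure~\ref{fig:counter_example_digirth_coating}). So nothing in your argument shows that such a $G_1$ exists for any of the five ratios, and the existence statement is unproved.

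In the paper, the proof \emph{is} these constructions. It gives explicit figures for $G_1$ (with $C_{in}=u_0v_0w_0x_0$, $C_{out}=u_1v_1w_1x_1$) and $h_{G_1}$. The per-copy increments in vertices and edges are $(a,b)=(8,12),(10,16),(11,18),(12,20),(13,22)$. It then does a finite verification; for $g_0=6$ this means $H_0$ and $H_1$ on $20$ and $56$ vertices and a table of the $16$ link-vertex distances.

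\textbf{Errors in the parameter fitting.} Your identity $\beta=\ell(\alpha-1)$ is correct, but for item (ii) it gives $\ell=\frac{12/5}{3/5}=4$, not $6$. In fact $\alpha-1=\frac{g-2}{g+2}$ and $\beta=\frac{4(g-2)}{g+2}$ force $\ell=4$ in all five items, which is why the paper uses $4$-cycles throughout.

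Your design heuristics are also off in ways that would mislead an actual construction:
\begin{itemize}
\item For $g_0=6$, Proposition~\ref{prop:coating_properties}\ref{prop:maximal_degree_skeleton} gives $\Delta(G)\le 3$, so no degree-$4$ vertices are allowed. Indeed $m_G=\frac32 n_G-2$ gives $3n_G-2m_G=4$, so the skeleton is cubic up to a total degree deficit of $4$.
\item The average face length of skeletons with $m_G\approx\frac{2g}{g+2}n_G$ tends to $\frac{4g}{g-2}$ (hexagonal for $g=6$), not $\frac{g+2}{2}$.
\end{itemize}
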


\Cref{prop:bound_ratio_for_low_digirth} shows that the bound of \Cref{thm:general_lower_bound_small_digirth}\ref{itm:digirth6} is asymptotically the best one can do using $g$-coatings of digirth 6 and 7. However we do not know whether it is possible to obtain a better bound for $g = 7$ using coatings of digirth 7 that are not 7-coatings.

\subsubsection{Proof of \Cref{thm:general_lower_bound_small_digirth}\ref{itm:digirth6}}\label{subsubsec:coating_function_digirth_6}

Consider the recursive coating family $(H_k)_{k \ge 0}$ of skeletons $(G_k)_{k \ge 0}$ defined in \Cref{fig:coating_function_digirth_6}. The graph $G_1$, with $C_{in} = u_0 v_0 w_0 x_0$ and $C_{out} = u_1 v_1 w_1 x_1$ (see \Cref{def:recursive_coating}), is shown in the middle of \Cref{fig:coating_function_digirth_6}; the coating function is indicated in blue.

\begin{figure}[htbp]
    \centering
    \includegraphics[width = 0.95\textwidth]{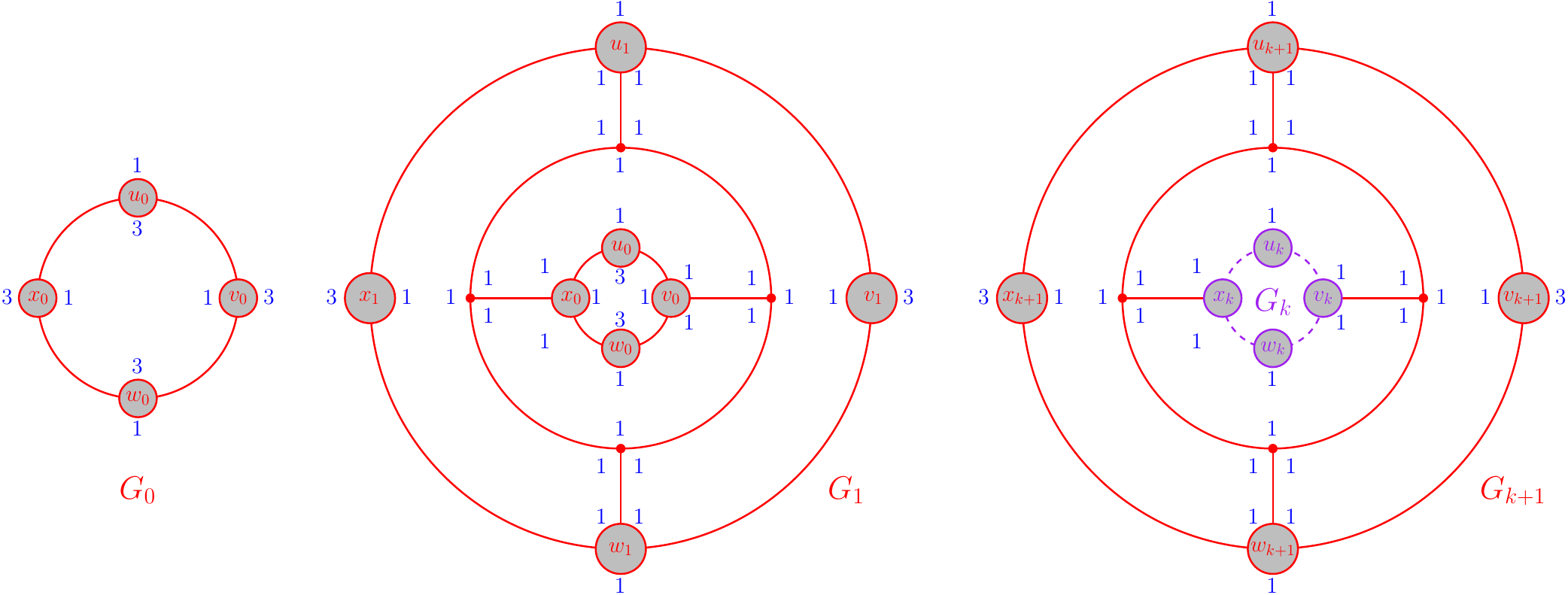}
    \caption{The family $(G_k)_{k \ge 0}$ of \Cref{subsubsec:coating_function_digirth_6} and its coating function in blue.}\label{fig:coating_function_digirth_6}
\end{figure}

For all $k \ge 0$, $H_k$ is a $6$-coating. To prove that $(H_k)_{k \ge 0}$ is a family of coatings of digirth $6$, we apply \Cref{lem:digirth_recursive_coating}, which requires the following.

\begin{observation}\label{obs:digirth6}
    \begin{enumerate}
        \item\label{itm:digirth_H01} $H_0$ and $H_1$ have digirth $6$.
        \item\label{itm:distance_link_vertices} For any two link vertices $y, z$ associated to edges of the cycle $u_0 v_0 w_0 x_0$, we have $d_{H_0}(y,z) = d_{H_1}(y,z)$.
    \end{enumerate}
\end{observation}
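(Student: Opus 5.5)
The statement to prove is \Cref{obs:digirth6}: $H_0$ and $H_1$ have digirth $6$, and the distances between link vertices on the cycle $u_0v_0w_0x_0$ are the same in $H_0$ and $H_1$. The graphs are small and explicit, so the plan is a structured finite check organised by corners, using the coating function of \Cref{fig:coating_function_digirth_6}.

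\textbf{Digirth of $H_0$.} $G_0$ is the undirected $4$-cycle, so $H_0$ has four clockwise vertex cycles $C_v$, each of length $6$ since $H_0$ is a $6$-coating. It also has two face cycles, one for the inner face and one for the outer face.

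I will first show that every directed cycle $D$ of a coating is determined by its corner set $\mathcal{K}(D)$. Whenever $D$ enters a corner, it must traverse all the vertices of that corner. At each link vertex, $D$ either continues along the same vertex or face cycle, or switches via a link arc. Hence $|D|$ equals the sum of $h$ over $\mathcal{K}(D)$ plus the number of link vertices on $D$.

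For $H_0$, I will enumerate the possible routings at each of the four link vertices. The cycles are $C_v$, the face cycles (whose lengths are sums of the corner values, which I read off the figure and check are at least $6$, as required by \Cref{obs:necessary_condition_digirth_coating}), and the mixed cycles. Each mixed cycle uses a link arc at some link vertex and otherwise follows vertex-cycle or face-cycle segments. The clockwise/counterclockwise orientations restrict how these segments can be concatenated. I will bound the length of each mixed cycle by summing the corner values it must include.

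\textbf{Digirth of $H_1$.} $G_1$ consists of $C_{in}$, $C_{out}$, and the connecting structure shown in the figure. I will list its faces and check that every face cycle has length at least $6$, and that every $C_v$ has length $6$. For mixed cycles, I will use an "interval" argument. A directed cycle in a coating alternates between pieces of clockwise vertex cycles and counterclockwise face cycles, and each switch happens at a link vertex or via a link arc. So I will classify short cycles by the small set of skeleton vertices and faces they touch. Any cycle touching at least three skeleton elements already accumulates at least $6$ vertices from the corner values. The remaining cases involve only two adjacent elements, one vertex and one face, or two vertices sharing an edge. These are handled directly: each such cycle passes around the link vertex and picks up at least the corners on one side.

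\textbf{Distances.} For item 2, I will compute $d_{H_0}(y,z)$ for each ordered pair of the four link vertices. In $H_0$, a directed $y$--$z$ path runs either along the inner face cycle or around the outer side through the $C_v$'s. In $H_1$, the inner face of $C_{in}$ is unchanged, and the outer face of $G_0$ is replaced by the copy of $G_1$. So I will check that no shortcut through $G_1$ is shorter than the corresponding outer route in $H_0$. The inequality $d_{H_1}\le d_{H_0}$ is immediate, since the $H_0$ paths along the inner side survive in $H_1$. The inequality $\ge$ comes from comparing corner sums: any path leaving through the $G_1$ copy must traverse whole corners whose $h$-values add up to at least the $H_0$ path length.

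\textbf{Main obstacle.} I expect the main obstacle to be the exhaustive but error-prone case analysis of mixed cycles in $H_1$. I would organise it by the number of link arcs used and prune using the fact that every corner contributes at least $1$ vertex.
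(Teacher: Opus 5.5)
Your plan is, like the paper's, a finite verification. The paper simply checks both items by hand or by computer ($H_0$ and $H_1$ have $20$ and $56$ vertices) and tabulates the $16$ distances. Your corner-length formula $\lvert D\rvert=\sum_{c\in\mathcal{K}(D)}h(c)+\#\{\text{link vertices on }D\}$ is correct. (A cycle is not, however, determined by its corner set: the detour $a_u\to s_{uv}\to b_v$ and the link arc $a_u\to b_v$ visit the same corners.)

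\textbf{The gap in the digirth check for $H_1$.} The problem is your pruning step. The claim that any cycle touching at least three skeleton elements ``already accumulates at least $6$ vertices from the corner values'' does not follow from any count. In a $6$-coating, $\Delta(G_1)\le 3$, so every corner at a degree-$3$ skeleton vertex has $h=1$. Moreover, a cycle can move between corners along link arcs without meeting any link vertex.

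Worse, these are exactly the cycles that can be short. Suppose $D$ winds once around a face $f$ of skeleton length $k$, whose corners inside $f$ are $c_1,\dots,c_k$. At each boundary vertex $v_i$, $D$ collects the other corners of $v_i$ and the $\deg(v_i)-2$ link vertices of the edges leaving $f$. That is $4-h(c_i)$ vertices per boundary vertex, so $\lvert D\rvert=4k-\lvert C_f\rvert$. This drops below $6$ as soon as $\lvert C_f\rvert>4k-6$, for instance for a triangular face with $\lvert C_f\rvert\ge 7$.

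By contrast, the cases you keep for direct treatment are the harmless ones: cycles on a single $C_v$ have length $6$, and cycles around one edge $uv$ have length $10$. Your case analysis therefore discards precisely the cycles that decide whether the digirth is $6$.

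\textbf{How to repair it.} Use the count from the proof of \Cref{thm:digirth_perfect_coating}, which does not need perfectness. With $G'=G_1\cap D^-$, every directed cycle satisfies $\lvert D\rvert\ge 6n_{G'}-2m_{G'}-\sum_f\lvert C_f\rvert$. Here the sum runs over the faces of $G'$ other than the one bordered by $D$ (these are faces of $G_1$), and equality holds when $D$ makes no detour through a link vertex. Trees give $4n_{G'}+2\ge 6$, and joining face-blocks by paths costs at most $2$ per path. So the check reduces to connected unions of the (at most $2^6$ subsets of) faces of $G_1$. Alternatively, run the computer check as the paper does.

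\textbf{The distance argument in item 2.} Your claim that $d_{H_1}\le d_{H_0}$ is ``immediate because inner-side paths survive'' is not right. In the table, each of the four pairs at distance $2$ is realised by a unique path through a single corner with $h=1$. Whatever the orientation of $u_0v_0w_0x_0$, two of these four corners lie on the side of $C_{in}$ that $G_1$ modifies. Equality does hold, but you must check that those corners, and the link arcs used, persist in $H_1$. Note that only $d_{H_1}\ge d_{H_0}$ is needed for \Cref{lem:digirth_recursive_coating}.
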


\Cref{obs:digirth6}\ref{itm:digirth_H01} can be verified by hand or with a computer, since $H_0$ and $H_1$ have only 20 and 56 vertices respectively.
\Cref{obs:digirth6}\ref{itm:distance_link_vertices} follows by computing $d(y,z)$ for all 16 pairs of link vertices in $H_0$ and $H_1$; the values are given below, where the subscript indicates the associated edge (e.g.\ $y_{uv}$ is the link vertex associated to the edge $u_0v_0$).

\begin{center}
    \begin{tabular}{|c|c|c|c|c|}
        \hline
        $d_{H_0}(y,z) = d_{H_1}(y,z)$  & $y_{uv}$ & $y_{vw}$ & $y_{wx}$ & $y_{xu}$ \\
        \hline
        $z_{uv}$ & 0 & 2 & 5 & 2 \\ 
        \hline
        $z_{vw}$ & 4 & 0 & 4 & 5 \\ 
        \hline
        $z_{wx}$ & 5 & 2 & 0 & 2 \\ 
        \hline
        $z_{xu}$ & 4 & 5 & 4 & 0 \\ 
        \hline
    \end{tabular}
\end{center}

By \Cref{obs:digirth6} and \Cref{lem:digirth_recursive_coating}, $(H_k)_{k \ge 0}$ is a family of coatings of digirth $6$.

The skeleton $G_k$ has $n_{G_k} = 4 + 8k$ vertices and $m_{G_k} = 4 + 12k = \frac{3}{2} n_{G_k} - 2$ edges. Since $H_k$ is a $6$-coating of digirth $6$, \Cref{cor:computing_fvs_g_coating_of_digirth_g} gives $\fv(H_k) = \frac{n_{H_k} - 2}{6 - \frac{3}{2}}$.

By \Cref{thm:ratio_extension_to_higher_digirth}, for every $g \ge 6$ this construction extends to an infinite family $\mathcal{F}_g$ of planar digraphs of digirth $g$ with $\fv(H) = \frac{n-2}{g - \frac{3}{2}}$ for every $H \in \mathcal{F}_g$.

\subsubsection{Proof of \Cref{thm:general_lower_bound_small_digirth}\ref{itm:digirth8}}\label{subsubsec:coating_function_digirth_8}

Consider the recursive coating family $(H_k)_{k \ge 0}$ of skeletons $(G_k)_{k \ge 0}$ defined in \Cref{fig:coating_function_digirth_8}. The graph $G_1$, with $C_{in} = u_0 v_0 w_0 x_0$ and $C_{out} = u_1 v_1 w_1 x_1$ (see \Cref{def:recursive_coating}), is shown in the middle of \Cref{fig:coating_function_digirth_8}; the coating function is indicated in blue.

\begin{figure}[htbp]
    \centering
    \includegraphics[width =\textwidth]{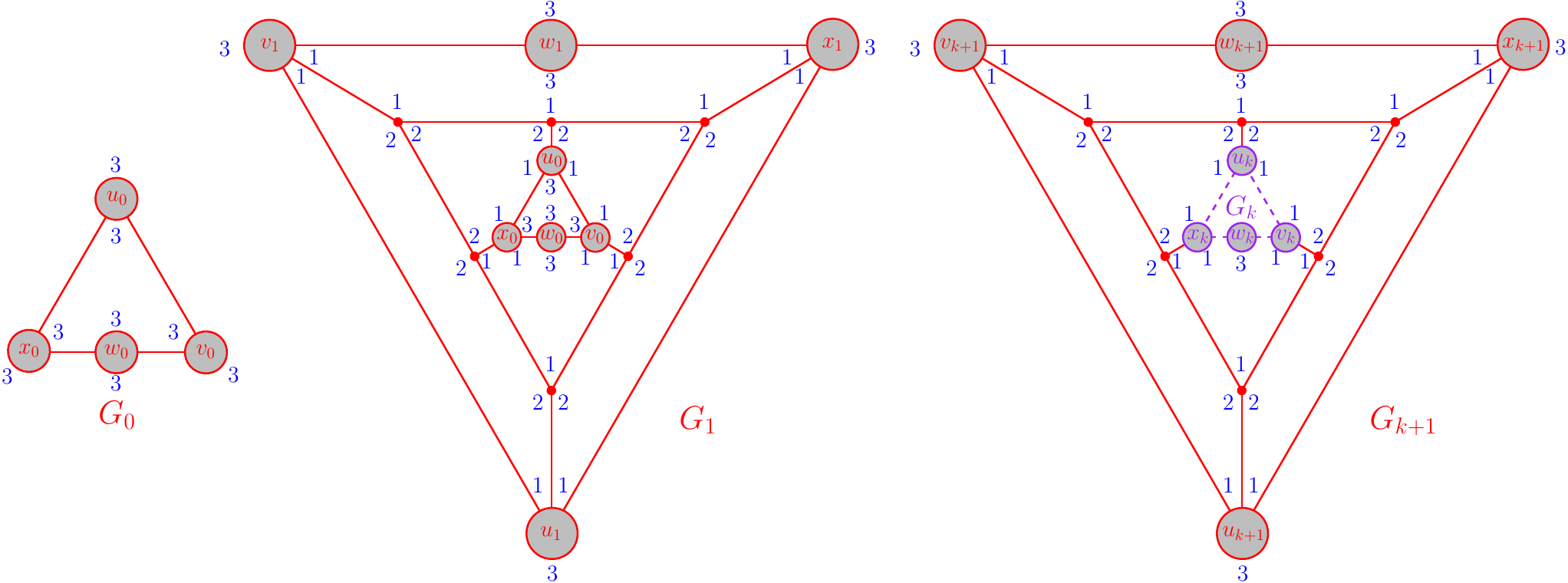}
    \caption{The family $(G_k)_{k \ge 0}$ of \Cref{subsubsec:coating_function_digirth_8} and its coating function in blue.}\label{fig:coating_function_digirth_8}
\end{figure}

For all $k \ge 0$, $H_k$ is an $8$-coating. To prove that $(H_k)_{k \ge 0}$ is a family of coatings of digirth $8$, we apply \Cref{lem:digirth_recursive_coating}, which requires the following.

\begin{observation*}
    \begin{enumerate}
        \item $H_0$ and $H_1$ have digirth $8$.
        \item For any two link vertices $y, z$ associated to edges of the cycle $u_0 v_0 w_0 x_0$, we have $d_{H_0}(y,z) = d_{H_1}(y,z)$.
    \end{enumerate}
\end{observation*}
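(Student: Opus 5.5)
The observation has two parts, and I would treat both as finite computations on two fixed digraphs. Part 1 says $H_0$ and $H_1$ have digirth $8$. Part 2 says that for link vertices $y,z$ on edges of $C_{in}=u_0v_0w_0x_0$, the distances $d_{H_0}(y,z)$ and $d_{H_1}(y,z)$ agree. I would build $H_0$ and $H_1$ explicitly from the coating function in \Cref{fig:coating_function_digirth_8}, using \Cref{def:coating}. Each corner $c$ becomes a directed path of $h(c)$ vertices, and each skeleton edge becomes a link vertex with its two link arcs. Cycles associated to vertices are oriented clockwise, and cycles associated to bounded faces counterclockwise. By \Cref{prop:coating_properties}\ref{prop:size_coating}, $n_{H_0}=8\cdot 4-4=28$, and $n_{H_1}=8n_{G_1}-m_{G_1}$ is similarly small. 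Both checks can therefore be done by hand or by a short breadth-first search, exactly as in \Cref{obs:digirth6}.

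For the digirth of $H_0$ and $H_1$ I would not enumerate all cycles. Instead I would compute, for every arc $ab$, the directed distance $d(b,a)$; the digirth is the minimum of $1+d(b,a)$ over all arcs. To make the hand verification structured, I would use the corner description from the remark after \Cref{obs:necessary_condition_digirth_coating}. A directed cycle is determined by its sequence of corners and link vertices, and it contains every vertex of each corner it enters. Its length is therefore $x+\sum_{c\in\mathcal{K}(D)}h(c)$, where $x$ is the number of link vertices it uses.

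The cycles then fall into a few types:
\begin{itemize}
\item cycles $C_v$ associated to vertices, which have length exactly $8$ since $H_k$ is an $8$-coating;
\item cycles $C_f$ associated to faces, which have length $\sum_{c\in\mathcal{K}(f)}h(c)\ge 8$ by the choice of $h$ (\Cref{obs:necessary_condition_digirth_coating});
\item mixed cycles, which switch between a vertex-cycle and a face-cycle through link arcs.
\end{itemize}
The mixed cycles are the main obstacle. The counterexample in \Cref{fig:counter_example_digirth_coating} shows that such cycles can be short even when the vertex and face conditions hold. For these I would rely on the arc-by-arc shortest-path computation rather than a case analysis.

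For part 2 I would compute the $4\times 4$ table of distances between the four link vertices $y_{uv},y_{vw},y_{wx},y_{xu}$, once in $H_0$ and once in $H_1$, and check that the tables agree. The inequality $d_{H_1}\le d_{H_0}$ is the easy direction. The boundary cycle $C_{in}$ in $G_1$ keeps the same outer corners and link structure as in $G_0$, so the corresponding paths survive with the same length.

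The inequality $d_{H_1}\ge d_{H_0}$ is the one that matters for \Cref{lem:digirth_recursive_coating}. Here I would argue that a shortest $y$–$z$ path in $H_1$ which enters the new copy of $G_1$ can be shortcut. Its portion inside the new part is at least as long as the outer boundary route it replaces in $H_0$, because the inner corners of $C_{in}$ in $G_1$ receive coating values at least as large. The rigorous check, however, is again a breadth-first search from each of the four link vertices in both digraphs.

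Once both parts are verified, \Cref{lem:digirth_recursive_coating} applies and gives digirth $8$ for every $H_k$.
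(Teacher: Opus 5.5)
Your proposal is correct and takes the paper's approach. The paper says only that both items "can be verified by hand or with a computer", and your plan is exactly such a finite check: shortest-path computations over every arc of $H_0$ and $H_1$ for the digirth, and a comparison of the two $4\times 4$ distance tables between the link vertices of $u_0v_0w_0x_0$. Your informal arguments for $d_{H_1}\le d_{H_0}$ and $d_{H_1}\ge d_{H_0}$ are only heuristics; in particular the second cannot rule out shortcuts created by the new link arcs, which is the phenomenon of \Cref{fig:counter_example_digirth_coating}. So, as you note yourself, the breadth-first search is what actually proves the statement.
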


Both items can be verified by hand or with a computer. By the observation above and \Cref{lem:digirth_recursive_coating}, $(H_k)_{k \ge 0}$ is a family of coatings of digirth $8$.

The skeleton $G_k$ has $n_{G_k} = 4 + 10k$ vertices and $m_{G_k} = 4 + 16k = \frac{8}{5} n_{G_k} - \frac{12}{5}$ edges. Since $H_k$ is an $8$-coating of digirth $8$, \Cref{cor:computing_fvs_g_coating_of_digirth_g} gives $\fv(H_k) = \frac{n_{H_k} - \frac{12}{5}}{8 - \frac{8}{5}}$.

By \Cref{thm:ratio_extension_to_higher_digirth}, for every $g \ge 8$ this construction extends to an infinite family $\mathcal{F}_g$ of planar digraphs of digirth $g$ with $\fv(H) = \frac{n-\frac{12}{5}}{g - \frac{8}{5}}$ for every $H \in \mathcal{F}_g$.

The proofs of \Cref{thm:general_lower_bound_small_digirth}\ref{itm:digirth9}\ref{itm:digirth10}\ref{itm:digirth11} follow the same approach and a similar construction as in the proof of \ref{itm:digirth8}. These proofs are given in Appendix~\ref{appendix:small_digirth}.

\subsection{Perfect $g$-coatings}\label{sec:digirth_coating}

If $H$ is a $g$-coating of digirth $g$ of a skeleton $G$, then every other coating of digirth $g$ of $G$ has at least as many vertices as $H$ because each cycle associated to a vertex $v \in V(G)$ has to be of length at least $g$. Therefore to optimize the ratio $\frac{ \fv(H) }{n_H}$ we look for $g$-coatings of digirth $g$. In particular all the cycles associated to faces have to be of length at least $g$. This leads to the following definition.

\begin{definition}\label{def:perfect_g_coating}
    Let $H$ be a coating of an undirected plane graph $G$. We say that $H$ is a \emph{perfect $g$-coating} of $G$ when all directed cycles associated to vertices and faces of $G$ have length exactly $g$. In other words, from \Cref{def:coating} the coating function $h$ associated to $H$ satisfies:
    \begin{displaymath}
        \forall v \in V(G), \quad \lvert C_v \rvert = \deg(v) + \sum_{c \in \mathcal{K}(v)} h(c) = g \qquad \text{and} \qquad \forall f \in F(G), \quad \lvert C_f \rvert = \sum_{c \in \mathcal{K}(f)} h(c) = g
    \end{displaymath}
\end{definition}

Intuitively, the perfect $g$-coatings are the coatings that minimally satisfy the digirth $g$ conditions of \Cref{obs:necessary_condition_digirth_coating}, although this is not a sufficient condition to have digirth $g$. In fact we will prove in \Cref{thm:digirth_perfect_coating} that having digirth $g$ for a perfect $g$-coating is conditioned by some structural properties of the associated skeleton. Perfect $g$-coatings are the coatings that should give the best ratio $\frac{ \fv(H) }{n_H}$ and this will be proved in \Cref{thm:upper_bound_fvs_coatings}.

\begin{proposition}\label{prop:coating_connected_without_loops}
    Let $H$ be a coating of digirth $g$ of some skeleton $G$. If $H$ is a $g$-coating, then $G$ has no loops. Moreover, if $H$ is perfect, then $G$ is connected.
\end{proposition}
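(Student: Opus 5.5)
The plan is to derive both claims from one counting observation: in a $g$-coating, and a fortiori in a perfect one, some cycle of $H$ that the construction forces to have total length exactly $g$ would have to be split into at least two genuine directed cycles. At least one of these would then have length less than $g$, contradicting the digirth.

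\emph{No loops.} Suppose $G$ has a loop $uu$. By \Cref{def:coating}, this loop is realised by merging two degree-2 vertices $s_u, s'_u$ of the same cycle $C_u$ into one link vertex $s$. These two vertices lie at distance at least $3$ on $C_u$. Since $H$ is a $g$-coating, $C_u$ has exactly $g$ arcs before the merge. After the merge, these $g$ arcs form a closed directed walk through $s$ twice. It therefore decomposes into two directed closed walks $C'$ and $C''$ with $s$ as their only common vertex and with $\lvert C'\rvert + \lvert C''\rvert = g$. The distance condition ensures that each of $C'$ and $C''$ has length at least $3$ and repeats no vertex other than $s$. Hence both are directed cycles of $H$, and the shorter one has length at most $g/2 < g$, contradicting that $H$ has digirth $g$.

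\emph{Perfect implies connected.} Suppose $H$ is perfect and $G$ has $k \ge 2$ connected components. I would first note that some face $f$ of $G$ is incident to at least two components. For instance, take a face incident to a component $G_1$ and to a component lying in a different region; such a face exists because the plane embedding of a disconnected graph has a face whose boundary is not connected. By \Cref{def:coating}, the vertices of $H$ inside $f$, excluding link vertices, then form $j \ge 2$ vertex-disjoint directed cycles $C_f^1, \dots, C_f^j$. Perfection gives $\sum_i \lvert C_f^i \rvert = \lvert C_f \rvert = g$. Each $C_f^i$ is nonempty, because every corner has $h(c) \ge 1$. So some $C_f^i$ has length at most $g/j < g$, which again contradicts the digirth.

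The only delicate point is the topological claim in the second part: that a disconnected plane graph has a face meeting several components, and that the definition of $C_f$ then genuinely yields several disjoint cycles. Isolated vertices need a separate check, since they have the degenerate corner $(\emptyset,v,\emptyset)$. For such a vertex the face cycle through that corner is separate from the face cycle of the surrounding component, so the same length argument applies. I expect this bookkeeping, rather than the counting, to be where care is needed.
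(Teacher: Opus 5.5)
Your proof is correct and is essentially the paper's argument. The connectivity part is identical: a face of $G$ meeting $j\ge 2$ components yields disjoint directed cycles $C_f^1,\dots,C_f^j$ whose lengths sum to $g$. For loops, the paper uses the two vertex-disjoint cycles closed by the two link arcs (total length $g-2$) rather than your two cycles through the merged vertex $s$ (total length $g$), which is only a cosmetic difference.

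One small overstatement: if $u$ carries a second loop, one of $C'$, $C''$ revisits that loop's link vertex, so it is a closed walk rather than a cycle. This is harmless, since any closed directed walk of length less than $g$ contains a directed cycle of length less than $g$.
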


\begin{proof}
    Let $H$ be a $g$-coating of $G$ of digirth $g$. If $G$ had a loop $uu$, then the two link arcs associated to this loop in $H$ would form two vertex-disjoint directed cycles that consist only of vertices associated to $u$ (see \Cref{fig:proof_coating_without_loops}). Since $H$ is a $g$-coating, the sum of their lengths would be at most $g$. A contradiction.
    
    \begin{figure}[htbp]
        \centering
        \includegraphics[height=4cm]{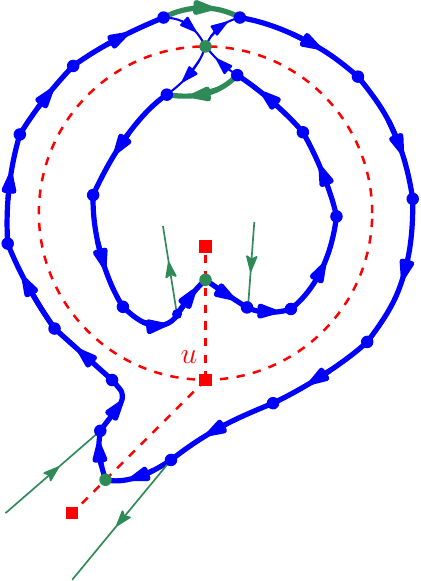}
        \caption{A skeleton (in red) with a loop and two directed cycles of a coating formed by the link arcs associated to this loop (in bold).}\label{fig:proof_coating_without_loops}
    \end{figure}
    
    Suppose now that $H$ is a perfect $g$-coating of digirth $g$. If $G$ is not connected, then it has a face $f \in F(G)$ incident to $k \ge 2$ connected components of $G$. Then the vertices of $H$ that lie inside $f$ (excluding link vertices) form a union of  $k$ directed cycles $C_f^1, \dots, C_f^k$. Then $g = \sum_{c \in \mathcal{K}(f)} h(c) = \lvert C_f \rvert = \sum_{i = 1}^k \lvert C_f^i \rvert \ge 2g$. A contradiction.
\end{proof}

\begin{remark}\label{rem:size_perfect_skeleton}
    If $G$ admits a perfect $g$-coating then $m_G = \frac{2g}{g+2} n_G - \frac{2g}{g+2}$. 
\end{remark}

\begin{proof}
    Let $H$ be a perfect $g$-coating of $G$. Then from \Cref{prop:coating_properties}~\ref{prop:sum_length_cycles} and~\ref{prop:size_coating}:
    \begin{displaymath}
        g \cdot f_G = \sum_{f \in F(G)} \lvert C_f \rvert  = n_H - m_G = g \cdot n_G - 2 m_G
    \end{displaymath}
    From \Cref{prop:coating_connected_without_loops}, $G$ is connected. Then using Euler's formula:
    \begin{displaymath}
        g \cdot (m_G + 2 - n_G) = g \cdot n_G - 2 m_G \qquad \text{and thus} \qquad m_G = \frac{2g}{g+2} n_G - \frac{2g}{g+2}
    \end{displaymath}
\end{proof}

The next result justifies the study of perfect $g$-coatings as they are the coatings with the largest minimum feedback vertex set.

\begin{theorem}\label{thm:upper_bound_fvs_coatings}
    Let $H$ be a coating of digirth $g$ of a skeleton $G$. Then $\fv(H) \le \frac{n_H - \frac{2g}{g+2}}{g- \frac{2g}{g+2}}$. Moreover if $H$ is a perfect $g$-coating of digirth $g$ then the equality holds.
\end{theorem}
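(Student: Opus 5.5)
The plan is to combine the upper bound of \Cref{lem:coating_digirth_g_normal_sets} with two counting inequalities that come from \Cref{obs:necessary_condition_digirth_coating} and \Cref{prop:coating_properties}\ref{prop:sum_length_cycles}, and to use Euler's formula to eliminate $n_G$ and $f_G$. Observe first that $g - \frac{2g}{g+2} = \frac{g^2}{g+2}$. So the claimed inequality is equivalent to
\begin{displaymath}
  g^2\,\fv(H) \le (g+2)\,n_H - 2g .
\end{displaymath}

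For the inequality, start from $g\cdot \fv(H) \le n_H + m_G$, which is \Cref{lem:coating_digirth_g_normal_sets}; it then suffices to bound $m_G$ in terms of $n_H$. By \Cref{prop:coating_properties}\ref{prop:sum_length_cycles} we have
\begin{displaymath}
  n_H + m_G = \sum_{v \in V(G)} \lvert C_v \rvert \ge g\, n_G
  \qquad\text{and}\qquad
  n_H - m_G = \sum_{f \in F(G)} \lvert C_f \rvert \ge g\, f_G .
\end{displaymath}
Both lower bounds hold because every cycle associated to a vertex or to a face has length at least the digirth $g$. For a face meeting several components, $\lvert C_f\rvert$ is a sum of several directed cycles, so the bound only gets better.

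Euler's formula for a plane graph with $c_G \ge 1$ components gives $f_G = m_G - n_G + 1 + c_G \ge m_G - n_G + 2$; loops and isolated vertices cause no trouble here. Substituting this and $n_G \le \frac{n_H+m_G}{g}$ into the second inequality gives
\begin{displaymath}
  n_H - m_G \ge g\,m_G - (n_H + m_G) + 2g,
\end{displaymath}
that is, $g\,m_G \le 2n_H - 2g$. Plugging this into $g\,\fv(H)\le n_H+m_G$ yields $g^2 \fv(H) \le g n_H + 2n_H - 2g$, which is exactly the required bound.

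For the equality case, let $H$ be a perfect $g$-coating of digirth $g$. It is in particular a $g$-coating of digirth $g$, so $\fv(H) = n_G$ by \Cref{thm:fvs_g_coating_digirth_g}. By \Cref{rem:size_perfect_skeleton}, $m_G = \alpha n_G - \beta$ with $\alpha=\beta=\frac{2g}{g+2}$, and $\alpha < g$. \Cref{cor:computing_fvs_g_coating_of_digirth_g} then gives $\fv(H) = \frac{n_H - \frac{2g}{g+2}}{g - \frac{2g}{g+2}}$, so equality holds. Equivalently, in a perfect coating every inequality used above is tight, since $c_G=1$ by \Cref{prop:coating_connected_without_loops}.

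The only delicate point is making sure the face inequality $\sum_f \lvert C_f\rvert \ge g f_G$ and the Euler step remain valid for disconnected skeletons and skeletons with loops. I expect this to go through because each face contributes at least one directed cycle of length at least $g$, and $c_G\ge 1$ only helps.
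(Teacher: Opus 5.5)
Your proof is correct and follows the paper's route. You combine $g\,\fv(H)\le n_H+m_G$ from \Cref{lem:coating_digirth_g_normal_sets} with $n_H-m_G=\sum_f\lvert C_f\rvert\ge g\,f_G\ge g(m_G-n_G+2)$, and for equality you invoke \Cref{rem:size_perfect_skeleton} together with \Cref{cor:computing_fvs_g_coating_of_digirth_g}. The one difference is how you eliminate $n_G$: the paper uses $n_G\le\fv(H)$ (\Cref{cor:fvs_coating}, which rests on the link-vertex exchange of \Cref{lem:fvs_without_link_vertices}), whereas you use $g\,n_G\le\sum_v\lvert C_v\rvert=n_H+m_G$, which needs only the digirth hypothesis and reaches the same final bound.
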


\begin{proof}
    First note that if $H$ is perfect, then the equality case comes directly from \Cref{rem:size_perfect_skeleton} and \Cref{cor:computing_fvs_g_coating_of_digirth_g}. Now we show the upper bound. Since $H$ has digirth $g$, from \Cref{lem:coating_digirth_g_normal_sets} we have:
    \begin{displaymath}
            n_H + m_G \ge g \cdot \fv(H). \tag{1}
    \end{displaymath}

    Since $H$ is a coating of digirth $g$, we have $\lvert C_f \rvert \ge g$ for every face $f \in F(G)$. Hence from \Cref{prop:coating_properties}\ref{prop:sum_length_cycles}:
    \begin{displaymath}
        n_H - m_G = \sum_{f \in F(G)} \lvert C_f \rvert \ge g \cdot f_G \ge g \cdot (2 + m_G - n_G).
    \end{displaymath}
    And thus by \Cref{cor:fvs_coating}, we get:
    \begin{displaymath}
        n_H - (g+1)\,m_G \ge 2g  - g\cdot n_G \ge 2g - g\cdot \fv(H).  \tag{2}
    \end{displaymath}
    
    Multiply (1) by $(g+1)$ and add (2) to eliminate the $m_G$ term, we get:
    \begin{displaymath}
        (g+2)\,n_H \ge g^2\,\fv(H) + 2g \qquad \text{hence} \qquad \fv(H) \le \frac{(g+2)\,n_H - 2g}{g^2} = \frac{n_H - \frac{2g}{g+2}}{g - \frac{2g}{g+2}}.
    \end{displaymath}
\end{proof}

\begin{definition}\label{def:fractional_arboricity}
    The \emph{fractional arboricity} of an undirected graph $G$ is defined as follows:
    \begin{displaymath}
        a_f(G) = \underset{\substack{S \text{ subgraph of } G \\ \text{with at least 2 vertices}}}{\max} \frac{m_S}{n_S - 1}.
    \end{displaymath}
\end{definition}

\begin{proposition}\label{prop:bounded_fractional_arboricity}
    If $G$ is a plane undirected graph admitting a $g$-coating of digirth $g$, then $a_f(G) \le \frac{2g}{g+2}$. In particular, every $g$-coating $H$ of digirth $g$ of $G$ satisfies $\fv(H) \ge n_G \ge f_G$.
\end{proposition}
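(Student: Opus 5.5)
The plan is to prove $m_S \le \frac{2g}{g+2}(n_S-1)$ for every subgraph $S$ of $G$ with $n_S\ge 2$. Then I will read off the consequence $n_G \ge f_G$ from Euler's formula. The tool is \Cref{prop:coating_subgraph}: $S$ itself admits a $g$-coating $H_S$ of digirth $g$. So it suffices to show $m_G \le \frac{2g}{g+2}(n_G-1)$ for any skeleton $G$ with $n_G\ge 2$ admitting a $g$-coating of digirth $g$, and then apply this to $S$.

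Fix such a $G$ and such an $H$. First I will reduce to the connected case. If $G$ has $c\ge 2$ components $G_1,\dots,G_c$, each $G_i$ admits a $g$-coating of digirth $g$, again by \Cref{prop:coating_subgraph}. Isolated vertices contribute no edges, so once the connected bound holds for the components with at least two vertices, summing gives $m_G \le \frac{2g}{g+2}(n_G-c) \le \frac{2g}{g+2}(n_G-1)$.

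Now assume $G$ is connected. By \Cref{prop:coating_connected_without_loops}, $G$ has no loops, since $H$ is a $g$-coating of digirth $g$. Every face cycle satisfies $\lvert C_f\rvert \ge g$ by \Cref{obs:necessary_condition_digirth_coating}, and $\lvert C_v\rvert=g$ for every vertex. Using \Cref{prop:coating_properties}\ref{prop:sum_length_cycles} and \ref{prop:size_coating}, I get
\[ g f_G \le \sum_{f} \lvert C_f\rvert = n_H - m_G = g n_G - 2m_G. \]
Euler's formula gives $f_G = m_G + 2 - n_G$, and substituting yields $(g+2)m_G \le 2g n_G - 2g$. This is exactly the desired bound $m_G \le \frac{2g}{g+2}(n_G-1)$, so $a_f(G)\le \frac{2g}{g+2}$.

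For the ``in particular'' part, $\fv(H)=n_G$ holds by \Cref{thm:fvs_g_coating_digirth_g}, so it remains to show $n_G\ge f_G$. For connected $G$, Euler's formula gives $f_G = m_G+2-n_G \le \frac{2g}{g+2}(n_G-1)+2-n_G$. Since $\frac{2g}{g+2}<2$, this is less than $n_G$ whenever $n_G \ge 1$. In the disconnected case I apply the computation to each component and count shared faces appropriately; the general Euler formula $f_G = m_G + 1 + c - n_G$ with $m_G \le \frac{2g}{g+2}(n_G-c)$ gives $f_G \le n_G + 1 - c \le n_G$.

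The main obstacle is the correct application of \Cref{prop:coating_subgraph} to arbitrary subgraphs, including the disconnected ones that arise. One also has to ensure the restricted coating is still a $g$-coating, which the observation provides. Degenerate components such as single vertices, and faces touching several components, need care when invoking Euler's formula.
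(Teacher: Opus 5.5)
Your proof is correct and follows the paper's argument. You use the face-length count $g f_G \le n_H - m_G = g n_G - 2m_G$ with Euler's formula, pass to subgraphs via \Cref{prop:coating_subgraph}, and apply Euler's formula again for $n_G \ge f_G$. The differences are minor: the paper skips your reduction to connected components by using $f_G \ge m_G + 2 - n_G$, which holds for every plane graph, and your phrase ``less than $n_G$'' should read ``at most $n_G$'', since equality occurs when $n_G = 1$.
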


\begin{proof}
    Let $H$ be a $g$-coating of $G$ of digirth $g$. Given a directed cycle $C_f$ associated to a face $f$ of $G$, we have
    $n_H - m_G = \sum_{f \in F(G)} \lvert C_f \rvert$ from \Cref{prop:coating_properties}\ref{prop:sum_length_cycles}. Since $H$ has digirth $g$, and by Euler's formula we get:
    \begin{displaymath}
        n_H - m_G = \sum_{f \in F(G)} \lvert C_f \rvert \ge g \cdot f_G \ge g \cdot (m_G + 2 - n_G).
    \end{displaymath}
    As $H$ is a $g$-coating, we have $n_H = g \times n_G - m_G$ (\Cref{prop:coating_properties}\ref{prop:size_coating}). Then:
    \begin{displaymath}
        g \cdot (m_G + 2 - n_G) \le n_H - m_G = g \cdot n_G - 2 m_G \qquad \text{and hence} \qquad m_G \le \frac{2g}{g+2}(n_G - 1).
    \end{displaymath}
    Now let $S$ be a subgraph of $G$ with at least two vertices. By \Cref{prop:coating_subgraph}, $S$ also admits a $g$-coating of digirth $g$ and thus $m_S \le \frac{2g}{g+2}(n_S - 1)$. Therefore $a_f(G) \le \frac{2g}{g+2}$.
    
    For the second statement, we already know by \Cref{prop:coating_properties}\ref{prop:fvs_greater_than_fG} and \Cref{cor:fvs_coating} that $\fv(H) \ge \max(n_G, f_G)$. Let $G_1 , \dots , G_k$ the connected components of $G$. For $1 \le i \le k$ we have by Euler's formula:
    \begin{displaymath}
        \begin{split}
            f_{G_i} & = m_{G_i} + 2 - n_{G_i} \le \frac{2g}{g+2}(n_{G_i} - 1) + 2 - n_{G_i} = \left( \frac{2g}{g+2} - 1 \right) n_{G_i} + 2 - \frac{2g}{g+2} \\
            &\le \frac{g-2}{g+2}n_{G_i} + \frac{4}{g+2} \le \frac{g-2}{g+2}n_{G_i} + \frac{4}{g+2}n_{G_i} = n_{G_i}.
        \end{split}
    \end{displaymath}
    Then summing the inequality over all connected components we get:
    \begin{displaymath}
            f_G \le f_{G_1} + \cdots + f_{G_k} \le n_{G_1} + \cdots + n_{G_k} = n_G.
    \end{displaymath}
\end{proof}

\begin{remark*}
    \Cref{prop:bounded_fractional_arboricity} also shows that a skeleton that admits a $g$-coating of digirth $g$ has to be 3-degenerate. Indeed, $\frac{2g}{g+2}<2$ for $g\ge 3$ and so for every subgraph $S$ of $G$ of at least 2 vertices, we have $\sum_{v \in V(S)} \deg(v) = 2 m_S < 4(n_S - 1)$. If $S$ has only one vertex, recall that $G$ has no loops by \Cref{prop:coating_connected_without_loops}. Therefore, $S$ has at least one vertex of degree strictly less than 4.
\end{remark*}

Now we come to the main result of this subsection that relates the digirth of a perfect $g$-coating with some structural properties of the associated skeleton.

\begin{theorem}\label{thm:digirth_perfect_coating}
    Let $H$ be a perfect $g$-coating of a skeleton $G$. $H$ has digirth $g$ if and only if $G$ is connected, has no loops and satisfies $a_f(G) = \frac{2g}{g+2}$.
\end{theorem}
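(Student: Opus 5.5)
The forward direction follows from results already established. Suppose $H$ is a perfect $g$-coating of digirth $g$. Then \Cref{prop:coating_connected_without_loops} shows that $G$ is connected and has no loops. \Cref{prop:bounded_fractional_arboricity} gives $a_f(G)\le \frac{2g}{g+2}$. Taking $S=G$ in \Cref{def:fractional_arboricity} and using \Cref{rem:size_perfect_skeleton}, we get $\frac{m_G}{n_G-1}=\frac{2g}{g+2}$. Hence $a_f(G)=\frac{2g}{g+2}$.

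For the converse, assume $G$ is connected, loopless and $a_f(G)=\frac{2g}{g+2}$, so that $m_S\le \frac{2g}{g+2}(n_S-1)$ for every subgraph $S$ with $n_S\ge 2$. Let $D$ be a directed cycle of $H$; we must show $|D|\ge g$. I would first describe directed cycles of a coating combinatorially. A directed path in $H$ either runs along the clockwise cycle $C_v$ inside the corners of $v$, or jumps between two cycles through a link vertex or a link arc. Consequently $D$ visits a set of corners $\mathcal{K}(D)$, and each visited corner is traversed entirely (as in the remark after \Cref{obs:necessary_condition_digirth_coating}). The cases $\mathcal{K}(D)\subseteq\mathcal{K}(v)$ and $\mathcal{K}(D)\subseteq\mathcal{K}(f)$ should force $D=C_v$ or $D=C_f$, which have length exactly $g$. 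Otherwise I would treat clockwise $D$ and obtain counterclockwise $D$ symmetrically, exchanging the roles of vertices and faces or viewing the exterior as the enclosed side.

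Let $S$ be the subgraph of $G$ formed by the skeleton vertices and edges enclosed by $D$; the point is to take everything enclosed, so that $S$ is closed under enclosure. Then every bounded face of each component of $S$ is a genuine face of $G$, because anything of $G$ lying inside such a face is also enclosed by $D$ and hence belongs to $S$. The corners of $S$-vertices that $D$ does not traverse are exactly the corners lying in these inner faces. Counting with perfectness, where each $S$-vertex carries $g-\deg_G(v)$ and each inner face carries exactly $g$, gives for connected $S$
\begin{displaymath}
|D| \ge \sum_{v\in V(S)}(g-\deg_G(v)) + (\text{edges of } G \text{ leaving } S) - g(f_S-1) \ge g\,n_S-2m_S-g(m_S-n_S+1),
\end{displaymath}
where the link vertices of the edges of $S$ and of the edges leaving $S$ are added back as appropriate. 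By Euler's formula the right-hand side equals $2g\,n_S-(g+2)m_S-g$. The arboricity hypothesis then gives $|D|\ge 2g\,n_S-2g(n_S-1)-g=g$. If $S$ is disconnected, the same count per component only adds further nonnegative contributions, namely extra corners on $D$.

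I expect the main obstacle to be making this counting rigorous. This requires three things: an exact description of which corners, link vertices and link arcs a directed cycle can use, so that the length of $D$ is correctly expressed through $S$ and no corner is double counted; a justification that shortcuts through link arcs cannot skip the corners adjacent to the link vertex, which would lower the count; and a correct handling of the boundary contribution of the edges leaving $S$. I would settle this first by a local analysis around one edge $uv$ with its link vertex and two link arcs, as in \Cref{fig:transfo_coating}. From that analysis I would derive that $D$ enters and leaves the enclosed region only through link structures, and that the corners it skips are precisely those in the inner faces of $S$.
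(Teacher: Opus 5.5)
Your proposal follows the paper's proof essentially step for step. The forward direction uses \Cref{prop:coating_connected_without_loops}, \Cref{prop:bounded_fractional_arboricity} and \Cref{rem:size_perfect_skeleton}. The converse takes the part $G'=G\cap D^-$ of the skeleton on the right-hand side of $D$ (your ``exterior as the enclosed side'' option for counterclockwise $D$). It writes $|D|$ as the outer-corner sum plus the crossing edges, then uses perfectness, Euler's formula and $m_{G'}\le\frac{2g}{g+2}(n_{G'}-1)$ to get $|D|\ge 2g\,n_{G'}-(g+2)m_{G'}-g\ge g$.

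The one point to tighten is your handling of a disconnected $S$. The paper instead proves $G'$ is connected, which is exactly what your claim that bounded faces of $S$ are faces of $G$ requires. The corner walk of $D$ stays inside a single component of $G'$. Connectivity of $G$ then lets any path of $G$ that leaves $D^-$ be rerouted through the vertices visited by $D$. Your justification via ``extra corners on $D$'' is not the right mechanism, since $D$ only ever touches one component.
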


\begin{proof}
   The forward direction follows immediately from \Cref{prop:coating_connected_without_loops} (connectivity and no loops), \Cref{prop:bounded_fractional_arboricity} ($a_f(G) \le \frac{2g}{g+2}$), and \Cref{rem:size_perfect_skeleton} (the bound is tight for a perfect $g$-coating). It remains to prove the converse.
    
    Suppose $G$ is connected, has no loops and satisfies $a_f(G) = \frac{2g}{g+2}$. Let $D$ be any directed cycle of $H$, and let $D^-$ be the closed region of the plane to the right of $D$ (i.e.\ the interior of $D$ if $D$ is clockwise, and the exterior otherwise). Set $G' = G \cap D^-$  and $H' = H \cap D^-$ (see \Cref{fig:illustration_claim_subcoating} for an example) and denote by $\mathcal{K}$ (resp. $\mathcal{K'}$) the set of corners of $G$ (resp. of $G'$).
    
    \begin{figure}[htbp]
        \centering
        \includegraphics[width=0.8\textwidth]{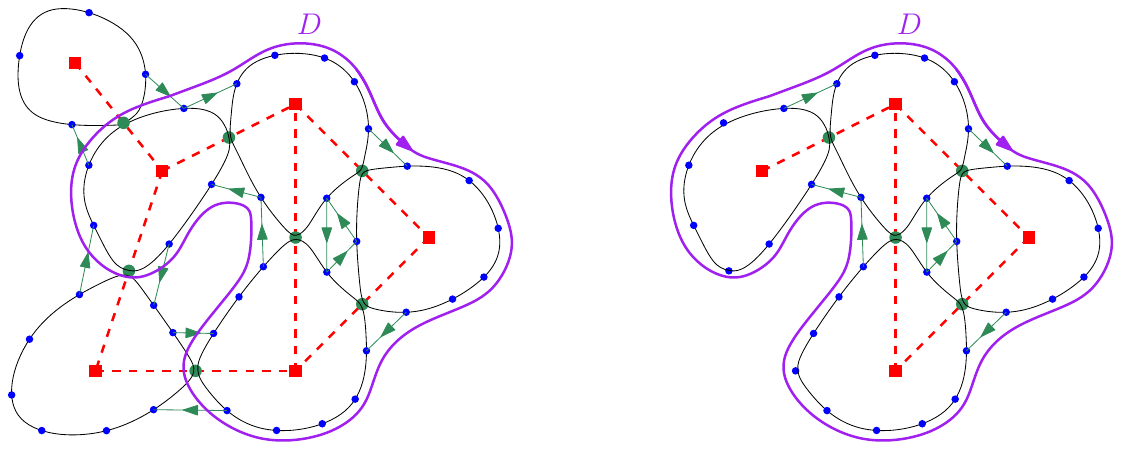}
        \caption{Left: a coating $H$ of a skeleton $G$ and a directed cycle $D$ (in purple) of $H$. Right: The subgraphs $G \cap D^-$ and $H \cap D^-$.}\label{fig:illustration_claim_subcoating}
    \end{figure}

\textbf{$G'$ is connected.} The cycle $D$ corresponds to a closed walk on $G$ via a sequence of corners $(c_1, \dots, c_k)$ incident to vertices $(v_1, \dots, v_\ell)$ of $G'$: consecutive corners either share the same vertex of $G$ (turning around it) or are connected by an edge of $G$ (crossing into $D^-$). Hence all of $v_1, \dots, v_\ell$ lie in one connected component of $G'$. Since $G$ is connected, any path in $G$ between two vertices of $G'$ that exits $D^-$ must exit through some $v_i$ and re-enter through some $v_j$; rerouting it with a path from $v_i$ to $v_j$ staying in $G'$ (as $v_i$ and $v_j$ are in the same connected component of $G'$) yields a path staying in $G'$. Thus $G'$ is connected.
    
\textbf{$H'$ is a coating of $G'$.} For $c \in \mathcal{K}$ and $c' \in \mathcal{K}'$, write $c \subseteq c'$ if the corner $c = (e_1, v, e_2)$ of $G$ lies inside the corner $c' = (e_1', v, e_2')$ of $G'$ (i.e.\ in clockwise order around $v$: $e_1', e_1, e_2, e_2'$). Define $h' : \mathcal{K}' \to \mathbb{N}$ by
        \begin{displaymath}
            h'(c') = \sum_{\substack{c \in \mathcal{K} \\ c \subseteq c'}} h(c) + \bigl(\lvert\{c \in \mathcal{K} : c \subseteq c'\}\rvert - 1\bigr),
        \end{displaymath}
        where the second term counts the link vertices of $H$ absorbed into corner $c'$. One verifies that $H'$ is a coating of $G'$ with coating function $h'$; in particular $h'(c) = h(c)$ for every $c \in \mathcal{K} \cap \mathcal{K}'$.

    Call $f_{ext}$ the outer face of $G'$ bounded by $D$. We call outer corners the corners $c \in \mathcal{K}$ that are included in a corner $c' \in \mathcal{K'}(f_{ext})$ (i.e. the corners of $G$ that are included in a corner of $G'$ incident to $f_{ext}$) and we denote $\mathcal{K}(f_{ext})$ the set of these corners (abusing the notation $\mathcal{K}(f)$ as $f_{ext}$ is not necessarily a face of $G$).
    \begin{displaymath}
        \begin{split}
            \lvert D \rvert &= \sum_{c \in \mathcal{K}(f_{ext})} h(c)  + \sum_{\substack{e \in E(G) \\ e \text{ crosses the cycle } D}} 1  \\
            &= \sum_{v \in V(G')} \sum_{ c \in \mathcal{K}(v)} h(c) - \sum_{v \in V(G')} \sum_{c \in \mathcal{K}(v) \backslash \mathcal{K}(f_{ext})} h(c) + \sum_{v \in V(G')} (\deg_G(v) - \deg_{G'}(v) ) \\
            &= \sum_{v \in V(G')} (\deg_G(v) + \sum_{c \in \mathcal{K}(v)} h(c)) - \sum_{\substack{f \in F(G') \\ f \neq f_{ext}}} \sum_{c \in \mathcal{K}(f)} h(c) - 2m_{G'}.
        \end{split}
    \end{displaymath}
    Note that all the faces of $F(G')$ except $f_{ext}$ are faces of $F(G)$. Then using the hypotheses we obtain:
    \begin{displaymath}
        \begin{split}
            \lvert D \rvert &= \sum_{v \in V(G')} g  - \sum_{\substack{f \in F(G') \\ f \neq f_{ext}}} g - 2m_{G'} = g \times n_{G'} - g \times (f_{G'} - 1) - 2m_{G'} \\
            &= g\big(n_{G'} - (1 + m_{G'} - n_{G'})\big) - 2 m_{G'} \qquad \text{(since $G'$ is planar and connected)} \\
            &= 2g\, n_{G'} - (g+2)\,m_{G'} - g \\
            &\ge 2g\, n_{G'} - (g+2)\,\frac{2g}{g+2}(n_{G'} - 1) - g = g.
        \end{split}
    \end{displaymath}
    Thus every directed cycle of $H$ has length at least $g$, so $H$ has digirth $g$.
\end{proof}


\subsection{A construction of perfect $g$-coatings of digirth $g$}\label{sec:construction_Gkl}
We construct infinite families $\mathcal{F}_g$ of perfect $g$-coatings of digirth $g$ for various values of $g$. By \Cref{thm:upper_bound_fvs_coatings} these graphs would satisfy
\begin{displaymath}
    \forall H \in \mathcal{F}_g, \quad \fv(H) = \frac{n - \frac{2g}{g+2}}{g- \frac{2g}{g+2}} \quad \text{and then} \quad \frac{\fv(H)}{n} \overset{n \to + \infty}{\longrightarrow} \frac{1}{g- \frac{2g}{g+2}}
\end{displaymath}

\Cref{thm:upper_bound_fvs_coatings} also ensures that this is the best bound we can reach using coatings.

Fix $k \ge 1$ and define the family of skeleton graphs $(G_{\ell}^{k})_{ \ell \ge 1}$ as follows:
\begin{itemize}
    \item Define three base blocks $S$, $B$ and $A^k$. The block $S$, also called \emph{the starting block}, is the cube (see \Cref{fig:BlockS_construction_Gkl}). The block $B$ is a cube where we subdivide two edges and add one degree 2 vertex as illustrated in \Cref{fig:BlockB_construction_Gkl}. The block $A^k$ is composed of $k-1$ cubes stacked one inside another (see \Cref{fig:BlockA_construction_Gkl}), where the vertices of the outer face of the $i^{th}$ cube coincide with the vertices of the inner face of the ${(i+1)}^{th}$ cube: we say that these vertices form \emph{the layer $i$}.
    
    \item $G_{\ell}^{k}$ is built by stacking one inside the other multiple base blocks as illustrated in \Cref{fig:Assembly_Gkl}. Inside the central 4-face of the block $S$, stack one inside the other $\ell$ copies $A_1^k,\ldots,A_l^k$ of the block $A^k$. Between each two consecutive blocks $A_i^k$, $A_{i+1}^k$, insert a copy $B_i$ of the block $B$. The vertices of layer $k-1$ of the block $S$ are identified with vertices of layer $k-1$ of the block $A_1^k$. The vertices of layer 0 of the block $A_i^k$ are identified with vertices of layer 0 of the block $B_i$. The vertices of layer $k-1$ of the block $B_i$ are identified with vertices of layer $k-1$ of the block $A_{i+1}^k$.
\end{itemize}

\begin{figure}[htbp]
    \centering
    \begin{subfigure}[b]{0.49\textwidth}
        \centering
        \includegraphics[scale=0.75]{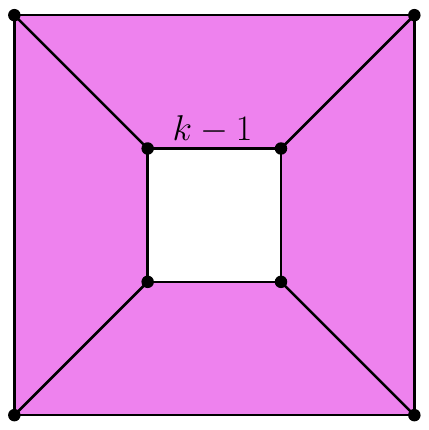}
        \caption{Block $S$ with one layer: $k-1$.}\label{fig:BlockS_construction_Gkl}
    \end{subfigure}
    \hfill
    \begin{subfigure}[b]{0.49\textwidth}
        \centering
        \includegraphics[scale=0.64]{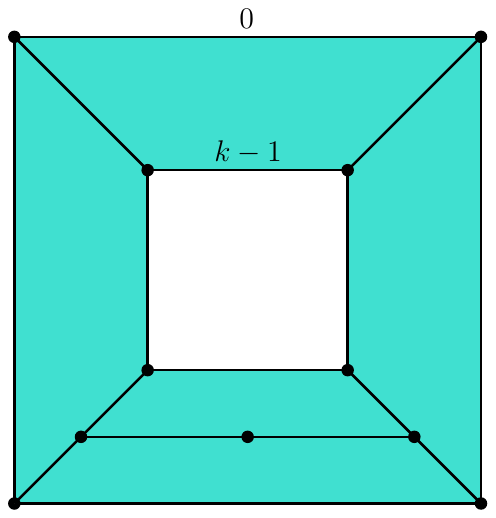}
        \caption{Block $B$ with two layers: 0 and $k-1$.}\label{fig:BlockB_construction_Gkl}
    \end{subfigure}
    
    \bigskip
    
    \begin{subfigure}[b]{0.49\textwidth}
        \centering
        \includegraphics[scale=0.55]{Figures/construction_Gkl_blockA}
        \caption{Block $A^k$ with $k$ layers: $\{0,\ldots,k-1\}$.}\label{fig:BlockA_construction_Gkl}
    \end{subfigure}
    \hfill
    \begin{subfigure}[b]{0.49\textwidth}
        \centering
        \includegraphics[scale=0.35]{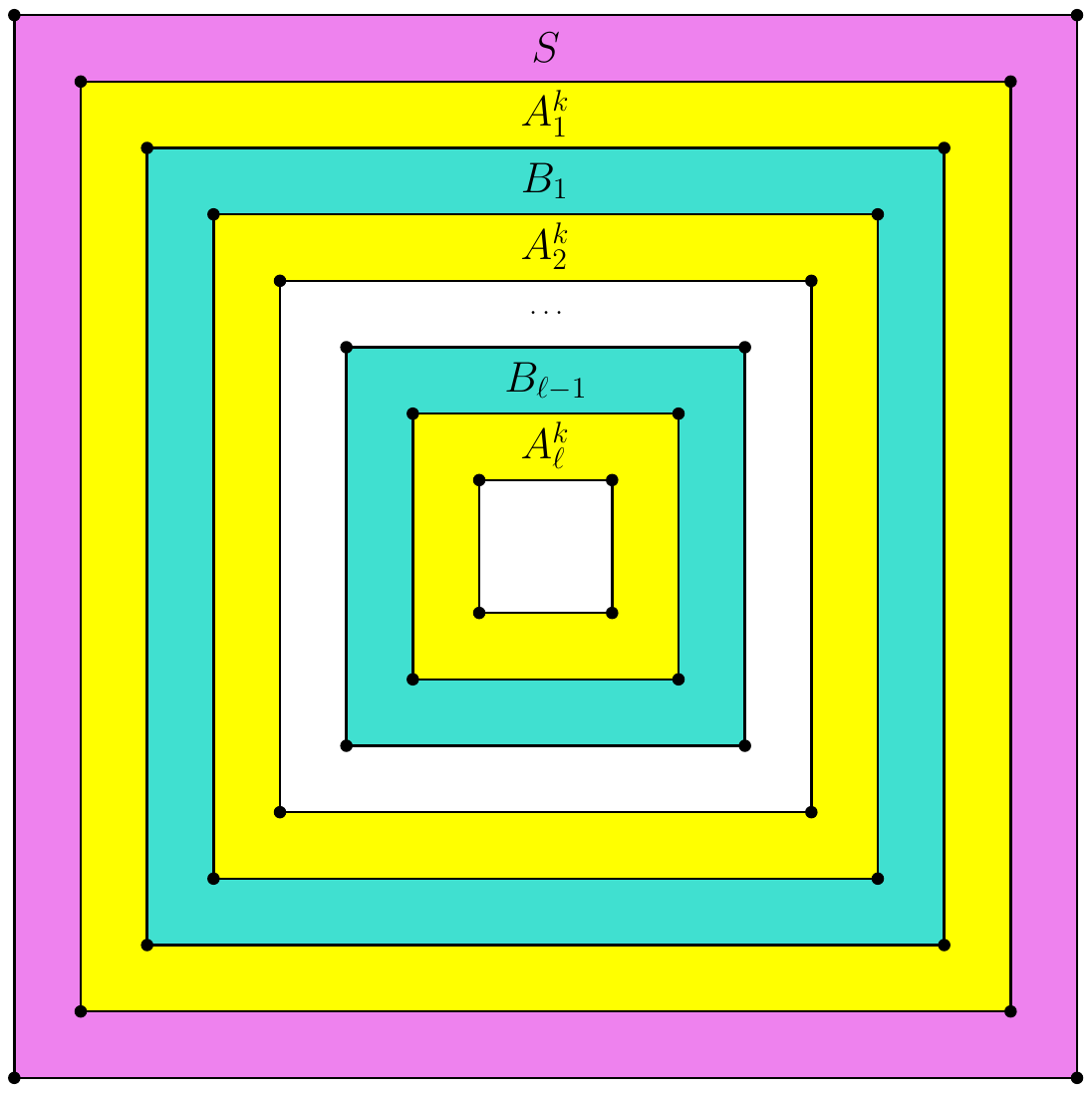}
        \caption{Assembly of the base blocks to build $G_{\ell}^{k}$.}\label{fig:Assembly_Gkl}
    \end{subfigure}
    \caption{Base blocks and construction of the family $(G_{\ell}^{k})_{\ell \ge 1}$.}\label{fig:Construction_Gkl}
\end{figure}

\begin{remark}\label{rem:size_Gkl}
    In each block of type $A^k$ there are $4k$ vertices and $8k-4$ edges. Each block of type $B$ adds $3$ vertices and $8$ edges. The starting block $S$ adds $4$ vertices and $8$ edges. In $G_{\ell}^{k}$ there are $\ell$ blocks of type $A^k$, $\ell - 1$ blocks of type $B$ and one starting block. Hence for $\ell \ge 1$ we have:
    \begin{displaymath}
        \lvert V(G_{\ell}^{k})\rvert = \ell(4k+3) +1 \quad \text{and}\quad \lvert E(G_{\ell}^{k})\rvert = \ell(8k + 4)
    \end{displaymath}
    In particular, by \Cref{def:fractional_arboricity} $a_f(G_{\ell}^{k}) \ge \frac{\lvert E(G_{\ell}^{k})\rvert}{ \lvert V(G_{\ell}^{k})\rvert - 1} = \frac{8k +4}{4k+3} = \frac{2\cdot (8k +4)}{(8k+4)+2}$.
\end{remark}

\begin{proposition}\label{prop:perfect_coating_Gkl}
    Fix $k \ge 1$ and set $g = 8k + 4$. For every $\ell \ge 1$, the skeleton graph $G_{\ell}^{k}$ admits a perfect $g$-coating.
\end{proposition}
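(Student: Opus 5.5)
We have to exhibit a coating function $h$ on the corners of $G_{\ell}^{k}$ that satisfies the two families of equalities in \Cref{def:perfect_g_coating}: $\deg(v)+\sum_{c\in\mathcal{K}(v)}h(c)=g$ for every vertex and $\sum_{c\in\mathcal{K}(f)}h(c)=g$ for every face, with $g=8k+4$ and all values $h(c)\ge 1$. A global consistency check comes first. \Cref{rem:size_Gkl} gives $m_G=\ell(8k+4)=\frac{2g}{g+2}(n_G-1)$, which is exactly the edge count required by \Cref{rem:size_perfect_skeleton}. So the total sum of $h$ computed vertex-wise, $g\,n_G-2m_G$, agrees with the total computed face-wise, $g\,f_G$, via Euler's formula. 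The linear system is therefore not obviously inconsistent. The real work is to find a positive solution locally.

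The plan is to define $h$ block by block, so that the verification is local and repeats periodically in $\ell$.
\begin{itemize}
\item \textbf{Block $A^k$.} It is made of nested cubes. Inner layer vertices have degree $4$ (two edges within the layer, one to each adjacent layer), so their corners must sum to $g-4=8k$. Each trapezoidal $4$-face between layers $i$ and $i+1$ has four corners, which must sum to $g$.
\item \textbf{Fixed pattern for $A^k$.} I would use a pattern that depends only on the position of a corner: which layer, which side of the square, and whether the corner points inward or outward. I would try values such as $h=1$ on the corners pointing inward or outward and large values (about $4k$) on the "side" corners. Then I would solve the small linear system, rotationally symmetric under the $4$-fold symmetry, for one vertex of layer $i$.
\item \textbf{Telescoping across layers.} The system links layer $i$ to layer $i+1$ through the shared faces. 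The natural ansatz is values that shift linearly with $i$, so that a vertex at layer $i$ gets about $2k$ from each of its four corners. The face constraints then reduce to a relation that can be checked uniformly in $i$.
\item \textbf{Blocks $S$ and $B$.} These are handled separately as explicit finite gadgets whose boundary values match the layer-$0$ and layer-$(k-1)$ corner values coming from the adjacent $A^k$ blocks.
\end{itemize}

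The order is therefore: (1) fix the symmetric parametrisation of corners in each block; (2) write the vertex and face equations for a generic layer of $A^k$ and solve them, checking positivity, which is where $g=8k+4$ should be exactly what makes the solution integral; (3) solve the boundary gadgets $S$ and $B$; (4) check the gluing faces between $S$ and $A_1^k$, between $A_i^k$ and $B_i$, and between $B_i$ and $A_{i+1}^k$. Each of these faces receives corners from two blocks, so its sum must be verified as a combination of both blocks' boundary values. The central face inside the innermost block also needs to be checked.

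I expect step (3)–(4), the block $B$ with its subdivided edges and the extra degree-$2$ vertex, to be the main obstacle. There the symmetry breaks, and the degree-$2$ and degree-$3$ vertices force corner values that must still leave each incident face summing to exactly $g$ with all entries at least $1$. My first attempt would be to treat $B$ as a "defect" that absorbs the mismatch between the outer layer of $A_i^k$ and the inner layer of $A_{i+1}^k$. I would choose its values by solving the small system directly, using the freedom that the global count guarantees the system has rank deficiency matching one degree of freedom per block.
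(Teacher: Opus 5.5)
Your plan is the same as the paper's: exhibit a block-periodic coating function on $G_\ell^k$ and check the vertex and face equations locally. But you never write the function down, and for this proposition the explicit function together with its verification is the whole proof. Steps (2)--(4) are only announced, and the reasons you give for expecting them to succeed do not hold.

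The global count you check is just the single compatibility condition of the equality system. Every corner occurs in exactly one vertex equation and one face equation. The vertex--face incidence multigraph of the connected plane graph $G_\ell^k$ is connected and bipartite, so its incidence matrix is totally unimodular with rank one less than its number of rows. Hence the system always has integer solutions (possibly negative) once that count holds. The count is also what pins down $g$: the equation $\frac{8k+4}{4k+3}=\frac{2g}{g+2}$ forces $g=8k+4$, so integrality is not where this value enters. Neither solvability nor integrality is at stake, and ``one degree of freedom per block'' does not touch what is: the requirement $h(c)\ge 1$ for every corner $c$.

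This requirement is a real constraint, as your own set-up shows. For a layer $i$ of a block $A^k$ whose vertices have degree $4$, let $I_i$ (resp.\ $O_i$) be the sum of the eight corners of that layer lying in faces inside (resp.\ outside) it. The four trapezoidal faces between layers $i$ and $i+1$ give $O_i+I_{i+1}=4g$. The four vertices of layer $i$ give $I_i+O_i=4(g-4)$. Hence $I_{i+1}=I_i+16$ whatever pattern you choose, so across each block $A_j^k$ with $j<\ell$ the inward-facing values necessarily rise by $16(k-1)$.

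The same count on the annulus of a block $B$ goes as follows. The annulus contains three new vertices of degrees $3,3,2$ and, by Euler's formula, five faces. This gives $O+I'=5g-(3g-8)=16k+16$, where $O$ and $I$ are the outward and inward sums of its inner square and $I'$ is the inward sum of its outer square. Combined with $I+O=4(g-4)$, this is $I'=I-16(k-1)$. So every $B_i$ must cancel exactly the drift of one $A^k$ block, while keeping all its corners $\ge 1$. At the same time, its own faces, the faces shared with the adjacent $A^k$ blocks, the faces of $S$, the outer face and the central face must all sum to exactly $g$.

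Whether this is possible is precisely the content of the proposition. The paper settles it by giving explicit values of $h$ on every corner of $A^k$, $B$, $S$ and the central face (\Cref{fig:coating_function_Gkl}) and checking the sums. To turn your proposal into a proof you must either supply such values and carry out these checks, or replace the construction by a genuine existence argument. One option is a supply--demand (Hall/Gale-type) feasibility criterion for $h-1\ge 0$ on the vertex--face incidence graph, verified for $G_\ell^k$.
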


\begin{proof}
    We provide the coating function $h$ for the copies of blocks $A^k$ and $B$ and for the remaining corners of $G_{\ell}^{k}$ as described in \Cref{fig:coating_function_Gkl}.
    \begin{figure}[htbp]
        \centering
        \begin{subfigure}[b]{0.48\textwidth}
            \centering
            \includegraphics[scale=0.45]{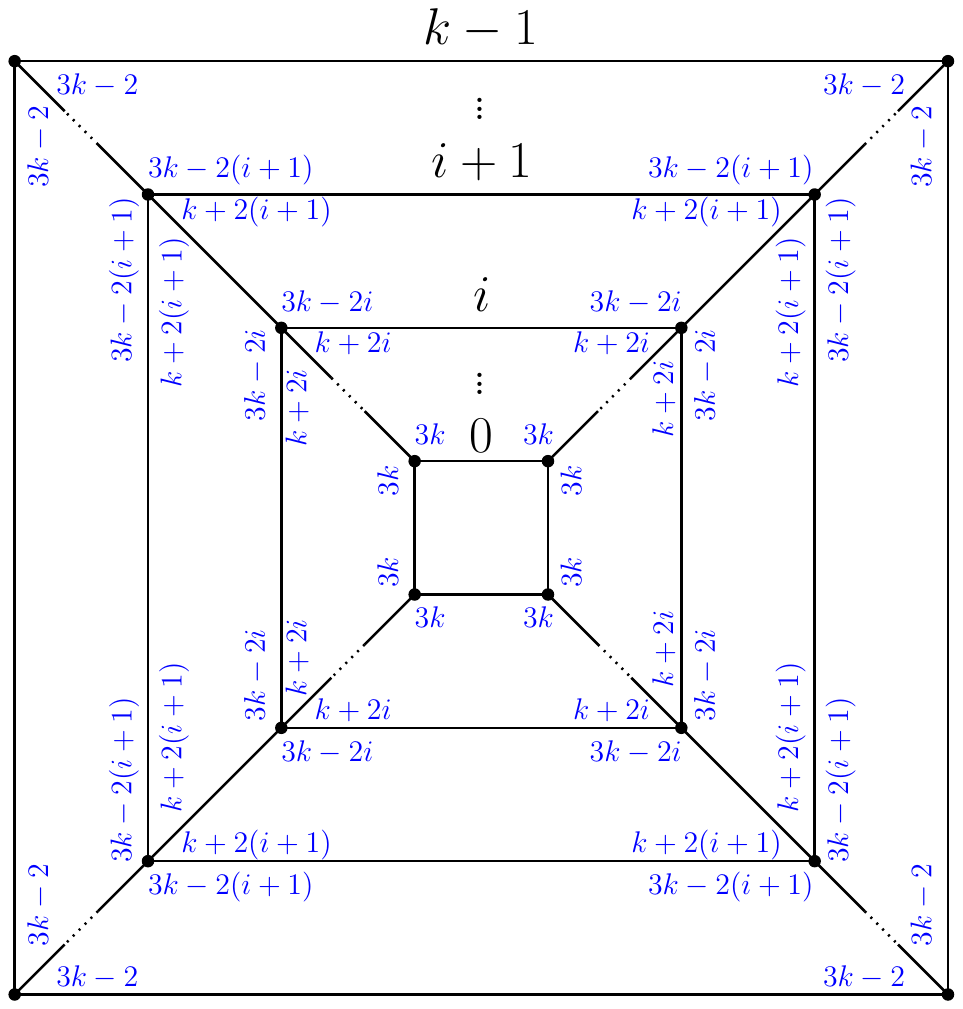}
            \caption{Corners of block $A^k$.}\label{fig:coating_function_blocA}
        \end{subfigure}
        \hfill
        \begin{subfigure}[b]{0.48\textwidth}
            \centering
            \includegraphics[scale=0.5]{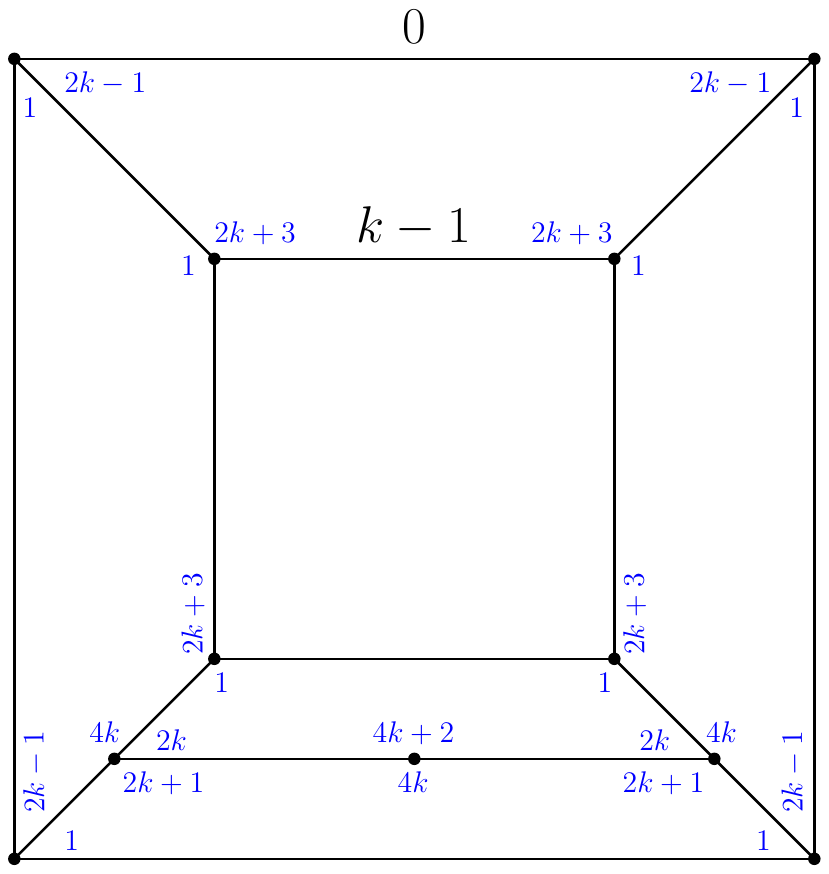}
            \caption{Corners of block $B$.}\label{fig:coating_function_blocB}
        \end{subfigure}
        \begin{subfigure}[b]{0.75\textwidth}
            \centering
            \includegraphics[scale=0.5]{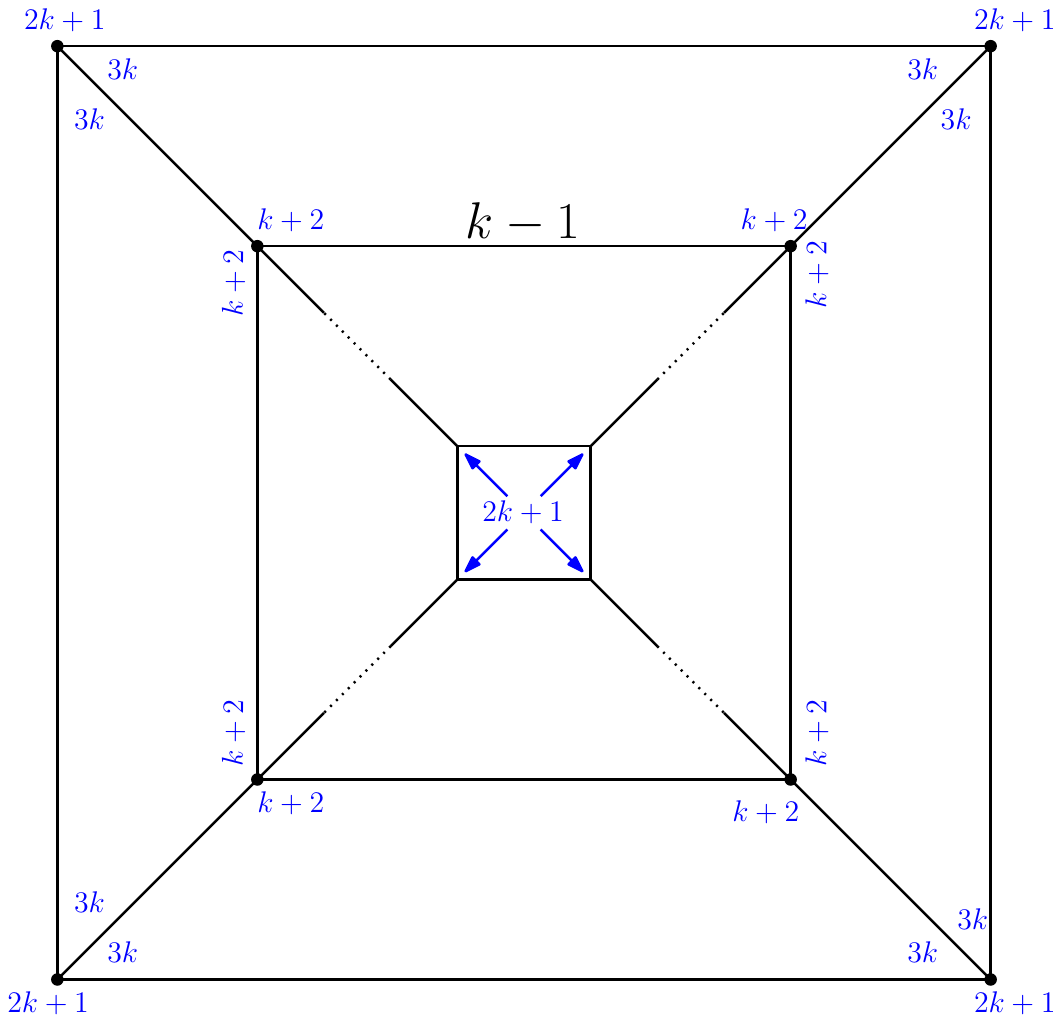}
            \caption{The remaining corners: starting block $S$ and central face of block $A^k_\ell$.}\label{fig:coating_function_outer_inner_faces}
        \end{subfigure}
        \caption{A coating function for $G_{\ell}^{k}$ (in blue) yielding a perfect $g$-coating for $g = 8k+4$.}\label{fig:coating_function_Gkl}
    \end{figure}

    One can verify that this coating function satisfies the conditions of \Cref{def:perfect_g_coating} that is the $h(c)$ around a face sum to $g = 8k+4$ and the $h(c)$ around a vertex $v$ sum to $g - \deg(v)$.
\end{proof}

For the rest of the section we set $g = 8k+4$. We now want to show that the perfect $g$-coating constructed above has digirth $g$. From \Cref{thm:digirth_perfect_coating} it remains to show that $a_f(G_{\ell}^{k})=\frac{2g}{g + 2}$. To compute the fractional arboricity of a planar graph, we use the notion introduced by Scheinerman and Ullman~\cite{SU11}, which we refer to as an \emph{arborization}, following the terminology of Bonamy et al.~\cite{BKKP20}.

\begin{definition}
    Let $G$ be an undirected graph and $c>0$ a constant. A \emph{$c$-arborization} of $G$ is a weighted family of forests $(\mathcal{A}, w)$ with $\mathcal{A}$ a family of forests and $w: \mathcal{A} \to \mathbb{R}^+$ a weight function such that:
    \begin{itemize}
        \item $\forall F \in \mathcal{A}$, we have $E(F) \subset E(G)$ and $F$ is acyclic (forest induced by edges of $G$).
        \item $\forall e \in E(G), \displaystyle \sum_{F\in \mathcal{A},\ \text{s.t. } e \in E(F)} w(F) \ge 1$ (each edge is covered by forests of total weight at least 1).
        \item $\underset{F \in \mathcal{A}}{\sum} w(F) = c$ (the sum of the forest weights equals $c$).
    \end{itemize}
\end{definition}

This definition is motivated by the following result which relates fractional arboricity and arborizations.

\begin{theorem}[Scheinerman and Ullman {\cite[p.~83]{SU11}}]\label{thm:fractional_arboricity}
    Let $G$ be a planar graph. Let $\nu(G)$ be the minimal value of $c$ such that $G$ admits a $c$-arborization. Then $a_f(G) = \nu(G)$.
\end{theorem}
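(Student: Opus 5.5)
This is the Nash-Williams-type duality between fractional arboricity and fractional covering by forests. I would prove it by LP duality over the forest (graphic) matroid of $G$; planarity plays no role beyond $G$ being a finite graph. For the weak inequality $a_f(G) \le \nu(G)$, fix a $c$-arborization $(\mathcal{A},w)$ and a subgraph $S$ with $n_S \ge 2$. Each forest $F \in \mathcal{A}$ restricted to $E(S)$ is a forest on at most $n_S$ vertices, so $|E(F)\cap E(S)| \le n_S - 1$. Double counting then gives
\begin{displaymath}
  m_S \le \sum_{e \in E(S)} \sum_{F \ni e} w(F) = \sum_{F \in \mathcal{A}} w(F)\,|E(F)\cap E(S)| \le c\,(n_S-1).
\end{displaymath}
Taking the maximum over $S$ yields $a_f(G) \le c$ for every arborization, hence $a_f(G)\le \nu(G)$. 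Loops, if present, should be excluded or treated by convention; the skeletons of interest have none by \Cref{prop:coating_connected_without_loops}.

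For the reverse inequality I would write $\nu(G)$ as the value of the linear program $\min \sum_F w(F)$ subject to $\sum_{F\ni e} w(F)\ge 1$ for all $e$ and $w\ge 0$, with $F$ ranging over all forests of $G$. This program is feasible and bounded, so its minimum is attained, which justifies speaking of the ``minimal value''. Its dual is $\max \sum_e y_e$ subject to $\sum_{e\in F} y_e \le 1$ for every forest $F$ and $y\ge 0$. By LP duality, $\nu(G)$ equals the maximum of $\sum_e y_e$ over the polytope $\{y \ge 0 : y(F)\le 1\ \forall F\}$, which is the independence polytope of the graphic matroid of $G$ scaled by $1/t$ for $t = \sum_e y_e$. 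In other words, $y/\nu$ lies in that polytope exactly when $\nu y$ lies in it.

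The key step, and the main obstacle, is converting the forest constraints into subgraph density constraints. I would invoke Edmonds' matroid polytope theorem: for $x\ge0$, the condition $x(F)\le 1$ for all forests $F$ is equivalent to $x(E') \le \operatorname{rank}(E')/\ldots$. The cleaner route is the homogeneous version: $z \ge 0$ lies in the forest polytope if and only if $z(E') \le r(E')$ for every $E'\subseteq E$, where $r(E') = n(E') - c(E')$ is the graphic rank. Applying this with $z = \nu\, y^*/\!\sum$ is messy, so I would argue directly instead. By Edmonds' fractional covering theorem (the fractional Nash-Williams theorem), the minimum fractional number of forests covering $E$ equals $\max_{E'} |E'|/r(E')$. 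The final step reduces this maximum to the form in \Cref{def:fractional_arboricity}. If $E'$ spans components $S_1,\dots,S_q$, then $r(E') = \sum_i (n_{S_i}-1)$ and $|E'| = \sum_i m_{S_i}$, so the ratio is a mediant and is therefore at most $\max_i m_{S_i}/(n_{S_i}-1)$. Conversely, any subgraph $S$ with $n_S\ge2$ gives $m_S/(n_S-1) \le |E(S)|/r(E(S))$, because $r(E(S))\le n_S-1$. Hence the two maxima coincide, which gives $\nu(G)=a_f(G)$.

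If I wanted to avoid citing Edmonds, I would prove the dual statement by uncrossing. Take an optimal dual $y$ and the family of tight sets. Tightness is closed under union and intersection for connected tight sets by submodularity of $r$, so one can extract a single densest set. This is where I expect most of the work to lie, so in the write-up I would simply cite the matroid covering theorem.
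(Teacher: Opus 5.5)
The paper gives no proof of this statement; it only cites \cite{SU11}. Your proposal is correct, given that loops are excluded as you note. With a loop no arborization exists at all, so that exclusion is genuinely needed. It helps to separate its two halves.

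\textbf{First direction.} Your double count $m_S\le\sum_F w(F)\,|E(F)\cap E(S)|\le c\,(n_S-1)$, which gives $a_f(G)\le\nu(G)$, is complete and elementary. It is also the only direction the paper ever uses. In \Cref{prop:arborization_Gkl} and \Cref{prop:arborization_Gklr} an explicit $\frac{2g}{g+2}$-arborization is exhibited only to bound $a_f$ from above. The matching lower bound comes from taking the whole graph as $S$ (\Cref{rem:size_Gkl}, \Cref{rem:size_Gklr}). So your argument makes the citation unnecessary for the paper's purposes.

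\textbf{Second direction.} This half is not an independent proof. The fractional Nash-Williams/Edmonds covering theorem $\nu=\max_{E'}|E'|/r(E')$ is essentially the matroid form of the result being cited. What you add is the mediant reduction from edge sets to subgraphs $S$ with $n_S\ge 2$, and that reduction is correct.

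\textbf{Write-up.} The paragraph on the dual program should be cut. The claim that $y/\nu$ lies in the polytope exactly when $\nu y$ does has no content, and the condition involving $\mathrm{rank}(E')$ is unfinished.

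Edmonds' polytope theorem is also not messy to use if you apply it to the primal rather than the dual. A $t$-arborization exists if and only if $\frac{1}{t}\mathbf{1}$ lies in the forest polytope. For one direction, divide $w$ by $t$ to get a convex combination of forest indicator vectors dominating $\frac{1}{t}\mathbf{1}$, and use that the polytope is down-closed. For the other, multiply the coefficients of a convex combination by $t$. By Edmonds, $\frac{1}{t}\mathbf{1}$ lies in the forest polytope if and only if $|E'|\le t\,r(E')$ for every $E'\subseteq E(G)$. This gives $\nu(G)=\max_{E'\neq\emptyset}|E'|/r(E')$ in two lines, without the dual LP or the uncrossing sketch. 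Your mediant step then finishes the proof.
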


In order to prove that $a_f(G_{\ell}^{k}) \le \frac{2g}{g + 2}$, we construct a $c$-arborization of $G_{\ell}^{k}$ with $c=\frac{16k + 8}{8k + 6}=\frac{2g}{g + 2}$. Then \Cref{rem:size_Gkl} will directly imply that $a_f(G_{\ell}^{k}) = \frac{2g}{g + 2}$.

We color the half-edges of $G_{\ell}^{k}$ with 4 colors (black, red, blue and green) starting with the outer face and the block $S$ and continue on each layer. In the following coloring procedure, coloring a vertex $v$ with a color $c$ means coloring all the half edges incident to $v$ with the color $c$.
\begin{enumerate}
    \item Color the vertices of the outer face starting from the top right vertex with colors blue, red, green and black in this order (see \Cref{subfig:coloring_Gkl_Sblock}). We say that the these vertices form a blue-red-green-black square. The vertices of layer $k-1$ of the block $S$ form a green-black-blue-red square.
    \item Color the layers of the blocks $A^k_i$ (for $i$ from $\ell$ to $1$) by alternating green-black-blue-red squares and blue-red-green-black squares. The layer $k-1$ being already colored, start with layer $k-2$. By doing so, along the diagonals in the Northwest–Southeast direction, blue and green vertices alternate and along the diagonals in the Southwest–Northeast direction, black and red vertices alternate (see \Cref{subfig:coloring_Gkl_Akblock}).
    \item Color the half edges inside each block $B_i$ (for $i$ from $\ell-1$ to $1$) following \Cref{subfig:coloring_Gkl_Bblock}. Note, there are two possible colorings for the block $B_i$ depending whether the layer 0 of the block $A_i$ is a blue-red-green-black square or a green-black-blue-red square.
\end{enumerate}

Moreover, in \Cref{fig:coloring_Gkl} 8 edges of the block $S$ and 8 edges of the block $B$ are labeled with $S_1, \dots, S_8$ (two for each color class). For a color $c$ (out of red, green, black and blue), the two edges with the label written in color $c$ are called \emph{special edges for the color $c$}.

\begin{figure}[htbp]
    \begin{subfigure}[b]{0.48\textwidth}
        \centering
        \includegraphics[scale=0.72]{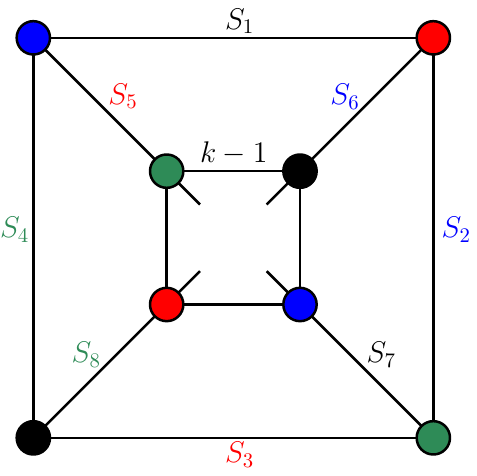}
        \caption{The coloring of the block $S$ of $G^k_{\ell}$.}\label{subfig:coloring_Gkl_Sblock}
    \end{subfigure}
    \begin{subfigure}[b]{0.48\textwidth}
        \centering
        \includegraphics[scale=0.72]{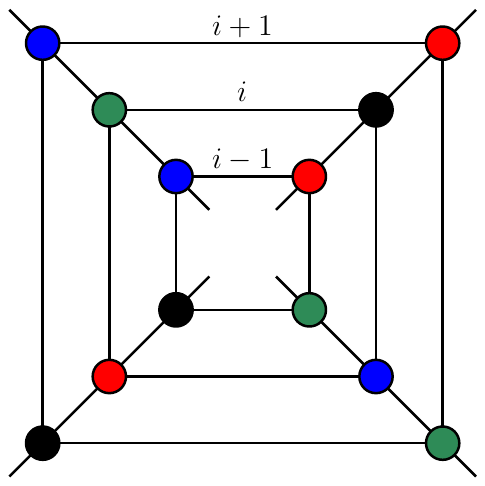}
        \caption{The coloring of the block $A^k$ of $G^k_{\ell}$.}\label{subfig:coloring_Gkl_Akblock}
    \end{subfigure}
    
    \begin{subfigure}{\textwidth}
        \begin{subfigure}{0.48\textwidth}
            \centering
            \includegraphics[scale=0.72]{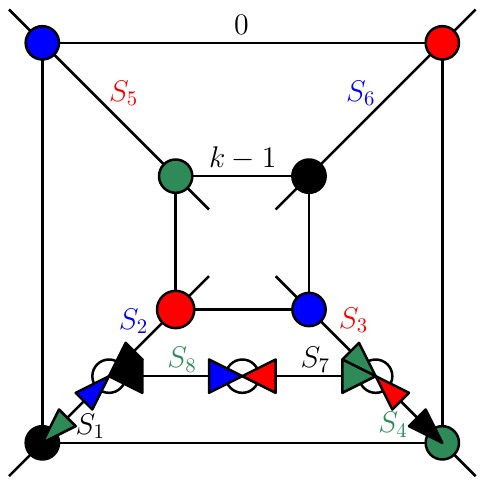}
        \end{subfigure}
        \begin{subfigure}{0.48\textwidth}
            \centering
            \includegraphics[scale=0.72]{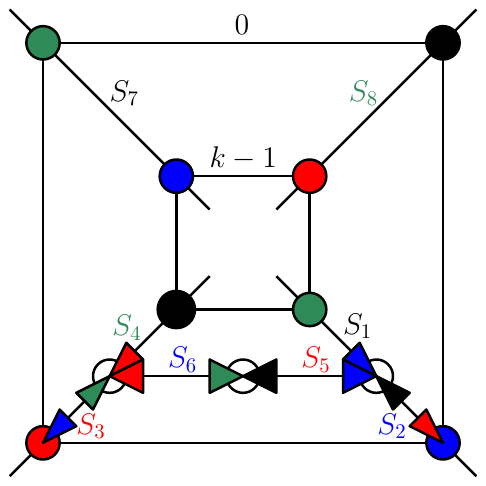}
        \end{subfigure}
        \caption{The coloring of the block $B$ of $G^k_{\ell}$.}\label{subfig:coloring_Gkl_Bblock}
    \end{subfigure}
    \caption{Coloring of the half-edges of $G_{\ell}^{k}$. The special edges of each of the four colors are those labelled $S_i$ for $1 \le i \le 8$. The color of label $S_i$ corresponds to the color the edge is special for.}\label{fig:coloring_Gkl}
\end{figure}

Denote by $E_c$ the set of edges of $G_{\ell}^{k}$ having one half-edge colored $c$ (see \Cref{fig:red_forest_general_case} for an example). Note that the two special edges for the color $c$ are never in $E_c$.

\begin{figure}[htbp]
    \begin{subfigure}[b]{0.48\textwidth}
        \centering
        \includegraphics[scale=0.72]{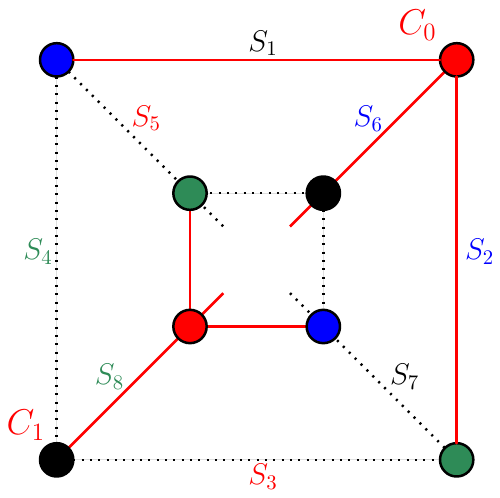}
        \caption{Block $S$}\label{subfig:red_E_forest_Sblock}
    \end{subfigure}
    \begin{subfigure}[b]{0.48\textwidth}
        \centering
        \includegraphics[scale=0.74]{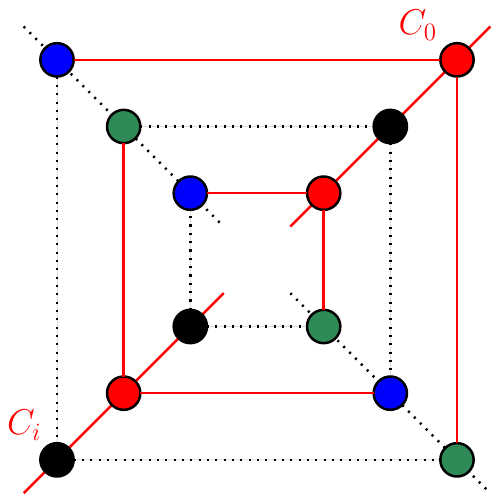}
        \caption{Block $A_i^k$}\label{subfig:red_E_forest_Aikblock}
    \end{subfigure}
    
    \begin{subfigure}{\textwidth}
        \begin{subfigure}{0.48\textwidth}
            \centering
            \includegraphics[scale=0.72]{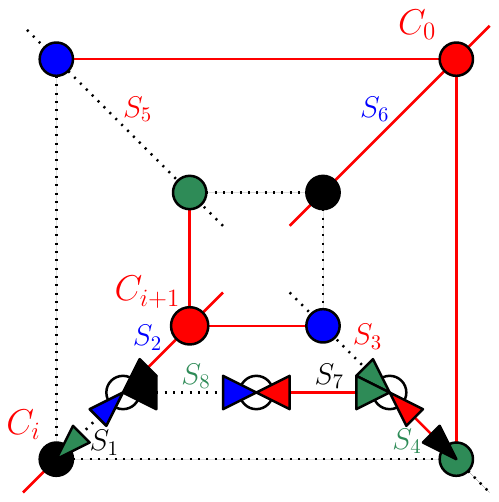}
        \end{subfigure}
        \begin{subfigure}{0.48\textwidth}
            \centering
            \includegraphics[scale=0.72]{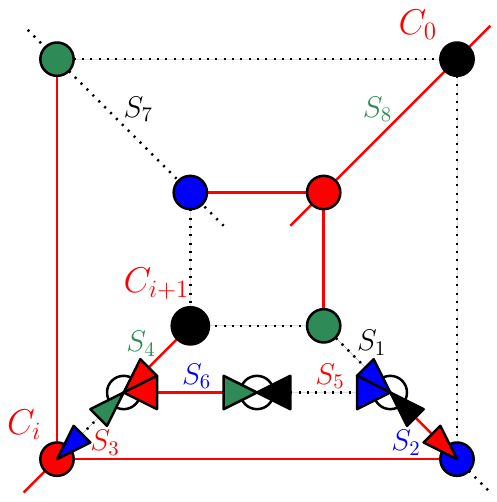}
        \end{subfigure}
        \caption{Block $B_i$}\label{subfig:red_E_forest_Bblock}
    \end{subfigure}
    \caption{Forest $E_{red}$ in the graph $G_{\ell}^{k}$.}\label{fig:red_forest_general_case}
\end{figure}

\begin{observation}\label{obs:properties_Ec}
    \begin{enumerate}[(i)]
        \item\label{obs:size_Ec_Glk} There is no edge of $G_{\ell}^{k}$ whose two half-edges have the same color. Thus every edge of $G_{\ell}^{k}$ belongs to exactly two of the sets $E_c$. Moreover, there are equally many half-edges of each color in $G_{\ell}^{k}$. Hence, from \Cref{rem:size_Gkl}
        \begin{displaymath}
            \lvert E_{black}\rvert = \lvert E_{red}\rvert = \lvert E_{blue}\rvert = \lvert E_{green}\rvert = \frac{\lvert E(G_{\ell}^{k})\rvert}{2} = \ell \cdot (4k +2)
        \end{displaymath}
        \item\label{obs:connected_components_Ec} Let $c$ be a color among red, blue, black, and green. $E_c$ contains no cycles and has exactly $\ell + 1$ connected components. One of these components, denoted $C_0$, consists of the edges located in the North part of $G^k_{\ell}$ and intersects all blocks. Each block $A_i^k$, for $i \in \llbracket 1, \ell \rrbracket$, defines one distinct connected component $C_i$ located in the South part of $G^k_{\ell}$. Moreover:
        \begin{itemize}
            \item Every edge of block $A_i^k$ that is not in $E_c$ connects the component $C_i$ to the component $C_0$.
            \item Each special edge for color $c$ located in block $B_i$ has one endpoint in $C_{i+1}$ and the other in $C_{i}$ or in $C_0$ (in particular, it is not an edge between $C_{i}$ and $C_{0}$).
            \item The special edges for color $c$ located in block $S$ connect the component $C_1$ to the component $C_0$.
        \end{itemize}
        
        \item\label{obs:identification_edge_Gkl} Every edge in $G_\ell^{k}$ can be identified with a unique edge of the block $S$ or $A_1^k$. \begin{itemize}
            \item Every colored block $A_i^k$ is either identical to $A_1^k$ or to its central reflection. So every edge of $A_i^k$ can be identified with a unique edge of $A_1^k$.
            \item A special edge in $B_i$ is identified with the special edge of $S$ that has the same letter in \Cref{fig:coloring_Gkl}.
        \end{itemize} 
    \end{enumerate}
\end{observation}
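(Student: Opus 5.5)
The statement is \Cref{obs:properties_Ec}, and I would verify it block by block, using the recursive structure of $G_\ell^k$ to reduce everything to finitely many local checks.

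For \ref{obs:size_Ec_Glk}, I would inspect the colorings in \Cref{fig:coloring_Gkl}. Within a block $A^k$, consecutive layers alternate between a green-black-blue-red square and a blue-red-green-black square. An edge joining two layers connects vertices at the same position, and a within-layer edge connects adjacent square vertices. In both cases the two endpoints receive different colors. For the blocks $S$ and $B$, I would check the same fact directly on the figure. Balance of colors would be shown per block: each square contributes one vertex of each color, and every vertex of a given layer position has the same degree pattern. So each block contributes equally many half-edges of each color, and the exceptional vertices of $B$ and $S$ can be checked by hand. Each edge then lies in exactly two sets $E_c$, and $4|E_c|$ equals $2|E(G_\ell^k)|$. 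Together with \Cref{rem:size_Gkl} this gives $|E_c|=\ell(4k+2)$.

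For \ref{obs:connected_components_Ec}, I would fix a color $c$ and use the diagonal alternation, namely blue/green on Northwest--Southeast diagonals and black/red on Southwest--Northeast diagonals. From this I would describe $E_c$ inside $A_i^k$ explicitly as a union of paths, as in \Cref{fig:red_forest_general_case}. This yields a North tree that continues from block to block, and a South tree that stays inside $A_i^k$. Acyclicity then follows by induction on the number of blocks. Each new block attaches to the existing North component $C_0$ along a tree, touching it at exactly one layer vertex or through a path meeting it once, and it creates one new tree $C_i$. I would count edges against vertices to confirm that there are exactly $\ell+1$ components: a forest with $|V|-(\ell+1)$ edges, compared with $\ell(4k+2)$, with isolated vertices handled carefully. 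The three bullet points are local: an edge of $A_i^k$ not in $E_c$ joins a vertex where $c$ is on the North diagonal to one where it is on the South diagonal, and the special edges are checked on the figures for $S$ and for both colorings of $B$.

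For \ref{obs:identification_edge_Gkl}, I would note that the coloring of $A_i^k$ is determined by the parity of its layer $0$ square. Swapping the two square types is exactly a central reflection, which gives the identification with $A_1^k$. Special edges of $B_i$ are matched to those of $S$ by their labels.

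The main obstacle is the global claim in \ref{obs:connected_components_Ec} that $E_c$ is acyclic with exactly $\ell+1$ components. Here I would argue carefully by induction on $\ell$, tracking how $C_0$ passes through each block $B_i$ and $A_{i+1}^k$ without closing a cycle.
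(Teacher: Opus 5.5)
Your plan matches the paper's own justification, which consists only of reading these properties off the colored figures. Your count $|V(G_\ell^k)|-|E_c|=\ell(4k+3)+1-\ell(4k+2)=\ell+1$ is a clean way to turn acyclicity into the exact number of components. Parts (i) and (iii) are fine as you describe them; for (iii), note that all eight edges of $S$ and all eight edges of $B$ carry labels, which is why every edge gets identified.

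The one step that would not go through as stated is your inductive invariant for acyclicity in (ii). The new part $E_c\cap(B_i\cup A_{i+1}^k)$ does not merely hang off $C_0$ and otherwise form the new tree.

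\begin{itemize}
\item Inside $A_i^k$, $E_c$ is a North tree and a South tree. Their spines are the two opposite half-diagonals on which $c$ alternates with its partner color. Red and black share one pair of half-diagonals, blue and green the other, and each pair has one North and one South half-diagonal.
\item Layer $0$ of $A_i^k$ contains exactly one vertex of color $c$. This vertex is monochromatic and has a unique incident edge in $B_i$, which therefore lies in $E_c$.
\item For one color of each pair, this vertex sits on the South spine, i.e.\ in $C_i$. So for two of the four colors, $E_c\cap B_i$ attaches new vertices of $B_i$ to the previous South component $C_i$.
\end{itemize}

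This is also visible in the last bullet of \Cref{obs:properties_Ec_Gklr}(ii). There, for black and green, an edge at $v_{a+b}^{(2i-1)}$ joins $C_{2i}$ to $C_{2i-1}$.

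What you should check on the figures instead is the following.
\begin{itemize}
\item $E_c\cap(B_i\cup A_{i+1}^k)$ is a forest, each of whose components meets layer $0$ of $A_i^k$ in at most one vertex.
\item The component containing the South tree of $A_{i+1}^k$ meets layer $0$ of $A_i^k$ in no vertex.
\item The component containing the North tree of $A_{i+1}^k$ meets it in a vertex of $C_0$.
\end{itemize}
Any cycle in the union would need a path of the new forest between two distinct vertices of layer $0$. So this invariant preserves acyclicity and produces exactly the components described. Your global count then confirms that there are $\ell+1$ of them.
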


Using these properties we derive the following procedure to construct large trees contained in $G_{\ell}^{k}$.
\begin{enumerate}
    \item Choose a color $c$ among (black, red, blue, green).
    \item Choose an edge $x$ among the edges of block $A_1^k$ and $S$ that is not in $E_c$. 
    \item Build the tree $T_{c,x}$ by adding to the forest $E_c$ the edge $x$ as well as all edges identified with $x$ (see \Cref{obs:properties_Ec}\ref{obs:identification_edge_Gkl}).
\end{enumerate}
Then $T_{c,x}$ is a tree of $G_{\ell}^{k}$. Indeed, by adding the edge $x$ we connect two connected components of $E_c$. Moreover, by adding all edges identified with $x$ in the other blocks we connect all other connected components of $E_c$ without creating cycles (by \Cref{obs:properties_Ec}\ref{obs:connected_components_Ec}).

This allows to construct a $c$-arborization of $G_{\ell}^{k}$ with $c = \frac{2g}{g+2} = \frac{16k + 8}{8k + 6}$:
\begin{itemize}
    \item Set $w(T_{c,x}) = \frac{1}{8k + 6}$ if $x$ is an edge of block $A_1^k$ not in $E_c$.
    \item Set $w(T_{c,x}) = \frac{2}{8k + 6}$ if $x$ is a special edge for color $c$.
\end{itemize}
Denote by $\mathcal{A}$ the family of weighted trees thus constructed.

\begin{proposition}\label{prop:arborization_Gkl}
    The family $\mathcal{A}$ defined above is a $\frac{16k + 8}{8k + 6}$-arborization of $G_{\ell}^{k}$ and thus $a_f(G_\ell^k) = \frac{2g}{g+2}$ (with $g = 8k+4$).
\end{proposition}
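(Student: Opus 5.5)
We must verify three things: every tree in $\mathcal{A}$ is a forest of $G_\ell^k$, the weights sum to $\frac{16k+8}{8k+6}$, and every edge is covered by total weight at least $1$. Then \Cref{thm:fractional_arboricity} gives $a_f(G_\ell^k)\le \frac{2g}{g+2}$. Combined with the lower bound of \Cref{rem:size_Gkl}, this yields equality.

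\textbf{Acyclicity.} Each $T_{c,x}$ is a tree by the argument following the construction. The forest $E_c$ has $\ell+1$ components $C_0,\dots,C_\ell$ by \Cref{obs:properties_Ec}\ref{obs:connected_components_Ec}.
\begin{itemize}
\item If $x$ lies in $A_1^k$, its copies in each $A_i^k$ join $C_i$ to $C_0$. This gives a star-like connection of all components, so no cycle appears.
\item If $x$ is a special edge of $S$, it joins $C_1$ to $C_0$. Its copy in $B_i$ joins $C_{i+1}$ to $C_i$ or to $C_0$. Adding these edges in the order $i=1,\dots,\ell-1$ attaches each new component $C_{i+1}$ to the already connected part, so the result stays a tree.
\end{itemize}

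\textbf{Total weight.} For a fixed color $c$, consider block $A_1^k$. By \Cref{rem:size_Gkl} it has $8k-4$ edges. Each edge lies in exactly two of the sets $E_{c'}$ by \Cref{obs:properties_Ec}\ref{obs:size_Ec_Glk}. I would check that the number of edges of $A_1^k$ outside $E_c$ is the same for every color, presumably by the rotational symmetry of the coloring; this common number is $(8k-4)/2=4k-2$.

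Each color also has exactly $2$ special edges. The total weight is therefore
\[
4\left(\frac{4k-2}{8k+6}+\frac{2\cdot 2}{8k+6}\right)=\frac{16k+8}{8k+6},
\]
as required.

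\textbf{Covering.} This is the main step.
\begin{itemize}
\item \emph{Edges of an $A_i^k$.} Such an edge $e$ is identified with an edge $x$ of $A_1^k$. It lies in exactly two sets $E_{c'}$, so it belongs to every tree of those two colors. The total weight of one color's trees is $\frac{4k+2}{8k+6}$, so these two colors contribute $2\cdot\frac{4k+2}{8k+6}$. For each of the two remaining colors $c$, $e\notin E_c$, and $e$ is covered by $T_{c,x}$ with weight $\frac{1}{8k+6}$. The total is $\frac{8k+4+2}{8k+6}=1$.
\item \emph{Edges of $S$ and $B_i$.} Here I would use the labelled figures. Each such edge is either special for the two colors missing it, receiving weight $\frac{2}{8k+6}$ from each, or lies in $E_c$ for appropriate colors. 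The same accounting then gives $\frac{2(4k+2)+4}{8k+6}=1$.
\end{itemize}

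The obstacle is exactly this last point. We must confirm, from \Cref{fig:coloring_Gkl}, that every edge of $S$ and $B_i$ outside $E_c$ for two colors is special for both of those colors. Counting suggests this holds: there are $8$ special edges in total, occupying $16$ color-slots. This is a finite check on the pictured blocks, and I would carry it out edge by edge.
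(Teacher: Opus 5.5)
Your treatment of acyclicity, the total weight and the edges of the blocks $A_i^k$ matches the paper. The covering step for the edges of $S$ and of the blocks $B_i$ is wrong as stated, however.

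\textbf{The check you propose would fail.} You plan to verify that every such edge is special for \emph{both} colors missing from its half-edges. But the $8$ edges of $S$, and the $8$ edges of each $B_i$, are exactly the labelled edges $S_1,\dots,S_8$, two per color. So each of them is special for exactly \emph{one} color.

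\textbf{Your counting points the other way.} Two special edges per color give $8$ (edge, color) incidences for your $16$ slots, not $16$. If each special edge really received $\frac{2}{8k+6}$ from two colors, each color would have four special trees. The total weight would then be $\frac{16k+24}{8k+6}$, contradicting the $\frac{16k+8}{8k+6}$ you computed.

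\textbf{The final arithmetic is also off.} We have $\frac{2(4k+2)+4}{8k+6}=\frac{8k+8}{8k+6}\neq 1$.

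\textbf{The fix.} One tree suffices. Every edge $e$ of $S$ or $B_i$ is special for a single color $c$. Since special edges for $c$ are never in $E_c$, this $c$ is necessarily one of the two colors absent from $e$. Let $x_e$ be the special edge of $S$ carrying the same label, as in \Cref{obs:properties_Ec}\ref{obs:identification_edge_Gkl}. Then $e$ lies in $T_{c,x_e}$, which has weight $\frac{2}{8k+6}$. The two colors of its half-edges contribute $\frac{2(4k+2)}{8k+6}$, and the fourth color contributes nothing. The total is therefore exactly $\frac{8k+4+2}{8k+6}=1$.

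So the fact to read off the figure is that every edge outside the blocks $A_i^k$ carries exactly one label $S_j$, not that it is special for two colors.
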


\begin{proof}
    Fix a color $c$. In block $A_1^k$ there are $ \frac{8k-4}{2} = 4k - 2$ edges that are not in $E_c$. In the block $S$ there are 2 special edges for color $c$. Hence the total weight of the trees where the chosen color is $c$ equals:
    \begin{displaymath}
        w(c) = \sum_{x} w(T_{c,x}) =  (4k - 2) \times \frac{1}{8k + 6} + 2 \times \frac{2}{8k + 6} = \frac{4k + 2}{8k + 6}
    \end{displaymath}
    From \Cref{obs:properties_Ec}\ref{obs:size_Ec_Glk}, there are equally many half-edges of each color. Therefore the sum of the weights of the trees in $\mathcal{A}$ is:
    \begin{displaymath}
        w(\mathcal{A}) = \sum_{c} \sum_{x} w(T_{c,x}) = 4 \times \frac{4k + 2}{8k + 6} = \frac{16k + 8}{8k + 6}
    \end{displaymath}
    For $e \in E(G_{\ell}^{k})$, we want to show that $e$ is covered by trees of total weight at least 1.
    \begin{itemize}
        \item All trees $T_{c,x}$ where $c$ is one of the two colors of the half-edges of $e$ contain $e$. This corresponds to trees of total weight $2 \times \frac{4k + 2}{8k + 6} = \frac{8k + 4}{8k + 6}$.
        \item To reach the remaining weight $\frac{2}{8k+6}$, distinguish two cases depending on whether $e$ lies in a block $A_i^k$ or is a special edge.
        \begin{itemize}[label = $\circ$]
            \item If $e$ is in a block $A_i^k$: let $c$ and $c'$ be the two colors that do not appear among the half-edges of $e$ and let $x_e$ be the edge of block $A_1^k$ identified with $e$. Then $e$ is used in $T_{c,x_e}$ and in $T_{c',x_e}$. This contributes to two trees of weight $\frac{1}{8k + 6}$.
            \item If $e$ is a special edge: let $c$ be the color for which $e$ is special and let $x_e$ be the special edge of the starting block identified with $e$. Then $e$ is used in $T_{c,x_e}$. This contributes to one tree of weight $\frac{2}{8k + 6}$.
        \end{itemize}
    \end{itemize}
    Thus $e$ is contained in trees of $\mathcal{A}$ of total weight at least 1. Hence $\mathcal{A}$ is a $\frac{16k + 8}{8k + 6}$-arborization of $G_{\ell}^{k}$.
\end{proof}

Finally we obtain the following result:

\begin{lemma}\label{lem:existence_perfect_g_coating_digirth_g_weak}
    Let $k \ge 1$ and $g = 8k+4$. There exists an infinite family of perfect $g$-coatings of digirth $g$.
\end{lemma}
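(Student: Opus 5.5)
The family will be $\{H_\ell^k\}_{\ell \ge 1}$, where for each $\ell \ge 1$ the digraph $H_\ell^k$ is the perfect $g$-coating of $G_\ell^k$ given by \Cref{prop:perfect_coating_Gkl}, with $g = 8k+4$. It is infinite because $n_{G_\ell^k} = \ell(4k+3)+1$ by \Cref{rem:size_Gkl}. By \Cref{prop:coating_properties}\ref{prop:size_coating}, $n_{H_\ell^k} = g\, n_{G_\ell^k} - m_{G_\ell^k}$ also grows with $\ell$, so the coatings are pairwise non-isomorphic. What remains is to show that each $H_\ell^k$ has digirth $g$. For this I would apply the converse direction of \Cref{thm:digirth_perfect_coating}, which reduces the digirth question to three structural properties of the skeleton $G_\ell^k$.

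\textbf{No loops.} This holds by construction: the blocks $S$, $B$ and $A^k$ are cubes, or cubes with subdivided edges and an added degree-2 vertex, stacked by identifying layers. None of these operations creates a loop.

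\textbf{Connectedness.} Each block is connected. Consecutive blocks share a whole layer of four vertices (the identifications in the definition of $G_\ell^k$), so the chain $S, A_1^k, B_1, A_2^k, \dots, A_\ell^k$ is connected. Alternatively, connectedness follows from the fact, established just before \Cref{prop:arborization_Gkl}, that each $T_{c,x}$ is a spanning tree.

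\textbf{Fractional arboricity.} This is exactly \Cref{prop:arborization_Gkl}, which gives $a_f(G_\ell^k) = \frac{2g}{g+2}$. The lower bound comes from \Cref{rem:size_Gkl}, and the upper bound from the arborization combined with \Cref{thm:fractional_arboricity}.

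With these three conditions verified, \Cref{thm:digirth_perfect_coating} shows that $H_\ell^k$ has digirth $g$ for every $\ell \ge 1$, which proves the lemma.

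As a consequence, \Cref{thm:upper_bound_fvs_coatings} then gives the value of $\fv$ with equality. The genuinely delicate inputs are the verification that the coating function of \Cref{fig:coating_function_Gkl} is perfect and the properties of $E_c$ in \Cref{obs:properties_Ec}. Both are already taken as established earlier in the paper, so I do not expect any real obstacle here beyond citing them correctly. The only care needed is to check that the connectedness and no-loop hypotheses of \Cref{thm:digirth_perfect_coating} are indeed satisfied by the stacked construction.
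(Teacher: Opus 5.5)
Your proposal is correct and matches the paper's proof: it combines \Cref{prop:perfect_coating_Gkl} (existence of the perfect $g$-coating) with \Cref{prop:arborization_Gkl} ($a_f(G_\ell^k)=\frac{2g}{g+2}$) and concludes with the converse direction of \Cref{thm:digirth_perfect_coating}. The paper leaves the connectedness and no-loop hypotheses of that theorem implicit, and your explicit checks of them are a useful addition.
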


\begin{proof}
    For all $\ell \ge 1$, the graph $G_\ell^k$ admits a perfect $g$-coating $H_\ell$ (\Cref{prop:perfect_coating_Gkl}). Moreover we proved that $a_f(G_\ell^k) = \frac{2g}{g+2}$ (\Cref{prop:arborization_Gkl}). Then by \Cref{thm:digirth_perfect_coating}, $H_\ell$ has digirth $g$.
\end{proof}

One can extend \Cref{lem:existence_perfect_g_coating_digirth_g_weak} to every even values of $g \ge 12$ with a modification of the skeleton $G_{\ell}^k$.

\begin{lemma}\label{lem:existence_perfect_g_coating_digirth_g}
    For all $g \ge 12$, there exists an infinite family of perfect $g$-coatings of digirth $g$.
\end{lemma}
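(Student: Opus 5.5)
The plan is to reuse the strategy of \Cref{lem:existence_perfect_g_coating_digirth_g_weak}. For each $g \ge 12$ we build skeletons $G_\ell$ together with a coating function. We then check three things. First, the coating function is a perfect $g$-coating in the sense of \Cref{def:perfect_g_coating}. Second, $G_\ell$ is connected and loopless. Third, $a_f(G_\ell) = \frac{2g}{g+2}$. \Cref{thm:digirth_perfect_coating} then gives digirth $g$ for all $\ell$.

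By \Cref{rem:size_perfect_skeleton}, a perfect $g$-coating forces $m_G = \frac{2g}{g+2}(n_G-1)$. The recursive blocks must therefore contribute $\Delta n$ vertices and $\Delta m$ edges with $\frac{\Delta m}{\Delta n} = \frac{2g}{g+2}$. The blocks $A^k$ and $B$ achieve this exactly when $g = 8k+4$. For other $g$ I would vary two things:
\begin{enumerate}
\item the number of stacked cube layers in $A$, where each layer adds $4$ vertices and $8$ edges;
\item the separating block $B$, by subdividing more or fewer edges or adding extra degree-$2$ vertices. Each such vertex changes $(\Delta n, \Delta m)$ by $(1,2)$, and a subdivision changes it by $(1,1)$.
\end{enumerate}
Alternatively, one can concatenate blocks $A^{k}$ and $A^{k+1}$ in suitable proportions.

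For each residue of $g$ modulo $8$ among the even residues, I would choose parameters $a,b$ solving $(g+2)\,\Delta m = 2g\,\Delta n$. I would then write down a coating function on the modified blocks. Around each vertex $v$ it must sum to $g-\deg(v)$, which is at least $g-4 \ge 8 > 0$ since the maximum degree is $4$. Around each face it must sum to $g$. The local freedom is large once $g \ge 12$, which explains the threshold. Positivity of $h$ is the only constraint, and it can be met by distributing the slack $g-8k-4$ uniformly along the appropriate corners.

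For odd $g$, the statement says ``all $g\ge 12$''. Even $g$ is the natural case; for odd $g$, the same parity bookkeeping is needed with degree-$3$ vertices and odd faces. The arithmetic constraint $\frac{2g}{g+2}\in\mathbb{Q}$ is harmless, since it only forces $\Delta n$ to be a multiple of $(g+2)/\gcd(g+2,2g)$. The block sizes therefore grow with $g$, but a linear chain of cubes with adjusted subdivisions still works.

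The main obstacle is the upper bound $a_f(G_\ell) \le \frac{2g}{g+2}$. Here I would mimic \Cref{prop:arborization_Gkl} via \Cref{thm:fractional_arboricity}. First, keep the four-colour half-edge colouring on the cube layers. Second, colour subdivision vertices so that each $E_c$ remains a forest with one northern component plus one component per block. Third, assign tree weights $\frac{1}{\Delta n}$ to trees obtained by adding a non-$E_c$ edge of a block, and $\frac{2}{\Delta n}$ to trees obtained by adding special edges. Fourth, check that every edge is covered with total weight $1$. If the colouring argument becomes unwieldy for general block shapes, my fallback is to verify $m_S \le \frac{2g}{g+2}(n_S-1)$ directly for subgraphs. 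For this I would note that a densest subgraph of such a layered construction can be taken to be a union of consecutive whole blocks plus the start, then apply induction on the number of blocks, adding one block at a time.
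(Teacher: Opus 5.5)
Your outline follows the paper's route: enlarge the blocks $B$ and $S$ of $G^k_\ell$ by degree-$2$ vertices (each contributing $(1,2)$), write a perfect coating function, bound $a_f$ by a four-colour arborization, and conclude with \Cref{thm:digirth_perfect_coating}. For even $g=8k+4+2a$ with $0\le a\le 3$, your parameter count is exactly the paper's case $b=0$. However, the substance of the lemma is the construction and its verification, and the proposal does not supply it where it is hard. The main gap is odd $g$, which is half of the statement. ``A linear chain of cubes with adjusted subdivisions still works'' is an assertion, not a construction. The paper takes a period of \emph{two} $A^k$ blocks separated alternately by $B^{a+b}$ and $B^{a}$, so that $\Delta n=g+2$ and $\Delta m=2g$. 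The arborization then genuinely changes. As long as the four colours play symmetric roles, each colour must carry total weight $\frac{g}{2(g+2)}$. For odd $g$ this is not an integer combination of your units $\frac{1}{g+2}$ and $\frac{2}{g+2}$. The paper resolves this in two steps. It gives the trees through black--blue and red--green edges of $A^k$ weight $\frac{1-b/(2k)}{g+2}$. It then adds trees of weight $\frac{b}{k(g+2)}$ through the two edges at the extra vertex $v^{(1)}_{a+b}$, which have no counterpart in $S^a$, so that these edges are covered by $2k$ such trees. Your sketch does not address this.

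Even for even $g$, the decisive checks are missing or miscalibrated. The coating conditions are \emph{equalities} at every vertex and every face, together with $h\ge 1$. This is a transportation problem whose solvability is not automatic. ``Distribute the slack uniformly along the appropriate corners'' leaves open which corners. Also, the maximum degree need not stay $4$ after your modification: in the paper, $u$ and $w$ reach degree $3+a$ or $3+a+b$. These vertices need separate bookkeeping. The paper handles this by adding $r$, $r-b$ or $b$ on specified corners, so that every face and every vertex except the $u^{(i)},w^{(i)}$ gains exactly $r$, and then checking the $u^{(i)},w^{(i)}$ directly. Your tree weights are off by a factor $2$. For $g=8k+4$ one has $\Delta n=4k+3$, and weights $\frac{1}{\Delta n},\frac{2}{\Delta n}$ give total weight $\frac{4(4k+2)}{4k+3}=2\cdot\frac{2g}{g+2}$. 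The correct unit is $\frac{1}{g+2}=\frac{1}{2\Delta n}$. Finally, your fallback claim is exactly what would need proving, and you give no argument for it. It says a densest subgraph can be taken to be the start block plus a prefix of whole blocks. The explicit arborization is how the paper avoids having to prove this.
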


The proof of \Cref{lem:existence_perfect_g_coating_digirth_g} is similar to what we developed in this section and is applied to an extension of the skeletons $(G_\ell^k)_{\ell \ge 1}$. The proof is given in the Appendix~\ref{appendix:proof_lemma_Gklr}.

\begin{theorem}\label{thm:general_lower_bound_high_digirth}
    For $g \ge 12$ there exists an infinite family of planar digraphs $\mathcal{F}_g$ such that for each $H \in \mathcal{F}_g$, $H$ has digirth $g$ and $\fv(H) = \frac{n - \frac{2g}{g+2}}{g- \frac{2g}{g+2}}$.
\end{theorem}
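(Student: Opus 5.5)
The plan is to combine \Cref{lem:existence_perfect_g_coating_digirth_g} with the equality case of \Cref{thm:upper_bound_fvs_coatings}. Fix $g \ge 12$. \Cref{lem:existence_perfect_g_coating_digirth_g} supplies an infinite family of skeletons $(G_\ell)_{\ell\ge1}$, each with a perfect $g$-coating $H_\ell$ of digirth $g$. Take $\mathcal{F}_g=\{H_\ell\}_{\ell\ge1}$. Each $H_\ell$ is a plane digraph, so it is planar, and its digirth is $g$ by construction.

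The equality part of \Cref{thm:upper_bound_fvs_coatings} then gives $\fv(H_\ell)=\frac{n_{H_\ell}-\frac{2g}{g+2}}{g-\frac{2g}{g+2}}$ directly. It is worth recording the chain behind this, so that no hidden hypothesis is missed. First, \Cref{prop:coating_connected_without_loops} shows that $G_\ell$ is connected. Next, \Cref{rem:size_perfect_skeleton} gives $m_{G_\ell}=\alpha n_{G_\ell}-\beta$ with $\alpha=\beta=\frac{2g}{g+2}$. Since $\alpha<2<g$, the condition $\alpha\neq g$ of \Cref{cor:computing_fvs_g_coating_of_digirth_g} holds. That corollary relies on \Cref{thm:fvs_g_coating_digirth_g}, which in turn rests on \Cref{thm:maximal_normal_set_greater_than_fvs}. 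Together they give $\fv(H_\ell)=n_{G_\ell}=\frac{n_{H_\ell}-\beta}{g-\alpha}$.

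It remains to check that the family is infinite as a family of digraphs, meaning the orders $n_{H_\ell}$ are unbounded. By \Cref{prop:coating_properties}\ref{prop:size_coating}, $n_{H_\ell}=g\,n_{G_\ell}-m_{G_\ell}=(g-\alpha)n_{G_\ell}+\beta$. This grows with $n_{G_\ell}$, and the skeletons in the appendix construction grow with $\ell$, as in \Cref{rem:size_Gkl}. Hence there are infinitely many distinct $H_\ell$.

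The real obstacle sits entirely inside \Cref{lem:existence_perfect_g_coating_digirth_g}, which is proved in the appendix and which I take as given. The hard part there is the case of odd $g$, and of even $g$ not of the form $8k+4$. For these one must exhibit skeletons with $a_f=\frac{2g}{g+2}$ exactly, using an explicit arborization as in \Cref{prop:arborization_Gkl}, together with a coating function meeting the perfect conditions of \Cref{def:perfect_g_coating}. Once that lemma is granted, the theorem follows with no further work.
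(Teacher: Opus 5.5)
Your proposal is correct and matches the paper's proof: take the infinite family of perfect $g$-coatings of digirth $g$ given by the appendix lemma, and apply the equality case of the upper bound for coatings. The paper derives that equality case from the edge-count remark for perfect skeletons together with the corollary computing the minimum feedback vertex set of $g$-coatings of digirth $g$, exactly as you unpack it. One small citation fix: the growth of the orders should be read off $|V(G_\ell^{k,r})| = \ell(8k+6+r)+1$ from the appendix, not from the $r=0$ count you cite.
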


\begin{proof}
    For $g \ge 12$, from \Cref{lem:existence_perfect_g_coating_digirth_g} there exists an infinite family $\mathcal{F}_g$ of perfect $g$-coatings of digirth $g$. From the equality case of \Cref{thm:upper_bound_fvs_coatings}: $\forall H \in \mathcal{F}_g, \fv(H) = \frac{n - \frac{2g}{g+2}}{g- \frac{2g}{g+2}}$.
\end{proof}

Using \Cref{thm:general_lower_bound_small_digirth} and \Cref{thm:general_lower_bound_high_digirth}, we derive a lower bound for $\tau_g$ for all $g \ge 6$.

\begin{corollary}\label{cor:lower_bound_tau}
    Let $g \ge 6$. We have $\tau_g \ge \frac{g+2}{g^2}$ if $g \neq 7$ and $\tau_7 \ge \frac{1}{7 - \frac{3}{2}} = \frac{2}{11}$.
\end{corollary}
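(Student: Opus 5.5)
The bound comes from the explicit families constructed above, by letting the number of vertices go to infinity. By definition, $\tau_g \ge \limsup \fv(H)/n_H$ along any infinite family of planar oriented graphs of digirth $g$ with $n_H \to \infty$. So for each $g \ge 6$ it suffices to exhibit such a family with the claimed asymptotic ratio. Every family built in this section consists of coatings, which are plane digraphs. They have no digons, since their digirth is at least $6$, so they are planar oriented graphs and qualify in the definition of $\tau_g$.

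\textbf{Case $g \ge 12$.} Use \Cref{thm:general_lower_bound_high_digirth}. It gives infinitely many $H$ of digirth $g$ with
\[
\fv(H) = \frac{(g+2)n_H - 2g}{g^2}.
\]
The family is infinite and each order occurs only finitely often, so $n_H \to \infty$ along it. Hence $\fv(H)/n_H \to \frac{g+2}{g^2}$.

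\textbf{Case $g \in \{6,8,9,10,11\}$.} Use the last assertion of \Cref{thm:general_lower_bound_small_digirth}. It gives infinitely many $H$ with
\[
\fv(H) = \frac{(g+2)n_H - 4(g-2)}{g^2},
\]
and again the ratio tends to $\frac{g+2}{g^2}$. The only thing to check is the arithmetic for each listed $g$: the pair $(\alpha,\beta)$ in item (i)–(v) should equal $\left(\frac{2g}{g+2}, \frac{4(g-2)}{g+2}\right)$. For example, $g=6$ gives $(\frac{3}{2}, 2)$ and $g=8$ gives $(\frac{8}{5}, \frac{12}{5})$. The paper already records this identity, so nothing new is needed.

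\textbf{Case $g = 7$.} Apply item (i) of \Cref{thm:general_lower_bound_small_digirth} with $g=7$, which is allowed since $7 \ge 6$. It gives
\[
\fv(H) = \frac{n-2}{7-\frac{3}{2}},
\]
so the ratio tends to $\frac{1}{11/2} = \frac{2}{11}$.

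There is no real obstacle here. The only points needing care are that each family is infinite with unbounded order, which is immediate since the skeletons $G_k$ grow with $k$, and the routine identification of constants for each value of $g$.
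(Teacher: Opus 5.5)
Your proposal is correct and follows the paper's proof: you use the high-digirth theorem for $g \ge 12$, and the small-digirth theorem for $g\in\{6,8,9,10,11\}$, where the denominator constant equals $\frac{2g}{g+2}$. For $g=7$ you use item (i), and in each case you let $n\to\infty$ so that $\fv(H)/n$ tends to $\frac{1}{g-\frac{2g}{g+2}}=\frac{g+2}{g^2}$ (resp.\ $\frac{2}{11}$).

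Your extra checks — that the families are oriented, have unbounded order, and that the constants match for each $g$ — are correct details the paper leaves implicit.
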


\begin{proof}
    First observe that for $g \in \{ 6,8,9,10,11 \}$, \Cref{thm:general_lower_bound_small_digirth} gives an infinite family $\mathcal{F}_g$ of planar digraphs of digirth $g$ having $\fv(H) = \frac{n - \beta_g}{g - \frac{2g}{g+2}}$ with $\beta_g$ being a constant as:
    \begin{center}
        \renewcommand{\arraystretch}{1.5}
        \begin{tabular}{|c|c|c|c|c|c|}
        \hline
            $g$ & 6 & 8 & 9 & 10 & 11 \\
            \hline
            $\frac{2g}{g+2}$ & $\frac{3}{2}$ & $\frac{8}{5}$ & $\frac{18}{11}$ & $\frac{5}{3}$ & $\frac{22}{13}$ \\
            \hline
        \end{tabular}
        \renewcommand{\arraystretch}{1}
    \end{center}
    
    Then from \Cref{thm:general_lower_bound_high_digirth} (if $g \ge 12$) or \Cref{thm:general_lower_bound_small_digirth} (if $g = 6,8,9,10,11$) there exists an infinite family $\mathcal{F}_g$ of planar digraphs of digirth $g$ such that every $H \in \mathcal{F}_g$ satisfies $\fv(H) = \frac{n - \beta_g}{g - \frac{2g}{g+2}}$ for a fixed constant $\beta_g$. Then for all $H \in \mathcal{F}_g$:
    \begin{displaymath}
        \tau_g \ge \frac{\fv(H)}{n} \overset{n \to +\infty}{\longrightarrow} \frac{1}{g-\frac{2g}{g+2}} = \frac{g+2}{g^2}
    \end{displaymath}
    For the case $g = 7$, from \Cref{thm:general_lower_bound_small_digirth}\ref{itm:digirth6} there exists an infinite family $\mathcal{F}_g$ of planar digraphs of digirth $7$ such that every $H \in \mathcal{F}_g$ has $\fv(H) = \frac{n - 2}{7 - \frac{3}{2}}$. Then $\tau_7 \ge \frac{1}{7-\frac{3}{2}} = \frac{2}{11}$.
\end{proof}

\clearpage
\section{Conclusion}
To obtain an upper bound on $\fv(G)$ for any planar digraph $G$ with digirth $g\geq 3$, we introduced the concept of a normal set of cycles and established the inequalities $\fv(G)\leq \lvert\mathcal{N}_{\max}(G)\rvert\leq \frac{n-2}{g-2}$. On the other hand, \Cref{thm:upper_bound_FVS} suggests that this upper bound might be improved, since equality is attained only when $G$ is a directed cycle.

Let $\mathcal{N}$ be a normal set of size $k = \fv(G)$ that minimizes $q(\mathcal{N}) = m_\mathcal{N} - n_\mathcal{N}$ (\Cref{thm:maximal_normal_set_greater_than_fvs} ensures that it always exists). The proof of \Cref{thm:upper_bound_FVS} shows that whenever a vertex belongs to more than one cycle of $\mathcal{N}$, it follows that $\fv(G[\mathcal{N}]) < k$. Intuitively, if $v$ is a vertex used in $d$ cycles of $\mathcal{N}$ (so $\deg_{G[\mathcal{N}]}(v) = 2d$), one needs at least $2(d-1)$ segments (paths that cut faces of $G[\mathcal{N}]$) to create a normal set $\mathcal{N}'$ of size $k-1$ from the edges of $G[\mathcal{N}] - v$. One could for instance look at a normal set of cycles of size $k-1$ of $G - v$ and ``superimpose it on $\mathcal{N}$'' to try to show this. Continuing in this way for all vertices used in multiple cycles of $\mathcal{N}$, we believe it should be possible to find $\sum_{v \in V(\mathcal{N})} (\deg_{G[\mathcal{N}]}(v) - 2) = 2\, q(\mathcal{N})$ segments. By the same reasoning as at the end of the proof of \Cref{prop:tightness_upper_bound}, we believe it should be possible to show that each segment contributes an energy of at least $\frac{1}{g}$. We thus conjecture the following.

\begin{conjecture}\label{conj:normal_family_energy}
    Let $G$ be a plane digraph of digirth $g$ and $\mathcal{N}$ a normal set of cycles of $G$ of size $\fv(G)$ that minimizes the quantity $q(\mathcal{N}) = m_\mathcal{N} - n_\mathcal{N}$. Then $\mathbf{E_{tot}}(\mathcal{N}) \ge \frac{2}{g}(m_\mathcal{N} - n_\mathcal{N})$.
\end{conjecture}

The constructions of \Cref{sec:construction_Gkl} and Appendix~\ref{appendix:proof_lemma_Gklr} produce planar digraphs with $\fv(G) = \frac{(g+2)n - 2g}{g^2}$ for $g \ge 12$ (\Cref{thm:general_lower_bound_high_digirth}). \Cref{conj:normal_family_energy} would imply that this lower bound is best possible for these values of $g$:

\begin{proposition}\label{prop:energy_conjecture_imply_tightness}
    If \Cref{conj:normal_family_energy} holds, then $\fv(G) \le \frac{(g+2)n - 2g}{g^2}$ for every plane digraph $G$ of digirth $g$.
\end{proposition}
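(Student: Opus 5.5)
The plan is to derive the bound purely from the energy identity of \Cref{thm:energy}, combined with the inequality of \Cref{conj:normal_family_energy} and two trivial estimates on $m_\mathcal{N}$ and $n_\mathcal{N}$. No structural argument should be needed beyond existence of a suitable normal set.

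First, fix a plane embedding of $G$ and set $k=\fv(G)$. By \Cref{thm:maximal_normal_set_greater_than_fvs}, $|\mathcal{N}_{\max}(G)|\ge k$. Removing cycles from a normal set keeps it normal (every vertex stays alternatingly oriented, or disappears from $G[\mathcal{N}]$). Hence normal sets of size exactly $k$ exist, and we may choose one, $\mathcal{N}$, minimizing $q(\mathcal{N})=m_\mathcal{N}-n_\mathcal{N}$. This is exactly the setting of \Cref{conj:normal_family_energy}, which we assume, so
\begin{displaymath}
  \mathbf{E_{tot}}(\mathcal{N}) \ge \frac{2}{g}\,(m_\mathcal{N}-n_\mathcal{N}).
\end{displaymath}

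Second, we bound the right-hand side from below using only the digirth and the order. Since every cycle of $\mathcal{N}$ has length at least $g$, \Cref{obs:double_counting_G[N]} gives $m_\mathcal{N}=\sum_{C\in\mathcal{N}}|C|\ge gk$. Trivially $n_\mathcal{N}\le n$. Therefore
\begin{displaymath}
  \mathbf{E_{tot}}(\mathcal{N}) \ge \frac{2}{g}(gk-n) = 2k-\frac{2n}{g}.
\end{displaymath}

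Third, we plug this into the identity $(g-2)k = n-2-\mathbf{E_{tot}}(\mathcal{N})$ of \Cref{thm:energy}. This yields $(g-2)k \le n-2-2k+\frac{2n}{g}$, that is $gk \le \frac{(g+2)n}{g}-2$. Multiplying by $g$ gives $g^2 k\le (g+2)n-2g$, which is the claimed bound $\fv(G)\le \frac{(g+2)n-2g}{g^2}$.

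The only step requiring care is the first one. There we must make sure a normal set of size exactly $\fv(G)$ (not merely at least $\fv(G)$) exists, so that the conjecture applies. This follows from the monotonicity of normality under deleting cycles. The remaining steps are a direct computation, so I do not expect a genuine obstacle.
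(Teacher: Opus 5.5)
Your computation in the second and third steps is correct. It is the paper's argument in a more direct form: the paper rewrites $\mathbf{E_{tot}}(\mathcal{N})$ as $(n-n_\mathcal{N})-(m_\mathcal{N}-n_\mathcal{N})+\sum_{C\in\mathcal{N}}(\lvert C\rvert-g)+2k-2$, adds $\frac{g}{2}$ times the conjectured inequality, and drops $n-n_\mathcal{N}$ and $\sum_C(\lvert C\rvert-g)$. Those are exactly your estimates $n_\mathcal{N}\le n$ and $m_\mathcal{N}\ge gk$, so both routes reach $g^2k\le (g+2)n-2g$.

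The error is in your first step: it is not true that deleting cycles from a normal set keeps it normal. Deleting a cycle $C$ keeps a vertex $v$ alternating if and only if the two half-arcs of $C$ at $v$ are consecutive in the cyclic order around $v$; otherwise two half-arcs of the same direction become neighbours. For a concrete example, join two vertices $v,w$ by six internally disjoint directed paths $P_0,\dots,P_5$ of length at least $2$. Place them in this clockwise order around $v$, with $P_i$ entering $v$ for even $i$ and leaving $v$ for odd $i$. Then $\{P_0\cup P_3,\,P_1\cup P_4,\,P_2\cup P_5\}$ is normal, since $w$ sees the reversed sequence, which still alternates. But after deleting $P_0\cup P_3$, the half-arcs at $v$ read out, in, in, out, so the two remaining cycles do not form a normal set. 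Since normality depends only on the arc set $G[\mathcal{N}]$, no re-decomposition of those arcs can repair this.

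The fact you need is nevertheless true, and the paper supplies it as \Cref{lem:finding_a_weaker_normal_set}. First make $\mathcal{N}_{\max}(G)$ laminar with \Cref{prop:operation_lam}; this leaves $G[\mathcal{N}]$ unchanged, hence keeps it normal. Then delete a cycle $C$ that is inclusion-minimal among the non-isolated cycles. Laminarity and minimality prevent any other cycle through a vertex of $C$ from entering $\mathring{C}$. So the two half-arcs of $C$ at each of its vertices are consecutive, and the remaining set stays normal. Iterating this $\lvert\mathcal{N}_{\max}(G)\rvert-\fv(G)$ times gives a normal set of size exactly $\fv(G)$, and among these you can pick one minimizing $q$. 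With this substitution your proof is complete.
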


\begin{proof}
    Let $\mathcal{N}$ be a normal set of cycles of size $k = \fv(G)$ such that $\mathbf{E_{tot}}(\mathcal{N}) \ge \frac{2}{g}(m_\mathcal{N} - n_\mathcal{N})$. From \Cref{thm:energy}:
    \begin{displaymath}
        \begin{split}
            \mathbf{E_{tot}}(\mathcal{N}) &= n-2 - (g-2) \lvert \mathcal{N} \rvert \\
            &= (n-n_\mathcal{N}) + (n_\mathcal{N} - m_\mathcal{N}) + m_\mathcal{N} - g \lvert \mathcal{N} \rvert + 2 \lvert \mathcal{N} \rvert - 2 \\
            &= (n - n_\mathcal{N}) + (n_\mathcal{N} - m_\mathcal{N}) + \sum_{C \in \mathcal{N}} ( \lvert C \rvert - g) + 2k - 2
        \end{split}
    \end{displaymath}
    Thus, by hypothesis:
    \begin{displaymath}
        \mathbf{E_{tot}}(\mathcal{N}) + \frac{g}{2} \mathbf{E_{tot}}(\mathcal{N}) \ge (n- n_\mathcal{N}) + \sum_{C \in \mathcal{N}} (\lvert C \rvert - g) + 2k - 2 \ge 2k - 2
    \end{displaymath}
    Then $\mathbf{E_{tot}}(\mathcal{N}) \ge \frac{4k-4}{g+2}$ and therefore $k(g-2) = n-2 - \mathbf{E_{tot}}(\mathcal{N}) \le n-2 - \frac{4k-4}{g+2}$.
    
    Hence $k (g-2 + \frac{4}{g+2}) \le n-2 + \frac{4}{g+2}$, which leads to $k \le \frac{(g+2)n - 2g}{g^2}$.
\end{proof}

Another approach to improving the upper bound on the minimum feedback vertex set would be to bound $q(\mathcal{N})$ from above. We outline our ideas in the following propositions.

\begin{proposition}
    Let $G$ be a plane digraph of digirth $g$ that admits a normal set of cycles $\mathcal{N}$ of size $k=\fv(G)$ such that $q(\mathcal{N}) \le \frac{2g}{g+2}(k-1)$. Then $\fv(G) \le \frac{(g+2)n - 2g}{g^2}$.
\end{proposition}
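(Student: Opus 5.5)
The plan is to reuse the energy bookkeeping of \Cref{thm:energy}, exactly as in the proof of \Cref{prop:energy_conjecture_imply_tightness}. The only change is that the hypothesis of \Cref{conj:normal_family_energy} is replaced by the assumed bound on $q(\mathcal{N})$. Let $\mathcal{N}$ be the given normal set with $\lvert\mathcal{N}\rvert = k = \fv(G)$. By \Cref{thm:energy}, $(g-2)k = n-2-\mathbf{E_{tot}}(\mathcal{N})$, so it suffices to show
\[
\mathbf{E_{tot}}(\mathcal{N}) \ge \frac{4(k-1)}{g+2}.
\]

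First, rewrite $n-2-(g-2)k$ as in the proof of \Cref{prop:energy_conjecture_imply_tightness}, using $m_\mathcal{N} = \sum_{C\in\mathcal{N}}\lvert C\rvert$ from \Cref{obs:double_counting_G[N]}:
\[
\mathbf{E_{tot}}(\mathcal{N}) = (n-n_\mathcal{N}) - q(\mathcal{N}) + \sum_{C\in\mathcal{N}}(\lvert C\rvert - g) + 2k-2 .
\]
The terms $n-n_\mathcal{N}$ and $\sum_{C}(\lvert C\rvert-g)$ are non-negative, since $g$ is the digirth. Hence $\mathbf{E_{tot}}(\mathcal{N}) \ge 2k-2-q(\mathcal{N})$.

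Next, insert the hypothesis $q(\mathcal{N}) \le \frac{2g}{g+2}(k-1)$. This gives
\[
\mathbf{E_{tot}}(\mathcal{N}) \ge (k-1)\Bigl(2-\frac{2g}{g+2}\Bigr) = \frac{4(k-1)}{g+2}.
\]

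Finally, substitute into \Cref{thm:energy}: $(g-2)k \le n-2-\frac{4(k-1)}{g+2}$. Moving the $k$-terms to the left yields $k\bigl(g-2+\frac{4}{g+2}\bigr) = \frac{g^2}{g+2}\,k$. The right-hand side becomes $n-2+\frac{4}{g+2} = \frac{(g+2)n-2g}{g+2}$. Therefore $k \le \frac{(g+2)n-2g}{g^2}$, which is the claimed bound.

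There is no real obstacle here. The only point needing care is the sign bookkeeping in the rewritten energy identity: the term $n_\mathcal{N}-m_\mathcal{N}$ equals $-q(\mathcal{N})$. It is the only term that can be negative, and it is exactly the one controlled by the hypothesis.
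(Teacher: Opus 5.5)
Your proof is correct and matches the paper's: both reduce to $\mathbf{E_{tot}}(\mathcal{N}) \ge 2k-2-q(\mathcal{N})$ and then finish with the same algebra as in \Cref{prop:energy_conjecture_imply_tightness}. The only difference is cosmetic: the paper gets this inequality by bounding $\mathbf{E_2}(\mathcal{N})$ with Euler's formula on $G[\mathcal{N}]$ and adding $\mathbf{E_4}(\mathcal{N})$, whereas you read it off the rewritten energy identity, and in both cases the discarded slack is exactly $(n-n_\mathcal{N})+\sum_{C\in\mathcal{N}}(\lvert C\rvert-g)$.
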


\begin{proof}
    Using the definition of $\mathbf{E_2}(\mathcal{N})$ and Euler's formula applied to $G[\mathcal{N}]$, we obtain:
    \begin{displaymath}
        \begin{split}
            \mathbf{E_2}(\mathcal{N}) &= \frac{1}{g} \sum_{F \in F(\mathcal{N})} (\ell_F - g) = \frac{1}{g} ( 2 \sum_{C \in \mathcal{N}} \lvert C \rvert - g \cdot f_\mathcal{N}) \\
            &\ge \frac{1}{g} (2gk - g(1 + c_\mathcal{N} + q(\mathcal{N}))) = 2k - c_\mathcal{N} -1 -q(\mathcal{N})
        \end{split}
    \end{displaymath}
    Then, using the hypothesis on $q(\mathcal{N})$, it follows that
    \begin{displaymath}
        \mathbf{E_{tot}}(\mathcal{N}) \ge \mathbf{E_2}(\mathcal{N}) + \mathbf{E_4}(\mathcal{N}) \ge 2k-2-q(\mathcal{N}) \ge \frac{4k-4}{g+2}
    \end{displaymath}
    The end of the proof is identical to the end of the proof of \Cref{prop:energy_conjecture_imply_tightness}.
\end{proof}

For small values of $g$, we believe it should be possible to impose an additional constraint on normal sets. We say that a normal set of cycles $\mathcal{N}$ has \emph{low density} if $2m_\mathcal{N} \le 3n_\mathcal{N}$ or equivalently $q(\mathcal{N}) \le \frac{m_\mathcal{N}}{3}$. We conjecture that the following holds.

\begin{conjecture}\label{conj:low_density}
    Any plane digraph $G$ admits a normal set of cycles with low density of size $\fv(G)$.
\end{conjecture}

If true, \Cref{conj:low_density} would imply another upper bound for the minimum feedback vertex sets, improving \Cref{thm:upper_bound_FVS} for small values of $g$:
\begin{proposition}\label{prop:low_density_fvs}
    Let $G$ be a plane digraph of digirth $g$ that admits a normal set of cycles with low density of size $\fv(G)$. Then $\fv(G) \le \frac{3n}{2g}$.
\end{proposition}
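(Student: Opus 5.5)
Let $\mathcal{N}$ be a normal set of cycles with low density and $\lvert \mathcal{N} \rvert = k = \fv(G)$, as the hypothesis provides. The plan is to count the arcs $m_\mathcal{N}$ of $G[\mathcal{N}]$ in two ways.

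First, the cycles of $\mathcal{N}$ are arc-disjoint and each has length at least $g$. By \Cref{obs:double_counting_G[N]} this gives
\begin{displaymath}
  m_\mathcal{N} = \sum_{C \in \mathcal{N}} \lvert C \rvert \ge g k .
\end{displaymath}

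Second, low density means $2 m_\mathcal{N} \le 3 n_\mathcal{N}$. Since $n_\mathcal{N} \le n$, we get $m_\mathcal{N} \le \frac{3n}{2}$.

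Combining the two estimates yields $g k \le \frac{3n}{2}$, that is, $\fv(G) = k \le \frac{3n}{2g}$. This is the claimed bound.

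There is no real obstacle here: the proposition is a direct counting consequence of the low-density hypothesis. The only step to check is that the lower bound $m_\mathcal{N} \ge gk$ is legitimate. It is, because a normal set is in particular a set of arc-disjoint directed cycles, and every directed cycle of $G$ has length at least the digirth $g$.

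The real difficulty lies in \Cref{conj:low_density}, which would supply the hypothesis, and that is not part of this statement. As a sanity check, $\frac{3n}{2g} < \frac{n-2}{g-2}$ holds roughly when $3(g-2) < 2g$, i.e. $g < 6$ asymptotically. This matches the remark that the bound would improve \Cref{thm:upper_bound_FVS} for small $g$.
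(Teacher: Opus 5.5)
Your proposal is correct and is the paper's proof: both chain $g\cdot\fv(G) \le \sum_{C\in\mathcal{N}}\lvert C\rvert = m_\mathcal{N} \le \frac{3}{2}n_\mathcal{N} \le \frac{3}{2}n$. The first inequality uses arc-disjointness together with the digirth, and the second uses the low-density hypothesis.
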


\begin{proof}
    Let $\mathcal{N}$ be a normal set of cycles with low density of size $\fv(G)$. Then
    \begin{displaymath}
        g \cdot \fv(G) \le \sum_{C \in \mathcal{N}} \lvert C \rvert  = m_\mathcal{N} \le \frac{3}{2} n_\mathcal{N} \le \frac{3}{2}n \qquad \text{and hence} \qquad \fv(G) \le \frac{3n}{2g}.
    \end{displaymath}
\end{proof}

Interestingly, the authors of~\cite{ELM17} also observed that the bound $\fv(G)\leq \frac{3n}{2g}$ would be a corollary of a conjecture of Goemans and Williamson~\cite{GW98}. That conjecture asserts that $\fv(G) \le \frac{3}{2}\fv^*(G)$ for every planar digraph $G$, where $\fv^*(G)$ is the fractional relaxation of $\fv(G)$: the minimum value of $\sum_{v \in V(G)} w(v)$ over all weight functions $w \colon V(G) \to [0,1]$ such that $\sum_{v \in C} w(v) \ge 1$ for every directed cycle $C$ of $G$.

For small values of $g$, \Cref{conj:low_density} together with \Cref{prop:low_density_fvs} would directly answer several open questions in the affirmative:
\begin{itemize}
    \item For $g=2$: $\fv(G) \le \frac{3n}{4}$, which would imply the existence of an independent set of size $\frac{n}{4}$ in every undirected planar graph, without using the Four-Color Theorem.
    \item For $g=3$: $\fv(G) \le \frac{n}{2}$, which is Albertson's question (weakening of \Cref{conj:albertson-berman}).
    \item For $g=4$: $\fv(G) \le \frac{3n}{8}$.
    \item For $g=5$: $\fv(G) \le \frac{3n}{10}$.
\end{itemize}
Note that for $g=4$ and $g=5$, the same upper bounds are also conjectured for undirected planar graphs of girth $g$ (see~\cite{AW87,KLS10}). For $g \ge 6$, the bound given by~\Cref{thm:upper_bound_FVS} is better than the one implied by~\Cref{conj:low_density}.

To prove \Cref{conj:low_density}, we note that the strategy used to show \Cref{thm:maximal_normal_set_greater_than_fvs} does not seem straightforward, since the notion of essential vertex and \Cref{lem:essential_vertex} do not readily adapt to normal sets of low density. Specifically, there exist plane digraphs such that no vertex is contained in every maximum normal set of low density. For example, in the digraph of \Cref{subfig:stacked-octahedron}, the size of a maximum normal set of cycles with low density is $4$, and no vertex belongs to every normal set of cycles with low density of size $4$.

To obtain our lower bounds, we developed and used the structure of coatings and skeletons to construct digraphs of digirth $g \ge 12$ with a minimum feedback vertex set of size $\frac{n- \frac{2g}{g+2}}{g-\frac{2g}{g+2}}$ (\Cref{thm:general_lower_bound_high_digirth}), approaching our upper bound of $\frac{n-2}{g-2}$ and proving that $\tau_g \ge \frac{g+2}{g^2}$ for these values of $g$. We also gave constructions for digirth $g \in \{6,8,9,10,11\}$ reaching a similar bound (\Cref{thm:general_lower_bound_small_digirth}) which imply that $\tau_g \ge \frac{g+2}{g^2}$. For $g=7$, we currently only have $\tau_7 \ge \frac{2}{11}$, which is slightly lower than $\frac{9}{49}$ (the value of $\frac{g+2}{g^2}$ at $g=7$). 

However, the concept of coatings and skeletons cannot be used for lower values of digirth. Indeed, \Cref{prop:bound_ratio_for_low_digirth} states that every coating $H$ of digirth $g$ satisfies $\fv(H) \le \frac{4n}{3g}$, which implies that the construction given in \Cref{appl:skeletonCk} is the best we can obtain using coatings for $g = 4$. \Cref{prop:bound_ratio_for_low_digirth} also shows that $\fv(H) \le \frac{n}{g - \frac{1}{2} \left\lfloor \frac{g}{2} \right \rfloor}$ when $H$ is a $g$-coating. This latter bound shows that the coatings given in \Cref{appl:skeletonCk} and \Cref{thm:general_lower_bound_small_digirth}\ref{itm:digirth6} are asymptotically the best $g$-coatings we can construct for $g = 5$ and $g = 7$, respectively. It remains open whether one can improve the lower bounds on $\tau_5$ and $\tau_7$ using coatings that are not $g$-coatings. We believe that solving this question would require a finer understanding of the value of the minimum feedback vertex set of a coating than the lower bound $\fv(H) \ge \max(n_G,f_G)$ of \Cref{rem:lower_bound_max_nG_fG}. It is also natural to ask for which values of $g \le 7$ the lower bound $\tau_g \ge \frac{g+2}{g^2}$ still holds. Since this inequality holds for $g=6$, it might also hold for $g=7$, although it is unclear whether this can be shown using coatings. Finally, for $g = 3$, $\tau_3 \ge \frac{5}{9}$ would be surprising, since it would disprove \Cref{conj:albertson-berman} by a wide margin and would also disprove \Cref{conj:NL,conj:low_density}.

It would also be interesting to look for digraphs with fixed digirth and genus. The notions of coatings and skeletons generalize naturally to graphs embedded on surfaces of higher genus. Let $G$ be an undirected graph embedded on an orientable surface $S$ of genus $\gamma$. Then one can define coatings for this skeleton in the same way as in \Cref{def:coating}. A coating $H$ of $G$ would then be a directed graph embedded on $S$. Most of the results of \Cref{sec:skeleton_and_coating} remain valid. In particular, \Cref{cor:fvs_coating} ensures that $\fv(H) \ge n_G$ and \Cref{rem:edge_contraction_skeleton} provides a useful inductive structure for these objects (deleting a link vertex in $H$ is the same as contracting an edge in $G$). The proof of \Cref{thm:digirth_perfect_coating}, which shows that a perfect $g$-coating of $G$ has digirth $g$, should still hold provided we adapt the definition of fractional arboricity (\Cref{def:fractional_arboricity}) to incorporate the genus $\gamma$ as follows: $a_f^{(\gamma)}(G) = \underset{\substack{G' \text{ subgraph of } G \\ \text{with at least 2 vertices}}}{\max} \frac{m_{G'}}{n_{G'} - (1 - \gamma)}$. In this case, perfect $g$-coatings would have $\fv(H) = \frac{n - \frac{2g}{g+2}(1 - \gamma)}{g - \frac{2g}{g+2}}$. For the upper bound, the concept of normal sets of cycles should also be extended to graphs of genus $\gamma$ to show that $\fv(G) \le \frac{n-2(1-\gamma)}{g-2}$. Define $\tau_g^{(\gamma)}$ analogously to $\tau_g$, but for digraphs of fixed genus $\gamma$. Interestingly, these two results would imply the same lower and upper bounds for $\tau_g^{(\gamma)}$ as in the planar case: $\frac{g+2}{g^2} \le \tau_g^{(\gamma)} \le \frac{1}{g-2}$. This observation raises the question of whether $\tau_g^{(\gamma)} = \tau_g$ for every genus $\gamma$.

\bibliographystyle{plain}
\bibliography{biblio}

\clearpage
\section*{Appendix}
\renewcommand{\thesubsection}{\Alph{subsection}}
\section{Proof of \Cref{thm:general_lower_bound_small_digirth}}\label{appendix:small_digirth}

\subsection{\Cref{thm:general_lower_bound_small_digirth}\ref{itm:digirth9}: case $g=9$}\label{subsubsec:coating_function_digirth_9}

Consider the recursive coating family $(H_k)_{k \ge 0}$ of the skeleton ${(G_k)}_{k \ge 0}$ defined in \Cref{fig:coating_function_digirth_9}. The coating function is defined in blue in the figure.

For every integer $k \ge 0$, \Cref{fig:coating_function_digirth_9} gives a coating function on a skeleton graph $G_k$ that defines a 9-coating $H_k$. 
\begin{figure}[htbp]
    \centering
    \includegraphics[width = 0.8\textwidth]{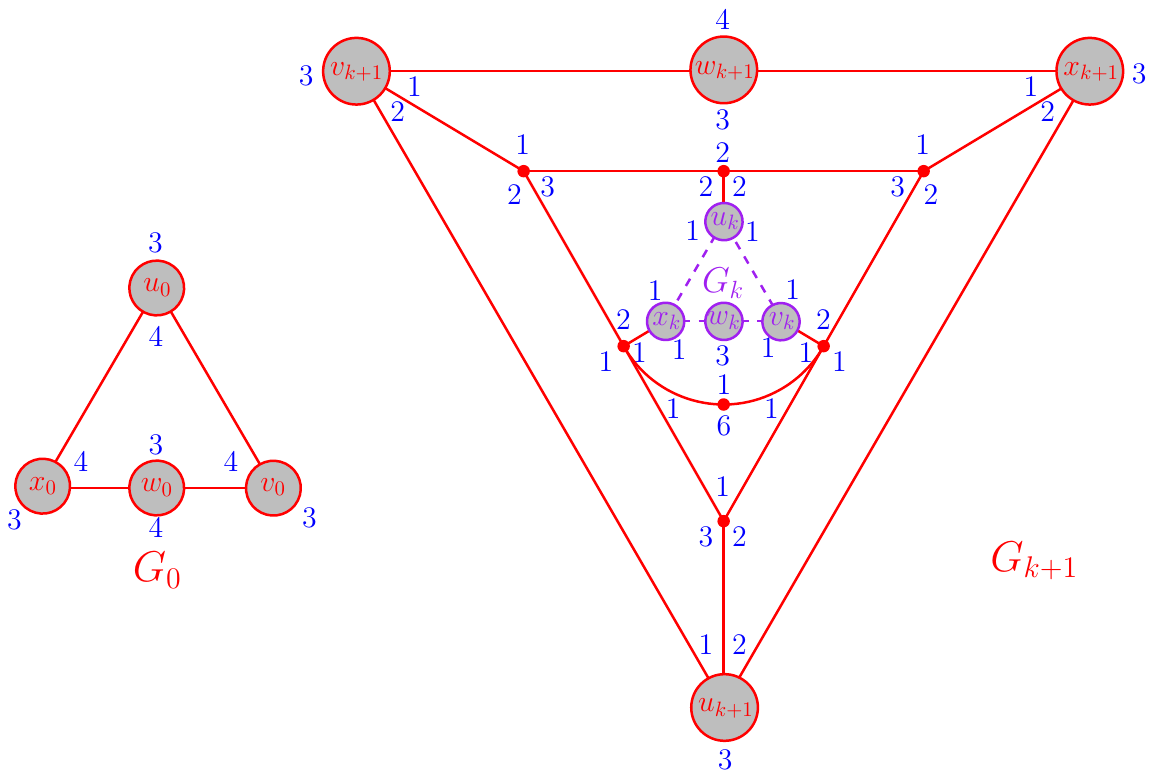}
    \caption{The family $(G_k)_{k \ge 0}$ of \Cref{subsubsec:coating_function_digirth_9} and its coating function in blue.}
    \label{fig:coating_function_digirth_9}
\end{figure}

For all $k \ge 0$, $H_k$ is a 9-coating. To prove that $(H_k)_{k \ge 0}$ is a family of coatings of digirth 9, we use \Cref{lem:digirth_recursive_coating}.

\begin{observation*}
    \begin{enumerate}
        \item $H_0$ and $H_1$ have digirth 9.
        \item For two link vertices $y,z$ associated with edges of the cycle $u_0 v_0 w_0 x_0$, we have $d_{H_0}(y,z) = d_{H_1}(y,z)$.
    \end{enumerate}
\end{observation*}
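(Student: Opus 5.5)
The observation has two items, for the $g=9$ recursive family of the appendix figure. First, $H_0$ and $H_1$ have digirth $9$. Second, for link vertices $y,z$ on edges of $u_0v_0w_0x_0$, $d_{H_0}(y,z)=d_{H_1}(y,z)$. Both are finite statements about two explicit digraphs, so the plan is a structured verification rather than an abstract argument.

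\textbf{Digirth.} The lower bound comes from the same corner-sum bookkeeping used in \Cref{thm:digirth_perfect_coating}. A directed cycle $D$ of a coating is determined by the corners it visits and the link vertices it passes through, and
\[
\lvert D\rvert = x + \sum_{c\in\mathcal K(D)} h(c),
\]
where $x$ is the number of link vertices on $D$. Take $G'=G\cap D^-$, which is connected by the argument in the proof of \Cref{thm:digirth_perfect_coating}. Then $\lvert D\rvert$ equals the sum of $\lvert C_v\rvert$ over $v\in V(G')$, minus the face sums of the inner faces of $G'$, minus $2m_{G'}$.

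Since $H_k$ is a $9$-coating, each $\lvert C_v\rvert=9$. The inner faces of $G'$ are faces of $G$, with the face sums read off the figure. So the check reduces to showing that, for every connected subgraph $G'$ of $G_0$ or $G_1$ cut out by a cycle $D$, this linear expression is at least $9$. I would run through these regions by hand (a few dozen for the $G_1$ skeleton, which has $4+11=15$ vertices), or confirm digirth directly with a BFS shortest-cycle computation on the explicit coatings. Upper bound: the vertex cycles $C_v$ have length exactly $9$, so the digirth is exactly $9$.

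\textbf{Distances.} One direction is immediate: $H_0$ embeds in $H_1$, with the copy of $G_1$ attached along $C_{in}$. Hence $d_{H_1}(y,z)\le d_{H_0}(y,z)$.

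For the reverse inequality, consider a shortest $y\to z$ path in $H_1$ that leaves the $H_0$ part. It enters the new copy at a link vertex on $C_{in}$ and returns at another one. So it suffices to show that every path through the new copy between two such link vertices $a,b$ has length at least $d_{H_0}(a,b)$. Then iterated replacement gives a path of no greater length inside $H_0$.

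I would tabulate the $16$ values $d_{H_0}(a,b)$, as done for $g=6$. Then I would compute the corresponding distances through the interior of the $G_1$ copy and compare entrywise.

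\textbf{Main obstacle.} I expect the bulk of the work to be the digirth check for $H_1$, since cycles can wind across both layers. The region reduction above keeps it manageable. Failing that, the authors' route of machine verification on explicit coatings of a few dozen vertices is fully rigorous.
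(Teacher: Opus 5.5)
Your proposal is correct and is in substance the paper's own argument. The paper simply states that both items are finite checks on the explicit digraphs $H_0$ and $H_1$, to be done by hand or by computer; your region-wise corner-sum reduction (borrowed from the proof of \Cref{thm:digirth_perfect_coating}) and your excursion-replacement argument for the distances are an organized way of carrying out that same verification. One small precision: the corner-sum expression is in general only a lower bound for $\lvert D\rvert$, not an equality, because a cycle may pass $a_u\to s\to b_v$ through a link vertex $s$ instead of using the link arc $a_ub_v$ (the displayed equality in that proof has the same slip), but a lower bound is exactly what you need.
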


These observations can be verified by hand or with a computer. Hence we can apply \Cref{lem:digirth_recursive_coating} and $(H_k)_{k \ge 0}$ is a family of coatings of digirth 9.

The skeleton $G_k$ has $n_{G_k} = 4 + 11k$ vertices and $m_{G_k} = 4 + 18k = \frac{18}{11} n_{G_k} - \frac{28}{11}$ edges. As $H_k$ is a 9-coating of digirth 9, from \Cref{cor:computing_fvs_g_coating_of_digirth_g} it follows that $\fv(H_k) = \frac{n_{H_k} - \frac{28}{11}}{9 - \frac{18}{11}}$.

Using \Cref{thm:ratio_extension_to_higher_digirth}, one can extend this construction for every digirth $g \ge 9$ and build an infinite family $\mathcal{F}_g$ of digraphs of digirth $g$ such that every $H \in \mathcal{F}_g$ satisfies $\fv(H) = \frac{n - \frac{28}{11}}{g - \frac{18}{11}}$.

\subsection{\Cref{thm:general_lower_bound_small_digirth}\ref{itm:digirth10}: case $g=10$}\label{subsubsec:coating_function_digirth_10}

Consider the recursive coating family $(H_k)_{k \ge 0}$ of the skeleton ${(G_k)}_{k \ge 0}$ defined in \Cref{fig:coating_function_digirth_10}. The coating function is defined in blue in the figure.

For every integer $k \ge 0$, \Cref{fig:coating_function_digirth_10} gives a coating function on a skeleton graph $G_k$ that defines a 10-coating $H_k$. 
\begin{figure}[htbp]
    \centering
    \includegraphics[width = 0.8\textwidth]{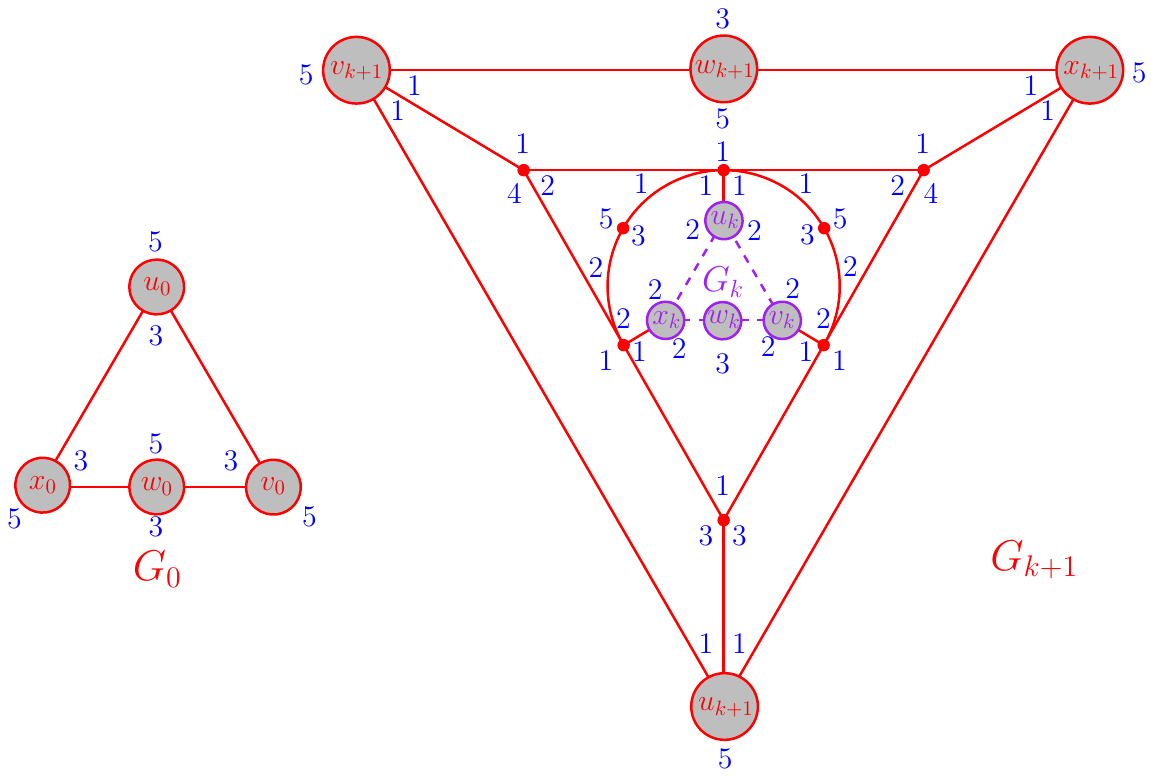}
    \caption{The family $(G_k)_{k \ge 0}$ of \Cref{subsubsec:coating_function_digirth_10} and its coating function in blue.}
    \label{fig:coating_function_digirth_10}
\end{figure}

For all $k \ge 0$, $H_k$ is a 10-coating. To prove that $(H_k)_{k \ge 0}$ is a family of coatings of digirth 10, we use \Cref{lem:digirth_recursive_coating}.

\begin{observation*}
    \begin{enumerate}
        \item $H_0$ and $H_1$ have digirth 10.
        \item For two link vertices $y,z$ associated with edges of the cycle $u_0 v_0 w_0 x_0$, we have $d_{H_0}(y,z) = d_{H_1}(y,z)$.
    \end{enumerate}
\end{observation*}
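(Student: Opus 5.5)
The statement to address is the observation for the case $g=10$: (1) $H_0$ and $H_1$ have digirth $10$; (2) for any two link vertices $y,z$ associated with edges of $u_0v_0w_0x_0$, we have $d_{H_0}(y,z)=d_{H_1}(y,z)$. My plan is to follow the scheme of the $g=6$ and $g=8$ cases and reduce both items to finite checks on $H_0$ and $H_1$. These are explicit small digraphs determined by the coating function in \Cref{fig:coating_function_digirth_10}.

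\textbf{Item (1).} I would not enumerate all cycles by hand. Instead I would use the corner structure: every directed cycle $D$ of a coating visits, for each corner it enters, all $h(c)$ vertices of that corner. Hence $\lvert D\rvert=x+\sum_{c\in\mathcal{K}(D)}h(c)$, where $x$ is the number of link vertices on $D$.

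So it suffices to list the closed corner-walks on the skeleton $G_0$ (a $4$-cycle) and on $G_1$ that correspond to directed cycles. At each link vertex such a walk either continues along the cycle $C_v$ or switches through the link vertex or a link arc. I would then check that each such walk has weight at least $10$.

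First, the facial and vertex cycles give exactly the conditions of \Cref{obs:necessary_condition_digirth_coating}. These are immediate from the figure, since $H_k$ is a $10$-coating and each face sum is at least $10$.

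The remaining cycles cross several faces. For them I would follow the computation in the proof of \Cref{thm:digirth_perfect_coating}: take the region $D^-$ to the right of $D$ and the subskeleton $G'=G\cap D^-$. This expresses $\lvert D\rvert$ through vertex and face sums of $G'$, namely
\[
\lvert D\rvert=\sum_{v\in V(G')}\lvert C_v\rvert-\sum_{f\neq f_{ext}}\lvert C_f\rvert-2m_{G'}.
\]
It then reduces the check to verifying this inequality over the finitely many connected subgraphs $G'$ of $G_1$. Because $H_1$ has only a few dozen vertices, I would confirm the result by a BFS shortest-cycle computation from every vertex.

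\textbf{Item (2).} For each of the $16$ ordered pairs $(y,z)$ of link vertices on $u_0v_0w_0x_0$, I would run BFS in $H_0$ and in $H_1$ and tabulate the distances, as was done for $g=6$.

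The inequality $d_{H_1}\le d_{H_0}$ is easy in one direction: the interior of $C_{in}$ in $H_1$ reproduces the corners of $H_0$ on that side, so $H_0$-paths lift to $H_1$. The content is the reverse inequality. Any shortcut in $H_1$ from $y$ to $z$ must pass through the new copy of $G_1$, leaving and re-entering through link vertices on $u_0v_0w_0x_0$. Its length therefore decomposes into segments whose lengths are bounded below by the face sums of the faces they traverse.

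\textbf{Main obstacle.} The hardest part is item (1) for $H_1$: ruling out short cycles that weave through several faces of $G_1$. For $g=10$ these are not controlled by the perfect-coating argument, since $G_1$ here is not perfect. I would handle this step by exhaustive computer search, and cross-check the few minimal candidate subgraphs $G'$ by hand.
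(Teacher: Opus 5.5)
Your proposal matches the paper's proof, which consists only of the assertion that both items can be checked by hand or by computer on the explicit graphs $H_0$ ($36$ vertices) and $H_1$ ($136$ vertices, so rather more than a few dozen). Your BFS shortest-cycle computation and the tabulation of the $16$ link-vertex distances are therefore exactly the intended argument; the corner-walk and $D^-$ reductions are optional cross-checks, and the side arguments you sketch for item (2) are not needed. As stated, those side arguments are also incomplete: the lifting of $H_0$-paths to $H_1$ must also handle paths using the annulus-side corners, which works because each $C_u$ has length $10$ in both graphs. So rely on the explicit distance table.
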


These observations can be verified by hand or with a computer. Hence we can apply \Cref{lem:digirth_recursive_coating} and $(H_k)_{k \ge 0}$ is a family of coatings of digirth 10.

The skeleton $G_k$ has $n_{G_k} = 4 + 12k$ vertices and $m_{G_k} = 4 + 20k = \frac{5}{3} n_{G_k} - \frac{8}{3}$ edges. As $H_k$ is a 10-coating of digirth 10, from \Cref{cor:computing_fvs_g_coating_of_digirth_g} it follows that $\fv(H_k) = \frac{n_{H_k} - \frac{8}{3}}{10 - \frac{5}{3}}$.

Using \Cref{thm:ratio_extension_to_higher_digirth}, one can extend this construction for every digirth $g \ge 10$ and build an infinite family $\mathcal{F}_g$ of digraphs of digirth $g$ such that every $H \in \mathcal{F}_g$ satisfies $\fv(H) = \frac{n - \frac{8}{3}}{g - \frac{5}{3}}$.

\subsection{\Cref{thm:general_lower_bound_small_digirth}\ref{itm:digirth11}: case $g=11$}\label{subsubsec:coating_function_digirth_11}

Consider the recursive coating family $(H_k)_{k \ge 0}$ of the skeleton ${(G_k)}_{k \ge 0}$ defined in \Cref{fig:coating_function_digirth_11}. The coating function is defined in blue in the figure.

For every integer $k \ge 0$, \Cref{fig:coating_function_digirth_11} gives a coating function on a skeleton graph $G_k$ that defines an 11-coating $H_k$. 
\begin{figure}[htbp]
    \centering
    \includegraphics[width = 0.8\textwidth]{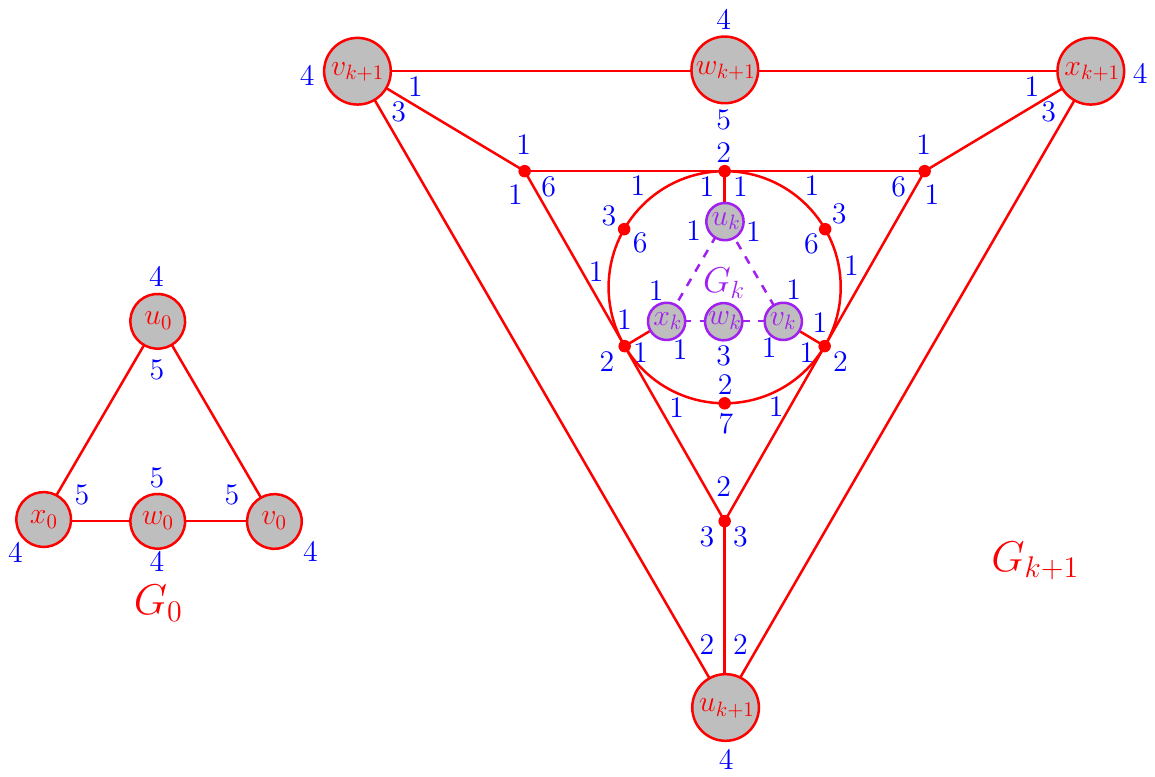}
    \caption{The family $(G_k)_{k \ge 0}$ of \Cref{subsubsec:coating_function_digirth_11} and its coating function in blue.}
    \label{fig:coating_function_digirth_11}
\end{figure}

For all $k \ge 0$, $H_k$ is an 11-coating. To prove that $(H_k)_{k \ge 0}$ is a family of coatings of digirth 11, we use \Cref{lem:digirth_recursive_coating}.

\begin{observation*}
    \begin{enumerate}
        \item $H_0$ and $H_1$ have digirth 11.
        \item For two link vertices $y,z$ associated with edges of the cycle $u_0 v_0 w_0 x_0$, we have $d_{H_0}(y,z) = d_{H_1}(y,z)$.
    \end{enumerate}
\end{observation*}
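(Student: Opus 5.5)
The statement is the observation for the case $g=11$ of \Cref{thm:general_lower_bound_small_digirth}\ref{itm:digirth11}. It has two parts: $H_0$ and $H_1$ have digirth $11$, and distances between link vertices on $C_{in}=u_0v_0w_0x_0$ are the same in $H_0$ and $H_1$. As in the cases $g=6,8,9,10$, this is a finite verification on two explicit digraphs given by the coating function of \Cref{fig:coating_function_digirth_11}. I would carry it out as a structured hand check backed by a computer check, and I would try to replace brute force with the structural tools of \Cref{sec:skeleton_and_coating} where possible.

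\textbf{Digirth of $H_0$.} $G_0$ is the undirected $4$-cycle, so $H_0$ is an $11$-coating of $C_4$. Its directed cycles are of three kinds: the four cycles $C_v$, the two face cycles $C_f$ (inner and outer), and cycles that use link arcs. The cycles $C_v$ have length exactly $11$ by the $11$-coating condition of \Cref{rem:sum_coating_function}. For the face cycles, \Cref{obs:necessary_condition_digirth_coating} requires that the coating-function values on each face of $C_4$ sum to at least $11$; I would read this off the figure. Every remaining cycle uses link arcs. Each such cycle corresponds to a closed corner sequence $\mathcal{K}(D)$ as in the proof of \Cref{thm:digirth_perfect_coating}, and its length is $x+\sum_{c\in\mathcal{K}(D)}h(c)$, where $x$ counts link vertices. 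On $C_4$ there are only finitely many corner patterns, so these can be enumerated.

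\textbf{Digirth of $H_1$.} The clean route is to show that every directed cycle $D$ satisfies $\lvert D\rvert\ge 11$ via the counting identity from the proof of \Cref{thm:digirth_perfect_coating}. That identity computes $\lvert D\rvert$ from $G'=G\cap D^-$: it is $\sum_{v\in V(G')}\lvert C_v\rvert$ minus the face-cycle lengths inside $G'$ minus $2m_{G'}$. With $\lvert C_v\rvert=11$ and face lengths read from the figure, this reduces the claim to an inequality on connected subgraphs $G'$ of $G_1$ that are closed regions. The coating is not perfect (the face sums exceed $11$), so the identity only gives a lower bound. I would enumerate the region-closed connected subgraphs of $G_1$, whose skeleton has about $n_{G_1}=4+13$ vertices, and check the inequality. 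The main obstacle is that this enumeration is too large to do cleanly by hand. I would therefore fall back on a BFS shortest-cycle computation on $H_1$ and report it, as the paper does for $g=6$.

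\textbf{Distances.} For the second item, I would compute $d_{H_0}(y,z)$ and $d_{H_1}(y,z)$ for all $16$ ordered pairs of link vertices on the four edges of $C_{in}$, as in the $g=6$ table. The inequality $d_{H_1}\le d_{H_0}$ is not automatic, since $H_0$ is not a subgraph of $H_1$. It follows by contracting, as in \Cref{rem:edge_contraction_skeleton}, the copy of $G_1$ minus $C_{in}$ down to $C_{in}$, which maps paths of $H_1$ to paths of $H_0$ that are no longer. For the reverse inequality, I would check that every shortest path in $H_0$ between link vertices, which runs along $C_{out}$-side corners, has a counterpart in $H_1$ of equal length through the new block. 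Only $d_{H_1}\ge d_{H_0}$ is needed for \Cref{lem:digirth_recursive_coating}.
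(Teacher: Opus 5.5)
Your plan is, in substance, the paper's proof. The paper only asserts that both items ``can be verified by hand or with a computer'', and your BFS shortest-cycle computation on $H_0$ and $H_1$, together with explicitly computing all $16$ distances in both graphs, is exactly that finite check. The structural shortcuts you sketch around it are not sound, however, and should carry no weight.

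\textbf{Distances.} The two inequalities are swapped. A length-nonincreasing map sending $y$--$z$ paths of $H_1$ to $y$--$z$ paths of $H_0$ would give $d_{H_0}(y,z)\le d_{H_1}(y,z)$. That is precisely the direction \Cref{lem:digirth_recursive_coating} needs. Exhibiting $H_1$-counterparts of shortest $H_0$-paths gives only the unneeded inequality $d_{H_1}\le d_{H_0}$.

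More importantly, no such map is available. \Cref{rem:edge_contraction_skeleton} says that contracting a skeleton edge corresponds to \emph{deleting} a link vertex of the coating. So ``contracting'' the part of $G_1$ outside $C_{in}$ yields a subgraph of $H_1$, namely a coating of a contracted multigraph with merged and much longer vertex cycles. That subgraph is not $H_0$, and deleting vertices can only increase distances. You also do not exhibit an arc contraction of $H_1$ onto $H_0$, and it is unclear that one exists. The cycles $C_{u_0},\dots,C_{x_0}$ have length $11$ in both graphs, yet in $H_1$ they meet the new vertex cycles at extra link vertices. The needed inequality $d_{H_1}\ge d_{H_0}$ therefore has to come from the explicit tables.

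\textbf{Digirth of $H_1$.} The identity from the proof of \Cref{thm:digirth_perfect_coating}, $\lvert D\rvert=\sum_{v\in V(G')}\lvert C_v\rvert-\sum_{f\in F(G')\setminus\{f_{ext}\}}\lvert C_f\rvert-2m_{G'}$, is exact for every coating, not ``only a lower bound''. What breaks for this non-perfect coating is the reduction to fractional arboricity, because the faces with $\lvert C_f\rvert>11$ enter with a minus sign. Checking the exact expression region by region, or simply running the BFS, is the right fallback, as you say.
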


These observations can be verified by hand or with a computer. Hence we can apply \Cref{lem:digirth_recursive_coating} and $(H_k)_{k \ge 0}$ is a family of coatings of digirth 11.

The skeleton $G_k$ has $n_{G_k} = 4 + 13k$ vertices and $m_{G_k} = 4 + 22k = \frac{22}{13} n_{G_k} - \frac{36}{13}$ edges. As $H_k$ is an 11-coating of digirth 11, from \Cref{cor:computing_fvs_g_coating_of_digirth_g} it follows that $\fv(H_k) = \frac{n_{H_k} - \frac{36}{13}}{11 - \frac{22}{13}}$.

Using \Cref{thm:ratio_extension_to_higher_digirth}, one can extend this construction for every digirth $g \ge 11$ and build an infinite family $\mathcal{F}_g$ of digraphs of digirth $g$ such that every $H \in \mathcal{F}_g$ satisfies $\fv(H) = \frac{n - \frac{36}{13}}{g - \frac{22}{13}}$.


\section{Proof of \Cref{lem:existence_perfect_g_coating_digirth_g}}\label{appendix:proof_lemma_Gklr}

Fix $k \ge 1$ and $r = 2a+b$ with $a \in \lbrace 0,1,2,3 \rbrace$ and $b \in \lbrace 0,1 \rbrace$. Define the family of skeleton graphs $(G_{\ell}^{k,r})_{ \ell \ge 1}$ as follows:
\begin{itemize}
    \item Define the three base blocks $A^k$, $B^x$, and $S^x$. The block $A^k$ is the same as the one used in \Cref{sec:construction_Gkl} (see \Cref{fig:BlockA_construction_Gklr}). The block $B^x$ is obtained from the block $B$ of \Cref{fig:BlockB_construction_Gkl} by adding $x$ vertices as described in \Cref{fig:BlockB_construction_Gklr} (in fact $B^0$ coincides with the block $B$ used in \Cref{sec:construction_Gkl}). The block $S^x$ is obtained from the block $S$ of \Cref{fig:BlockS_construction_Gkl} by adding $x$ vertices as described in \Cref{fig:BlockS_construction_Gklr} (in fact $S^0$ coincides with the block $S$ used in \Cref{sec:construction_Gkl}).
    
    \item We construct $G_{\ell}^{k,r}$ the same way as in \Cref{fig:Construction_Gkl}. Start with a block $S^a$ and inside its central 4-face, stack one inside the other $2\ell$ copies $A_1^k, A_2^k, \dots, A_{2\ell}^k$ of the block $A^k$. Between each two consecutive blocks $A_{2i-1}^k$ and $A_{2i}^k$, insert a copy $B_{2i-1}^{a+b}$ of the block $B^{a+b}$ and between each two consecutive blocks $A_{2i}^k$ and $A_{2i+1}^k$, insert a copy $B_{2i}^{a}$ of the block $B^a$. In terms of blocks, one can write:
    \begin{displaymath}
        G_\ell^{k,r} = (S^a + A^k + B^{a+b} + A^k) + (\ell - 1) \times (B^a + A^k + B^{a+b} + A^k)
    \end{displaymath}
    
    The vertices of layer $k-1$ of the block $S^a$ are identified with vertices of layer $k-1$ of the block $A_1^k$. The vertices of layer 0 of the block $A_{2i}^k$ (resp. $A_{2i-1}^k$) are identified with vertices of layer 0 of the block $B_{2i}^a$ (resp. $B_{2i-1}^{a+b}$). The vertices of layer $k-1$ of the block $B_{2i}^a$ (resp. $B_{2i-1}^{a+b}$) are identified with vertices of layer $k-1$ of the block $A_{2i+1}^k$ (resp. $A_{2i}^k$). The construction is illustrated in \Cref{fig:Assembly_Gklr}.
\end{itemize}

\begin{figure}[htbp]
    \centering
    \begin{subfigure}[b]{0.48\textwidth}
        \centering
        \includegraphics[scale=0.6]{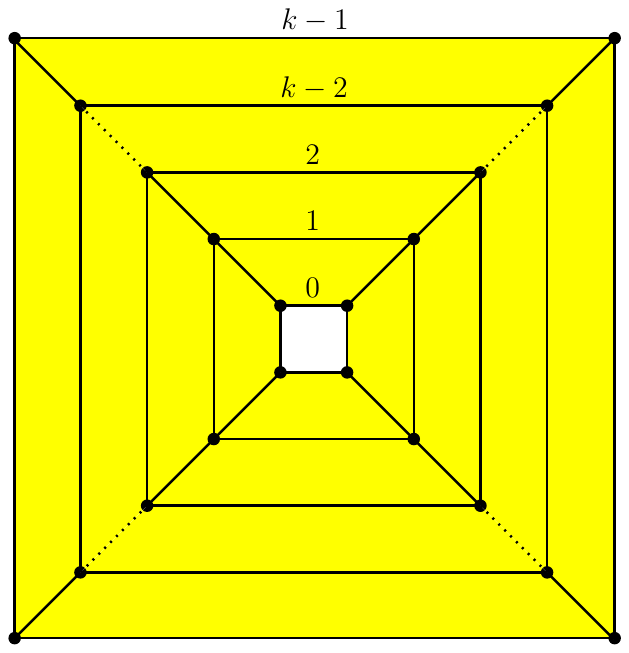}
        \caption{Block $A^k$ with $k$ layers: $\{0,\ldots,k-1\}$.}\label{fig:BlockA_construction_Gklr}
    \end{subfigure}
    \hfill
    \begin{subfigure}[b]{0.48\textwidth}
        \centering
        \includegraphics[scale=0.7]{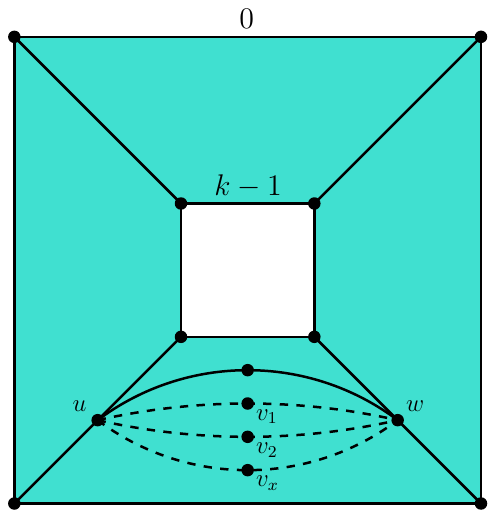}
        \caption{Block $B^x$ with 2 layers: 0 and $k-1$. The vertices $u$ and $w$ have $x+1$ common neighbors.}\label{fig:BlockB_construction_Gklr}
    \end{subfigure}
    \begin{subfigure}[b]{0.48\textwidth}
        \centering
        \includegraphics[scale=0.6]{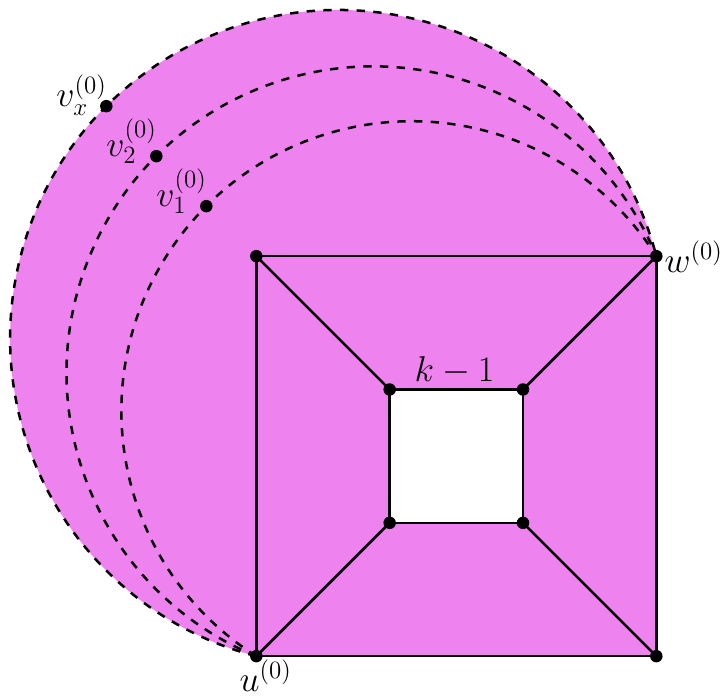}
        \caption{Block $S^x$ with the layer $k-1$. The vertices $u^{(0)}$ and $w^{(0)}$ have $x+2$ common neighbors.}\label{fig:BlockS_construction_Gklr}
    \end{subfigure}
    \hfill
    \begin{subfigure}[b]{0.48\textwidth}
        \centering
        \includegraphics[scale=0.33]{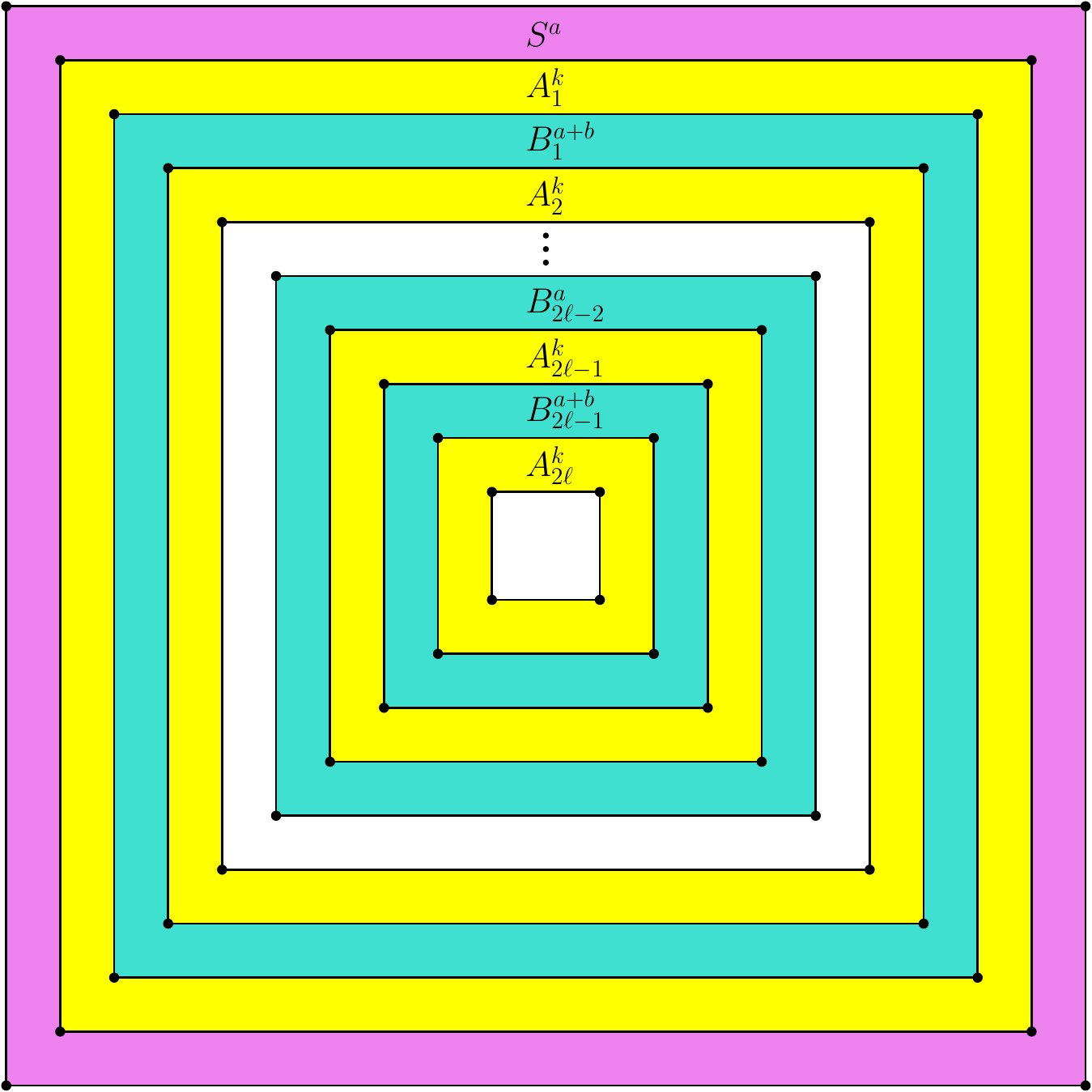}
        \caption{Assembly of the base blocks to build $G_{\ell}^{k,r}$.}\label{fig:Assembly_Gklr}
    \end{subfigure}
    \caption{Base blocks and construction of the family $(G_{\ell}^{k,r})_{\ell \ge 1}$.}\label{fig:Construction_Gklr}
\end{figure}

\begin{remark}\label{rem:size_Gklr}
    In each block of type $A^k$ there are $4k$ vertices and $8k-4$ edges. Each block of type $B^x$ ($x=a$ or $a+b$) adds $3+x$ vertices and $8+2x$ edges. The starting block $S^a$ adds $4+a$ vertices and $8+2a$ edges. In $G_{\ell}^{k,r}$ there are $2\ell$ blocks of type $A^k$, $\ell$ blocks of type $B^{a+b}$, $\ell - 1$ blocks of type $B^a$ and one block $S^a$. Hence for $\ell \ge 1$ we have:
    \begin{displaymath}
        \lvert V(G_{\ell}^{k,r})\rvert = \ell(8k+6+r) +1 \quad \text{and}\quad \lvert E(G_{\ell}^{k,r})\rvert = \ell(16k + 8 +2r)
    \end{displaymath}
    In particular, by \Cref{def:fractional_arboricity}: $a_f(G_{\ell}^{k,r}) \ge \frac{\lvert E(G_{\ell}^{k,r})\rvert}{ \lvert V(G_{\ell}^{k,r})\rvert - 1} = \frac{16k +8 + 2r}{8k+6+r} = \frac{2g}{g +2}$ with $g = 8k+4 + r$.
\end{remark}

\begin{proposition}\label{prop:perfect_coating_Gklr}
    Fix $k \ge 1$ and $r = 2a+b$ with $a \in \lbrace 0,1,2,3 \rbrace$ and $b \in \{0,1\}$. Set $g = 8k + 4 + r$. For every $\ell \ge 1$, the skeleton graph $G_{\ell}^{k,r}$ admits a perfect $g$-coating.
\end{proposition}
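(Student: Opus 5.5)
The proof will be an explicit construction, modelled on \Cref{prop:perfect_coating_Gkl}. I will write down a coating function $h$ on the corners of $G_{\ell}^{k,r}$, block by block, and then check the two families of linear constraints of \Cref{def:perfect_g_coating}: around each vertex $v$ the values $h(c)$ sum to $g-\deg(v)$, and around each face they sum to $g$, where $g=8k+4+r$. Since $G_{\ell}^{k,r}$ is a concatenation of copies of $A^k$, $B^a$, $B^{a+b}$ and one $S^a$, it suffices to give $h$ on each block type and check the constraints locally. The only places needing care are the vertices of the identified layers ($0$ and $k-1$) and the faces spanning two adjacent blocks; for these, the corner values coming from the two sides must be added.

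\textbf{Step 1 (block $A^k$).} Start from the coating function of \Cref{fig:coating_function_blocA}, which is valid for $g_0=8k+4$. Every interior vertex and face of $A^k$ needs an extra $r$. I will distribute $+r$ along a ``diagonal'' of corners: in each layer of $A^k$, choose corners so that each interior vertex receives exactly one augmented corner and each interior face receives exactly one. This is the corner analogue of the trick in \Cref{thm:ratio_extension_to_higher_digirth}, where $r$ is added to the corners of a set meeting each vertex cycle once. The alternating square pattern of the layers suggests choosing the corner lying on the Northwest--Southeast diagonal in each layer, so that the augmented corners form a system of distinct representatives between vertices and faces. Count check: $A^k$ has $4k$ vertices and, by Euler on the annulus, $4k-4$ interior faces. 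The four extra vertices belong to layer $0$ or layer $k-1$, and their deficit is settled in the neighbouring $B$ or $S$ block.

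\textbf{Step 2 (blocks $B^x$ and $S^x$).} In $B^x$, the vertices $u,w$ have $x+1$ common neighbours, each of degree $2$. This creates $x$ additional $4$-faces $u\,y_i\,w\,y_{i+1}$ and $x$ new degree-$2$ vertices. Each such new vertex $y$ requires $h$-sum $g-2$ over its two corners, and each new face requires $g$. Taking the values from \Cref{fig:coating_function_blocB} for the two original faces bordering the fan, I will set the values inside the fan alternately. For the corners at $u$ and $w$ inside the fan I expect small values (such as $1$). The total surplus of $u$ and $w$ (their degree rises by $x$) must then be absorbed so that $u$ and $w$ still satisfy the vertex equation; this fixes how much is removed from their other corners. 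Using the blocks $B^{a+b}$ and $B^a$ in alternation is exactly what makes the parity of $r$ work: per pair of $A$ blocks we add $2a+b=r$ extra vertices, matching the count in \Cref{rem:size_Gklr}. The same treatment applies to $S^a$, where $u^{(0)}$ and $w^{(0)}$ have $a+2$ common neighbours.

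\textbf{Main obstacle.} The hard part is consistency at the interface layers combined with positivity ($h(c)\ge 1$ everywhere). A vertex on layer $0$ or $k-1$ has corners in two blocks, and the $+r$ deficit must be met exactly once. Meanwhile the fan corners at $u,w$ eat into the available budget, and the restrictions $a\le 3$, $b\le 1$ presumably come from keeping every value positive. I would first solve the linear system for $k=1$ on one period $B^a+A^k+B^{a+b}+A^k$, treating $a,b$ symbolically. Then I would verify that the solution is periodic, so that the recursion over $\ell$ is automatic. Finally I would check the start block $S^a$ and the innermost face of $A^k_{2\ell}$ separately, as in \Cref{fig:coating_function_outer_inner_faces}. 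Once the per-block tables are fixed, the verification is the routine summation promised by \Cref{def:perfect_g_coating}.
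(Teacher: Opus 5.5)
\textbf{There is a genuine gap.} Your outline (start from the coating function $h_0$ of $G^k_{2\ell}$, add increments, give the fan corners at $u,w$ the value $1$, then check the vertex and face equations) is the paper's outline. But the proof consists of exhibiting $h$, and you never do. Worse, the one concrete design choice you commit to cannot work in general. In Step~1 you treat every block $A^k$ the same way: you add exactly $r$ on a system of distinct representatives and push all remaining discrepancies into the $B$ and $S$ blocks.

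\textbf{Why Step~1 fails.} Take a layer $4$-cycle $L$. Sum the face equations over the faces inside $L$ and subtract the vertex equations of the vertices strictly inside $L$. This forces the sum of $h$ over the inside corners of the vertices of $L$ to equal $(g+2)m_{in}-2g\,n_{in}+g-d$. Here $n_{in},m_{in}$ count vertices and edges strictly inside $L$, and $d$ is the number of edges leaving $L$ inward. Compare this with the same quantity for $h_0$ on $G^k_{2\ell}$. The increment over $h_0$ at these corners is $r$ at the central face. It is unchanged when you cross a block $A^k$. It changes by $4x-2r$ when you cross a block $B^x$, i.e.\ by $+2b$ for $B^{a+b}$ and by $-2b$ for $B^a$. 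Hence on the outer layer of every odd-indexed block $A^k_{2i-1}$ the increment must be exactly $r+2b$. For $k\ge 2$ those corners lie in the outermost annulus of $A^k_{2i-1}$ itself. With increments in $\{0,r\}$ their total would be a multiple of $r$, which is impossible when $b=1$ and $a\ge 1$. Solving your linear system for $k=1$ first would not reveal this, since $A^1$ has no interior corners.

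\textbf{How the paper handles it.} This is exactly why the paper's function uses increments $r$, $r-b$ and $b$, and two different coating functions on $A^k_{2i-1}$ and $A^k_{2i}$: the $\pm b$ corrections at $u^{(i)},w^{(i)}$ have to be transported through the $A$ blocks. With fan corners of value $1$, the degree increase plus the fan supply $2a$ at $u,w$ in $S^a$ and $B^a$. So $b$ must be added there, and the remaining corners are $3k,\,3k+b,\,2k+1$ in $S^a$ and $2k,\,4k+b,\,2k+1$ in $B^a$. In $B^{a+b}$ they supply $2(a+b)=r+b$, so $b$ must be removed, giving $2k,\,4k-b,\,2k+1$. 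Your description of the adjustment as always a removal is therefore only half right.

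\textbf{A smaller misreading.} The restriction $a\le 3$ has nothing to do with positivity. It only makes $r=2a+b$ range over $\{0,\dots,7\}$, so that $g=8k+4+r$ covers every $g\ge 12$.
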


\begin{proof}
    We provide the coating function $h$ for the copies of blocks $A^k$, $B^a$ and $B^{a+b}$ and for the remaining corners of $G_{\ell}^{k,r}$ as described in \Cref{fig:coating_function_Gklr}. This coating function is derived from the coating function $h_0$ of \Cref{fig:coating_function_Gkl}. We only add an extra quantity $r$, $r-b$ or $b$ on some corners of $G_{\ell}^{k,r}$.
    \begin{figure}[htbp]
        \centering
        \begin{subfigure}[b]{0.48\textwidth}
            \centering
            \includegraphics[scale=0.4]{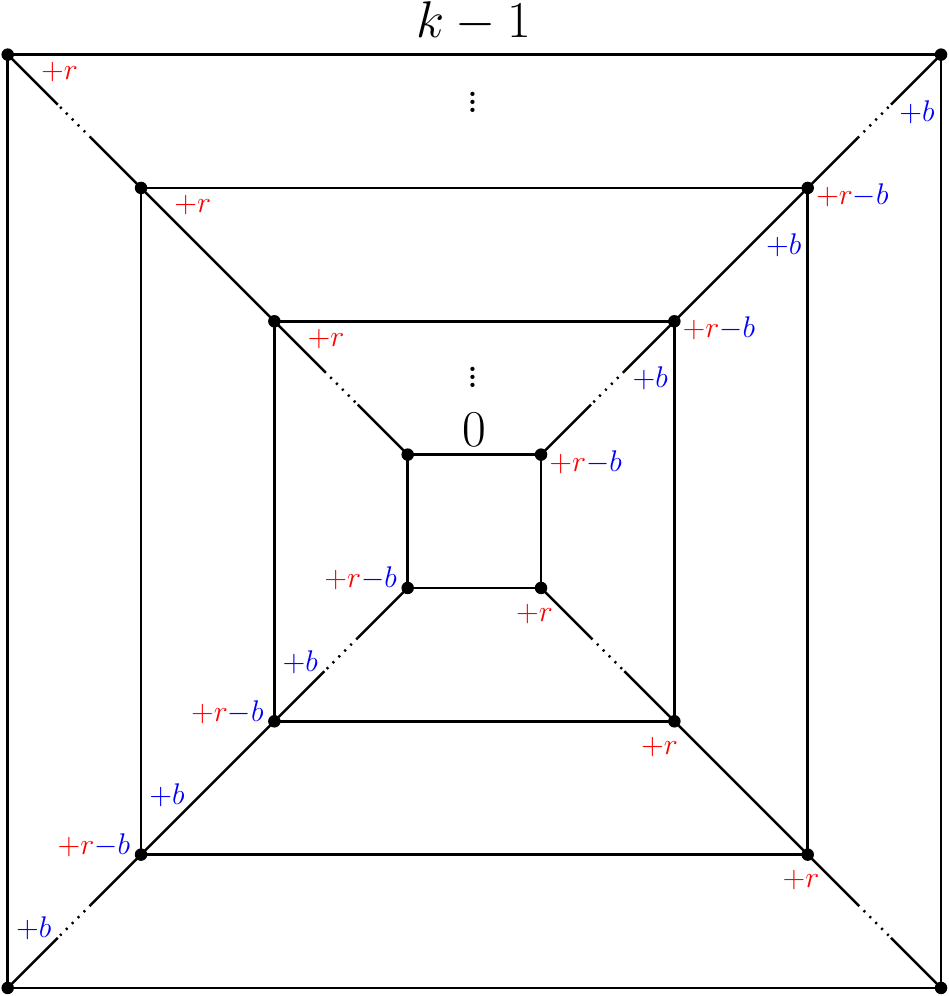}
            \caption{Corners of block $A_{2i-1}^k$.}\label{fig:coating_function_blockA_odd}
        \end{subfigure}
        \hfill
        \begin{subfigure}[b]{0.48\textwidth}
            \centering
            \includegraphics[scale=0.45]{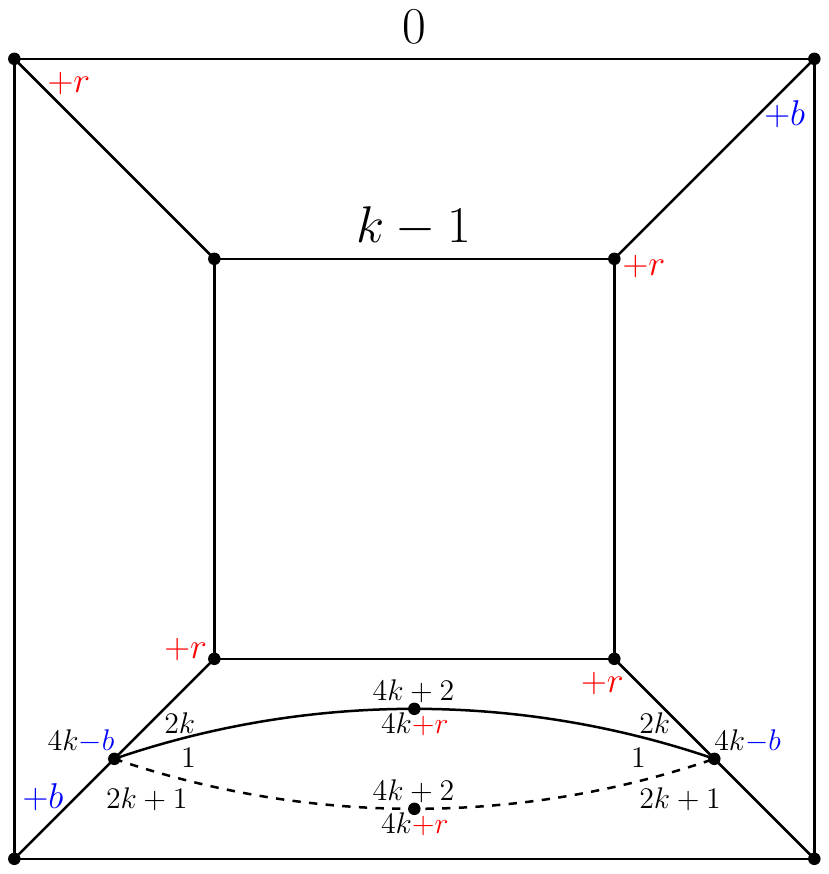}
            \caption{Corners of block $B_{2i-1}^{a+b}$.}\label{fig:coating_function_blockB_a+b}
        \end{subfigure}
        \begin{subfigure}[b]{0.48\textwidth}
            \centering
            \includegraphics[scale=0.4]{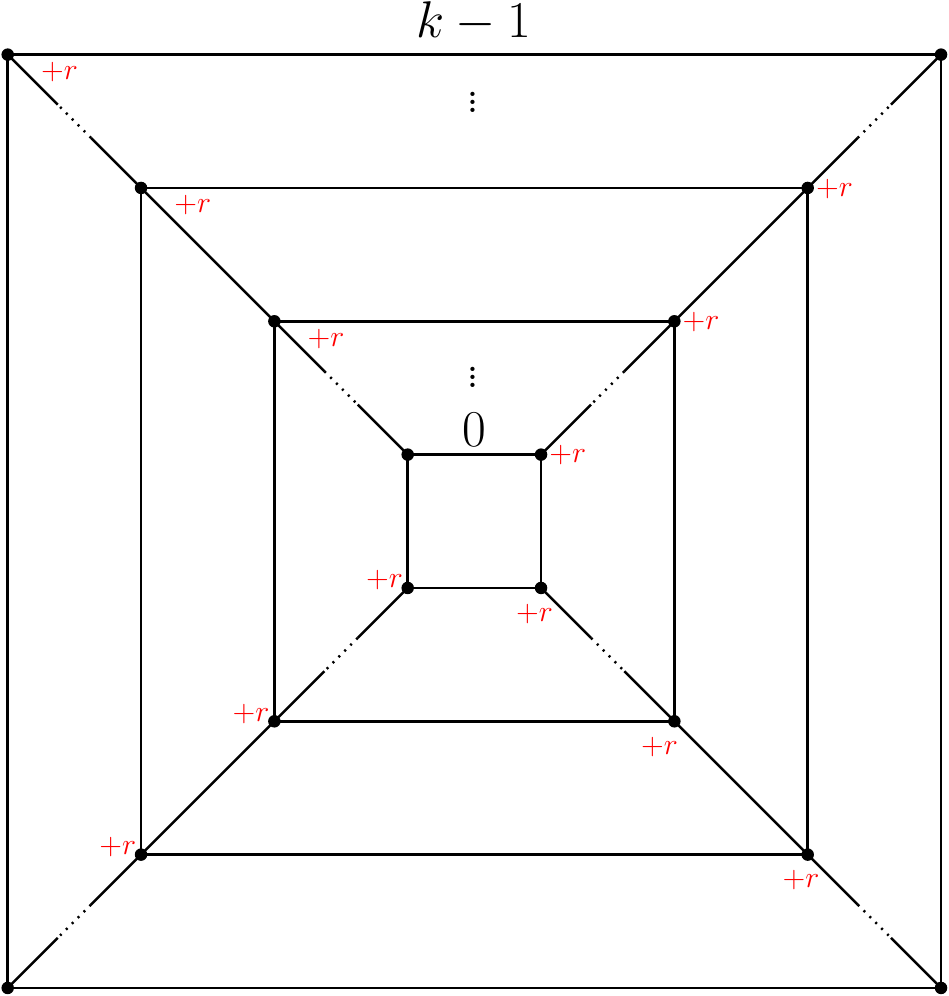}
            \caption{Corners of block $A_{2i}^k$.}\label{fig:coating_function_blockA_even}
        \end{subfigure}
        \hfill
        \begin{subfigure}[b]{0.48\textwidth}
            \centering
            \includegraphics[scale=0.45]{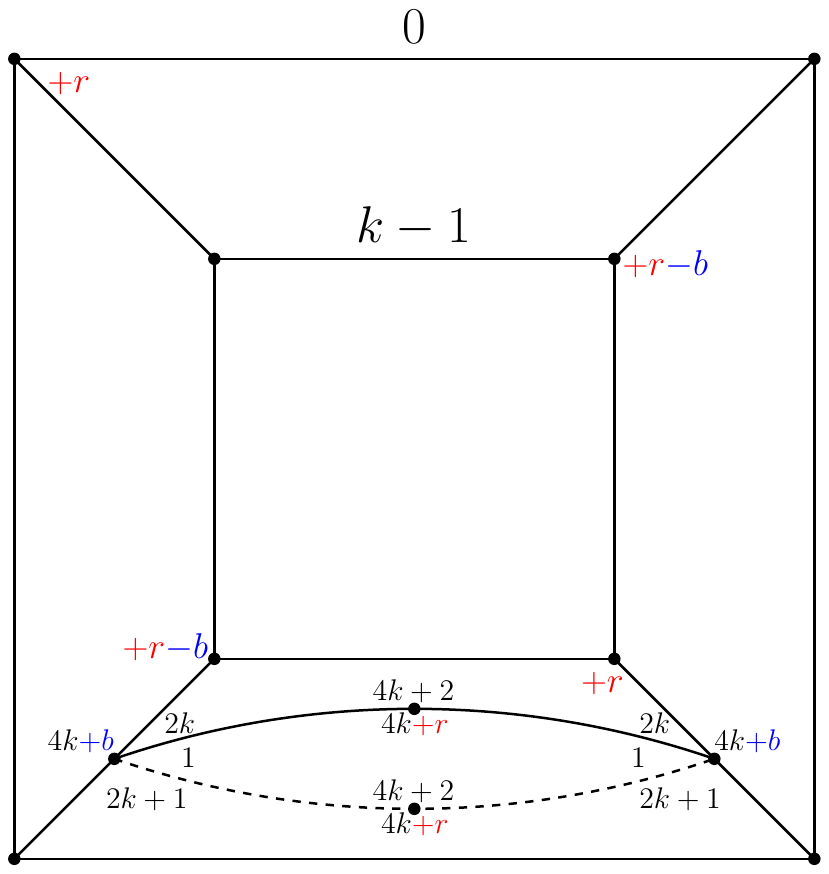}
            \caption{Corners of block $B_{2i}^{a}$.}\label{fig:coating_function_blockB_a}
        \end{subfigure}
        \begin{subfigure}[b]{0.75\textwidth}
            \centering
            \includegraphics[scale=0.4]{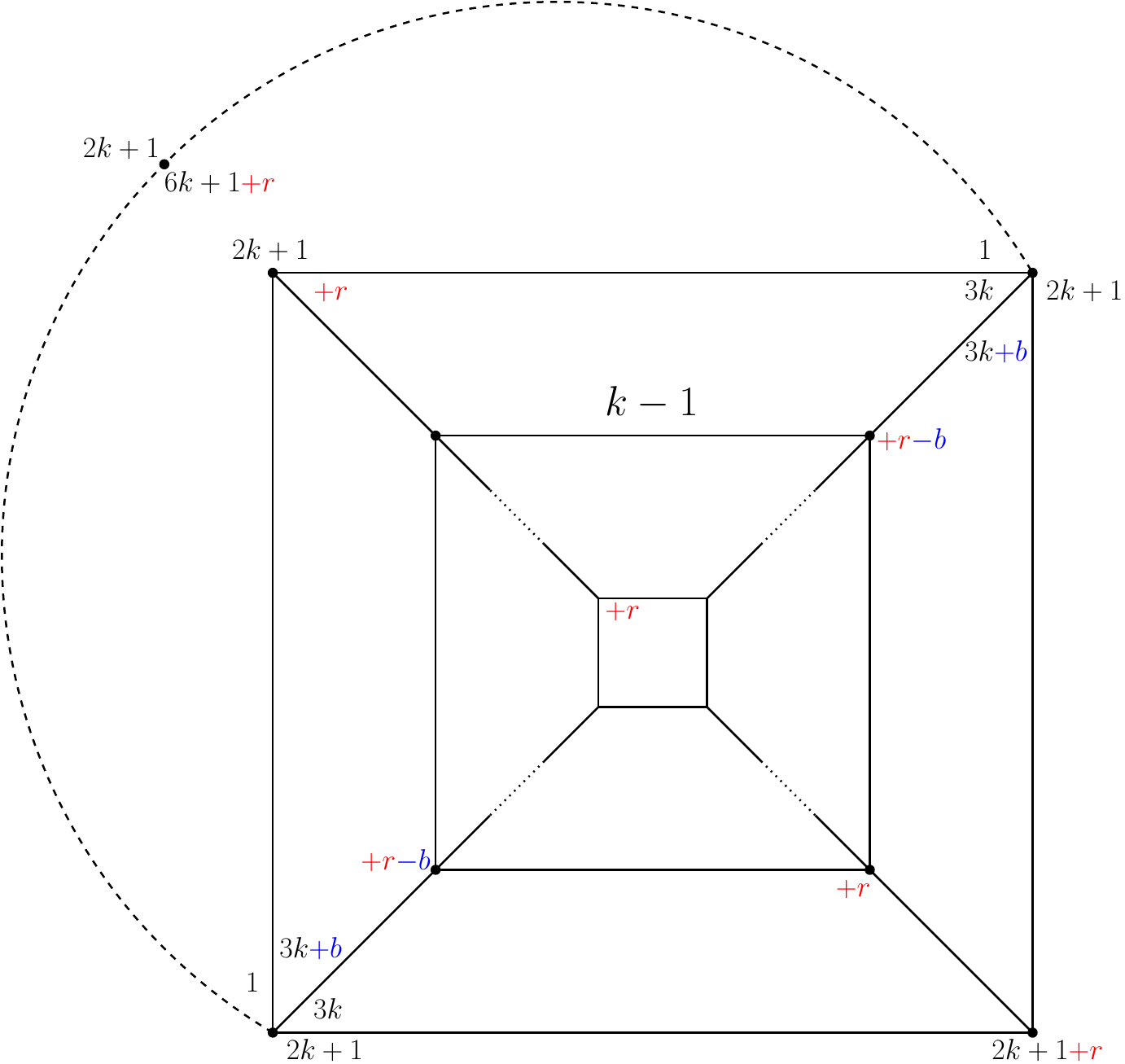}
            \caption{The remaining corners: starting block $S^a$ and central face of block $A^k_{2\ell}$.}\label{fig:coating_function_outer_inner_faces_Gklr}
        \end{subfigure}
        \caption{A coating function $h$ for $G_{\ell}^{k,r}$ yielding a perfect $g$-coating. We only write the coating function around the newly added vertices and edges. For the other corners $h(c)$ is either $h_0(c)$ or $h_0(c) + x$ with $x \in \lbrace r,r-b,b \rbrace$ and $h_0$ being the coating function defined in \Cref{fig:coating_function_Gkl}.}\label{fig:coating_function_Gklr}
    \end{figure}
    One can verify that around each face a quantity of exactly $r$ is added.
    \begin{displaymath}
        \forall f \in F(G_\ell^{k,r}), \quad \sum_{c \in \mathcal{K}(f)} h(c) = r + \sum_{c \in \mathcal{K}(f)} h_0(c) = r +8k+4 = g.
    \end{displaymath}
    Similarly, one can verify that around each vertex a quantity of exactly $r$ is added, except for the vertices $u^{(0)}, w^{(0)}$ in block $S^a$ and $(u^{(i)}, w^{(i)})_{1 \le i \le 2\ell -1}$, which correspond to the $\ell$ copies of vertices $u,w$ in block $B^{a+b}$ ($i$ odd) and the $\ell-1$ copies in block $B^{a}$ ($i$ even). Then:
    \begin{displaymath}
        \forall v \in V(G_\ell^{k,r}) \backslash \{ u^{(i)}, w^{(i)} \mid 0 \le i \le 2\ell -1 \}, \quad \deg(v) + \sum_{c \in \mathcal{K}(v)} h(c) = r + \deg(v) + \sum_{c \in \mathcal{K}(v)} h_0(c) = r + 8k+4 = g
    \end{displaymath}
    Finally note that $\deg(u^{(2i)}) = \deg(w^{(2i)}) = 3+a$ and $\deg(u^{(2i+1)}) = \deg(w^{(2i+1)}) = 3+a+b$ for all $0 \le i \le \ell-1$ and then for $v \in \{ u^{(0)}, w^{(0)} \}$:
    \begin{displaymath}
        \deg(v) + \sum_{c \in \mathcal{K}(v)} h(c) = 3+a + a \times 1 + 3k + (3k+b) + (2k+1) = 8k+4+2a+b = g.
    \end{displaymath}
    For $v \in \{ u^{(2i)}, w^{(2i)} \mid 1 \le i \le \ell -1 \}$:
    \begin{displaymath}
        \deg(v) + \sum_{c \in \mathcal{K}(v)} h(c) = 3+a + a \times 1 + 2k + (4k+b) + (2k+1) = 8k+4+2a+b = g.
    \end{displaymath}
    And for $v \in \{ u^{(2i+1)}, w^{(2i+1)} \mid 0 \le i \le \ell-1 \}$:
    \begin{displaymath}
        \deg(v) + \sum_{c \in \mathcal{K}(v)} h(c) = 3+a+b + (a+b) \times 1 + 2k + (4k-b) + (2k+1) = 8k+4+2a+b = g.
    \end{displaymath}
    Then $h$ defines a perfect $g$-coating.
\end{proof}

For the rest of the section we fix $g = 8k+4+r$ (with $r = 2a+b$).

We now want to show that the perfect $g$-coating constructed above has digirth $g$. From \Cref{thm:digirth_perfect_coating} it remains to show that the fractional arboricity of $G_{\ell}^{k,r}$ is $\frac{2g}{g + 2}$. To prove it we use \Cref{thm:fractional_arboricity} and construct a $\frac{2g}{g+2}$-arborization of $G_{\ell}^{k,r}$, then \Cref{rem:size_Gklr} will directly give that $a_f(G_\ell^{k,r}) = \frac{2g}{g+2}$.

Similarly to what we have done in \Cref{sec:construction_Gkl}, we color the half-edges of $G_{\ell}^{k,r}$ with 4 colors (black, red, blue and green). This coloring is an extension of the coloring of \Cref{fig:coloring_Gkl} where we color the new half-edges as shown in \Cref{fig:coloring_Gklr}. The two special edges for each color are the same as in \Cref{sec:construction_Gkl}.

\begin{figure}[htbp]
    \centering
    \includegraphics[width=\textwidth]{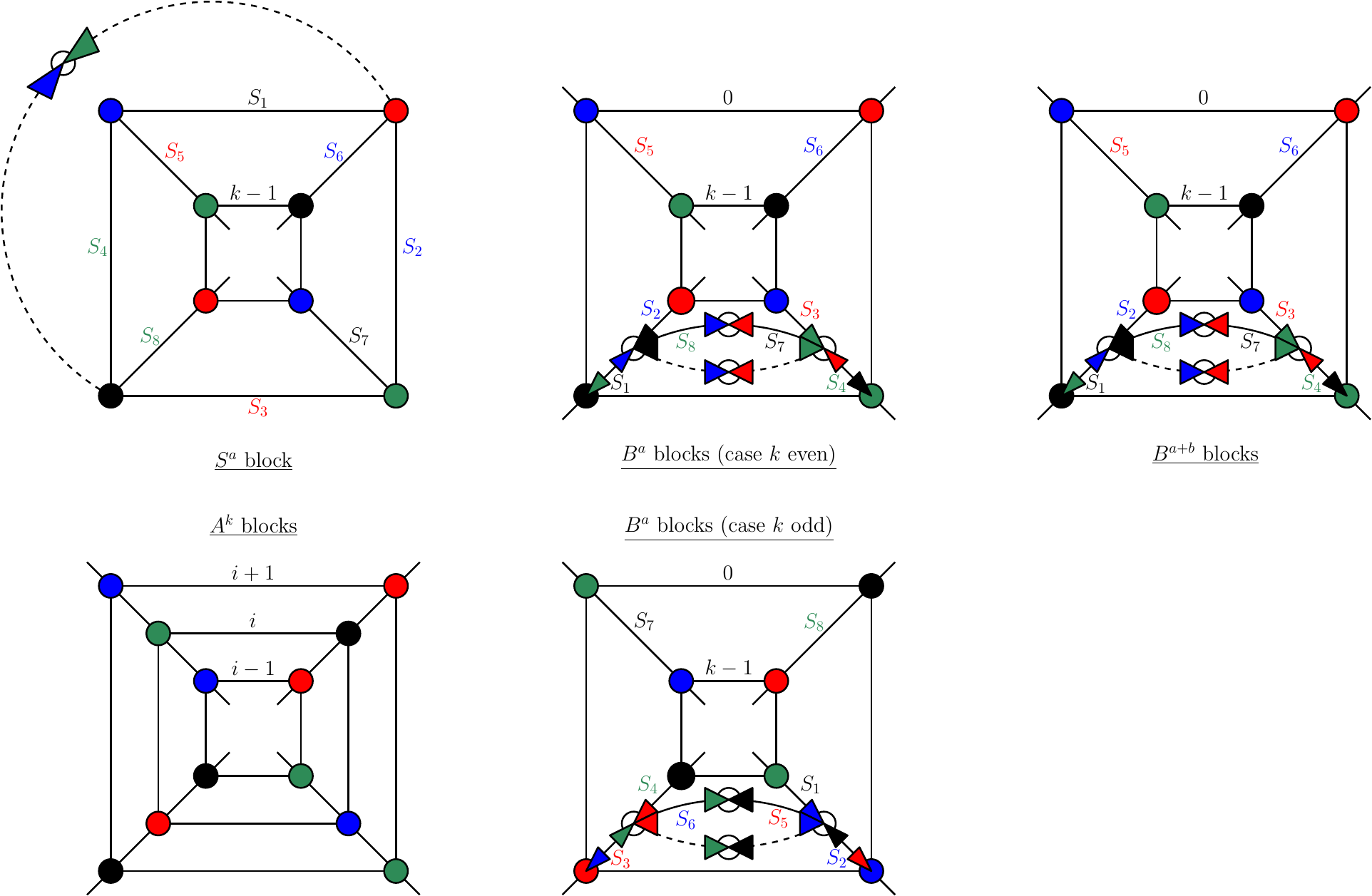}
    \caption{Coloring of the half-edges of $G_{\ell}^{k,r}$. We also give the two special edges for each color in $S^a$ and in the blocks $B^x$ ($x = a$ or $a+b$)}\label{fig:coloring_Gklr}
\end{figure}

For $c$ a color among (black, red, blue, green), denote by $E_c$ the set of edges of $G_{\ell}^{k,r}$ having one half-edge colored $c$ (see \Cref{fig:red_forest} for an example). Note that the two special edges for the color $c$ are never in $E_c$.

\begin{figure}[htbp]
    \centering
    \includegraphics[width=\textwidth]{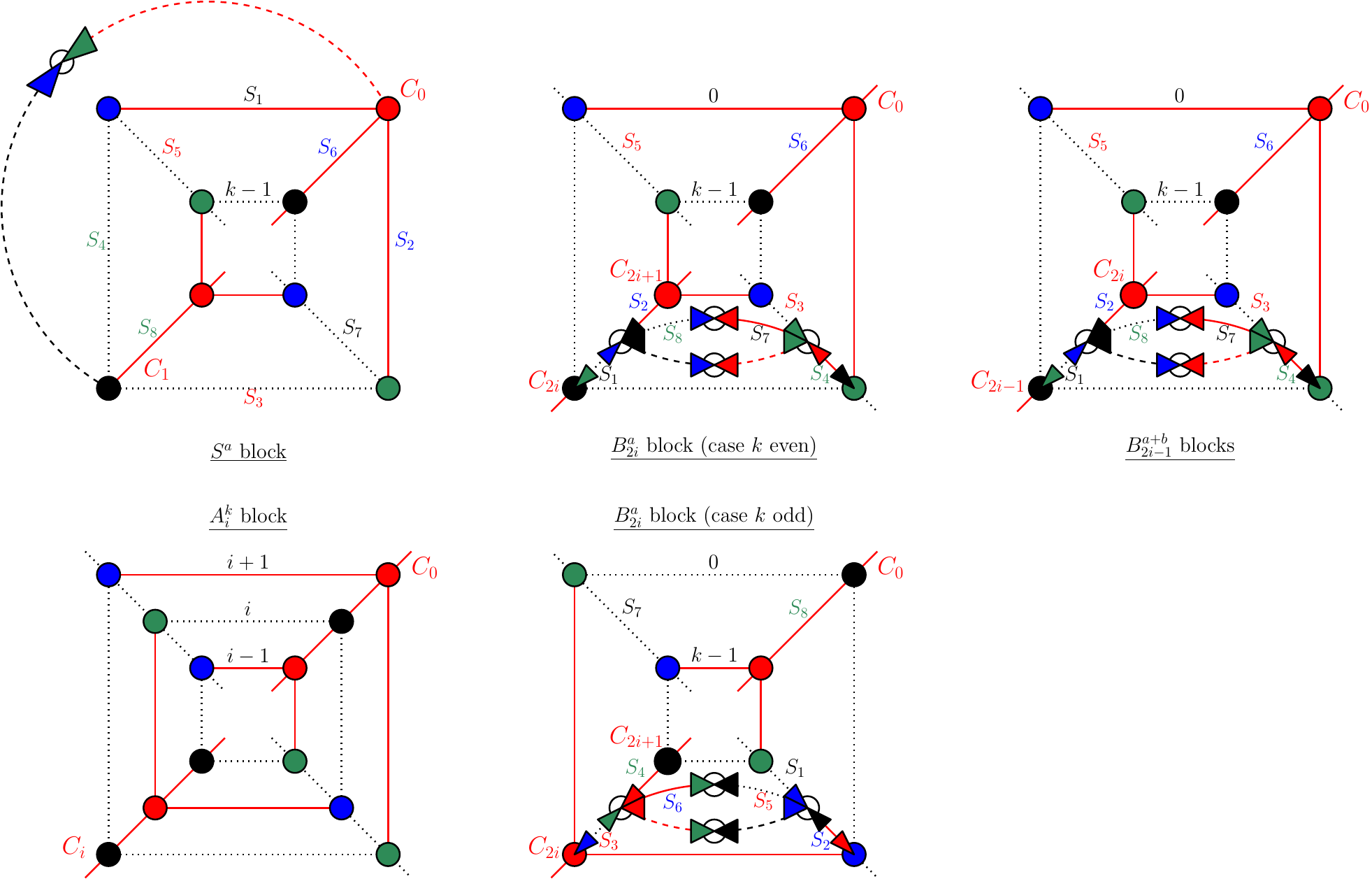}
    \caption{Forest $E_{red}$ in the graph $G_{\ell}^{k,r}$.}\label{fig:red_forest} 
\end{figure}

The properties of \Cref{obs:properties_Ec} still hold and we add some more to deal with the edges incident to the added vertices $(v_j^{(i)})_{\substack{1 \le j \le a+b \\ 0 \le i \le 2\ell - 1}}$.

\begin{observation}\label{obs:properties_Ec_Gklr}
    \begin{enumerate}[(i)]
        \item\label{obs:size_Ec_Glkr} There is no edge of $G_{\ell}^{k,r}$ whose two half-edges have the same color. Thus every edge of $G_{\ell}^{k,r}$ belongs to exactly two of the sets $E_c$. Moreover there are equally many half-edges of each color in $G_{\ell}^{k,r}$. Hence from \Cref{rem:size_Gklr}
        \begin{displaymath}
            \lvert E_{black}\rvert = \lvert E_{red}\rvert = \lvert E_{blue}\rvert = \lvert E_{green}\rvert = \frac{\lvert E(G_{\ell}^{k,r})\rvert}{2} = \ell \cdot (8k +4+r)
        \end{displaymath}
        Furthermore, in each block $A^k$ there are exactly $k$ edges whose two half-edges are $(blue, black)$ and $k$ edges whose two half-edges are $(red, green)$ (one per layer).
        \item\label{obs:connected_components_Ec_Gklr} $E_c$ contains no cycles and has exactly $2\ell + 1$ connected components. There is one component given by the edges located in the North part of the graph which crosses all blocks, denoted $C_0$, and one connected component per $A^k$ block located in the South part of the graph. Denote by $C_i$ the connected component of $E_c$ associated with block $A_i^k$ for $i \in \llbracket 1, 2\ell \rrbracket$. Moreover:
        \begin{itemize}
            \item Any edge of block $A_i^k$ that is not in $E_c$ connects the component $C_i$ to the component $C_0$.
            \item For $1 \le i \le 2\ell - 1$, the special edges for color $c$ located in block $B_i^x$ connect the component $C_{i+1}$ to the component $C_{i}$ or to $C_0$ (in particular this is not an edge between $C_{i}$ and $C_{0}$).
            \item The special edges for color $c$ located in block $S^a$ connect the component $C_1$ to the component $C_0$.
            \item The edges in block $S^a$ incident to a vertex $v_j^{(0)}$ (for $1 \le j \le a$) that are not in $E_c$ connect the component $C_1$ to the component $C_0$.
            \item For $1 \le i \le 2\ell - 1$, the edges in block $B_i^x$ incident to a vertex $v_j^{(i)}$ (for $1 \le j \le a$) that are not in $E_c$ connect the component $C_{i+1}$ to the component $C_{i}$ or to $C_0$ (in particular this is not an edge between $C_{i}$ and $C_{0}$).
            \item If $b = 1$, then for every $1 \le i \le \ell$, the edge in block $B_{2i-1}^{a+b}$ incident to a vertex $v_{a+b}^{(2i-1)}$ that is not in $E_c$ connects the component $C_{2i}$ to the component $C_{0}$ when $c \in \{ \text{red}, \text{blue} \}$ and to the component $C_{2i-1}$ when $c \in \{ \text{black}, \text{green} \}$.
        \end{itemize}
        
        \item\label{obs:identification_edge_Gklr} Every edge in $G_\ell^{k,r}$ can be identified with a unique edge of one of the blocks $S^a, A_1^k, B_1^{a+b}$ or $A_2^k$. \begin{itemize}
            \item For $1 \le i \le \ell$, every colored block $A_{2i-1}^k$ is identical to $A_1^k$ and every colored block $A_{2i}^k$ is identical to $A_2^k$. So every edge of $A_{2i-1}^k$ (resp. $A_{2i}^k$) can be identified with a unique edge of $A_1^k$ (resp. $A_2^k$).
            \item For $1 \le i \le \ell$, every colored block $B_{2i-1}^{a+b}$ is identical to $B_1^{a+b}$. So every edge of $B_{2i-1}^{a+b}$ can be identified with a unique edge of $B_1^{a+b}$.
            \item For $1 \le i \le \ell-1$, every edge in block $B_{2i}^a$ incident to a vertex $v_j^{(2i)}$ (for $1 \le j \le a$) can be identified with a unique edge of $S^a$ incident to the vertex $v_j^{(0)}$ (by matching the color of the half-edges).
            \item For $1 \le i \le \ell-1$, a special edge in $B_{2i}^a$ is identified with the special edge of $S^a$ that has the same label in \Cref{fig:coloring_Gklr}.
        \end{itemize} 
        
        \item\label{obs:edge_counterpart_Gklr} Every edge in $S^a$ or $A_1^k$ has a counterpart in $B_1^{a+b}$ or $A_2^k$.\begin{itemize}
            \item The block $A_2^k$ is either identical to $A_1^k$ (if $k$ is even) or to its central reflection (if $k$ is odd).
            \item We identify every edge of $S^a$ with an edge of $B_1^{a+b}$ the same way as in the last observation. By doing this every edge of $S^a$ has a counterpart. (In case $b = 1$, only the edges incident to $v_{a+b}^{(1)}$ do not have a counterpart.)
        \end{itemize}
    \end{enumerate}
\end{observation}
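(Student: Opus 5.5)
The plan is to reduce almost everything to \Cref{obs:properties_Ec}, which already holds for $G_\ell^{k}$. Then only the half-edges incident to the added vertices $v_j^{(i)}$ need to be treated by a local check on the blocks $S^a$, $B^a$ and $B^{a+b}$. The key structural fact I would exploit is that $G_\ell^{k,r}$ is obtained from a $G_{2\ell}^{k}$-type chain (blocks $S$, $A^k$, $B$ alternating) by inserting, inside each $B$ or $S$ block, extra common neighbours of the pair $u,w$. The coloring of \Cref{fig:coloring_Gklr} extends the one of \Cref{fig:coloring_Gkl} without changing any old half-edge color.

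For \ref{obs:size_Ec_Glkr}, I would first note that old edges inherit the property ``two half-edges of distinct colors'' from \Cref{obs:properties_Ec}. For each new vertex $v_j^{(i)}$ (of degree 2, adjacent to $u^{(i)}$ and $w^{(i)}$), I would read off from \Cref{fig:coloring_Gklr} that its two edges get half-edge color pairs that are distinct. Equidistribution of colors then follows block by block: in $A^k$ it holds by the alternating square pattern; in each $B^x$ and $S^x$ the $x$ new vertices contribute $4x$ half-edges, which should be colored so that each pair $(v_j, v_{j+1})$ uses each color exactly twice. The case $b=1$ needs the single extra vertex of $B^{a+b}$ to be compensated across the pair of blocks $B^{a+b}, B^a$. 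This is exactly why the construction doubles the period to $S^a + A^k + B^{a+b} + A^k$. The resulting count $\lvert E_c\rvert = \lvert E\rvert/2$ follows from \Cref{rem:size_Gklr}. The claim about one $(blue,black)$ and one $(red,green)$ edge per layer of $A^k$ is read directly from the layer-alternation in \Cref{subfig:coloring_Gkl_Akblock}.

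For \ref{obs:connected_components_Ec_Gklr}, I would argue by induction along the chain of blocks. The forest structure and the components $C_0, C_i$ restricted to old edges are given by \Cref{obs:properties_Ec}, applied now with $2\ell$ copies of $A^k$. The new edges in $E_c$ are pendant edges attaching $v_j^{(i)}$ to a component already present, so they create neither cycles nor new components. The count $\lvert E_c\rvert = (n - (2\ell+1))$ then confirms exactly $2\ell+1$ components. The listed connection rules for edges not in $E_c$ follow by checking which endpoint ($u^{(i)}$ or $w^{(i)}$) lies in which component: $u$ is on the North side ($C_0$ or $C_i$) and $w$ is on the South side of $A_{i+1}^k$. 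The $b=1$ bullet, with its dependence on $c\in\{\text{red},\text{blue}\}$ versus $\{\text{black},\text{green}\}$, is the one subtle case and must be read carefully from the figure.

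Items \ref{obs:identification_edge_Gklr} and \ref{obs:edge_counterpart_Gklr} are periodicity statements. The coloring of $A^k$ alternates square types layer by layer, so after the $k$ layers plus one $B$ block the pattern is either reproduced or centrally reflected according to the parity of $k$. Going through two $A$ blocks returns to the original, which gives $A_{2i-1}^k \cong A_1^k$ and $A_{2i}^k \cong A_2^k$. The $B^a$ blocks are matched with $S^a$ by color matching of the $v_j$-edges and the special labels. I expect the main obstacle to be the bookkeeping in the $b=1$ case, that is, ensuring that the extra vertex $v_{a+b}$ of $B^{a+b}$ is consistent with equidistribution and with the component rule. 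This is essentially a finite figure inspection, which I would carry out explicitly for both parities of $k$.
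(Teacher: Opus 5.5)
Your overall reduction is the right one. It is what the paper does implicitly when it says that the properties of \Cref{obs:properties_Ec} still hold and only the edges at the added vertices need new properties. Deleting the vertices $v_j^{(i)}$ leaves $G_{2\ell}^{k}$ with the colouring of \Cref{fig:coloring_Gkl}. Your count $n-\lvert E_c\rvert=2\ell+1$ is correct, and your parity argument for $A_{2i-1}^k\cong A_1^k$, $A_{2i}^k\cong A_2^k$ is fine.

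The problem is your account of the colours at the new vertices in (i). There is no compensation between pairs $(v_j,v_{j+1})$ or between the blocks $B^{a+b}$ and $B^a$. For odd $a$ such a pairing inside a block does not even exist. Moreover, by (iii)--(iv) every new vertex of $B^a$, and each of the first $a$ new vertices of $B^{a+b}$, is colour-matched to one of $S^a$, so nothing is left over to balance $v_{a+b}$. The fact you need is per vertex: the two edges at each $v_j^{(i)}$ carry complementary colour pairs, one black--blue and one red--green. This is why the paper can name the two edges at $v_{a+b}^{(1)}$ as $e^{BB}_{a+b}$ and $e^{RG}_{a+b}$. Hence the four half-edges of $v_j^{(i)}u^{(i)}$ and $v_j^{(i)}w^{(i)}$ contain each colour exactly once.

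This matters because your step (ii) silently relies on this fact while your step (i), as written, contradicts it. Suppose a new vertex were not balanced on its own. Then the colour pairs of its two edges would share some colour $c$ and miss some colour $c'$. It would be an isolated component of $E_{c'}$ outside $C_0,\dots,C_{2\ell}$, and it would have degree $2$ in $E_c$. Your ``pendant edge'' argument would then fail, and the $b=1$ bullet, which speaks of \emph{the} edge at $v_{a+b}^{(2i-1)}$ not in $E_c$, would not make sense.

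With the per-vertex fact everything is immediate:
\begin{itemize}
\item equidistribution and the ``exactly two $E_c$'' property reduce to \Cref{obs:properties_Ec};
\item each $v_j^{(i)}$ is a leaf of every $E_c$, so acyclicity and the number of components are preserved;
\item its non-$E_c$ edge joins the component of $u^{(i)}$ to that of $w^{(i)}$, so all the $v_j$ bullets of (ii), including the colour-dependent $b=1$ one, reduce to locating $u^{(i)},w^{(i)}$ in each $E_c$.
\end{itemize}

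The doubled period is not about colour balance either. A period $A^k+B^{a+b}+A^k+B^a$ gives $\frac{16k+8+2r}{8k+6+r}=\frac{2g}{g+2}$ with $g=8k+4+r$. A single $A^k+B^x$ period only reaches $g=8k+4+2x$, so the doubling is what allows odd $r$.

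Finally, (iv) needs the colour-matching of $S^a$ into $B_1^{a+b}$, leaving the two edges at $v_{a+b}^{(1)}$ unmatched when $b=1$. You only state this matching for $B^a$.
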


Using these properties we derive the following procedure to construct large trees contained in $G_{\ell}^{k,r}$.
\begin{enumerate}
    \item Choose a color $c$ among (black, red, blue, green).
    \item Choose an edge $x_1$ among the edges of block $A_1^k$ and $S^a$ that is not in $E_c$ and that is not a special edge for another color $c' \neq c$.
    \item Define $x_2$ as the counterpart in block $B_1^{a+b}$ or $A_2^k$ of the edge $x_1$ (see \Cref{obs:properties_Ec_Gklr}\ref{obs:edge_counterpart_Gklr}).
    \item Build the tree $T_{c,x_1,x_2}$ by adding to the forest $E_c$ the edges $x_1, x_2$ as well as all edges identified with them (see \Cref{obs:properties_Ec_Gklr}\ref{obs:identification_edge_Gklr}).
\end{enumerate}

In case $b = 1$, denote by $e_{a+b}^{BB}$ (resp. $e_{a+b}^{RG}$) the Black-Blue (resp. Red-Green) edge incident to $v_{a+b}^{(1)}$ (in block $B_1^{a+b}$). We add the following trees:
\begin{enumerate}
    \item Choose a color $c$ among (black, red, blue, green).
    \item \begin{itemize}
        \item If $c = \text{black}$, set $x_1 = e_{a+b}^{RG}$ and choose $x_2$ among the edges in $A_2^k$ whose two half-edges are red and green.
        \item If $c = \text{blue}$, set $x_2 = e_{a+b}^{RG}$ and choose $x_1$ among the edges in $A_1^k$ whose two half-edges are red and green.
        \item If $c = \text{green}$, set $x_1 = e_{a+b}^{BB}$ and choose $x_2$ among the edges in $A_2^k$ whose two half-edges are black and blue.
        \item If $c = \text{red}$, set $x_2 = e_{a+b}^{BB}$ and choose $x_1$ among the edges in $A_1^k$ whose two half-edges are black and blue.
    \end{itemize}
    \item Build the tree $T_{c,x_1,x_2}$ by adding to the forest $E_c$ the edges $x_1, x_2$ as well as all edges identified with them (see \Cref{obs:properties_Ec_Gklr}\ref{obs:identification_edge_Gklr}).
\end{enumerate}

Then $T_{c,x_1, x_2}$ is a tree of $G_{\ell}^{k,r}$. Indeed, by adding the edge $x_1$ we connect two connected components $C_0$ and $C_1$ of $E_c$. By adding the edge $x_2$ we connect the connected component $C_2$ to either $C_1$ or $C_0$. Moreover, by adding all edges identified with $x_1$ we connect the connected component $C_{2i-1}$ to either $C_{2i-2}$ or $C_0$ and by adding all edges identified with $x_2$ we connect the connected component $C_{2i}$ to either $C_{2i-1}$ or $C_0$ for each $i = 2, \dots, \ell$ (by \Cref{obs:properties_Ec_Gklr}\ref{obs:connected_components_Ec_Gklr}). Thus we connect all connected components of $E_c$ without creating cycles.

This allows us to construct a $\frac{2g}{g+2}$-arborization of $G_{\ell}^{k,r}$, where $\frac{2g}{g+2} = \frac{16k+8 +4a+2b}{g+2}$:
\begin{itemize}
    \item Set $w(T_{c,x_1, x_2}) = \frac{1 - \frac{b}{2k}}{g+2}$ if $x_1 \in E(A_1^k)$ and $x_2 \in E(A_2^k)$ and their half-edges are (black, blue) or (red, green).
    \item Set $w(T_{c,x_1, x_2}) = \frac{1}{g+2}$ if $x_1 \in E(A_1^k)$ and $x_2 \in E(A_2^k)$ and their half-edges are not (black, blue) or (red, green).
    \item Set $w(T_{c,x_1, x_2}) = \frac{1}{g+2}$ if $x_1 \in E(S^a)$ and $x_1$ is not a special edge for color $c$.
    \item Set $w(T_{c,x_1, x_2}) = \frac{2}{g+2}$ if $x_1 \in E(S^a)$ and $x_1$ is a special edge for color $c$.
    \item Set $w(T_{c,x_1, x_2}) = \frac{b}{k(g+2)}$ if $x_1$ or $x_2$ is in $\{ e_{a+b}^{BB}, e_{a+b}^{RG} \}$.
\end{itemize}
Denote by $\mathcal{A}$ the family of weighted trees thus constructed.

\begin{proposition}\label{prop:arborization_Gklr}
    The family $\mathcal{A}$ defined above is a $\frac{2g}{g+2}$-arborization of $G_{\ell}^{k,r}$ and thus $a_f(G_{\ell}^{k,r}) = \frac{2g}{g + 2}$ (with $g = 8k+4+2a+b$).
\end{proposition}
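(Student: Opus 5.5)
The plan mirrors the proof of \Cref{prop:arborization_Gkl}. First I check that every $T_{c,x_1,x_2}$ is a spanning tree; this follows from \Cref{obs:properties_Ec_Gklr}\ref{obs:connected_components_Ec_Gklr} and \ref{obs:identification_edge_Gklr}, as argued just before the statement. Then I show that the weights sum to $\frac{2g}{g+2}$ and that every edge is covered with total weight at least $1$. Given this, \Cref{thm:fractional_arboricity} gives $a_f(G_\ell^{k,r}) \le \frac{2g}{g+2}$, and \Cref{rem:size_Gklr} supplies the reverse inequality.

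\textbf{Total weight.} I fix a color $c$ and compute $w(c)=\sum w(T_{c,x_1,x_2})$, aiming for $w(c)=\frac{4k+2+a+\frac b2}{g+2}$ for each $c$; summing over the four colors then gives $\frac{16k+8+4a+2b}{g+2}$.

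\begin{itemize}
\item \emph{Block $A_1^k$.} It contains $4k-2$ edges not in $E_c$. Of these, $k$ have half-edge colors $(black,blue)$ or $(red,green)$ avoiding $c$, by the count in \Cref{obs:properties_Ec_Gklr}\ref{obs:size_Ec_Glkr}. These $k$ trees get weight $\frac{1-b/(2k)}{g+2}$ and the others get $\frac{1}{g+2}$, for a contribution of $\frac{4k-2-b/2}{g+2}$.
\item \emph{Block $S^a$.} The two special edges for $c$ contribute $\frac{4}{g+2}$. The $a$ edges incident to $v_1^{(0)},\dots,v_a^{(0)}$ not in $E_c$ contribute $\frac{a}{g+2}$. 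Each $v_j^{(0)}$ has degree $2$ with two differently colored half-edges, so exactly one incident edge avoids $E_c$; I will verify this against \Cref{fig:coloring_Gklr}.
\item \emph{Extra trees when $b=1$.} For each $c$ there are $k$ extra trees of weight $\frac{1}{k(g+2)}$, since $x_2$ (or $x_1$) ranges over the $k$ edges of the prescribed color pair. Together they give $\frac{1}{g+2}$.
\end{itemize}

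Adding these gives $\frac{4k-2-b/2+4+a+b}{g+2}=\frac{4k+2+a+b/2}{g+2}$, as desired. The delicate part is matching the exact number of non-special edges of $S^a$ outside $E_c$, which I will read off the figure.

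\textbf{Coverage.} Any edge $e$ lies in $E_c$ for its two half-edge colors $c$, so it is contained in every tree of those two colors, giving weight $2w(c)=\frac{8k+4+2a+b}{g+2}$. It remains to gain $\frac{2}{g+2}$ more, from trees of the two colors $c,c'$ absent at $e$. I split into cases.

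\begin{itemize}
\item \emph{$e$ a generic edge of some $A_i^k$.} Then $e$ is identified with a choice of $x_1$ or $x_2$ in both $T_{c,\cdot}$ and $T_{c',\cdot}$, each of weight $\frac1{g+2}$.
\item \emph{$e$ a $(black,blue)$ or $(red,green)$ edge of $A_i^k$.} The two main trees give only $\frac{2-b/k}{g+2}$. The deficit $\frac{b/k}{g+2}$ is exactly supplied by the extra trees for the two absent colors. For instance, a red--green edge of $A_1^k$ lies in a blue extra tree, and of $A_2^k$ in a black extra tree, each of weight $\frac{b}{k(g+2)}$. I must check that each such edge receives both missing colors.
\item \emph{$e$ special for $c$.} One tree of weight $\frac{2}{g+2}$ suffices.
\item \emph{$e$ incident to some $v_j^{(i)}$ with $j\le a$.} It is covered as $x_1$ (in $S^a$) or via its counterpart $x_2$ (in $B_1^{a+b}$) for both absent colors, each of weight $\frac1{g+2}$.
\item \emph{$e\in\{e^{BB}_{a+b},e^{RG}_{a+b}\}$.} It appears in $2k$ extra trees, $k$ for each of the two absent colors, giving $\frac{2}{g+2}$.
\end{itemize}

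\textbf{Main obstacle.} I expect the hardest part to be the case bookkeeping in the coverage step, especially for $b=1$. There I must confirm that the extra trees' colors match exactly the colors absent from the $(black,blue)$ and $(red,green)$ edges in the odd and even $A$ blocks. I must also check that the edges of $B_{2i}^a$ identified with those of $S^a$ inherit the same coverage.
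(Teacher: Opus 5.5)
Your proposal is correct and follows the paper's proof essentially step by step. It uses the same per-color weight count $\frac{4k+2+a+\frac b2}{g+2}=\frac{g}{2(g+2)}$, the same case split for coverage, and the same closing appeal to \Cref{thm:fractional_arboricity} and \Cref{rem:size_Gklr}.

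One small correction concerns your case of $(black,blue)$ or $(red,green)$ edges of an $A$-block. Such an edge lies in extra trees of only \emph{one} of its two absent colors; for example, a red--green edge of $A_{2i-1}^k$ lies only in blue extra trees. That single tree of weight $\frac{b}{k(g+2)}$ already fills the deficit exactly, so do not try to verify that the edge receives both missing colors: that stronger claim is false and not needed.
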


\begin{proof}
    Fix $c = \text{black}$. In block $A_1^k$ there are $ \frac{8k-4}{2} = 4k - 2$ edges that are not in $E_c$. In the block $S^a$ there are 2 special edges for color $c$ and $a$ edges incident to the vertices $(v_j^{(0)})_{1 \le j \le a}$ that are not in $E_c$. Finally, by \Cref{obs:properties_Ec_Gklr}\ref{obs:size_Ec_Glkr}, there are $k$ edges in block $A_1^k$ whose two half-edges are red and green. Hence the total weight of the trees where the chosen color is black equals:
    \begin{displaymath}
        \begin{split}
            w( \text{black} ) &= \sum_{x_1,x_2} w(T_{c,x_1,x_2}) = k \times \frac{1 - \frac{b}{2k}}{g+2} + (4k - 2 + a - k) \times \frac{1}{g+2} + 2 \times \frac{2}{g+2} + k \times \frac{b}{k(g+2)} \\
            &= \frac{1}{g+2} \times (k- \frac{b}{2} + 3k-2+4+a+b) = \frac{4k + 2 + a + \frac{b}{2}}{g+2}= \frac{g}{2(g+2)}
        \end{split}
    \end{displaymath}
    The same computation applies to any of the four choices of color (up to permuting the colors), hence $w(c) = \frac{g}{2(g+2)}$ for every color $c \in \{\text{black},\text{red},\text{blue},\text{green}\}$. Therefore the sum of the weights of the trees in $\mathcal{A}$ is:
    \begin{displaymath}
        w(\mathcal{A}) = \sum_{c} \sum_{x_1,x_2} w(T_{c,x_1,x_2}) = 4 \times  \frac{g}{2(g+2)} = \frac{2g}{g+2}
    \end{displaymath}
    For $e \in E(G_{\ell}^{k,r})$, we want to show that $e$ is covered by trees of total weight at least 1.
    
    All trees $T_{c,x_1,x_2}$ where $c$ is one of the two colors of the half-edges of $e$ contain $e$. This corresponds to trees of total weight $2 \times \frac{g}{2(g+2)} = \frac{g}{g+2}$. By \Cref{obs:properties_Ec_Gklr}\ref{obs:identification_edge_Gklr}, $e$ can be identified with a unique edge $f_e$ in block $S^a$, $A_1^k$, $B_1^{a+b}$ or $A_2^k$. To reach the remaining weight $\frac{2}{g+2}$, distinguish multiple cases depending on the nature of $f_e$. Denote by $c_1, c_2$ the color of the two half-edges of $f_e$.
    \begin{itemize}
        \item If $f_e$ is in $A_1^k$ or $A_2^k$ then $e$ is used in $T_{c, x_1, x_2}$ when $c \notin \{c_1, c_2\}$ and $f_e \in \{ x_1 , x_2 \}$. If $b= 0$ or if $\{c_1, c_2\} \neq \{ \text{black}, \text{blue} \}$ and $\{c_1, c_2\} \neq \{ \text{red}, \text{green} \}$ then it contributes to two trees of weight $\frac{1}{g+2}$.
        
        If $b=1$ and $\{c_1, c_2\} = \{ \text{black}, \text{blue} \}$ or $\{c_1, c_2\} = \{ \text{red}, \text{green} \}$ then it contributes to two trees of weight $\frac{1 - \frac{b}{2k}}{g+2}$. In this case $e$ is also used in the trees $T_{c,x_1, x_2}$ where $\lbrace x_1, x_2 \rbrace = \lbrace f_e, e_{a+b}^{RG} \}$ or $\lbrace x_1, x_2 \rbrace = \lbrace f_e, e_{a+b}^{BB} \}$. This contributes to one tree of weight $\frac{b}{k(g+2)}$ (for example if $x_1$ is in $A_1^k$ and $(c_1, c_2) = (\text{black},\text{blue})$ then $e$ is used in $T_{\text{red}, f_e, e_{a+b}^{BB}}$). Thus the total contribution is $\frac{2}{g+2}$.
        
        \item Suppose $f_e$ is in $S^a$ or $B_{1}^{a+b}$ and $f_e \not \in \{ e_{a+b}^{BB}, e_{a+b}^{RG} \}$. If $f_e$ is a special edge for some color $c$ then $e$ is used in $T_{c,x_1, x_2}$ where $f_e \in \{x_1, x_2 \}$. This contributes to one tree of weight $\frac{2}{g+2}$.
        
        If $f_e$ is not a special edge then $e$ is used in $T_{c,x_1, x_2}$ when $c \notin \{c_1, c_2\}$ and $f_e \in \{x_1, x_2 \}$. This contributes to two trees of weight $\frac{1}{g+2}$.
        
        \item Suppose $b = 1$ and $f_e \in \{ e_{a+b}^{BB}, e_{a+b}^{RG} \}$, say $f_e = e_{a+b}^{BB}$ here. Then $e$ is used in the trees of type $T_{\text{red},x_1,f_e}$ where $x_1$ is a black-blue edge in $A_1^k$ and the trees of type $T_{\text{green},f_e,x_2}$ where $x_2$ is a black-blue edge in $A_2^k$. From \Cref{obs:properties_Ec_Gklr}\ref{obs:size_Ec_Glkr}, this contributes to $2k$ trees of weight $\frac{b}{k(g+2)}$. Thus the total contribution is $\frac{2}{g+2}$.
    \end{itemize}
    Thus $e$ is contained in trees of $\mathcal{A}$ of total weight at least 1. Hence $\mathcal{A}$ is a $\frac{2g}{g+2}$-arborization of $G_{\ell}^{k,r}$.
    
    \Cref{thm:fractional_arboricity} together with \Cref{rem:size_Gklr} show that $a_f(G_{\ell}^{k,r}) = \frac{2g}{g + 2}$ for $g = 8k+4+2a+b$.
\end{proof}

Finally we can prove \Cref{lem:existence_perfect_g_coating_digirth_g}.

\begin{replemma}{lem:existence_perfect_g_coating_digirth_g}
    For all $g \ge 12$, there exists an infinite family of perfect $g$-coatings of digirth $g$.
\end{replemma}

\begin{proof}
    Let $k\ge 1$, $a \in \{0,1,2,3\}$ and $b \in \{0,1\}$ such that $g=  8k+4+2a+b$. Set $r = 2a+b$. For all $\ell \ge 1$, the graph $G_\ell^{k,r}$ admits a perfect $g$-coating $H_\ell$ (\Cref{prop:perfect_coating_Gklr}). Moreover \Cref{prop:arborization_Gklr} shows that $a_f(G_{\ell}^{k,r}) = \frac{2g}{g + 2}$. Then by \Cref{thm:digirth_perfect_coating}, $H_{\ell}$ has digirth $g$.
\end{proof}

\end{document}